\documentclass[12pt]{amsart}
\renewcommand{\bar}{\overline}
\usepackage{amsmath,amsthm,amscd,euscript,amssymb}
\setlength{\textheight}{21.02cm} \topmargin=-.01cm
\oddsidemargin=-.01cm \evensidemargin=-.01cm \textwidth=16.02cm
\usepackage{graphicx}
\usepackage{epstopdf}
\usepackage{color}
\pdfoptionpdfminorversion=7

\usepackage[most]{tcolorbox}

\newcommand{\myexample}[2]{
   \begin{tcolorbox}[breakable,colback=black!5!white,colframe=black,title={
   #1}]
        #2
    \end{tcolorbox}
}

\renewcommand{\bar}{\overline}
\def \r{\mathbb R}

\def \q{\mathbb Q}
\def \z{\mathbb Z}

\def \M{\mathcal M}

\def \pyr{\hbox{\rm Pyr}}
\def \GL{\hbox{\rm GL}}
\def \SL{\hbox{\rm SL}}
\def \id{\hbox{\rm Id}}
\def \dist{\hbox{\rm d}}
\def \vol{\hbox{\rm Vol}}
\def \area{\hbox{\rm Area}}

\def \starr{\hbox{\rm star}}

\DeclareMathOperator{\Ahull}{A-hull}

\DeclareMathOperator{\sgn}{sgn} 

\newtheorem{theorem}{Theorem}[section]
\newtheorem{lemma}[theorem]{Lemma}

\newtheorem{proposition}[theorem]{Proposition}
\newtheorem{corollary}[theorem]{Corollary}
\theoremstyle{remark}
\newtheorem{remark}[theorem]{Remark}
\theoremstyle{definition}
\newtheorem{definition}[theorem]{Definition}
\newtheorem{example}[theorem]{Example}
\newtheorem{problem}{Problem}
\newtheorem{conjecture}[problem]{Conjecture}

\title{On a periodic Jacobi-Perron type algorithm}

\author{Oleg Karpenkov}


\date{29 January 2021}

\keywords{Jacobi-Perron algorithm, Klein continued fractions, Dirichlet group}

\email[Oleg Karpenkov]{karpenk@liverpool.ac.uk}

\begin{document}
\input{epsf}

\begin{abstract}
In this paper we introduce a new modification of
the Jacobi-Perron algorithm in the three dimensional case
and prove its periodicity for the case of totally-real conjugate cubic vectors.
This provides an answer in the totally-real case to the question of algebraic periodicity for cubic irrationalities posed in 1848 by Ch.~Hermite.
\end{abstract}

\maketitle
\tableofcontents

\section*{Introduction}

Recall that a cubic number is a root of a cubic polynomial with integer coefficients
irreducible over the field $\q$.
A cubic number is called {\it totally-real} if the cubic integer polynomial defining it
has three real roots.
We say that a vector in $\r^3$ is {\it cubic} if its coordinates span a cubic field.
In this paper we introduce a new Jacobi-Perron type algorithm (which we call the $\sin^2$-algorithm)
and prove its periodicity in the totally-real case of cubic vectors.
This provides a solution to Hermite's problem on cubic periodicity in the totally-real case.

\vspace{2mm}

The study of periodic representations has a long history.
It starts with the invention of the Euclidean Algorithm  (in around 300 B.C.)
by ancient Greeks that was designed
for finding the greatest common divisor of two integer numbers.
After the discovery of continued fractions by J.~Wallis in 1695 the Euclidean algorithm
was adapted to pairs of arbitrary real numbers.
Finally, in 1770 in~\cite{Lagrange1770}  J.-L.~Lagrange showed that
a real number is a root of an irreducible quadratic polynomial
if and only if its continued fraction is periodic.

\vspace{2mm}

The question ofsimilar periodic representations of cubic irrationalities was
posed by Ch.~Hermite in 1848 (see e.g.~\cite{Picard1901}, \cite{Hermite1850}) in a very broad form.
In this paper we follow C.G.J.~Jacobi and O.~Perron and study algebraic periodicity
of generalized Euclidean algorithms in $\r^3$.

\vspace{2mm}

The first generalization of the classical Euclidean algorithm to the higher dimensional case
was introduced  in 1868 by C.G.J.~Jacobi  in~\cite{Jacobi1868} and further developed in~\cite{Perron1907}
and~\cite{Perron1913} by O.~Perron.
Let us outline the algorithm.

\vspace{2mm}

%

\myexample{ Jacobi-Perron algorithm}
{
{\noindent{\bf \underline{Input of the algorithm}:}}
Any triples of real numbers $(x_0,y_0,z_0)$.

\vspace{1mm}

{\noindent {\bf \underline{Step of the algorithm}:}}
Assume that we have constructed a vector $(x_i, y_i,z_i)$ and let $y_i\ne 0$.
Then we define the next vector $(x_{i+1},y_{i+1},z_{i+1})$ as follows
$$
(x_{i+1},y_{i+1},z_{i+1})=\Big(y_i,z_i-\Big\lfloor\frac{z_i}{y_i}\Big\rfloor y_i, x_i-\Big\lfloor\frac{x_i}{y_i}\Big\rfloor y_i\Big).
$$
As an output of this step we have the following pair of integers:
$$
\Big(\Big\lfloor\frac{z_i}{y_i}\Big\rfloor,\Big\lfloor\frac{x_i}{y_i}\Big\rfloor\Big).
$$
It is called the {\it $i$-th element} of the Jacobi-Perron multidimensional continued fraction algorithm.

\vspace{1mm}

{\noindent{\bf \underline{Termination of the algorithm}:}}
Once we have constructed a vector  $(x_i, y_i,z_i)$ satisfying $y_i=0$,
the algorithm terminates.

}

\vspace{2mm}

If one applies the Jacobi-Perron algorithm to cubic vectors, it never terminates and sometimes produces an eventually
periodic output.  So it is natural to ask the following question: {\it Is the output of the Jacobi-Perron algorithm
always eventually periodic for cubic vectors?}
In folklore this question is known as Jacobi's Last Theorem (for further details see, e.g.,
in~\cite{Schweiger2000} and Chapter~23.4 in~\cite{KarpenkovGCF2013}).

\begin{problem}{\bf (Jacobi's Last Theorem.)}
Let $K$ be a totally-real cubic number field.
Consider arbitrary elements $y$ and  $z$ of $K$
satisfying $0<y,z<1$ such that $1$, $y$, and $z$ are independent over $\q$.
Is it true that the Jacobi-Perron algorithms generate an eventually periodic continued fraction with
starting data $v=(1,y,z)$?
\end{problem}

This problem is still open and the answer to it is conjectured to be negative.
Numerical computations in~\cite{Elsner1967}
show that the output is not periodic, for instance, for  the vector
$v=(1,\sqrt[3]{4},\sqrt[3]{16})$.
Periodicity of the Jacobi-Perron algorithm was proven for certain classes of cubic numbers
in~\cite{Bernstein1971}.
It is interesting to notice that cubic periodicity is also unknown for various modifications
of the Jacobi-Perron algorithm
(subtractive algorithms by V.~Brun~\cite{Brun1958} and
E.S.~Selmer~\cite{Selmer1961},
fully subtractive algorithm by F.~Schweiger~\cite{Schweiger1994}
and~\cite{Schweiger1995},
generalized subtractive algorithm~\cite{Schweiger1992},
 Tamura-Yasutomi algorithm~\cite{Tamura2009},
 heuristic algebraic  periodicity detecting algorithm~\cite{Karpenkov2021-2},  etc.).

\vspace{2mm}

Let us try to explain informally the reason why
periodicity may break for cubic vectors.
Any cubic vector is an eigenvector of some $3\times 3$-matrix with
integer coefficients whose characteristic polynomial is irreducible over $\q$.
Due to Dirichlet's unit theorem (see, e.g., in~\cite{Borevich1966})
this matrix can be taken to have a unit determinant  (see, e.g., in Chapter 17 of~\cite{KarpenkovGCF2013}).
This matrix has another two (possibly complex) eigenlines.
We say that  these eigenlines are {\it conjugate} to the line containing the original cubic vector.
The algorithms discussed before work entirely with the direction of the original vector
disregarding the directions of the conjugate eigenlines.
This is probably the main reason for the loss of periodicity.
The study of Klein's polyhedra suggests that all three eigenlines should be considered in a periodic algorithm.
In fact the situation here is quite paradoxical, as conjugate eigenlines can be constructed from the original vector
(see Subsection~\ref{2ways}).
We discuss Klein's polyhedra later in Subsection~\ref{Klein sails and three-dimensional continued fractions}
as they are substantially used in the proof of periodicity for the new $\sin^2$-algorithm.

\vspace{2mm}

The main goal of this paper is to develop a new modification
of the Jacobi-Perron algorithm which we call the $\sin^2$-algorithm  (see Section~\ref{sin-Jacobi-Perron algorithm}).
The $\sin^2$-algorithm works with triples of arbitrary real vectors in $\r^3$.
For triples of totally-real conjugate vectors we prove periodicity
of the  $\sin^2$-algorithm (Theorems~\ref{eventually-periodic}
and~\ref{eventually-periodic-algorithm}).
This provides a solution to Hermite's problem in the spirit of Jacobi's Last Theorem for
cubic vectors in the totally-real case of $\r^3$.
To the best of our knowledge this is the first algorithm that has been proven to be periodic if and only if
it is applied to cubic vectors.
The non-totally-real case remains now open; we briefly discuss it  in Section~\ref{conjectures}.

\vspace{2mm}

Finally let us mention that various different types of cubic periodicity were also studied for other types of generalized continued fractions:
for Klein polyhedra~\cite{Lachaud1993,German2008},
Minkovski-Voronoi polyhedra~\cite{Voronoi1952,Minkowski1967,Bullig1940},
triangle sequences~\cite{Dasaratha2012}, and
ternary continued fractions (or bifurcating continued fractions)~\cite{Murru2015}.

\vspace{2mm}

{\noindent
{\bf This paper is organized as follows.}
}
We start in Section~\ref{Basic notions and definitions}
with basic notions and definitions.
Here we discuss the concept of generalized Euclidean algorithms; recall some notions of integer geometry;
show a classic geometric construction of Klein's polyhedra that are used in the proofs;
and discuss the action of the positive Dirichlet group on cubic sails.
In Section~\ref{sin-Jacobi-Perron algorithm}
we introduce the $\sin^2$-algorithm and state its periodicity (Theorems~\ref{eventually-periodic}
and~\ref{eventually-periodic-algorithm}).
In Section~\ref{Lattice geometry used in the proofs}
we show several important statements from geometry of integer lattices
and deduce the proof of Theorem~\ref{eventually-periodic}$($i$)$.
Further in Section~\ref{Theorem-ii}
we reformulate the second item of Theorem~\ref{eventually-periodic}
in a more analytic way (see Theorem~\ref{greater e}).
Finally we prove all items of Theorem~\ref{greater e}
in Sections~\ref{teor-v}, \ref{teor-i}, \ref{teor-ii}, and~\ref{theor-iii-iv}.
Note that some computations in the proof of Theorem~\ref{greater e}
are done in MAPLE2020 (see in~\cite{maple}).
We conclude this paper in Section~\ref{conjectures}
with a list of open questions.

\section{Definition of the $\sin^2$-algorithm and theorem on its periodicity}
\label{sin-Jacobi-Perron algorithm}

We start in Subsections~\ref{2ways} and~\ref{States of the main part of the algorithm}
with  a general discussion on conjugate cubic directions.
In Subsection~\ref{Technique to construct separating bases for arbitrary states}
we introduce the notion of separating bases and write down the algorithm to reach a
separating basis starting from an arbitrary integer basis.
We define admissible JP-transformations and discuss their elementary properties
in Subsection~\ref{Admissible JP-transformations}.
We continue in Subsection~\ref{Definition of the map}
by showing how to pick an admissible JP-transformation
for the next iteration of the $\sin^2$-algorithm.
In Subsection~\ref{Description ALG} we write down
the $\sin^2$-algorithm and go through one particular example.
Finally in Subsection~\ref{Periodicity of Phi}
we state the periodicity of the algorithm (Theorem~\ref{eventually-periodic-algorithm}).

\subsection{How to set conjugate cubic directions in $\r^3$}\label{2ways}

Let us now briefly discuss two ways to define cubic conjugate directions
in $\r^3$.

\vspace{2mm}

{\noindent {\bf Polynomial definition.}}
Let  $p$ be an irreducible over $\q$ cubic polynomial with integer coefficients.
Then its roots $\xi$, $\nu$, and $\mu$ are called {\it conjugate} cubic numbers.

\vspace{2mm}

In order to define conjugate vectors we pick a basis
$(q_1,q_2,q_3)$
in the linear space of degree 2 polynomials of over $\q$  in one variable.
Then the vectors
$$
(q_1(\xi),q_2(\xi),q_3(\xi)), \quad
(q_1(\nu),q_2(\nu),q_3(\nu)), \quad
(q_1(\mu),q_2(\mu),q_3(\mu))
$$
are called {\it conjugate} cubic vectors.
Once we know 4 polynomials: $(p,q_1,q_2,q_3)$
we uniquely determine a triple of cubic conjugate vectors.

\begin{remark}\label{propor}
Note that all Jacobi-Perron type algorithms generate the same output for proportional vectors.
For that reason we are interested only in the directions of  conjugate vectors:
we call such directions  {\it conjugate cubic directions}.
\end{remark}

In order to define three conjugate cubic directions it is enough to consider polynomials
$(p,q_1,q_2,q_3=1)$ (where  as before $1$, $q_1$, and $q_2$ generate the linear space of polynomials of degree 2).

\begin{remark}
Note that conjugate cubic vectors and directions can be considered in both totally-real and non-totally-real cases.
\end{remark}

{\noindent {\bf Matrix definition.}}
An alternative approach to define conjugate cubic directions is
to consider  a triple of eigendirections for some integer $3\times 3$-matrix.
The characteristic polynomial of such matrices should be irreducible over $\q$.

\begin{remark}\label{cd}
Polynomial and matrix definitions are equivalent in the following sense.
They provide the same set of triples of conjugate cubic directions.
Furthermore, any triple of conjugate cubic directions can be defined by an $SL(3,\z)$-matrix
(see, e.g., Chapter 17 in~\cite{KarpenkovGCF2013}).
\end{remark}

\begin{remark}
Notice that similar constructions work for
algebraic conjugate degree $d$ directions in $\r^d$.
\end{remark}

\subsection{Totally-real cubic states}
\label{States of the main part of the algorithm}
For simplicity we further use the following notion.

\begin{definition}\label{domain of alg}
A {\it state} is collection of coordinates for three linearly independent vectors $(\xi=(x,y,z),\nu_1,\nu_2)$
where
$$
x\ge y\ge z>0.
$$
Denote the set of all states by $\Sigma(\xi,\nu_1,\nu_2)$.
\end{definition}

In this paper we are mostly  interested in the following totally-real cubic states.

\begin{definition}
We say that a state $s=(\xi,\nu_1,\nu_2)$ is a {\it totally-real cubic} state if
there exists a matrix  $B\in\SL(3,\z)$ with irreducible characteristic polynomial over $\q$ such that the vectors $\xi,\nu_1,\nu_2$ are eigenvectors of $B$ with distinct eigenvalues.
\end{definition}

\begin{remark}
Note that a totally-real cubic state $s=((x_0,y_0,1),(x_1,y_1,1),(x_2,y_2,1))$
is determined by the first vector $(x_0,y_0,1)$ in a unique way up to a swap of the last two vectors.
\end{remark}

\subsection{Technique to construct separating bases for arbitrary states}
\label{Technique to construct separating bases for arbitrary states}

Let us start with several general definitions. A vector is called {\it integer} if all its coordinates are integers.
We say that a basis is {\it integer}
if it generates the lattice of integer points.
Now we introduce a couple of notions that are used in the main algorithm.

\begin{definition}\label{separating-def}
Let $(\xi, \nu_1,\nu_2)$ be a triple of vectors.

\begin{itemize}
\item
A basis $E$ is called {\it supporting} for $\xi$
if the non-negative orthant of $E$ contains either $\xi$ or $-\xi$

\item
A supporting basis for $\xi$ is said to be {\it separating} for $(\xi, \nu_1,\nu_2)$
if  a non-negative orthant does not include both $\pm\nu_1$ and $\pm\nu_2$.

\item
We say that a state $s$ is {\it separating} if the coordinate basis is separating for the vectors of $s$.
\end{itemize}
\end{definition}

Let us now show how to find a separating basis for a given state.

\vspace{2mm}

%

\myexample{Algorithm producing a separating basis}
{
{\noindent{\bf \underline{Input of the algorithm}:}}
We start with three linearly independent vectors $(\xi,\nu_1, \nu_2)$
where the coordinates of $\xi$ are positive in the coordinate basis $E$.

\vspace{1mm}

{\noindent{\bf \underline{Set up for the algorithm}:}}
Let
$$
\frac{x}{y}=[a_0:a_1;a_2;\ldots]
$$
be a regular continued fraction and let $\frac{p_n}{q_n}$
be its $n$-th partial quotient, i.e.,
$$
\frac{p_n}{q_n}=[a_0:a_1;a_2;\ldots; a_n]
$$
(with positive relatively prime integers $p_n, q_n$, and $n=0,1,2,\ldots$).

Denote by $M_{2n,\xi}(E)$ the following basis
$$
(p_{2n+1}e_1+ q_{2n+1}e_2 , p_{2n} e_1+q_{2n} e_2,e_3).
$$
Namely, the matrix of transformation to $M_{2n,\xi}(E)$ is expressed as follows:
$$
M_{n,\xi}=\prod\limits_{k=0}^{n}
\left(
\left(
\begin {array}{ccc}
1&a_{2k}&0\\
\noalign{\medskip}0&1&0\\
\noalign{\medskip}0&0&1
\end {array}
\right)
\cdot
\left(
\begin {array}{ccc}
1&0&0\\
\noalign{\medskip}a_{2k+1}&1&0\\
\noalign{\medskip}0&0&1
\end {array}
\right)
\right)
$$

{\noindent {\bf \underline{Step of the algorithm}:}}
At Step~$i$ we construct the basis $M_{2n,\xi}(E)$ and check
whether it  is a separating for $s$.

\vspace{1mm}

{\noindent{\bf \underline{Termination of the algorithm}:}}
The algorithm terminates if $M_{2i,\xi}(E)$ is a separating basis
or if the continued fraction for $x/y$ is finite and we have reached the last element.
}

\vspace{2mm}

\begin{proposition}\label{ConstSetB}
Let $s=(\xi,\nu_1,\nu_2)$ be a totally-real cubic state.
Then the above algorithm terminates and produces a separating basis.
\end{proposition}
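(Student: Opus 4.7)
The plan is to show that for every sufficiently large $n$ the basis $M_{2n,\xi}(E)$ produced by the algorithm is already separating, so that the procedure terminates via the separating-basis criterion.

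First I would argue that $x/y$ is irrational. Since $s$ is a totally-real cubic state, $\xi,\nu_1,\nu_2$ are eigenvectors of some $B\in\SL(3,\z)$ whose characteristic polynomial $\mu(\lambda)$ is an irreducible cubic over $\q$; fixing a rational parametrisation of eigenvectors, each coordinate of $\xi$ is the value at $\lambda=\lambda_\xi$ of a fixed polynomial in $\q[\lambda]$ of degree $<3$. Any nontrivial $\q$-linear relation among the three coordinates would be a polynomial of degree $\le 2$ vanishing at $\lambda_\xi$, hence (by irreducibility of $\mu$) identically zero; Galois-conjugating then confines $\xi,\nu_1,\nu_2$ to a common rational $2$-plane, contradicting their linear independence. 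In particular $x/y\notin\q$, so the continued fraction of $x/y$ is infinite and the ``last element'' termination never occurs.

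The supporting property of $M_{2n,\xi}(E)$ is immediate: the consecutive convergents $p_{2n}/q_{2n}$ and $p_{2n+1}/q_{2n+1}$ bracket $x/y$ from opposite sides, so $(x,y)$ lies in the open positive cone over $(p_{2n},q_{2n})$ and $(p_{2n+1},q_{2n+1})$; since $z>0$, $\xi$ is in the non-negative orthant of the new basis.

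The crux is to show the non-negative orthant of $M_{2n,\xi}(E)$ fails to contain both $\pm\nu_1$ and $\pm\nu_2$ once $n$ is large. Put $\rho_i:=\nu_{i,1}/\nu_{i,2}$ when $\nu_{i,2}\ne 0$. Using $p_{2n+1}q_{2n}-p_{2n}q_{2n+1}=1$, Cramer's rule expresses the first two new-basis coordinates of $\nu_i$ as
\[
a_i \;=\; \nu_{i,2}\,q_{2n}\bigl(\rho_i-p_{2n}/q_{2n}\bigr),\qquad
b_i \;=\; -\nu_{i,2}\,q_{2n+1}\bigl(\rho_i-p_{2n+1}/q_{2n+1}\bigr).
\]
If $\rho_i\ne x/y$, then once $n$ is large enough that both convergents are closer to $x/y$ than $|\rho_i-x/y|$, the two parenthesised differences share the sign $\mathrm{sgn}(\rho_i-x/y)$, and the explicit minus sign in $b_i$ forces $a_i,b_i$ to have opposite signs; neither $\nu_i$ nor $-\nu_i$ then lies in the non-negative orthant. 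The equality $\rho_i=x/y$ can hold for at most one index, since $\rho_1=\rho_2=x/y$ would put $\xi,\nu_1,\nu_2$ into the common plane $yu_1-xu_2=0$; the subcase $\nu_{i,2}=0$ is analogous (then $a_i=q_{2n}\nu_{i,1}$ and $b_i=-q_{2n+1}\nu_{i,1}$ have opposite signs because $\nu_{i,1}\ne 0$, else $\nu_i\parallel e_3$). Hence for large $n$ at least one of $\pm\nu_1,\pm\nu_2$ escapes the non-negative orthant, so $M_{2n,\xi}(E)$ is separating and the algorithm terminates. The only place where the cubic hypothesis is genuinely used is the irrationality of $x/y$ at the start; the rest reduces to a routine sign analysis of convergents together with linear independence of the state vectors.
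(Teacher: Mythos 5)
Your proposal is correct and follows essentially the same strategy as the paper's sketch: both observe that as $n\to\infty$ the cone spanned by $M_{2n,\xi}(E)$ collapses onto the half-plane through $e_3$ and $\xi$, and that linear independence forces the conjugate vectors to eventually escape it. You merely make explicit what the paper leaves implicit — the irrationality of $x/y$, the Cramer's-rule sign computation for the new coordinates of $\nu_i$, and the observation that at most one of $\rho_1,\rho_2$ can coincide with $x/y$ (a slightly weaker, but sufficient, version of the paper's claim that $e_3,\xi,\nu_i$ are independent for each $i$).
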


\begin{proof}
Let us outline the proof.
From the classical theory of regular continued fractions we have the following statements.
For every $n\ge 1$
\begin{itemize}
\item  the positive octant for  $M_{n,\xi}(E)$ contains $\xi$;

\item the vectors of $M_{n,\xi}(E)$ generate $\z^3$;

\item the orientation of $M_{n,\xi}(E)$ coincides with the orientation with $E$;

\item For partial quotients we have
$$
\lim\limits_{n\to \infty}\frac{p_n}{q_n}=\frac{x}{y}.
$$
\end{itemize}

From the general theory of algebraic numbers the vectors $e_3$, $\xi$ and $\nu_1$
(and also the vectors $e_3$, $\xi$ and $\nu_2$) are linearly independent. Hence
the projections of $\nu_1$ and $\nu_2$ along $e_3$ to the plane $z=0$
are not collinear to the projection of $\xi$ along $e_3$ to the plane $z=0$.
Thus there exists $n>0$ such that the cone generated by $M_{n,\xi}(E)$
is sufficiently close to the plane passing through 1 and $\xi$, and, therefore, it does not contain both $\nu_1$
and $\nu_2$.
\end{proof}

 \subsection{Admissible JP-transformations}
 \label{Admissible JP-transformations}

First of all let us give a general definition of admissible transformations.

\begin{definition}
Let $E$ be a supporting basis for a vector $\xi$.
We say that a transformation (or simply a matrix) $M$ is  {\it admissible} for the pair $(\xi, E)$  if
$M(E)$ is a supporting basis for~$\xi$.
\end{definition}

Similar to the Jacobi-Perron algorithm, the $\sin^2$-algorithm operates with a collection of linear transformations.
This collection contains transformations of two types.

\begin{definition}
Let $E=(e_1,e_2,e_3)$ be some basis.
\begin{itemize}
\item
Denote by
$V_{\alpha,\beta;\gamma}(E)$ the basis
$$
(e_1,e_2+\gamma e_1,e_3+\alpha e_1+\beta e_2).
$$

\item
Denote by
$W(E)$ the basis
$$
(e_1+e_3,e_2+e_1, e_3).
$$
\end{itemize}
We call these basis transformations {\it JP-transformations}.
\end{definition}

Let us now introduce the notion of the set of admissible maps
for the $\sin^2$-algorithm.

\begin{definition}\label{admissible}
Consider a vector $\xi$ with a supporting basis $E$.
Denote by $\M_\xi$ the union of
\begin{itemize}
\item the set of all admissible JP-transformations $V_{\alpha,\beta;\gamma}$;
\item the transformation $W$ in case if $W(E)$ is supporting for $\xi$.
\end{itemize}
The elements of $\M_\xi$ are {\it admissible JP-transformations}.
We say that $\M_\xi$ is the {\it set of admissible JP-transformations} for $\xi$.
\end{definition}

Let us collect some basic properties of JP-transformations together.

\begin{proposition}
Let $\xi$ be a non-zero vector with a supporting basis $E$,
such that the coordinates $(x,y,z)$ of $\xi$
in the basis $E$ satisfy $x>y>z$.
The the following statements hold.
\\
{\it i$)$} The JP-transformation $V_{\alpha,\beta,\gamma}$ is admissible if and only if
$$
\alpha\le \Big\lfloor \frac{x}{z}\Big\rfloor, \quad
\beta \le \Big\lfloor \frac{y}{z}\Big\rfloor,  \quad \hbox{and} \quad
\gamma \le \Big\lfloor \frac{x/z-\alpha}{y/z-\beta}\Big\rfloor
$$
{\it ii$)$} The JP-transformation $W(E)$  is admissible if and only if
$$
z>x-y>0.
$$
\\
{\it iii$)$} Every JP-transformation is an injection of the positive orthant of $E$ to itself.
\\
{\it iv$)$} Let $E$ be a separating basis for a triple of vectors $(\xi,\nu_1,\nu_2)$
and let $M$ be any admissible JP-transformation for $\xi$.
Then $M(E)$ is a separating basis for $(\xi,\nu_1,\nu_2)$ .
\end{proposition}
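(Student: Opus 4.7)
The plan is to prove the four items by direct coordinate computations, treating each JP-transformation as a unipotent or $\SL(3,\z)$-matrix acting on $E$-coordinates. The key observation is that, once the $E$-coordinates of $\xi$ in the transformed basis are written down explicitly, all four items follow transparently.

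For item (i), I would expand $\xi = a f_1 + b f_2 + c f_3$ with $(f_1,f_2,f_3) = V_{\alpha,\beta;\gamma}(E)$ and match with $\xi = xe_1 + ye_2 + ze_3$; the result is
$$
c = z, \qquad b = y - \beta z, \qquad a = x - \alpha z - \gamma(y - \beta z).
$$
Admissibility is $a,b,c \ge 0$, and unpacking this (with $\alpha,\beta,\gamma \ge 0$ integers) gives the three stated inequalities: $\beta \le \lfloor y/z\rfloor$ from $b \ge 0$, then $\alpha \le \lfloor x/z\rfloor$ as a forced consequence of $a \ge 0$ together with $\gamma(y-\beta z)\ge 0$, and finally $\gamma \le \lfloor (x/z-\alpha)/(y/z-\beta)\rfloor$ as the rearrangement of $a \ge 0$. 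Item (ii) follows from the analogous expansion along $(e_1+e_3, e_2+e_1, e_3)$, which gives new coordinates $(x-y,\,y,\,z-x+y)$; non-negativity combined with $x > y > z > 0$ is exactly $z > x-y > 0$.

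For item (iii), the same expansions show that both $V_{\alpha,\beta;\gamma}$ (when its parameters are non-negative) and $W$ have matrices with non-negative entries, hence send the positive orthant of $E$ into itself; injectivity is automatic from invertibility. Item (iv) then follows immediately: in $E$-coordinates, the positive orthant of $M(E)$ is the image under $M$ of the positive orthant of $E$, and by (iii) is a subset of the positive orthant of $E$. So if the latter contains no signed pair $\epsilon_1\nu_1,\epsilon_2\nu_2$ (with $\epsilon_i \in \{\pm 1\}$), neither does the former, and $M(E)$ is separating.

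The only genuine subtlety is in (i): the condition $\alpha \le \lfloor x/z\rfloor$ is not an independent constraint but a derived consequence of $a\ge 0$ together with $\gamma(y-\beta z) \ge 0$, so the proof of the ``if and only if'' requires checking that the three displayed inequalities are jointly equivalent to admissibility rather than individually necessary.
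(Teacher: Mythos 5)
Your proof is correct and follows essentially the same route as the paper: write the coordinates of $\xi$ in the new basis explicitly (the paper records exactly the same formulas $V_{\alpha,\beta;\gamma}(x,y,z)=(x-\alpha z-\gamma(y-\beta z),\,y-\beta z,\,z)$ and $W(E)=(x-y,\,y,\,z-(x-y))$), read off items (i)--(ii) from nonnegativity of those coordinates, note (iii) from the nonnegativity of the basis-change matrices, and deduce (iv) from (iii) via nesting of cones. Your remark that the bound on $\alpha$ is a derived consequence of $a\ge 0$ together with $\gamma(y-\beta z)\ge 0$ is a fair observation that the paper leaves implicit, but it does not change the logical content of the ``if and only if.''
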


\begin{proof}
Let us first  write the coordinates of $\xi$ in the new bases for $V_{\alpha, \beta, \gamma}$ and $W$, we have:
$$
\begin{array}{l}
V_{\alpha,\beta;\gamma}(x,y,z)=\big(x-\alpha z-\gamma(y-\beta z), y-\beta z, z);\\
W(E)=\big(x - y, y, z-(x-y)\big).
\end{array}
$$
All these coordinates should be positive; this implies the inequalities of first two items of the proposition.

\vspace{2mm}

The third item is  straightforward.
The last item is as follows.
Since $E$ is a separating basis, the cone spanned
by $E$ does not include both $\pm\nu_1$ and $\pm\nu_2$.
Therefore, by the third item the cone spanned by $M(E)$
does not include both $\pm\nu_1$ and $\pm\nu_2$.
\end{proof}

\begin{remark}
Note that for every $\xi$ the set $\M_\xi$ is finite.
\end{remark}

\subsection{Definition of the map $\Phi$}
\label{Definition of the map}

The main idea of the $\sin^2$-algorithm aims at to maximize at every step the
$\sin^2$-function of the following angle.

\begin{definition}
Consider a state $s=(\xi,\nu_1,\nu_2)$.
Denote by $\alpha(s)$ the angle between the planes through the origin spanned by
pairs of vectors
$(\xi,\nu_1)$ and the $(\xi,\nu_2)$ respectively.
\end{definition}

The optimization rule for the $\sin^2$-algorithm is given by the following definition.

\begin{definition}\label{optimization rule-def}
Let $s$ be a state.
Let  $M\in \M_\xi$ be
the transformation with the greatest possible value of $\sin\alpha(M(s))$.
Let
$$
\Phi(s)=T\circ M,
$$
where $T$ is a transposition of the basis vectors that puts $M(\xi)$
in decreasing order.
We call the transformation $\Phi(s)$ the {\it $\sin^2$-transformation} for $s$.
\end{definition}

\begin{remark}
The projectivisation of $\sin^2$-transformations gives a two-dimen\-si\-onal analog
of the Gauss map for classical continued fractions.
\end{remark}

\begin{remark}
For the case of cubic totally-real states
the maximum of $\sin\alpha(M(s))$ is uniquely defined.
However in the case of arbitrary vectors
several maxima are possible.
In this case one should introduce an ordering for the set $\M_\xi$
and take the first element of $\M_\xi$ providing the maximum.
One can take a standard {\it lexicographic}
ordering for the elements $V_{*,*;*}$
and additionally setting $W>V_{*,*;*}$ if $W\in \M_\xi$.
\end{remark}

\subsection{Description of the $\sin^2$-algorithm}
\label{Description ALG}

Now everything is ready for the formulation of the $\sin^2$-algorithm.

\vspace{2mm}

%

\myexample{$\sin^2$-algorithm}
{
{\noindent
{\bf \underline{Input data}:} We are given by three linearly independent vectors $\xi$, $\nu_1$, $\nu_2$ in $\r^3$.
We start with the coordinate basis $E$.
}
\\
{\bf \underline{Preliminary Stage 1: Finding a supporting basis}.}
First we multiply the vectors of $E$ by $\pm 1$ and swap them such that the coordinates
of the vector $\xi$ in the new basis satisfy
$$
x\ge y\ge z>0.
$$

\vspace{2mm}

{\noindent
{\bf \underline{Preliminary Stage 2: Finding a separating basis}.}
We use the techniques of Proposition~\ref{ConstSetB} to construct a separating basis for
$(\xi,\nu_1, \nu_2)$.
In these coordinates the triple of vectors will be a separating state.
}

\vspace{1mm}

{\noindent
{\bf \underline{Main Stage}:}
Each step we perform the following iteration: $s \to \Phi(s)$.
}

{\noindent
{\bf \underline{Termination of the algorithm}:}
Once we produce a triple $(z,y,z)$ with one of the coordinates equal to 0,
the algorithm terminates.
}

\vspace{1mm}

{\noindent
{\bf \underline{Output}:}
A sequence of admissible JP-transformations generated in the iterations of
Preliminary Stages~1 and~2 and Main Stage.
}

}

\begin{remark}
In case if some coordinates of $\xi$ coincide
there is some freedom to choose a basis transposition that put the coordinates of $\xi$ in the non-increasing order.
Let us agree not to swap the order of the equal coordinates with respect each other.
Note that the coordinates of cubic vectors are always distinct, hence the swap of coordinates
are uniquely defined.
\end{remark}

\begin{remark}
For a MAPLE2020 realisation of the $\sin^2$-algorithm we refer to Sin2JP.mw in~\cite{maple}.
\end{remark}

\begin{example}
Let us consider three linearly-independent eigenvectors $\xi$, $\nu_1$, and $\nu_2$ of
the matrix
$$
\left(
\begin {array}{ccc}
0 & 0 & 1 \\
1 & -15 & -9 \\
-9 & 136 & 66
\end {array}
\right)
$$
We pick $\xi(x,y,1)$ to be the eigenvector with the greatest eigenvalue.
Note that
$$
\xi\approx(0.02189094967, -0.1479558970, 1).
$$

\vspace{2mm}

{\noindent
At Preliminary Stage~1} we swap the first and the third coordinate vector and multiply the second one by $-1$.
Namely we consider the lattice basis transformation with the following matrix:
$$
\left(
\begin {array}{ccc}
0&0&1\\
0&-1&0\\
1&0&0
\end {array}
\right).
$$
The new basis is supporting for $\xi$.

\vspace{2mm}

{\noindent
At Preliminary Stage~2} we consider the regular continued fraction for $x/y$ which is
$$
\frac{x}{y}=[6;1:3:6:\ldots].
$$
We arrive to a separating basis after we use the first two elements of the continued
fraction for $x/y$ (which are 6 and 1).
The matrix of the corresponding basis transformation is as follows:
$$
\left(
\begin {array}{ccc}
1&{\bf 6}&0\\
0&1&0\\
0&0&1
\end {array}
\right)
\cdot
\left(
\begin {array}{ccc}
1&0&0\\
{\bf 1}&1&0\\
0&0&1
\end {array}
\right).
$$

{\noindent During the main stage} we get a periodic sequence of the following integer basis linear transformations.
Its pre-period consists of three steps:
$$
\Phi_1=
\left(
\begin{array}{ccc} 1&3&1\\
\noalign{\medskip}0&1&1\\
\noalign{\medskip}0&1&0
\end{array} \right),
\quad
\Phi_2=
\left(
  \begin{array}
{ccc} 1&1&1\\ \noalign{\medskip}1&0&1\\ \noalign{\medskip}1&0&0
\end{array} \right) ,
\quad
\Phi_3=
\left( \begin{array}{ccc} 1&1&0
\\ \noalign{\medskip}1&0&1\\ \noalign{\medskip}1&0&0\end{array}
 \right).
$$
and its period is

$$
\begin{array}{ccc}
\Phi_{8k+4}=
\left( \begin{array}{ccc} 1&1&0\\ \noalign{\medskip}0&1&1
\\ \noalign{\medskip}0&0&1\end {array} \right) ,
&
\Phi_{8k+5}=
\left( \begin{array}
{ccc} 1&0&1\\ \noalign{\medskip}1&0&0\\ \noalign{\medskip}0&1&0
\end {array} \right) ,
&
\Phi_{8k+6}=
\left( \begin{array}{ccc} 1&0&1
\\ \noalign{\medskip}1&0&0\\ \noalign{\medskip}0&1&0
\end{array}\right),
\\
\Phi_{8k+7}=
 \left( \begin{array}{ccc} 1&1&6\\ \noalign{\medskip}1&0&5
\\ \noalign{\medskip}0&0&1\end{array} \right),
&
\Phi_{8k+8}=
\left( \begin{array}{ccc} 1&16&4\\ \noalign{\medskip}0&4&1\\ \noalign{\medskip}0&1&0
\end {array} \right) ,
&
\Phi_{8k+9}=
\left( \begin{array}{ccc} 1&0&1
\\ \noalign{\medskip}1&0&0\\ \noalign{\medskip}0&1&0\end {array}
 \right) ,
\\
\Phi_{8k+10}=
\left( \begin{array}{ccc} 0&1&1\\ \noalign{\medskip}1&0&0
\\ \noalign{\medskip}0&0&1\end{array} \right) ,
&
\Phi_{8k+11}=
\left( \begin{array}{ccc} 1&1&1\\ \noalign{\medskip}1&1&0\\ \noalign{\medskip}1&0&0
\end{array} \right),
\end{array}
$$
for $k=0,1,\ldots$.

A MAPLE2020 realisation of this example is in Sin2JP.mw in~\cite{maple}.
\end{example}

\subsection{Periodicity of the $\sin^2$-algorithm for triples of real conjugate cubic vectors}
\label{Periodicity of Phi}

We start with some technical definitions that is further used in the proofs.

\begin{definition}
Let $\xi$ be a cubic totally-real vector and let
$\nu_1$ and $\nu_2$ be its conjugate vectors.
\begin{itemize}
\item
We say that a state $s=((x_0,y_0,1),(x_1,y_1,1),(x_2,y_2,1))$ is a $\xi$-state if
in some $\z^3$-basis the coordinates of vectors $\xi$, $\nu_1$ and $\nu_2$ are respectively proportional to the vectors of $s$.

\item
The set of all separating $\xi$-states $s$ where $\sin\alpha(s)>\varepsilon$ is said to be
{\it $\varepsilon$-cap} for $(\xi,\nu_1,\nu_2)$ and denoted by $\Omega_\varepsilon(\xi;\nu_1,\nu_2)$.

\item
The set of all separating $\xi$-states $s$ satisfying
$$
\sin^2\alpha(s)>\sin^2\alpha(\Phi(s))
$$
is said to be
{\it extremal} for $\xi$ and denoted by $\Omega_{\max}(\xi;\nu_1,\nu_2)$.
\end{itemize}
\end{definition}

\begin{remark}
In fact, the angle between planes is always in $[0,\pi/2]$, and hence $\sin\alpha$ is always nonnegative.
So considering $\sin\alpha$ here is equivalent to considering $\sin^2\alpha$.
We pick $\sin^2\alpha$ as it has a nice rational expressions in terms of state coordinates.
\end{remark}

The main result of current paper is based
on the statements of the following theorem together with its corollary.

\begin{theorem}\label{eventually-periodic}
Let $A\in GL(3,\z)$ be a matrix with irreducible over $\q$ characteristic polynomial
having three real roots.
Let also $(\xi,\nu_1,\nu_2)$ be some eigenbasis of $A$.
Assume that the coordinate basis is separating.
Then the following two statements hold.

{\noindent
{\it $($i$)$}}
For every $\varepsilon>0$ the set $\Omega_\varepsilon(\xi;\nu_1,\nu_2)$ is finite.

{\noindent
{\it $($ii$)$}}
The set $\Omega_{\max}(\xi;\nu_1,\nu_2)$ is finite.

\end{theorem}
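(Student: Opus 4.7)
My plan is to prove (i) by a lattice-geometric compactness argument and then reduce (ii) to (i) by showing that small-angle states cannot be extremal. For (i), the first step is to derive a closed form for $\sin\alpha(s)$. The planes $\spann(\tilde\xi,\tilde\nu_i)$ share the line $\r\tilde\xi$ and have normals $n_i=\tilde\xi\times\tilde\nu_i$; the vector triple product identity $(\tilde\xi\times\tilde\nu_1)\times(\tilde\xi\times\tilde\nu_2)=\tilde\xi\cdot\det[\tilde\xi,\tilde\nu_1,\tilde\nu_2]$ yields
$$\sin\alpha(s)=\frac{|\tilde\xi|\cdot|\det[\tilde\xi,\tilde\nu_1,\tilde\nu_2]|}{|\tilde\xi\times\tilde\nu_1|\cdot|\tilde\xi\times\tilde\nu_2|}.$$
Passing from state coordinates to the ambient space via $M=E^{-1}\in\GL(3,\z)$, and using $Mu\times Mv=(\det M)\,M^{-T}(u\times v)$ with $|\det M|=1$, one rewrites this as
$$\sin\alpha(s)=\frac{D\,|M\xi|}{|M^{-T}w_1|\cdot|M^{-T}w_2|},\qquad w_i=\xi\times\nu_i,\;D=|\det[\xi,\nu_1,\nu_2]|,$$
where $D>0$ and $\spann(w_1,w_2)=\xi^\perp$.

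The compactness step for (i) then reads as follows. The condition $\sin\alpha(s)>\varepsilon$ forces both $|M^{-T}w_1|$ and $|M^{-T}w_2|$ to be small, which means the singular structure of $M$ must stretch the $\xi$-direction and crush $\xi^\perp$; since $|\det M|=1$, this singles out a very thin set of candidates in $\GL(3,\z)$. These candidates are precisely the ``$\xi$-aligned'' matrices, which form Dirichlet-orbits of bounded complexity. Because the Dirichlet action leaves the state unchanged (the third-coordinate normalisations absorb the eigenvalue factors), each state corresponds to a single Dirichlet-orbit; the discreteness of Klein's sail then confines the orbits contributing $\sin\alpha>\varepsilon$ to finitely many, proving (i).

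For (ii), the natural strategy is to find $\varepsilon_0>0$ depending only on $(\xi,\nu_1,\nu_2)$ such that $\Omega_{\max}(\xi;\nu_1,\nu_2)\subseteq\Omega_{\varepsilon_0}(\xi;\nu_1,\nu_2)$, and then apply~(i). This amounts to the contrapositive statement: if $\sin\alpha(s)<\varepsilon_0$, then some admissible $M\in\M_\xi$ satisfies $\sin\alpha(M(s))\ge\sin\alpha(s)$, so by the maximality built into $\Phi$ one gets $\sin^2\alpha(\Phi(s))\ge\sin^2\alpha(s)$ and $s\notin\Omega_{\max}$. The candidate $M$ is dictated by the near-flat structure of the state: when $W$ is admissible (i.e.\ $0<x-y<z$) it is the natural choice, since $W$ moves $\tilde\xi$ towards the main diagonal and reshapes the normals favourably, while otherwise a $V_{\alpha,\beta;\gamma}$ saturating the floor inequalities of the proposition in Subsection~\ref{Admissible JP-transformations} plays the analogous role.

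The main obstacle is the analytic verification of the angle-recovery inequality $\sin\alpha(M(s))\ge\sin\alpha(s)$ in the near-flat regime. Because $\Phi$ maximises $\sin\alpha(M(s))$ over all of $\M_\xi$, it suffices to exhibit \emph{one} competitive $M$, but the correct choice depends delicately on which coordinate ratios of $\xi,\nu_1,\nu_2$ are close to integers. Unpacking the inequality through the explicit formula for $\sin\alpha$ produces a polynomial inequality of moderate degree in the six state coordinates, and in the boundary zones where two candidates perform comparably a careful case split becomes unavoidable; the introduction's reference to MAPLE2020 strongly suggests that this last part requires computer-assisted verification rather than a single closed-form argument.
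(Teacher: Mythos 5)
Your proposal captures the right architecture for $(i)$ --- reducing to the Dirichlet group $\Xi_+$ and the double-periodicity of Klein's sail --- and the formula you derive for $\sin\alpha$ via the cross-product identity is exactly the one the paper uses (Lemmas~\ref{area-bounded-alpha-lemma-1}--\ref{area-bounded-alpha-lemma-3}, Remark~\ref{formulaF}). But the central step is asserted rather than proved. You write that the ``discreteness of Klein's sail confines the orbits contributing $\sin\alpha>\varepsilon$ to finitely many,'' yet this confinement is exactly what must be shown, and it is not a soft compactness fact. In particular you conflate Dirichlet orbits of \emph{planes} with Dirichlet orbits of \emph{bases}: a state corresponds (essentially bijectively) to a Dirichlet orbit of separating bases, and within each orbit of base planes there are a priori infinitely many bases. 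The paper bridges this with a chain of quantitative estimates: Proposition~\ref{area-bounded-alpha} converts $\sin\alpha(s)>\varepsilon$ into a lower bound $\vol(\pyr)>\varepsilon/12$; Proposition~\ref{finiteness-unit-distance-planes-2} (which is where sail finiteness enters) gives finitely many base planes with this lower volume bound up to $\Xi_+$; Proposition~\ref{frozen-vertex}$($ii$)$ provides a uniform \emph{upper} bound on such volumes; and then Propositions~\ref{bounded-coords} and~\ref{bounded-triangle} show that, for a fixed base plane, bounded volume plus $\sin\alpha>\varepsilon$ force the triangle coordinates to be bounded, hence only finitely many bases. Your ``stretch $\xi$, crush $\xi^\perp$, $\det=1$'' heuristic points in the right direction but does not by itself preclude infinitely many bases per orbit; the missing ingredient is precisely the upper volume bound of Proposition~\ref{frozen-vertex}$($ii$)$ combined with the bounded-coordinates argument.

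For $(ii)$, your reduction --- find $\varepsilon_0$ with $\Omega_{\max}\subseteq\Omega_{\varepsilon_0}$ and apply $(i)$ --- is a legitimate alternative to the paper's route, which instead parametrizes nearness by $\min(|p_2-p_1|,|q_2-q_1|)$ (the axis-crossings of the sides $\tilde\xi\tilde\nu_i$) and proves finiteness of $\Omega_{\max}$ directly through Theorem~\ref{greater e}. The two formulations are morally equivalent, and your observation that the proof must exhibit a single competitive admissible $M$ in the small-angle regime, with the choice depending delicately on coordinate ratios, correctly predicts the 19-case decomposition and computer-assisted verification of Sections~\ref{teor-v}--\ref{theor-iii-iv}. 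You are candid that you do not supply this content; that is the bulk of the actual proof of $(ii)$, so the proposal for this part is a plan rather than a proof.
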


\begin{proof}
Item~$($i$)$ is proven in Subsection~\ref{Theorem-i}.
Item~$($ii$)$ is shown in Section~\ref{Theorem-ii}.
\end{proof}

\begin{corollary}\label{period-c1}
Let $A\in GL(3,\z)$ be a matrix with irreducible over $\q$ characteristic polynomial
having three real roots.
Let $(\xi,\nu_1,\nu_2)$ be its eigenvalues.
Suppose that the coordinate basis is separating.
Then sequence of transformations $\Phi^n(\xi;\nu_1,\nu_2)$ is a eventually periodic.
\end{corollary}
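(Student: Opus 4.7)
The plan is to combine the two finiteness statements of Theorem~\ref{eventually-periodic} in the standard way. Write $s_n=\Phi^n(s_0)$ with $s_0=(\xi,\nu_1,\nu_2)$. The first step is to verify by induction on $n$ that every $s_n$ is again a separating $\xi$-state with $\sin\alpha(s_n)>0$. Indeed, by definition $\Phi=T\circ M$, where $M\in\M_\xi$ is an admissible JP-transformation and $T$ is a basis transposition; item~(iv) of the proposition on JP-transformations in Subsection~\ref{Admissible JP-transformations} guarantees that $M(E)$ remains separating for $(\xi,\nu_1,\nu_2)$, $T$ obviously preserves separability, and both are integer basis changes, so the $\xi$-state structure is preserved. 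Positivity of $\sin\alpha(s_n)$ follows because $\xi,\nu_1,\nu_2$ are linearly independent eigenvectors of $A$ for three distinct eigenvalues, so the planes $\spann(\xi,\nu_1)$ and $\spann(\xi,\nu_2)$ are genuinely distinct for every representation of the state.

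Next I would apply Theorem~\ref{eventually-periodic}(ii). Since $\Omega_{\max}(\xi;\nu_1,\nu_2)$ is finite and each index $n$ with $s_n\in\Omega_{\max}$ corresponds to a strict drop $\sin^2\alpha(s_n)>\sin^2\alpha(s_{n+1})$, there exists $N$ beyond which no strict drop occurs, that is $\sin\alpha(s_n)\le\sin\alpha(s_{n+1})$ for all $n\ge N$. Setting $\varepsilon_0:=\tfrac12\sin\alpha(s_N)$, the previous paragraph gives $\varepsilon_0>0$, and for every $n\ge N$ the monotonicity yields $\sin\alpha(s_n)\ge\sin\alpha(s_N)>\varepsilon_0$, so $s_n\in\Omega_{\varepsilon_0}(\xi;\nu_1,\nu_2)$.

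Finally, by Theorem~\ref{eventually-periodic}(i) the set $\Omega_{\varepsilon_0}(\xi;\nu_1,\nu_2)$ is finite. Hence the tail $\{s_n:n\ge N\}$ is contained in a finite set, and because $\Phi$ is deterministic, the pigeonhole principle produces indices $N\le k<m$ with $s_k=s_m$, which forces $s_{k+j}=s_{m+j}$ for every $j\ge 0$. This is exactly eventual periodicity of the orbit $\Phi^n(\xi;\nu_1,\nu_2)$.

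I do not anticipate any serious obstacle: all the technical content is packaged inside Theorem~\ref{eventually-periodic}, and the argument above is a short deterministic dynamics argument combining the two items with pigeonhole. The only points that require care are the preservation of the separating $\xi$-state property under $\Phi$ and the strict positivity of $\sin\alpha$ along the orbit, both of which follow at once from the definitions and the linear independence of the three eigenvectors of $A$.
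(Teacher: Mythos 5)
Your argument follows exactly the same strategy as the paper's own proof: first note that $\Phi$ preserves the separating $\xi$-state structure, then use Theorem~\ref{eventually-periodic}$($ii$)$ to argue that beyond some index the descent steps stop, and finally use Theorem~\ref{eventually-periodic}$($i$)$ to confine the tail of the orbit to a finite set and conclude by pigeonhole plus determinism. Your first paragraph, which verifies preservation of the separating $\xi$-state property and the strict positivity of $\sin\alpha$ along the orbit (the latter from the linear independence of the three eigenvectors of $A$), is a useful explicit addition that the paper only partially spells out, and it is what makes the choice $\varepsilon_0=\tfrac12\sin\alpha(s_N)>0$ legitimate.

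However, the key inference in your second paragraph is not valid as written: finiteness of $\Omega_{\max}(\xi;\nu_1,\nu_2)$ as a set of states does not by itself bound the number of indices $n$ with $s_n\in\Omega_{\max}$. A deterministic orbit can revisit a finite set infinitely often --- indeed this happens precisely when the orbit is eventually periodic with a descent step inside its period --- so "there exists $N$ beyond which no strict drop occurs" does not follow from finiteness alone. The gap is harmless and short to fill, and you should fill it by a case split: if $\Omega_{\max}$ is visited infinitely many times, then pigeonhole on the finite set $\Omega_{\max}$ already yields indices $k<m$ with $s_k=s_m$, and determinism of $\Phi$ gives eventual periodicity at once; if $\Omega_{\max}$ is visited only finitely many times, your $N$ exists and the rest of your argument goes through unchanged. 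It is worth noting that the paper's own proof contains the same unjustified jump ("Since $\Omega_{\max}$ is finite ... we have only [finitely] many descend steps"), so you are reproducing the published argument faithfully, but a careful write-up should make the case split explicit.
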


\begin{proof}
First of all, note that $\Phi(\xi;\nu_1,\nu_2)$ preserves the property of a basis to be separating.
Since $\Omega_\varepsilon(\xi;\nu_1,\nu_2)$ is finite for every $\varepsilon>0$ (Theorem~\ref{eventually-periodic}$($i$)$),
we reach the set $\Omega_{\max}(\xi;\nu_1,\nu_2)$ in finitely many steps.

Each time we reach $\Omega_{\max}(\xi;\nu_1,\nu_2)$ we ``descend'', i.e., reduce the value of $\sin\alpha$ on the next step.
Since $\Omega_{\max}(\xi;\nu_1,\nu_2)$ is finite (Theorem~\ref{eventually-periodic}$($ii$)$), we have only many descend steps.

Therefore, we have periodicity of the output as our algorithm is deterministic.
\end{proof}

\begin{definition}
We say that three vectors in $\r^3$ are in {\it $\z$-general position}
if any two of them spans a plane that does not have non-zero integer vectors in it.
\end{definition}

\begin{remark}
Note that triples of cubic conjugate vectors are always in $\z$-general position.
\end{remark}

Finally the main result of this  paper can be formulate as follows.

\begin{theorem}\label{eventually-periodic-algorithm}
Consider a triple $(\xi,\nu_1,\nu_2)$ of linearly independent vectors in $\z$-general position in $\r^3$.
Then the $\sin^2$-algorithm is eventually periodic for $(\xi,\nu_1,\nu_2)$
if and only if $(\xi,\nu_1,\nu_2)$ is a triple of conjugate cubic vectors.
\end{theorem}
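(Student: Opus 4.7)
My plan is to prove the equivalence by handling each direction separately.

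For the ``if'' direction, the strategy is to reduce to Corollary~\ref{period-c1}. If $(\xi,\nu_1,\nu_2)$ is a triple of conjugate cubic vectors in $\r^3$, then since all three are real the underlying cubic number field is totally-real. By Remark~\ref{cd} there exists a matrix $A\in\SL(3,\z)$ with characteristic polynomial irreducible over $\q$ and three real roots, whose eigenbasis (up to scaling) is $(\xi,\nu_1,\nu_2)$. Preliminary Stages~1 and~2 terminate---the second by Proposition~\ref{ConstSetB}---and place the coordinate basis into separating form. Corollary~\ref{period-c1} then delivers eventual periodicity of the main stage.

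For the ``only if'' direction, the plan is to extract a matrix from the periodic dynamics and analyse its spectrum. Suppose the $\sin^2$-algorithm is eventually periodic with period $p$, and let $B$ denote the composition of the $p$ transformations (admissible JP-transformations and coordinate permutations) that occur during one full period. Each factor has integer entries and determinant $\pm 1$, so $B\in\GL(3,\z)$. Since the projective state returns to itself after one period, each of $\xi,\nu_1,\nu_2$ must be an eigenvector of $B$ with a real eigenvalue; together they form an eigenbasis for $B$.

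Next I would invoke the $\z$-general position hypothesis to rule out any rationality in the spectrum of $B$. If some eigenvalue $\lambda_i$ were rational, then $\ker(B-\lambda_i I)$ would be a rational line, placing the corresponding eigenvector in a rational direction; the plane spanned by it and either of the other two would then contain non-zero integer multiples of that direction, contradicting $\z$-general position. A repeated real eigenvalue with geometric multiplicity two is ruled out similarly: it would have to be rational (since the $\q$-factorisation of the integral cubic characteristic polynomial would have a double linear factor), and its two-dimensional rational eigenspace would again violate $\z$-general position. Hence the characteristic polynomial of $B$ is an integer cubic with no rational root, so it is irreducible over $\q$; and since all three eigenvectors are real, all three of its roots are real. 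Remark~\ref{cd} then identifies $(\xi,\nu_1,\nu_2)$ as a triple of conjugate cubic vectors.

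The main obstacle I anticipate is making the non-degeneracy step rigorous: reading off $B$ from the periodic output and verifying that it really acts on $(\xi,\nu_1,\nu_2)$ by a genuinely non-trivial spectrum rather than reducing to a degenerate identity. This requires careful bookkeeping of how admissible JP-transformations act on coordinates versus on the basis, and of the normalisation implicit in the $\xi$-state. Once this is clarified, the algebraic conclusion is essentially forced by the $\z$-general position hypothesis.
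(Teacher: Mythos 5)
Your proposal follows essentially the same strategy as the paper: the forward implication is reduced to Corollary~\ref{period-c1}, and the converse extracts an integer matrix from the periodic output and deduces irreducibility of its characteristic polynomial from the $\z$-general position hypothesis. The structure is correct, but there is a genuine gap in the converse that you flag in your final paragraph without resolving.

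The gap is precisely the degenerate case you yourself anticipate. Your rationality argument disposes of rational eigenvalues when the corresponding eigenspace has dimension $1$ or $2$: a rational eigenline forces one of the three directions to be rational, and a rational eigenplane forces one of the three planes to contain a two-dimensional sublattice. But when the eigenspace is all of $\r^3$, i.e.\ when the period matrix $B$ is a scalar $\pm I$ (the only possibility in $\GL(3,\z)$), neither argument applies: the eigenspace is rational and full, yet the planes spanned by pairs of $\xi,\nu_1,\nu_2$ need not be rational, so $\z$-general position yields no contradiction. You cannot conclude irreducibility without closing this case. The paper closes it by a dynamical, not an arithmetic, observation: the composition of transformations over one period maps the positive octant into a \emph{proper} subset of itself (by item~(iii) of the proposition on JP-transformations and the fact that the state genuinely changes), whereas $\pm I$ maps the octant onto itself. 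Hence the period matrix cannot be a scalar, the characteristic polynomial has three distinct real roots, no rational root, and is therefore irreducible.

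One further bookkeeping remark: as stated, the vectors fixed projectively by your period matrix $B$ are the pre-period images $M_1^{-1}\xi, M_1^{-1}\nu_1, M_1^{-1}\nu_2$, not $\xi,\nu_1,\nu_2$ themselves. The paper therefore works with the conjugate $M = M_1 M_2 M_1^{-1}$, which has $(\xi,\nu_1,\nu_2)$ as an eigenbasis. This is harmless for your argument because irreducibility of the characteristic polynomial is invariant under $\GL(3,\z)$-conjugation and the notion of conjugate cubic vectors is preserved by integer base change, but it should be made explicit so that the $\z$-general position hypothesis is applied to the correct triple.
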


\begin{proof}
After the preliminary stages we arrive to a separating basis for $(\xi,\nu_1,\nu_2)$.
The finiteness of steps in the  preliminary stages follows from $\z$-general position for $(\xi,\nu_1,\nu_2)$.
By Dirichlet unit's theorem,
there exists a $GL(3,\z)$-matrix with irreducible over $\q$ characteristic polynomial
having three real roots
(see, e.g., in Chapter 17~\cite{KarpenkovGCF2013}).
Then by Corollary~\ref{period-c1}
the algorithm is eventually periodic.

\vspace{2mm}

The converse statement is straightforward.
Let  the $\sin^2$-algorithm be eventually periodic for $(\xi,\nu_1,\nu_2)$.
Since the triple $(\xi,\nu_1,\nu_2)$  is in $\z$-general position,
we reach the main stage in finitely many steps of the preliminary stages.
Let $M_1$ and $M_2$ be the products of all the matrices for the transformations
in the pre-period and the period respectively.
Then the matrix
$$
M=M_1M_2M_1^{-1}
$$
will have the triple $(\xi,\nu_1,\nu_2)$ as an eigenbasis.

It remains to check that the characteristic polynomial of $M$ is irreducible over $\q$.
Assume that the characteristic polynomial of $M$ is reducible.
Then $M$ has a  rational eignevector.
Now if $M$ has more than 1 distinct eigenvalues, then the irreducibility
follows directly from the fact that the triple $(\xi,\nu_1,\nu_2)$  is in $\z$-general position.
Hence $M$ has to be proportional to the identity matrix $I$, and, therefore, $M_2$
is proportional to $I$ as well.
Finally, by construction, $M_2$ maps the positive octant to
a proper subset of the positive octant. Hence $M_2$ is not multiple to $I$.
We come to the contradiction. Therefore,
the characteristic polynomial of $M$ is irreducible over $\q$.
\end{proof}

\begin{remark}
If we remove the condition of $\z$-general position,
then for some triples the algorithm of the preliminary stage may generate an infinite
output, which can be periodic if the corresponding continued fraction is periodic.
In fact this issue can be resolved by modifying the algorithm of the preliminary step.
Also as we conjecture below (see Conjecture~\ref{conj-2}),
one might skip the preliminary stage of the algorithm.
We omit further discussions on vectors that are not in $\z$-general position
as this case is not of interest for us.
\end{remark}

\section{Cubic Klein's polyhedra and corresponding Dirichlet groups}
\label{Basic notions and definitions}

In this section we collect some preliminary notions and definitions.
In Subsection~\ref{Integer Geometry} we briefly recall some basic notions from integer geometry.
Further in Subsection~\ref{Klein sails and three-dimensional continued fractions}
we introduce Klein's sails, which is a very useful geometrical generalization of
regular continued fractions to multidimensional case in terms of polyhedral surfaces.
In Subsection~\ref{Positive Dirichlet group and its action on cubic sails}
we discuss the notion of positive Dirichlet groups and its relation
to double-periodicity of Klein's sails.
We conclude in Subsection~\ref{2ways}
with a brief discussion on how to define an algebraic directions in order to generate the input of the algorithm.

\subsection{Basic notions of integer geometry}\label{Integer Geometry}

Let us start with several notions of integer geometry.
(see~\cite{KarpenkovGCF2013} for further reference).

\vspace{2mm}

A point is called {\it integer} if all its coordinates are integers.
A segment (or a vector) is called {\it integer}
if the coordinates of its endpoints are integer.
A {\it polygon} is called integer if its vertices are all integer.
Finally, a plane (a line) is {\it integer} if its intersection with $\z^3$
is a two-dimensional lattice (respectively, a one-dimensional lattice).

\vspace{2mm}

Further we use one important invariant of integer geometry: the integer distance.

\begin{definition}
Let $\pi$ be an integer plane and let $A$ be an integer  point in the complement to the plane $\pi$.
The index of the sublattice generated by all vectors with one endpoint being $A$ and the other contained in $\pi$
in the lattice of all integer vectors of the span of $A$ and $\pi$ is called
the {\it integer distance} between $A$ and $\pi$.
It is denoted by $\id(A,\pi)$.
\end{definition}

\begin{remark}
Note that the integer distance from a point $A$ to a plane $\pi$
coincides with  the number of integer planes parallel to $\pi$ that are
between $A$ and $\pi$.
Let $B$ be a point of $\pi$.
Then such planes split the segment $AB$ into $k$ segments of equal length, where $k$
is equal to the integer distance between  $A$ and $\pi$.
\end{remark}

Let us discuss also the notion of integer volume and its relation to the Euclidean volume.

\begin{remark}
By definition, the {\it integer volume} of an integer tetrahedron $T$
is the index of the lattice generated by its edges in the lattice $\z^3$.

\vspace{1mm}

{\noindent
Note that the integer volume of $T$ equals to $1/6$ of the Euclidean volume of $T$.
By that reason we will use a more usual notion of the Euclidean volume
keeping in mind its integer invariance. }

\vspace{1mm}

{\noindent
We denote the Euclidean volume of a polytope $P$ by $\vol(P)$.}
\end{remark}

\subsection{Klein's sails and three-dimensional continued fractions}
\label{Klein sails and three-dimensional continued fractions}
Later in the proofs we substantially use a geometric construction of polyhedral multidimensional continued fractions  that was introduced by F.~Klein in~\cite{Klein1895} and~\cite{Klein1896}. (For general information on Klein polyhedra we refer to~\cite{Arnold2002} and~\cite{KarpenkovGCF2013}.)

\vspace{2mm}

First of all we recall that a {\it simplicial cone} in $\r^3$ is the convex hull of $3$ rays with the same vertex and linearly
independent directions.

\begin{definition}
Consider an integer simplicial three-dimensional cone $C$ with center at the origin in $\r^3$.
\begin{itemize}
\item
An {\it A-hull} of the cone $C$ is the convex
hull of all integer points lying in the closure of $C$ except the origin.
We denote this set by $\Ahull(C)$.

\item
The boundary of the set $\Ahull(C)$ is called
the {\it sail} of this cone.
Denote it by $K(C)$.
\end{itemize}
\end{definition}

\begin{definition}
Consider three planes of $\r^{3}$ intersecting at a single point at the origin.
The complement to the union of these planes consists of $8$ open simplicial
cones $C_1,\ldots, C_8$.
The set of all sails for all the cones $C_1,\ldots,C_8$
is called the {\it three-dimensional continued fraction} ({\it in the sense of Klein})
associated to the given three planes in $\r^3$.
\end{definition}

\subsection{Positive Dirichlet group and its action on cubic sails}
\label{Positive Dirichlet group and its action on cubic sails}

Positive Dirichlet groups are very natural groups of matrices acting on sails of cubic cones.
Their presence is the reason for the double periodicity of Klein's sails.
In this subsection we give the main definitions related to Dirichlet groups.

\vspace{2mm}

Consider a matrix $A\in\GL(n,\z)$ whose characteristic
polynomial is irreducible over $\q$.

\begin{definition}
By $\Gamma(A)$ we denote the set of all integer matrices commuting with $A$.

\vspace{1mm}

\begin{itemize}
\item
The {\it Dirichlet group} $\Xi(A)$ is the subset of invertible matrices in $\Gamma(A)$.

\vspace{1mm}

\item
The {\it positive Dirichlet group} $\Xi_+ (A)$ is the subgroup of $\Xi(A)$
containing of all matrices with positive real eigenvalues.
\end{itemize}
\end{definition}

In case of $A\in \GL(3,\z)$ with distinct real (irrational) eigenvalues,
the group  $\Xi_+ (A)$ is isomorphic to $\z^{2}$.

\begin{definition}
Consider $A\in\GL(3,\z)$ whose eigenvalues are all distinct and real.
Take three planes passing through the origin
that are spanned by pairs of linearly independent eigenvectors of $A$.
These planes define the three-dimensional continued fraction that is called the
{\it multidimensional continued fraction associated to $A$}.
The sails of the corresponding cones are said to be {\it totally-real cubic cones}.

\vspace{1mm}

{\noindent All 8 cones defined by these three planes are said to be {\it cubic}.}
\end{definition}

\begin{remark}
From general theory of multidimensional continued fraction we have
following properties for cubic sails:
\begin{itemize}
\item A cubic sail is a polyhedral surface homeomorphic to $\r^2$.
Every face of that surface is bounded, every vertex of this sail is incident to
finitely many edges and faces.

\item The group $\Xi_+ (A)$ acts transitively on cubic sails for $A$,
sending vertices to vertices, edges to edges, and faces to faces.

\item There are finitely many orbits of vertices, edges and faces of every sail with respect to the action of $\Xi_+ (A)$.
\end{itemize}

Here we would like to note that in the higher dimensional case and
in the non-totally-real case similar constructions exist.
For further information we refer to~\cite{Arnold2002} and~\cite{KarpenkovGCF2013}.
\end{remark}

\section{Further lattice geometry used in the proofs}
\label{Lattice geometry used in the proofs}

In this section we prove several statement from lattice geometry
that are crucial for the proof. We also prove Theorem~\ref{eventually-periodic}$($i$)$
here.

\vspace{1mm}

In Subsection~\ref{Finiteness of planes splitting a given pyramid}
we establish finiteness of integer planes at fixed integer distance
splitting the cone into two parts one of which contains either a given pyramid
(Proposition~\ref{finiteness-planes})
or a given vertex of Klein's sail (Corollary~\ref{finiteness-for-faces}).
Further in Subsection~\ref{Pyramids of volumes bounded from below}
we prove uniform boundedness of volumes of sectional pyramids (Proposition~\ref{frozen-vertex})
and the areas of the corresponding sections (Corollary~\ref{S(T)-finite}) for algebraic cones.
In addition we prove here that the number of unit planes at unit integer distance to the origin that are cutting a pyramid of a separated from zero volume is finite
(Proposition~\ref{finiteness-unit-distance-planes-2}).
Further in Subsection~\ref{Lower bounds for areas and volumes}
we give a lover bounds on sectional area and volume via $\sin$ of the angle between
invariant planes (Proposition~\ref{area-bounded-alpha}).
In Subsection~\ref{Triangles with bounded coordinates}
we show finiteness for triangles in the plane with bounded coordinates (Proposition~\ref{bounded-triangle})
and discuss the estimates on coordinates for certain cases of states (Proposition~\ref{bounded-coords}).
Finally in Subsection~\ref{Theorem-i} we deduce the proof of Theorem~\ref{eventually-periodic}$($i$)$.

\subsection{Finiteness of planes splitting a given pyramid}
\label{Finiteness of planes splitting a given pyramid}

Despite the situation in the Euclidean geometry we have the following finiteness statement.

\begin{proposition}\label{finiteness-planes}
Let $P$ be an integer pyramid with vertex at the origin and with a triangular base.
Let $C$ be the integer cone defined by $P$ with vertex at the origin.
Finally, let $d$ be some positive real number.
Then there are only finitely many integer planes at integer distance at most $d$ from the origin $O$
dividing $C$ into two nonempty parts such that
one of these parts is bounded and contains $P$.
\end{proposition}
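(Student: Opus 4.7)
The plan is to parametrize the relevant integer planes $\pi$ by their primitive defining linear form. I would begin by writing any integer plane not through the origin uniquely (up to sign) as $\pi=\{x\in\r^3:f(x)=c\}$, with $f:\z^3\to\z$ a primitive integer linear form and $c$ a positive integer; a short verification (change coordinates so that $f$ becomes the projection onto the last axis) shows that $\id(O,\pi)=c$, so the hypothesis $\id(O,\pi)\le d$ forces $c\in\{1,\ldots,\lfloor d\rfloor\}$.

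Let $v_1,v_2,v_3$ be the primitive integer direction vectors of the three edges of the simplicial cone $C$, so that $C=\{t_1v_1+t_2v_2+t_3v_3:t_i\ge 0\}$, and let $b_i=\alpha_i v_i$ (with $\alpha_i\in\z_{>0}$) be the vertices of the triangular base of $P$. Fixing the sign of $f$ so that $f>0$ on the interior of $P$, I would next observe that boundedness of the component of $C\setminus\pi$ containing $P$ forces $f(v_i)\ge 1$ for every $i$: indeed, if $f(v_i)\le 0$ for some $i$, the entire ray $\{tv_i:t\ge 0\}$ lies in the half-space $\{f\le c\}$ and provides an unbounded direction on the $P$-side of $\pi$. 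The requirement that this bounded component contain each $b_i$ then reads $\alpha_i f(v_i)=f(b_i)\le c$, yielding
$$
1\le f(v_i)\le\frac{c}{\alpha_i}\le\frac{d}{\alpha_i},\qquad i=1,2,3.
$$

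To finish, I would use that $v_1,v_2,v_3$ form a basis of $\r^3$, so any linear form is determined by its three values $f(v_1),f(v_2),f(v_3)$; the inequalities above confine this triple to the finite set $\prod_{i=1}^{3}\{1,\ldots,\lfloor d/\alpha_i\rfloor\}$. Hence only finitely many primitive forms $f$ can arise, and for each such $f$ the parameter $c$ varies over the finite set $\{1,\ldots,\lfloor d\rfloor\}$; this produces only finitely many planes $\pi$, as required. The argument is essentially bookkeeping, and the only step requiring any care is the boundedness-to-positivity implication in the middle paragraph, which is immediate from the simplicial description of $C$.
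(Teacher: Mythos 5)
Your proof is correct, and it takes a genuinely different route from the paper's. The paper first establishes an auxiliary statement (Lemma~\ref{finiteness-planes-lemma}) for the coordinate cone and an $\varepsilon$-dilate of the unit tetrahedron, bounding the numerators and denominators of the axis intercepts $q_i=m_i/n_i$ of $\pi$ via a two-case analysis ($q_i>1$ versus $\varepsilon<q_i\le 1$), and then reduces the general pyramid $P$ to this model by an integer-affine change of coordinates together with a dilation trick (choosing $k$ so that $kP$ contains a lattice basis, then passing to $\tfrac1k T'\subset P$). Your argument works directly in the given cone: you parametrize integer planes by a primitive linear form $f$ and its level $c$, observe that $\id(O,\pi)=c$, use boundedness of the $P$-side of $\pi$ to force $f(v_i)\ge 1$ on the three edge directions, use the containment $b_i=\alpha_i v_i\in\pyr(C,\pi)$ to force $f(v_i)\le c/\alpha_i\le d/\alpha_i$, and finish because a linear form is determined by its values on the basis $(v_1,v_2,v_3)$. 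This bypasses both the auxiliary lemma and the affine/dilation reduction; what it buys is a shorter, more conceptual argument, at the price of invoking the identification $\id(O,\{f=c\})=c$ for primitive $f$, which is precisely the fact the paper re-derives implicitly through its intercept bookkeeping. One small wording point: the sign normalization is stated more safely as choosing $f$ so that $c>0$ (equivalently, so that the bounded component containing $O$ lies in $\{f<c\}$); the phrase ``$f>0$ on the interior of $P$'' tacitly assumes $f$ does not change sign there, which only follows once $f(v_i)\ge 1$ has been established. This is easily repaired and does not affect the argument.
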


We start the proof with the following statement.

\begin{lemma}\label{finiteness-planes-lemma}
Let $C$ be the coordinate cone,
let $T$ be the unit coordinate tetrahedron,
and  let $d$ be some positive integer and $\varepsilon$ be some positive real numbers.
Then there are only finitely many integer planes at integer distance at most $d$ from the origin $O$
dividing $C$ into two nonempty parts such that
one of these parts is bounded and contains $\varepsilon$-dilate of the tetrahedron $T$ $($see Figure~\ref{p.2}$)$.
\end{lemma}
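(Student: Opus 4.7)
The plan is to parameterize each candidate plane by its primitive integer equation and then bound all the parameters. Every integer plane admits a unique (up to overall sign) representation
$$
\pi \colon \alpha x + \beta y + \gamma z = \delta
$$
with $\alpha, \beta, \gamma, \delta \in \z$ and $\gcd(|\alpha|,|\beta|,|\gamma|) = 1$. From the lattice-index definition of $\id(\cdot,\cdot)$ one has $\id(O,\pi) = |\delta|$, so the integer-distance hypothesis becomes $|\delta|\le d$.

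Next I would translate the geometric conditions. For $\pi$ to split $C$ (the positive octant) into two nonempty parts with the bounded piece containing the origin, $\pi$ must meet each positive coordinate semi-axis at a strictly positive value; after a possible sign flip of the equation this is equivalent to $\alpha,\beta,\gamma>0$ and $\delta>0$, with the bounded piece being $\{(x,y,z)\in C:\alpha x+\beta y+\gamma z\le \delta\}$ and the complementary unbounded piece automatically nonempty.

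To bound the coefficients I would substitute the vertices $\varepsilon e_1,\varepsilon e_2,\varepsilon e_3$ of the $\varepsilon$-dilate of $T$ into the defining inequality, obtaining
$$
\varepsilon\alpha\le \delta,\qquad \varepsilon\beta\le \delta,\qquad \varepsilon\gamma\le \delta.
$$
Together with $\delta\le d$ this gives $\alpha,\beta,\gamma\le d/\varepsilon$, so the positive integer quadruple $(\alpha,\beta,\gamma,\delta)$ ranges over the finite set $\{1,\dots,\lfloor d/\varepsilon\rfloor\}^3\times\{1,\dots,d\}$, and each choice determines at most one plane. Finiteness follows.

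The main (minor) obstacle is the identity $\id(O,\pi)=|\delta|$. I would verify it by choosing a $\z$-basis $(u_1,u_2)$ of the rank-$2$ sublattice $\{v\in\z^3:\alpha v_1+\beta v_2+\gamma v_3=0\}$ and a vector $w\in\z^3$ with $\alpha w_1+\beta w_2+\gamma w_3=1$ (which exists since $\gcd(\alpha,\beta,\gamma)=1$); then $(u_1,u_2,w)$ is a $\z$-basis of $\z^3$, and the sublattice generated by integer points of $\pi$ equals $\z u_1\oplus\z u_2\oplus\z(\delta w)$, of index $|\delta|$ in $\z^3$. With this identification the bounds above close the argument.
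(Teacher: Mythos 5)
Your proof is correct but proceeds by a genuinely different, and arguably cleaner, parameterization than the paper's. The paper describes the plane by its rational intercepts $q_i = m_i/n_i$ on the three coordinate axes and, for each axis separately, runs a two-case argument (on whether $q_i>1$ or $\varepsilon<q_i\le 1$) counting parallel integer planes along a unit coordinate segment to bound $m_i$ and $n_i$ by $d$ and $d/\varepsilon$. You instead describe the plane by its primitive integer normal form $\alpha x+\beta y+\gamma z=\delta$, establish the identity $\id(O,\pi)=|\delta|$ once via the splitting $\z^3=\z u_1\oplus\z u_2\oplus\z w$, and read off the bounds $\delta\le d$ and $\alpha,\beta,\gamma\le d/\varepsilon$ in one stroke by evaluating the linear form at the vertices of $\varepsilon T$. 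The two parameterizations are linked by $q_i$ being $\delta$ divided by the $i$-th coefficient, so you are extracting the same arithmetic information, but the coefficient-vector description turns the integer-distance computation into a one-line lattice-index fact and dispenses with the paper's per-axis case split. One detail worth making explicit: since the integer plane contains integer points, $\gcd(\alpha,\beta,\gamma)$ automatically divides the offset, so the primitive representation with integer $\delta$ exists for every integer plane; you rely on this tacitly when you write the equation with $\delta\in\z$.
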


\begin{figure}
\centering
\includegraphics{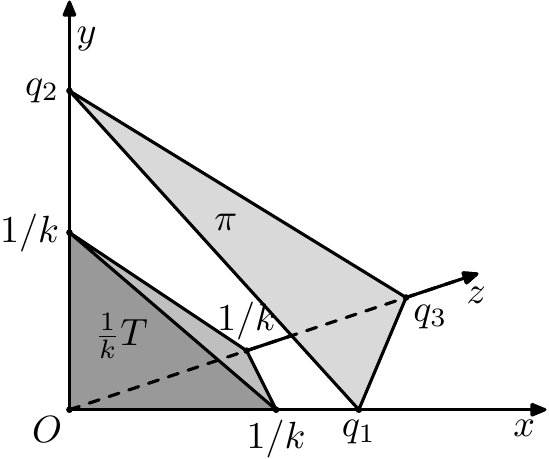}
\caption{An admissible intersection of a plane $\pi$ with the cone $C$.}
\label{p.2}
\end{figure}

\begin{proof}
Let some plane $\pi$ satisfy all the conditions of the lemma.
Assume that the coordinates of intersection of $\pi$ with axes are
$$
q_1=\frac{m_1}{n_1}, \qquad
q_2=\frac{m_2}{n_2}, \quad
\hbox{and}
\quad
q_3=\frac{m_3}{n_3}.
$$

\vspace{1mm}

First of all let us estimate $m_1$ and $n_1$.
Consider two cases for $q_1=m_1/n_1$.

\vspace{1mm}

{
\noindent
{\it Case 1.}}
Let $q_1>1$.
Then the point $(1,0,0)$ divides the segment with endpoints
$(0,0,0)$, $(q_1,0,0)$ in proportion $n_1:m_1$.
Assuming that $n_1$ and $m_1$ are relatively prime, we have at least
$$
n_1+m_1-1
$$
integer planes parallel to $\pi$ that are between $\pi$ and the origin.
Hence $n_1,m_1\le d$.

\vspace{1mm}

{
\noindent
{\it Case 2.}}
Let now $\varepsilon <q_1\le 1$.
Then the point $(q_1,0,0)$ divides the segment with endpoints
$(0,0,0)$, $(1,0,0)$ in proportion $m_1:(n_1-m_1)$.
Assuming that $n_1$ and $m_1$ are relatively prime, we have at least
$
m_1
$
integer planes parallel to $\pi$ that are between $\pi$ and the origin.
Hence $m_1\le d$.
Now we use the fact that
$$
\frac{m_1}{n_1}>\varepsilon
$$
to get
$$
n_1<\frac{m_1}{\varepsilon}<\frac{d}{\varepsilon}.
$$

\vspace{1mm}

For both cases we have
$$
m_1<d \qquad \hbox{and} \qquad n_1<\frac{d}{\varepsilon}.
$$

Similarly, we have $n_i\le d$ and $m_i\le d/\varepsilon$ for $i=2,3$.

\vspace{2mm}

Therefore, we have at most $d^6/\varepsilon^3$ integer planes
satisfying the conditions of the lemma.
\end{proof}

Let us deduce the proof of Proposition~\ref{finiteness-planes}.

\vspace{2mm}

{
\noindent
{\it Proof of Proposition~\ref{finiteness-planes}.}
First of all note that the statement of Lemma~\ref{finiteness-planes-lemma} is invariant under integer affine transformations.
Therefore, it holds for any integer pyramid $T$ whose edges generate the lattice $\z^3$.

\vspace{2mm}

Secondly, there exists an integer $k$ such that the $k$-tuple pyramid $kP$ contains a triple of vectors that generate the basis. Denote the resulting pyramid by $T'$.
Now, note that  $1/k\cdot T'$ is contained in $P$.

It is clear that any integer plane satisfying conditions of this corollary also satisfy them for $1/k\cdot T'$.
By Lemma~\ref{finiteness-planes-lemma} the number of such planes is finite.
Therefore, the number  of planes is finite for $P$ as well.

\qed
}

Let us fix the following notation.

\begin{definition}
Let a plane $\pi$ divide a cone $C$ centered at the origin
onto two non-empty parts one of which is finite.
Then we say that $\pi$ is a {\it base} plane for $C$.
We denote the finite part by $\pyr(C,\pi)$ (see Figure~\ref{p.1}).
\end{definition}

\begin{figure}
 \centering
 \includegraphics{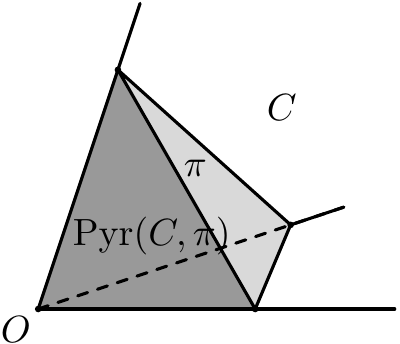}
\caption{A cone $C$ together with one of its base planes $\pi$ and the corresponding pyramid $\pyr(C,\pi)$.}
\label{p.1}
\end{figure}

\begin{corollary}\label{finiteness-for-faces}
Consider a  cone $C$ in $\r^3$ centered at the origin.
Let $v$ be any point in the interior of $C$, and let $d$ be a positive real number.
Then there are only finitely many integer base planes $\pi$ satisfying the following two conditions:
$$
\id(O,\pi)<d \qquad \hbox{and} \qquad v\subset \pyr(C,\pi).
$$
\end{corollary}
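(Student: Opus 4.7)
The plan is to reduce to the case of an integer simplicial subcone of $C$ containing $v$ and then directly bound the primitive normal vector of any admissible plane using the position of $v$ relative to a basis of integer generators. The key point is that writing $v$ with strictly positive coefficients in such a basis converts the integer-distance bound into a simultaneous upper bound on the inner products of the normal with the generators.

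First I would choose three linearly independent primitive integer vectors $g_1,g_2,g_3 \in C$ so that $v$ lies in the open simplicial cone $C'$ they span, with $v = \alpha_1 g_1 + \alpha_2 g_2 + \alpha_3 g_3$ and all $\alpha_i > 0$. This is possible because the interior of $C$ contains a rational neighbourhood of $v$. The crucial observation is that every integer plane $\pi$ satisfying the hypotheses of the corollary for $C$ also satisfies them for $C' \subseteq C$: the bounded cap $\pyr(C,\pi)$ restricts to a bounded cap of $C'$, and since $v$ lies on the apex side of $\pi$ and inside $C'$, one still has $v \in \pyr(C',\pi)$. Thus it suffices to prove the statement for the simplicial integer cone $C'$.

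Assuming $C$ is the simplicial integer cone generated by $g_1,g_2,g_3$, every base plane $\pi$ has equation $\langle n, x\rangle = c$ with primitive $n \in \z^3$, each $\langle n, g_i\rangle$ a positive integer (so that $\pi$ cuts off a bounded piece), and $c = \id(O,\pi)$ a positive integer. The hypotheses then read $c < d$ and $\langle n, v\rangle \leq c$, which combine to give
$$
\alpha_i \langle n, g_i\rangle \;\leq\; \sum_{j=1}^{3}\alpha_j \langle n, g_j\rangle \;=\; \langle n, v\rangle \;\leq\; c \;<\; d,
$$
so each positive integer $\langle n, g_i\rangle$ is strictly below $d/\alpha_i$. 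Since $g_1,g_2,g_3$ form a basis of $\r^3$, the vector $n$ is determined by the triple $(\langle n, g_i\rangle)_{i=1}^{3}$, leaving only finitely many integer choices of $n$, and for each $n$ only finitely many integers $c$ in $[\langle n,v\rangle, d)$. Therefore only finitely many planes $\pi$ arise.

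I expect the main technical point to be the simplicial reduction itself — verifying cleanly that $\pi$ remains a base plane of $C'$ with $v$ in its pyramid. After that the argument reduces to an elementary counting estimate, driven entirely by the positivity of the coefficients $\alpha_i$ of $v$ in the chosen basis of integer generators.
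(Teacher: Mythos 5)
Your proof is correct, and it takes a genuinely different route from the paper's. The paper reduces the corollary to Proposition~\ref{finiteness-planes}: it projects $v$ onto the three edges of $C$ to obtain a pyramid $P_v$ sitting inside every admissible $\pyr(C,\pi)$, and then invokes Proposition~\ref{finiteness-planes}, whose own proof passes through Lemma~\ref{finiteness-planes-lemma} (the coordinate cone with an $\varepsilon$-dilate of the unit simplex) and bounds the numerators and denominators of the rational intercepts $q_i=m_i/n_i$ of the plane with the axes, with a case split according to whether $q_i>1$ or not. Your argument instead works on the dual side: you pick a simplicial rational subcone $C'\subset C$ with primitive integer generators $g_1,g_2,g_3$ and $v=\sum\alpha_i g_i$, $\alpha_i>0$, and then parametrize the candidate planes by their primitive integer normal $n$ and integer level $c=\id(O,\pi)$. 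The single inequality $\alpha_i\langle n,g_i\rangle\le\langle n,v\rangle\le c<d$ immediately pins down each $\langle n,g_i\rangle$, and hence $n$, to finitely many values, with $c$ then ranging over a bounded set. Both proofs share the same reduction-to-a-simplicial-pyramid idea and are ultimately driven by the positivity of the coefficients of $v$, but the dual (normal vector) formulation collapses the chain Proposition~\ref{finiteness-planes} $\to$ Lemma~\ref{finiteness-planes-lemma} into one displayed inequality and avoids the dilation bookkeeping and the numerator/denominator case analysis. The one step worth stating more carefully is the verification that every $\pi$ admissible for $C$ is still a base plane of $C'$ with $v\in\pyr(C',\pi)$: this is true because $v\in C'$ lies on the apex side of $\pi$, while the rays of $C'$ are unbounded and hence cross $\pi$, so $\pi$ splits $C'$ into a nonempty bounded and a nonempty unbounded part; you flagged this yourself and the check is routine.
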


\begin{proof}
Let $v_i$ for $i=1,2,3$ be the projections of $v$ to edges $e_{i}$ of $C$
along the corresponding 2-faces of $C$ not containing $e_i$ for $i=1,2,3$.
Denote
$$
P_v=Ov_1v_2v_2.
$$
The pyramid $P_v$ is included into every pyramid $\pyr(C,\pi)$
satisfying the conditions of the corollary, since $\pi$ is a base plane and
since $v\in \pyr(C,\pi)$.

Now the finiteness of such planes follows directly from Proposition~\ref{finiteness-planes}.
\end{proof}

\subsection{Finiteness and boundedness results for algebraic cones}
\label{Pyramids of volumes bounded from below}

We start with one topological definition.

\begin{definition}
Let $P$ be a polyhedral surface and let $v$ be one of its vertices.
Denote by $\starr(v)$ the union of all edges emanating from $v$.
We call it by a (one-dimensional) {\it star} of $P$ at $v$.
\end{definition}

Now we are ready to formulate the statement on triangles inside $\pyr(C,\pi)$
for $\pi$ at sufficiently large integer distances from the origin.

\begin{proposition}\label{frozen-vertex}
Let $C$ be a totally-real cubic cone in $\r^3$ centered at the origin.
Then  there exist a positive real number $M_1$ such that the following statements hold.

\vspace{1mm}

\noindent{\it $($i$)$}
For every
integer base plane $\pi$ satisfying
$$
\vol(\pyr(C,\pi))>M_1
$$
the pyramid $\pyr(C,\pi)$ contains an integer vertex of the Klein's sail $K(C)$ it its interior.

\vspace{1mm}

\noindent{\it $($ii$)$}
For every integer base plane $\pi$ at the unit integer distance to the origin
we have
$$
\vol\big(\pyr(C,\pi)\big)<M_1.
$$

\end{proposition}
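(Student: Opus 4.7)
The plan is to prove the two items separately, taking $M_1$ to be the maximum of the two constants produced. Item (ii) follows from the non-vanishing of a cubic norm form at integer points, while item (i) relies on the cocompactness of the positive Dirichlet group action on pyramid shapes combined with a Minkowski-style integer point argument.

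For (ii), let $u_1,u_2,u_3$ denote the eigenvector directions of $A$, which span the edges of $C$. A unit-integer-distance plane $\pi$ takes the form $\vec{n}\cdot x=1$ for a primitive integer vector $\vec{n}$ positive on $C$, and meets the $i$-th edge at $t_iu_i$ with $t_i=1/(\vec{n}\cdot u_i)$. Hence
\[
\vol(\pyr(C,\pi))=\frac{|\det(u_1,u_2,u_3)|}{6\,\bigl|\prod_{i=1}^{3}(\vec{n}\cdot u_i)\bigr|}.
\]
The product in the denominator equals, up to a fixed nonzero factor depending only on the eigenbasis, the norm $N_{K/\q}$ of a nonzero element of an order in the associated totally-real cubic field $K$, hence a nonzero rational integer. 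Bounding its absolute value from below by $1$ yields the uniform upper bound $\vol(\pyr(C,\pi))\le |\det(u_1,u_2,u_3)|/6$.

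For (i), I would argue in three steps. First, I normalize the shape using the Dirichlet group: each $g\in\Xi_+(A)$ with eigenvalues $(\lambda_1,\lambda_2,\lambda_3)$ satisfying $\prod\lambda_i=1$ sends $\pi$ to $g^{-T}\pi$ and rescales the edge parameters by $t_i\mapsto\lambda_i t_i$, preserving both $\vol(\pyr)$ and the property of containing a sail vertex in the interior. In the logarithmic coordinates $(\log t_1,\log t_2,\log t_3)$ the action is a rank-$2$ translation lattice inside the plane $\log\lambda_1+\log\lambda_2+\log\lambda_3=0$, whose quotient on each fixed-volume slice is a compact two-torus. Consequently some $g$ brings $\pi$ to a pyramid with $t_i\in[c_1 V^{1/3},c_2 V^{1/3}]$ for constants $c_1,c_2>0$ depending only on $C$, where $V=\vol(\pyr)$. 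Second, with comparable $t_i$'s of order $V^{1/3}$, the normalized simplex has Euclidean inradius bounded below by $\kappa V^{1/3}$ for some $\kappa>0$. Once $V$ exceeds a threshold $V_0$ with $\kappa V_0^{1/3}>\sqrt{3}/2$ (the covering radius of $\z^3$ in $\r^3$), the inscribed ball contains an integer point in the interior of the simplex, necessarily nonzero since $0$ is a vertex. Third, pulling back by $g^{-1}$ yields an interior integer point $P\ne 0$ of the original $\pyr(C,\pi)$, so $P\in\Ahull(C)$; writing $P=\sum_v c_v v+r$ via the decomposition $\Ahull(C)=\conv(\text{sail vertices})+\bar{C}$ with $c_v\ge 0$, $\sum_v c_v=1$, $r\in\bar{C}$, the interior inequality $\vec{n}\cdot P<1$ combined with $\vec{n}\cdot r\ge 0$ forces $\min_v(\vec{n}\cdot v)<1$, placing some sail vertex in the interior of $\pyr(C,\pi)$. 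Setting $M_1=\max(|\det(u_1,u_2,u_3)|/6,\,V_0)$ completes the argument.

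The main obstacle I expect is the first step above: verifying that the action of $\Xi_+(A)$ on shape space is cocompact on each fixed-volume slice. This uses $\Xi_+(A)\cong\z^2$ (Dirichlet's unit theorem in the totally-real cubic case) together with the full rank of its logarithmic image inside the hyperplane $\log\lambda_1+\log\lambda_2+\log\lambda_3=0$. Once this normal form is in place, the Minkowski inradius bound and the convex-decomposition argument for $\Ahull(C)$ are comparatively routine.
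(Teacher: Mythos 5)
Your proof is correct, but it follows a genuinely different route from the paper's. For item $($i$)$, the paper works directly with the combinatorial structure of the sail: it picks the nearest sail vertex $v$ in the $\pi$-direction, observes that $\pyr(C,\pi')$ (to the parallel plane through $v$) is contained in the union of the pyramids cut off by the face-planes adjacent to $v$, and then bounds that union by $M_2 M_4$ using Lemma~\ref{finiteness-algebraic-lemma} (finitely many faces per vertex, finitely many face orbits under $\Xi_+$). You instead appeal to cocompactness of the $\Xi_+(A)$-action on the fixed-volume shape slice (via the logarithmic embedding), normalize the pyramid to have balanced edge-lengths of order $V^{1/3}$, and use the inradius-vs-covering-radius comparison to extract a nonzero interior integer point; you then descend to a sail vertex via the decomposition $\Ahull(C)=\conv(\text{sail vertices})+\bar C$. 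Both are valid; your version trades the paper's explicit face-union geometry for a more classical geometry-of-numbers argument. For item $($ii$)$, the paper simply deduces it from $($i$)$ by contraposition (a large pyramid has an interior integer point, so the integer distance is $\ge 2$), whereas you prove it directly by expressing $\vol(\pyr(C,\pi))$ in terms of $\prod_i(\vec n\cdot u_i)$ and bounding the denominator from below as a nonzero value of a cubic norm form with bounded denominator — self-contained and arguably cleaner, though it pulls in the algebraic-number-theoretic input at this earlier stage. The one point you should make explicit is that the extreme-point decomposition of $\Ahull(C)$ and the boundedness of sail faces (so that a positive functional attains its minimum at a vertex) are being used; these hold for totally-real cubic cones but are not free.
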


We start the proof with the following lemma.

\begin{lemma}\label{finiteness-algebraic-lemma}
Let $C$ be a totally-real cubic cone in $\r^3$ centered at the origin.
Then there exist positive constants $M_2$, $M_3$ and $M_4$
$($depending entirely on $C$$)$ such that the following uniform
properties hold:
\begin{itemize}
\item
For every vertex $v$ of $K(C)$ the number of adjacent to $v$ faces of $K(C)$ $($and, therefore,
the number of the adjacent edges as well$)$
are bounded by some number $M_2$;

\item
For every plane $\pi$ containing a face of $K(C)$ we have
$$
M_3<\vol(\pyr(C,\pi))<M_4;
$$
\end{itemize}
\end{lemma}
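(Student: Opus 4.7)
The plan is to exploit the double periodicity of the Klein sail under the positive Dirichlet group $\Xi_+(A)$, reducing both statements to finitely many orbit representatives and reading off the bounds from those.

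First I would unpack the setup: since $C$ is a totally-real cubic cone, it is one of the eight cones associated to a matrix $A \in \GL(3,\z)$ whose characteristic polynomial is irreducible over $\q$ with three distinct real roots. I would then invoke the three properties listed in the remark at the end of Subsection~\ref{Positive Dirichlet group and its action on cubic sails}: every vertex of $K(C)$ has only finitely many incident edges and faces; the group $\Xi_+(A)$ acts on $K(C)$ cellularly, carrying vertices to vertices, edges to edges, and faces to faces; and this action has only finitely many orbits on each type of cell.

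For the incidence bound $M_2$, I would observe that each $g \in \Xi_+(A)$ restricts to a cellular homeomorphism of $K(C)$, so the star of a vertex $v$ maps bijectively onto the star of $g(v)$. In particular, the number of faces (equivalently edges) incident to $v$ is a $\Xi_+(A)$-orbit invariant. With finitely many vertex orbits and each representative having finite local degree, one sets $M_2$ to be the maximum degree over a chosen finite collection of orbit representatives.

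For the volume bounds, the crucial point is that $\Xi_+(A) \subset \GL(3,\z)$, so every $g \in \Xi_+(A)$ has $|\det g| = 1$ and hence preserves Euclidean volume. Moreover, since $g$ commutes with $A$ and the three eigenvalues of $A$ are pairwise distinct, $g$ preserves each eigenline of $A$ and therefore fixes the cone $C$ setwise while fixing the origin. Consequently, if a plane $\pi$ supports a face $F$ of $K(C)$, then $g(\pi)$ supports the face $g(F)$, and $g$ carries $\pyr(C,\pi)$ isomorphically onto $\pyr(C,g(\pi))$, preserving volume. Thus $\vol(\pyr(C,\pi))$ depends only on the $\Xi_+(A)$-orbit of $F$; with only finitely many face orbits it takes only finitely many values. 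Each of those values is strictly positive, since $F \subset \partial \Ahull(C)$ and $O \notin \Ahull(C)$ force $\pi$ to avoid the origin and to cut a non-degenerate triangular base $C \cap \pi$ out of the simplicial cone $C$. Taking the minimum and maximum over orbit representatives produces the desired $M_3$ and $M_4$.

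The main substantive ingredient is the classical double-periodicity theorem for Klein polyhedra, asserting that $\Xi_+(A)$ has only finitely many orbits on cells of $K(C)$; everything else is bookkeeping built on the fact that unimodular integer matrices preserve volume. So the expected obstacle is not any one inequality in the proof itself, but rather the careful citation and application of that finiteness-of-orbits theorem from the geometry of cubic sails.
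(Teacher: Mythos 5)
Your proposal is correct and essentially matches the paper's proof. The paper disposes of the first item by citing a classical reference (Theorem 16.29 of Karpenkov's book), while you derive it explicitly from the combination of per-vertex finiteness, cellularity of the $\Xi_+(A)$-action, and finiteness of vertex orbits — but that is precisely the argument underlying the cited result, and for the second item your use of unimodularity plus finiteness of face orbits is exactly the paper's reasoning.
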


\begin{remark}
In fact a real number $M_1$ in Proposition~\ref{frozen-vertex}
can be taken as
$$
M_1=M_2 M_4,
$$
where $M_2$ and $M_4$ are as in Lemma~\ref{finiteness-algebraic-lemma}.
\end{remark}

{
\noindent
{\it Proof of Lemma~\ref{finiteness-algebraic-lemma}.}
The first item is a classical statement, see, e.g., Theorem~16.29 in~\cite{KarpenkovGCF2013}.
}

\vspace{2mm}

The second statement follows from
the invariance of volumes under the action of the positive Dirichlet group $\Xi_+(C)$
together with the fact
that the number of faces and vertices is finite
up to the action of $\Xi_+(C)$ (see e.g. Chapter~18 of~\cite{KarpenkovGCF2013}).
\qed

\vspace{2mm}

{
\noindent
{\it Proof of Proposition~\ref{frozen-vertex}.}
{\it Item~$($i$)$.}
Let $v\in K(C)$ be one of the closest integer point to the origin with respect to $\pi$.
Namely,
$$
\big(\pyr(C,\pi') \setminus \pi'\big)\cap \z^3=\{O\},
$$
where $\pi'$ is the plane passing through $v$ and parallel to $\pi$.
}

Now denote by $\pi_i$ all the planes of faces adjacent to $v$.
By Lemma~\ref{finiteness-algebraic-lemma} there are at most $M_1$ of such faces (note that $M_1$
depends only on the cone but not on the vertex).
Set
$$
P_i=\pyr(C,\pi_i).
$$
and let
$$
E=\bigcup\limits_{i} P_i.
$$

From Lemma~\ref{finiteness-algebraic-lemma} we know that $\vol(P_i)<M_4$ for all $P_i$
in the union.
Therefore,
$$
\vol(E)<M_2 M_4
$$
and hence it is uniformly bounded for all vertices.

\vspace{1mm}

By construction $\pyr(C,\pi')\subset E$, and hence,
$$
\vol(\pyr(C,\pi'))<M_2 M_4.
$$
Therefore, the statement of Item~$($i$)$ holds for $M_1=M_2 M_4$.

\vspace{2mm}

{\noindent{\it Item~$($ii$)$.}} Let $\pi$ satisfy
$$
\vol\big(\pyr(C,\pi)\big)<M_1.
$$
Then by Item~$($i$)$ there exists an integer point in the interior of the pyramid $\pyr(C,\pi)$.
Therefore, the integer distance from the origin to $\pi$ is at least 2.
This concludes the proof of Item~$($ii$)$.
\qed

Let us fix the following important notation.

\begin{definition}
Consider an integer plane $\pi$ and a convex polygon $P$ in it.
Let $S$ be the basis square of the integer lattice in it.
We say that the quantity
$$
\frac{\area (P)}{\area (S)}
$$
is a $\pi$-area of $P$ and denote it by $\area_\pi(P)$.
\end{definition}

Let us now deduce uniform boundedness of $\pi$-area for bounded sections
on unit integer distance.

\begin{corollary}\label{S(T)-finite}
Consider  a totally-real cubic cone in $\r^3$ centered at the origin.
Then there exists a real constant $M$ such that
for every integer base plane $\pi$
  at the unit  integer distance to the origin
we have
$$
\area_\pi(C\cap \pi)<M.
$$
$($Recall that for any base plane $\pi$, the triangle  $C\cap \pi$ is bounded.$)$
\end{corollary}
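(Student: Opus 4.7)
The plan is to reduce the desired bound on the $\pi$-area to the volume bound already supplied by Proposition~\ref{frozen-vertex}$($ii$)$, using the standard correspondence between lattice co-area on an integer plane and the distance from the origin.

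First, I would recall the lattice-geometric relation for an integer plane $\pi$ at integer distance $d = \id(O,\pi)$ from the origin. Since $\pi$ is integer, the integer sublattice on $\pi$ has fundamental parallelogram of some Euclidean area $A_\pi$. Because there are exactly $d-1$ integer planes parallel to $\pi$ strictly between $O$ and $\pi$, all equally spaced, the Euclidean distance between consecutive parallel integer planes is $h/d$, where $h$ is the Euclidean distance from $O$ to $\pi$. The fundamental parallelepiped of $\z^3$ can then be built from a fundamental parallelogram in $\pi$ together with an integer vector connecting $\pi$ to the next parallel integer plane; its volume is $A_\pi \cdot (h/d) = 1$, so $A_\pi = d/h$.

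Next I would combine this with the Euclidean pyramid-volume formula. For any base plane $\pi$ of the simplicial cone $C$, the triangle $C\cap \pi$ is the base and $h$ is the height of $\pyr(C,\pi)$, so
$$
\vol\bigl(\pyr(C,\pi)\bigr) \;=\; \tfrac{1}{3}\, h\cdot \area(C\cap\pi).
$$
By definition of $\pi$-area,
$$
\area_\pi(C\cap\pi) \;=\; \frac{\area(C\cap\pi)}{A_\pi} \;=\; \frac{h}{d}\cdot \area(C\cap\pi).
$$
Specializing to the unit integer distance $d=1$, these two identities give
$$
\area_\pi(C\cap\pi) \;=\; h\cdot \area(C\cap\pi) \;=\; 3\,\vol\bigl(\pyr(C,\pi)\bigr).
$$

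Finally, I would invoke Proposition~\ref{frozen-vertex}$($ii$)$, which guarantees $\vol(\pyr(C,\pi)) < M_1$ uniformly for all integer base planes $\pi$ at unit integer distance to the origin. Setting $M := 3M_1$ then yields the desired bound $\area_\pi(C\cap\pi) < M$. There is no real obstacle here: the entire content is the lattice identity $A_\pi = d/h$ and the elementary volume formula, with all the hard work already packaged into Proposition~\ref{frozen-vertex}.
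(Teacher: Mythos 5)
Your proof is correct and follows essentially the same route as the paper: reduce the $\pi$-area bound to the volume bound of Proposition~\ref{frozen-vertex}$($ii$)$ via the identity $\vol(\pyr(C,\pi)) = \tfrac{1}{3}\,\id(O,\pi)\cdot\area_\pi(C\cap\pi)$, which you correctly derive from $A_\pi = d/h$. Note that your constant $M = 3M_1$ is the right one; the paper's stated value $M = M_1/3$ appears to be a slip, since at unit integer distance the identity gives $\area_\pi(C\cap\pi) = 3\,\vol(\pyr(C,\pi)) < 3M_1$.
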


\begin{proof}
The corollary follows directly from the fact that
the volume of a tetrahedron is equal to a third of its integer distance to the base times $\pi$-volume
of the base. We can actually take $M=M_1/3$, where $M_1$ is as in Proposition~\ref{frozen-vertex}.
\end{proof}

Finally we prove the following result.

\begin{proposition}\label{finiteness-unit-distance-planes-2}
Consider  a totally-real cubic cone in $\r^3$ centered at the origin.
Let $\varepsilon$ be a positive real number.
Then the number of integer base planes $\pi$ on the unit integer distance
to the origin satisfying
$$
\vol\big(\pyr(C,\pi)\big)>\varepsilon
$$
is finite up to the action of the positive Dirichlet group $\Xi_+(C)$.
\end{proposition}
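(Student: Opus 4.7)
\emph{Proof plan.} The idea is to pass to the eigenbasis of the matrix defining $C$, use Dirichlet's unit theorem to normalize the plane $\pi$ by the action of $\Xi_+(C)$, and then observe that the resulting normalized planes have bounded integer coefficients.

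Fix a matrix $A\in\GL(3,\z)$ defining $C$ and let $(v_1,v_2,v_3)$ be its real eigenvectors, oriented so that $C$ is the positive cone on them. In these eigenbasis coordinates $C$ is the positive octant, and every $g\in\Xi_+(C)$ acts by a diagonal matrix $\mathrm{diag}(\lambda_1(g),\lambda_2(g),\lambda_3(g))$ with $\lambda_i(g)>0$ and $\prod_i\lambda_i(g)=\det g=1$. For any integer plane $\pi$ with $\id(O,\pi)=1$ the triangle $\pi\cap C$ has its three vertices on the edges of $C$, so write them as $c_iv_i$ with $c_i>0$; then
$$
\vol(\pyr(C,\pi))=\tfrac{1}{6}\,D\,c_1c_2c_3,\qquad D=|\det(v_1,v_2,v_3)|.
$$
The element $g$ sends $(c_1,c_2,c_3)$ to $(\lambda_1(g)c_1,\lambda_2(g)c_2,\lambda_3(g)c_3)$ and preserves both $\id(O,\pi)=1$ and the pyramid volume.

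The next step is to normalize via $\Xi_+(C)$. By hypothesis $\vol(\pyr(C,\pi))>\varepsilon$, and by Proposition~\ref{frozen-vertex}$($ii$)$ one has $\vol(\pyr(C,\pi))<M_1$; hence $c_1c_2c_3$ lies in a fixed compact subinterval of $(0,\infty)$, so $\log c_1+\log c_2+\log c_3$ is confined to a bounded interval. The $\Xi_+(C)$-action on $(\log c_1,\log c_2,\log c_3)$ is translation by the additive subgroup
$$
L=\{(\log\lambda_1(g),\log\lambda_2(g),\log\lambda_3(g))\mid g\in\Xi_+(C)\},
$$
which by Dirichlet's unit theorem is a rank-$2$ lattice inside the sum-zero hyperplane $\{x_1+x_2+x_3=0\}$ and therefore has a bounded fundamental domain there. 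Choosing $g$ so that the sum-zero projection of $(\log c_1,\log c_2,\log c_3)$ falls into that domain confines $(\log c_1,\log c_2,\log c_3)$ to a bounded set, i.e.\ $(c_1,c_2,c_3)$ to a compact subset $K\subset(0,\infty)^3$ depending only on $C$ and $\varepsilon$.

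It remains to count normalized planes. Any integer plane with $\id(O,\pi)=1$ has the form $\pi\colon ax+by+cz=\pm 1$ for some primitive $(a,b,c)\in\z^3$, and the condition $c_iv_i\in\pi$ reads $\langle(a,b,c),v_i\rangle\,c_i=\pm 1$. Hence the three numbers $|\langle(a,b,c),v_i\rangle|=1/c_i$ are bounded away from both $0$ and $\infty$ once $(c_1,c_2,c_3)\in K$. Since $(v_1,v_2,v_3)$ is a basis of $\r^3$, the linear map $(a,b,c)\mapsto(\langle(a,b,c),v_i\rangle)_{i=1}^3$ is an isomorphism of $\r^3$ carrying $\z^3$ to a lattice; the boundedness just obtained forces $(a,b,c)$ into a finite set, and together with the two sign choices for the right-hand side this yields finitely many normalized planes, hence finitely many $\Xi_+(C)$-orbits. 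The main obstacle is the middle step: checking that the log-eigenvalue image $L$ is a rank-$2$ lattice in the sum-zero plane, which is precisely the totally-real cubic case of Dirichlet's unit theorem applied to the order generated by $A$; this structural input is what permits the pyramid to be pulled into a bounded region by the Dirichlet action, after which the final count is pure linear algebra.
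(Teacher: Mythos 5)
Your proof is correct, but it takes a genuinely different route from the paper's. The paper argues through the Klein sail: it sets $k=\lfloor M_1/\varepsilon\rfloor+1$, uses Proposition~\ref{frozen-vertex}$($i$)$ to force a sail vertex into the $k$-dilate of $\pyr(C,\pi)$, invokes the finiteness of sail vertices up to $\Xi_+(C)$, and then applies Corollary~\ref{finiteness-for-faces} to count the planes at bounded integer distance whose pyramid captures a fixed vertex. You instead pass to the eigenbasis, write $\vol(\pyr(C,\pi))=\tfrac16 D\,c_1c_2c_3$, and observe that the $\Xi_+(C)$-action on the triple $(c_1,c_2,c_3)$ is multiplication by eigenvalue triples of determinant $1$; since $c_1c_2c_3$ is trapped in a compact interval (using Proposition~\ref{frozen-vertex}$($ii$)$ for the upper bound), Dirichlet's unit theorem — which makes the log-eigenvalue image a full-rank lattice in the trace-zero plane — lets you pull $(c_1,c_2,c_3)$ into a compact set by a single choice of $g\in\Xi_+(C)$, and then the coefficient vector $(a,b,c)$ of $\pi$, being primitive and satisfying $|\langle(a,b,c),v_i\rangle|=1/c_i$, lies in a bounded region of the lattice $\z^3$, which is finite. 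What your route buys is directness: you make the appeal to Dirichlet's unit theorem explicit as a normalization step, and you avoid the Klein sail entirely beyond the volume upper bound $M_1$ (you do not need the dilation argument, the finiteness of sail vertices up to $\Xi_+(C)$, nor Proposition~\ref{finiteness-planes} / Corollary~\ref{finiteness-for-faces}). What the paper's route buys is coherence with the rest of the paper, which is built systematically around the sail machinery and reuses Corollary~\ref{finiteness-for-faces} elsewhere. Both proofs ultimately rest on the same structural input — the Dirichlet unit group is cocompact in the norm-one torus — but reach it by different intermediaries.
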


\begin{proof}
Set
$$
k=\lfloor M_1/\varepsilon\rfloor+1,
$$
where $M_1$ is the constant of Proposition~\ref{frozen-vertex}$($i$)$ for the cone $C$.

By Proposition~\ref{frozen-vertex}$($i$)$
for every integer base plane $\pi$ with $\vol\big(\pyr(C,\pi)\big)>\varepsilon$,
the pyramid $k\pyr(C,\pi)$ (i.e. the $k$-dilate of $\pyr(C,\pi)$) contains some vertex $v$ of the sail $K(C)$.

Note that there are only finitely many vertices of the sail up to the action of the
positive Dirichlet group $\Xi_+(C)$ (for more information see~\cite{Arnold2002}
and~\cite{KarpenkovGCF2013}).
By Corollary~\ref{finiteness-for-faces} for each of the choices of the sail vertex there are only finitely many integer planes $\pi'$ on integer distance $k$  containing this vertex in $\pyr(C,\pi').$
Therefore, there are only finitely many
integer planes $\pi$ on the unit integer distance
to the origin satisfying
$$
\vol\big(\pyr(C,\pi)\big)>\varepsilon
$$
up to the action of the positive Dirichlet group $\Xi_+(C)$.
\end{proof}

\subsection{Lower bounds for areas and volumes}
\label{Lower bounds for areas and volumes}

In this section we relate the angle between invariant planes from
the one hand and the areas and volumes of corresponding sectional pyramids from the other hand.

\begin{proposition}\label{area-bounded-alpha}
Consider a state
$$
s=(\xi=(x_0,y_0,1),\nu_1=(x_1,y_1,1),\nu_2=(x_2,y_2,1))
$$
in a separating basis. Denote by $T$ the triangle with vertices at endpoints of
$\xi$ , $\nu_1$, and $\nu_2$.
Let $\alpha$ be the angle between the invariant planes containing $\xi$,
then the area of the triangle $T$
and the volume of the pyramid $\pyr(T)$ with base at $T$ and vertex at the origin are estimated as follows
$$
\area(T)> \frac{1}{4}\sin \alpha, \quad \hbox{and} \quad
\vol(\pyr(T))> \frac{1}{12}\sin \alpha.
$$
\end{proposition}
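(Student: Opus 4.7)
The plan is to reduce both inequalities to a single lower bound by exploiting that the three vectors all lie in the plane $z=1$, and then to control the resulting quantity via the dihedral-angle volume formula. Since this plane is at Euclidean distance $1$ from the origin, one has $\vol(\pyr(T))=\tfrac{1}{3}\area(T)$; in particular, the two inequalities in the statement are equivalent, and it suffices to prove the area bound. Setting $A=\xi$, $B=\nu_1$, $C=\nu_2$ and letting $\alpha$ be the dihedral angle along the edge $OA$ of the tetrahedron $OABC$ (which is precisely the angle between the planes $\spann(A,B)$ and $\spann(A,C)$ from the statement), the classical formula
$$6\vol(OABC)=|OA|\cdot h_B\cdot h_C\cdot \sin\alpha,$$
with $h_B=\dist(B,\text{line }OA)$ and $h_C=\dist(C,\text{line }OA)$, combined with $\vol(OABC)=\tfrac{1}{3}\area(T)$, reduces the proposition to the single inequality $|A|\cdot h_B\cdot h_C>\tfrac{1}{2}$.

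From the state condition $x_0\ge y_0\ge 1$ one immediately has $|A|=\sqrt{x_0^2+y_0^2+1}\ge\sqrt{3}$, so it will be enough to prove $h_B,h_C>1/\sqrt{2}$. For this I would write $h_B=|A\times B|/|A|$ and expand
$$|A\times B|^2=(x_1y_0-x_0y_1)^2+(x_0-x_1)^2+(y_0-y_1)^2.$$
Minimising the right-hand side over $y_1$ for fixed $x_1$ yields the sharp bound $h_B^2\ge (x_0-x_1)^2/(x_0^2+1)$; symmetrically $h_B^2\ge (y_0-y_1)^2/(y_0^2+1)$. The separating hypothesis is exactly what makes these usable: it forces $B=(x_1,y_1,1)$ outside the closed positive orthant, so either $x_1<0$ (so $(x_0-x_1)^2>x_0^2$, and then $h_B>x_0/\sqrt{x_0^2+1}\ge 1/\sqrt{2}$) or $y_1<0$ (giving $h_B>y_0/\sqrt{y_0^2+1}\ge 1/\sqrt{2}$); both estimates use $x_0,y_0\ge 1$. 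The identical argument applied to $\nu_2$ gives $h_C>1/\sqrt{2}$, and multiplying produces $|A|\cdot h_B\cdot h_C>\sqrt{3}/2>\tfrac{1}{2}$, finishing the proof.

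The only step with any real content is the sharp estimate for $h_B$ obtained by minimising $|A\times B|^2$ over the unconstrained coordinate of $B$. The more obvious bound $|A\times B|\ge |A-B|$, which follows from the fact that the line through $A$ and $B$ lies in the plane $z=1$ and hence at Euclidean distance at least $1$ from the origin, only yields $|A|\cdot h_B\cdot h_C\ge |A-B|\cdot|A-C|/|A|$, and this fails to absorb the factor $|A|$ when $x_0$ is much larger than $y_0$. The minimisation argument, which produces the denominator $\sqrt{x_0^2+1}$ instead of $|A|$, is the critical ingredient making the separating hypothesis close the gap.
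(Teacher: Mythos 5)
Your proof is correct, and it takes a genuinely different route than the paper's. The paper first passes from the full triangle $\xi\nu_1\nu_2$ to a smaller triangle $\xi AB$, where $A,B$ are the exit points of the segments $\xi\nu_1$ and $\xi\nu_2$ through the boundary $H$ of the positive quadrant of $\{z=1\}$, and then splits into three cases (both points on the $x$-axis, both on the $y$-axis, or one on each), proving a separate lemma with its own explicit cross-product computation for each configuration. Your approach avoids the case split entirely: the volume formula $6\vol(O\xi\nu_1\nu_2)=|O\xi|\,h_{\nu_1}h_{\nu_2}\sin\alpha$ together with $\vol=\tfrac13\area(T)$ reduces everything to the single inequality $|O\xi|\,h_{\nu_1}h_{\nu_2}>\tfrac12$, and the sharp bound $h_{\nu_1}^2\ge(x_0-x_1)^2/(x_0^2+1)$ (and its $y$-counterpart), obtained by minimising $|O\xi\times O\nu_1|^2$ over the unconstrained coordinate, lets you invoke the separating hypothesis (one negative coordinate of $\nu_1$) exactly once per vertex rather than once per case. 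The analytic content is close -- both arguments rest on completing a square inside $|O\xi\times\,\cdot\,|^2$ to absorb the factor $|O\xi|=\sqrt{x_0^2+y_0^2+1}$ -- but your organisation is more uniform and produces the slightly better constant $\sqrt{3}/2$ for $|O\xi|\,h_{\nu_1}h_{\nu_2}$ along the way. One small point worth making explicit: the state hypothesis gives $x_0\ge y_0\ge 1$ (not necessarily strict), but since $x_1<0$ or $y_1<0$ gives $(x_0-x_1)^2>x_0^2$ strictly, the chain $h_{\nu_1}^2>x_0^2/(x_0^2+1)\ge\tfrac12$ still yields the strict inequality in the conclusion.
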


We start the proof with the following three lemmas.

\begin{lemma}\label{area-bounded-alpha-lemma-1}
Let $OABC$ be a tetrahedron with coordinates
$$
O=(0,0,0), \quad A=(a,0,1), \quad B=(b,0,1), \quad \hbox{and} \quad C=(x_0,y_0,1),
$$
where $b>a$ and $y_0>1$.
Denote the angle between the planes $OAC$ and $OBC$ by $\alpha$.
Then
$$
\area(ABC)> \frac{1}{4}\sin \alpha.
$$
\end{lemma}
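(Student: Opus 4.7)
The plan is to reduce the inequality $\area(ABC) > \frac{1}{4}\sin\alpha$ to a clean algebraic statement and dispatch it via a single quadratic minimization. Since $A$, $B$, $C$ all lie in the plane $z=1$ with $A$, $B$ on the line $y=0$ at distance $b-a$ apart and $C$ at height $y_0$ from this line, one immediately has $\area(ABC) = \frac{1}{2}(b-a)y_0$. For $\sin\alpha$ I will use $\sin\alpha = |(A\times C)\times(B\times C)|/(|A\times C|\cdot|B\times C|)$ combined with the BAC-CAB identity: since $A\times C \perp C$, the expansion collapses to $(A\times C)\times(B\times C) = -[(A\times C)\cdot B]\,C$, and a direct $3\times 3$ determinant computation gives $(A\times C)\cdot B = -\det(A,B,C) = -y_0(b-a)$. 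Hence $\sin\alpha = y_0(b-a)|C|/(|A\times C|\cdot|B\times C|)$, and dividing by the area formula reduces the claim to
\[
4|A\times C|^2|B\times C|^2 > |C|^2.
\]

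The core step is the single-vector bound $|A\times C|^2 \geq y_0^2|C|^2/(y_0^2+1)$ and its analog for $|B\times C|^2$. Expanding $|A\times C|^2 = y_0^2 + a^2 y_0^2 + (x_0-a)^2$, this becomes the one-variable estimate $a^2 y_0^2 + (x_0-a)^2 \geq x_0^2 y_0^2/(y_0^2+1)$, whose minimum in $a$ is attained at $a^\ast = x_0/(y_0^2+1)$ and equals the right-hand side exactly. Multiplying the two bounds, dividing by $|C|^2$, and using $|C|^2 \geq y_0^2+1$ then yields
\[
\frac{|A\times C|^2|B\times C|^2}{|C|^2} \;\geq\; \frac{y_0^4|C|^2}{(y_0^2+1)^2} \;\geq\; \frac{y_0^4}{y_0^2+1}.
\]
It remains to check $y_0^4/(y_0^2+1) > 1/4$: in the variable $t = y_0^2$ this reads $4t^2 - t - 1 > 0$, which equals $2$ at $t=1$ and is strictly increasing for $t > 1/8$, so it holds throughout $y_0 > 1$.

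I expect the main obstacle to be the choice of the correct lower bound for $|A\times C|^2$. Crude estimates such as $|A\times C|^2 \geq y_0^2$ or $|A\times C|^2 \geq (x_0-a)^2$ each fail because they discard the precise interaction between $y_0$ and $x_0$ needed for the factor $|C|^2$ in the denominator to cancel. The minimization of $a^2 y_0^2 + (x_0-a)^2$ captures exactly this interaction in closed form; once the bound $|A\times C|^2 \geq y_0^2 |C|^2/(y_0^2+1)$ is in hand, the remainder of the argument is routine algebra and a polynomial positivity check.
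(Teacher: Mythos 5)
Your proof is correct and follows essentially the same path as the paper: express $\sin\alpha$ via the cross-product formula, verify $(A\times C)\times(B\times C)=y_0(b-a)C$, and reduce the claim to a lower bound on $|A\times C|\cdot|B\times C|$ relative to $|C|$. The only deviation is the intermediate bound — you minimize $a^2y_0^2+(x_0-a)^2$ exactly to obtain the sharp $|A\times C|^2\geq y_0^2|C|^2/(y_0^2+1)$ and then need the extra check $4y_0^4/(y_0^2+1)>1$, whereas the paper uses the slacker estimate $|A\times C|^2>|C|^2/2$ (by completing a square and dropping a term), which lets the factor $|C|>1$ finish immediately.
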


\begin{proof}
Note that the angle between planes is equal to the angle between its normals.
In particular we have the following formula (here by $v\times w$
we denote the cross product of two vectors $v$ and $w$):
$$
\sin \alpha=\frac{|(OA\times OC)\times(OB\times OC)|}{|OA\times OC|\cdot|OB\times OC|}.
$$

Note that
$$
\begin{array}{l}
OA\times OC=(-y_0,x_0-a,ay_0);\\
OB\times OC=(-y_0,x_0-b,by_0),
\end{array}
$$
and hence
$$
(OA\times OC)\times(OB\times OC)=(b-a)y_0(x_0,y_0,1).
$$
Let us estimate the norms of the above three vectors.
We have:

\begin{align*}
(|OA\times OC|)^2&=y_0^2(a^2+1)+(x_0-a)^2\\
&\ge y_0^2+a^2+(x_0-a)^2\\
&=y_0^2+\frac{x_0^2}{2}+\Big(\frac{x_0}{\sqrt{2}}-\sqrt{2}a \Big)^2\\
&> \frac{x_0^2+y_0^2+1}{2}.
\end{align*}

Similarly (by swapping $a$ with $b$) we have
$$
(|OB\times OC|)^2>\frac{x_0^2+y_0^2+1}{2}.
$$

Direct computations give us the following expression
$$
(|(OA\times OC)\times(OB\times OC)|)^2=(b-a)^2y_0^2(x_0^2+y_0^2+1).
$$
Substituting to the formula for the $\sin \alpha$ above we have
$$
\sin\alpha=\frac{(b-a)y_0(x_0^2+y_0^2+1)^{1/2}}{|OA\times OC|\cdot|OB\times OC|}.
$$
Therefore,
$$
\area(ABC)=\frac{1}{2}
y_0(b-a)=\frac{1}{2}\sin\alpha\frac{|OA\times OC|\cdot|OB\times OC|}{(x_0^2+y_0^2+1)^{1/2}}.
$$
Finally applying the estimates for $|OA\times OC|$ and $|OB\times OC|$
we have
$$
\area(ABC)>\frac{1}{2}\sin\alpha \frac{\big(\frac{x_0^2+y_0^2+1}{2}\big)^{1/2}{\big(\frac{x_0^2+y_0^2+1}{2}\big)^{1/2}}}{(x_0^2+y_0^2+1)^{1/2}}
=\frac{1}{4}\sin\alpha(x_0^2+y_0^2+1)^{1/2}>\frac{1}{4}\sin\alpha.
$$
This concludes the proof.
\end{proof}

\begin{lemma}\label{area-bounded-alpha-lemma-2}
Let $OABC$ be a tetrahedron with coordinates
$$
O=(0,0,0), \quad A=(0,a,1), \quad B=(0,b,1), \quad \hbox{and} \quad C=(x_0,y_0,1),
$$
where $b>a$ and $y_0>1$.
Denote the angle between the planes $OAC$ and $OBC$ by $\alpha$.
Then
$$
\area(ABC)> \frac{1}{4}\sin \alpha.
$$
\end{lemma}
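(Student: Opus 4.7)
The plan is to deduce Lemma~\ref{area-bounded-alpha-lemma-2} immediately from Lemma~\ref{area-bounded-alpha-lemma-1} by exploiting the evident symmetry between the two setups under the orthogonal reflection $\sigma\colon\r^3\to\r^3$, $(x,y,z)\mapsto(y,x,z)$, which exchanges the first two coordinates and fixes the third. Applied to the tetrahedron $OABC$ of the present lemma, $\sigma$ fixes $O$ and sends
$$
A=(0,a,1)\mapsto A'=(a,0,1),\quad B=(0,b,1)\mapsto B'=(b,0,1),\quad C=(x_0,y_0,1)\mapsto C'=(y_0,x_0,1),
$$
which is precisely the coordinate shape required by Lemma~\ref{area-bounded-alpha-lemma-1}, with the roles of the first two coordinates of the ``generic'' vertex interchanged.

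Since $\sigma$ is an isometry of $\r^3$, it preserves Euclidean triangle areas and (unsigned) Euclidean angles between planes through the origin. Hence $\area(A'B'C')=\area(ABC)$ and the angle between the image planes $OA'C'$ and $OB'C'$ is still $\alpha$. Invoking Lemma~\ref{area-bounded-alpha-lemma-1} for the image tetrahedron therefore gives
$$
\area(ABC)\;=\;\area(A'B'C')\;>\;\tfrac14\sin\alpha,
$$
which is the desired bound.

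The one point that requires care is matching the hypotheses: Lemma~\ref{area-bounded-alpha-lemma-1} needs the second coordinate of its ``generic'' vertex to exceed $1$, a condition that after applying $\sigma$ translates into the requirement $x_0>1$ rather than the stated $y_0>1$. In the intended application inside Proposition~\ref{area-bounded-alpha} this is automatic, because the coordinate basis is a separating state and so, by Definition~\ref{domain of alg}, the coordinates of $\xi=(x_0,y_0,1)$ satisfy $x_0\ge y_0\ge 1$; combined with $y_0>1$ this forces $x_0>1$, and Lemma~\ref{area-bounded-alpha-lemma-1} applies. I expect this bookkeeping to be the only real obstacle: the analytic core, namely the lower bound on $|OA\times OC|^2$ driving the estimate, has already been done in Lemma~\ref{area-bounded-alpha-lemma-1}, and the reflection $\sigma$ simply transports it to the present configuration. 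If one wanted a self-contained proof, one would instead repeat the computation of $|OA\times OC|^2=(a-y_0)^2+x_0^2(1+a^2)$ and its counterpart for $B$, bounding each below by $(x_0^2+y_0^2+1)/2$ using $x_0\ge 1$ at exactly the step where the original argument used $y_0\ge 1$.
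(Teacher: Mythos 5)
Your argument is exactly the paper's own: the published proof of Lemma~\ref{area-bounded-alpha-lemma-2} consists of the single line stating that it ``repeats the proof of Lemma~\ref{area-bounded-alpha-lemma-1} after swapping the first two coordinates,'' which is precisely your reflection $\sigma$. You are also right to flag the hypothesis mismatch: after the swap (or, equivalently, in the self-contained version where one computes $|OA\times OC|^2=(a-y_0)^2+x_0^2(1+a^2)$), the bound $|OA\times OC|^2>(x_0^2+y_0^2+1)/2$ needs $x_0>1$ rather than the stated $y_0>1$ --- a slip in the paper's wording that is harmless in the only place the lemma is used, namely Proposition~\ref{area-bounded-alpha}, where a separating state guarantees $x_0\ge y_0>1$.
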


\begin{proof}
The proof repeats the proof of Lemma~\ref{area-bounded-alpha-lemma-1} after swapping the first two coordinates.
\end{proof}

\begin{lemma}\label{area-bounded-alpha-lemma-3}
Let $OABC$ be a tetrahedron with coordinates
$$
O=(0,0,0), \quad A=(0,a,1), \quad B=(b,0,1), \quad \hbox{and} \quad C=(x_0,y_0,1),
$$
where $a\ge 0$, $b\ge 0$, $x_0>1$, and $y_0>1$.
Denote the angle between the planes $OAC$ and $OBC$ by $\alpha$.
Then
$$
\area(ABC)> \frac{1}{4}\sin \alpha.
$$
\end{lemma}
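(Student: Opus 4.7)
The plan is to mimic the strategy used in Lemmas~\ref{area-bounded-alpha-lemma-1} and~\ref{area-bounded-alpha-lemma-2}, namely, to use the formula
$$
\sin\alpha=\frac{|(OA\times OC)\times(OB\times OC)|}{|OA\times OC|\cdot|OB\times OC|}
$$
and, after identifying $\area(ABC)$ with a multiple of the same triple product, deduce the desired estimate.

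First I would compute the cross products directly:
$$
OA\times OC=(a-y_0,\,x_0,\,-ax_0),\qquad OB\times OC=(-y_0,\,x_0-b,\,by_0).
$$
A routine expansion then gives
$$
(OA\times OC)\times(OB\times OC)=(ax_0+by_0-ab)(x_0,y_0,1),
$$
while a separate computation of $\tfrac{1}{2}|AB\times AC|$ shows
$$
\area(ABC)=\tfrac{1}{2}|ax_0+by_0-ab|.
$$
Substituting these expressions into the formula for $\sin\alpha$ yields
$$
\area(ABC)=\frac{1}{2}\sin\alpha\cdot\frac{|OA\times OC|\cdot|OB\times OC|}{\sqrt{x_0^2+y_0^2+1}},
$$
reducing the lemma to proving that the last ratio strictly exceeds $\tfrac{1}{2}$.

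The main (and only nontrivial) step will be to establish the two lower bounds
$$
|OA\times OC|^2\ge\frac{x_0^2+y_0^2+1}{2}, \qquad |OB\times OC|^2\ge\frac{x_0^2+y_0^2+1}{2}.
$$
Unlike in the previous two lemmas, where simply discarding nonnegative terms was sufficient, here the extra summand $a^2x_0^2$ in $|OA\times OC|^2=(a-y_0)^2+x_0^2(1+a^2)$ prevents such a naive argument. I would treat this expression as a quadratic in $a$, locate its minimum at the critical point $a=y_0/(1+x_0^2)$, compute the minimum value $\tfrac{x_0^2(x_0^2+y_0^2+1)}{1+x_0^2}$, and then invoke the hypothesis $x_0>1$ to conclude $\tfrac{x_0^2}{1+x_0^2}\ge\tfrac{1}{2}$. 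The symmetric estimate for $|OB\times OC|^2$ follows by swapping the roles of $(a,x_0)$ and $(b,y_0)$ and applying $y_0>1$ in place of $x_0>1$.

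Combining these two bounds yields
$$
\area(ABC)\ge\frac{1}{4}\sin\alpha\cdot\sqrt{x_0^2+y_0^2+1},
$$
and the required strict inequality follows at once from $x_0,y_0>1$. The key obstacle is precisely the quadratic optimization for $|OA\times OC|^2$ (and its symmetric counterpart); everything else is bookkeeping parallel to Lemma~\ref{area-bounded-alpha-lemma-1}.
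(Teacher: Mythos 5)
Your proof is correct, and it reaches the same identity
$$
\area(ABC)=\tfrac12\sin\alpha\cdot\frac{|OA\times OC|\cdot|OB\times OC|}{\sqrt{x_0^2+y_0^2+1}}
$$
as the paper, but the two proofs diverge at the estimation step. The paper avoids bounding the individual factors $|OA\times OC|^2$ and $|OB\times OC|^2$ by $\tfrac{x_0^2+y_0^2+1}{2}$; instead it writes $\area^2(ABC)$ directly as $\sin^2\alpha$ times a ratio, and uses the very crude bounds $(1+a^2)x_0^2+(y_0-a)^2>x_0^2$ and $(x_0-b)^2+(1+b^2)y_0^2>y_0^2$ (just keep one term, drop the rest), and then observes that $\tfrac{x_0^2y_0^2}{x_0^2+y_0^2+1}>\tfrac13>\tfrac14$. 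Your route, via the critical point $a^*=y_0/(1+x_0^2)$ and the exact minimum $\tfrac{x_0^2(x_0^2+y_0^2+1)}{1+x_0^2}$, is sharper and more closely mirrors the template of Lemmas~\ref{area-bounded-alpha-lemma-1} and~\ref{area-bounded-alpha-lemma-2}, at the cost of a small optimization.

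One remark on your framing: your claim that "discarding nonnegative terms" fails here, unlike in Lemmas~\ref{area-bounded-alpha-lemma-1}--\ref{area-bounded-alpha-lemma-2}, is not actually true. The factor here is $(1+a^2)x_0^2+(y_0-a)^2$, which is the same shape as $y_0^2(a^2+1)+(x_0-a)^2$ in Lemma~\ref{area-bounded-alpha-lemma-1} with $x_0\leftrightarrow y_0$ swapped; since both $x_0>1$ and $y_0>1$ hold here, the identical drop-and-complete-the-square argument ($a^2x_0^2\ge a^2$, then $a^2+(y_0-a)^2=\tfrac{y_0^2}{2}+(\tfrac{y_0}{\sqrt2}-\sqrt2 a)^2$) would also give the bound $\ge x_0^2+\tfrac{y_0^2}{2}>\tfrac{x_0^2+y_0^2+1}{2}$. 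So the quadratic optimization is a valid but not forced choice. This is only a point of exposition; it does not affect the correctness of your proof.
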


\begin{proof}
Direct calculations shows that
$$
\begin{array}{rcl}
\area^2(ABC)&=&\displaystyle \frac{(ax_0+by_0-ab)^2}{4};\\
\sin^2\alpha&=&\displaystyle{\frac { \left( x_0^2+y_0^2+1 \right)  \left(ax_0+by_0-ab \right) ^{2}}{
 \left( (x_0-b)^2+(1+b^2)y_0^2 \right)  \left( (1+a^2)x_0^2+(y_0-a)^2 \right) }}.
\end{array}
$$
Therefore,
$$
\begin{aligned}
\area^2(ABC)&=\frac{ \left( (x_0-b)^2+(1+b^2)y_0^2 \right)  \left( (1+a^2)x_0^2+(y_0-a)^2 \right)}
{4\left( x_0^2+y_0^2+1 \right)}
\sin^2\alpha\\
&>\frac{1}{4} \cdot\frac{  x_0^2y_0^2 }
{x_0^2+y_0^2+1}
\sin^2\alpha
=
\frac{1}{4} \cdot\frac{  1 }
{\frac{1}{x_0^2}+\frac{1}{y_0^2}+\frac{1}{x_0^2y_0^2}}
\\
&>\frac{1}{12}\sin^2\alpha>\frac{1}{4^2}\sin^2\alpha,
\end{aligned}
$$
the second inequality holds since $x_0>1$ and $y_0>1$.
Therefore,
$$
\area(ABC)>\frac{1}{4}\sin \alpha,
$$
this concludes the proof.
\end{proof}

{\noindent
{\it Proof of Proposition~\ref{area-bounded-alpha}.}}
Since the basis is separating we have:
\\
{\it $($i$)$} The first two coordinates of $\xi$ are greater than 1;
\\
{\it $($ii$)$} The vectors $\nu_1$ and $\nu_2$  both have at least one negative coordinate.

\vspace{1mm}

Denote by $H$ the union of two closed rays
$$
H=\{(t,0,1)|t\ge 0\}\cup\{(0,t,1)|t\ge 0\}.
$$
Let $A$  and $B$ be the intersection points of the lines $\xi\nu_1$
and $\xi\nu_2$ with $H$ respectively.
Since the basis is separating for the state $s$, the point $A$ is inside the segment $\xi\nu_1$
and the point $B$ is inside the segment $\xi\nu_2$.

\vspace{1mm}

Now we can apply Lemmas~\ref{area-bounded-alpha-lemma-1},
~\ref{area-bounded-alpha-lemma-2}, and~~\ref{area-bounded-alpha-lemma-3},
we have
$$
\area(\xi\nu_1\nu_2)>\area(\xi AB)>\frac{1}{4}\sin \alpha.
$$
Finally since the distance from the origin to the base $T$ is 1, we have
$$
\vol(\pyr((0,0,0)\xi\nu_1\nu_2))> \frac{1}{12}\sin \alpha.
$$
This concludes the proof of Proposition~\ref{area-bounded-alpha}.
\qed

\subsection{Triangles with bounded coordinates}
\label{Triangles with bounded coordinates}

Let us start with the following proposition.

\begin{proposition}\label{bounded-triangle}
Let $T$ be an arbitrary triangle on an integer plane $\pi$,
and $M>0$ be a real number.
Then there exists finitely many integer affine bases $(O,e_1,e_2)$ in which
all absolute values of the coordinates of vertices of $T$ are bounded by $M$ from above.
\end{proposition}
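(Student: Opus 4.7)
The plan is to establish finiteness in two stages: first, to show that only finitely many linear bases $(e_1,e_2)$ can occur; then, for each such basis, to show that only finitely many origins $O$ can occur.

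Let $A,B,C$ denote the vertices of $T$ and set $v_1 = B-A$ and $v_2 = C-A$. These are two linearly independent vectors in $\pi$ (by non-degeneracy of the triangle), fixed once $T$ is given. Suppose $(O,e_1,e_2)$ is an integer affine basis in which every coordinate of $A$, $B$, and $C$ has absolute value at most $M$; then every coordinate of $v_1$ and $v_2$ in the linear basis $(e_1,e_2)$ has absolute value at most $2M$. Let $P$ be the $2\times 2$ real matrix whose columns are these coordinates. The key observation is that $|\det P|$ is an invariant of $T$ and the plane lattice $\Lambda := \pi \cap \z^3$, independent of $(e_1,e_2)$: fixing any reference $\z$-basis of $\Lambda$, the identity $[v_1 \mid v_2] = [e_1 \mid e_2]\, P$ read in those coordinates together with $\det[e_1 \mid e_2] = \pm 1$ yields $|\det P| = d$, where $d := |\det[v_1 \mid v_2]| > 0$ is a fixed positive real number. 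By Cramer's rule, $P^{-1}$ has entries bounded in absolute value by $2M/d$, and the identity $[e_1 \mid e_2] = [v_1 \mid v_2]\, P^{-1}$ expresses $e_1$ and $e_2$ as linear combinations of $v_1,v_2$ with bounded coefficients. Hence $e_1$ and $e_2$ each lie in a bounded subset of $\pi$ depending only on $T$ and $M$, and since $e_1, e_2 \in \Lambda$, there are only finitely many possibilities for the linear basis.

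For each fixed linear basis $(e_1,e_2)$, the requirement $|a_1|, |a_2| \le M$ on the coordinates of $A = O + a_1 e_1 + a_2 e_2$ forces $O$ to lie in the bounded parallelogram $A + [-M,M]\, e_1 + [-M,M]\, e_2$. Since $O$ must be an integer point and this parallelogram contains only finitely many points of $\z^3$, finitely many origins remain. Combining the two stages yields the proposition. I do not anticipate a serious obstacle: the entire argument reduces to the invariance of $|\det P|$ under $\GL(2,\z)$-changes of lattice basis, followed by elementary counting of lattice points in bounded regions.
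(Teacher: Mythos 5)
Your proof is correct and reaches the same two-stage structure as the paper (constrain the linear basis first, then the origin), but the argument for the linear part is genuinely different. The paper fixes a reference integer basis, considers the transition matrix $S$ to any other admissible basis, inscribes a small axis-aligned right triangle $T_\varepsilon$ inside $T$ in those reference coordinates, and uses that $S^{-1}B=\varepsilon S^{-1}$ has bounded entries because it records coordinates of points of $T_\varepsilon\subset T$; it then passes from $S^{-1}$ to $S$ via $\det S=\pm1$. You instead work with the coordinate matrix $P$ of the edge vectors $v_1,v_2$, observe that $|\det P|$ is the intrinsic lattice invariant $d=|\det[v_1\mid v_2]|$ (independent of the choice of integer basis, since such bases differ by $\GL(2,\z)$), and deduce from Cramer's rule that $[e_1\mid e_2]=[v_1\mid v_2]P^{-1}$ confines $e_1,e_2$ to a compact region depending only on $T$ and $M$. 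Both are valid; yours is cleaner in that the controlling constant $d$ is intrinsic to $T$ and $\Lambda$ rather than to an auxiliary inscribed triangle and a choice of reference coordinates, and it sidesteps the paper's (mildly awkward) step of first bounding $S^{-1}$ and then transferring the bound to $S$ via the adjugate. The treatment of the translational part $O$ is essentially identical in both.
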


\begin{proof}
Fix some integer basis $(O,e_1,e_2)$ of the plane $\pi$.
Consider another integer basis $(O',e_1',e_2')$ of $\pi$ satisfying the condition of the proposition.
Let $S$ be a transition matrix from $e_1, e_2$ to $e_1', e_2'$.

\vspace{1mm}

Let us show that the elements of the matrix $S$ are all bounded.
There exists a real number $\varepsilon>0$ such that the triangle $T$ contains a triangle $T_\varepsilon$
with vertices
$$
v_1, \quad v_2=v_1+(\varepsilon,0), \quad v_3=v_1+(0,\varepsilon).
$$
Let $B$ be the matrix of vectors $v_2-v_1$ and $v_3-v_1$.
It is clear that
$$
B=\varepsilon
\left(
\begin{array}{cc}
1&0\\
0&1\\
\end{array}
\right).
$$
Then after the change of coordinates the new coordinates of the differences of vectors will be
$$
S^{-1}B=\varepsilon S^{-1}.
$$
In case if some element of $S^{-1}$ are greater than
$\frac{2M}{\varepsilon}$,
then at least one of the coordinates of $T_\varepsilon$ is greater than $M$.
Hence at least one of the coordinates of $T$ is greater than $M$.

So the absolute values of the elements of $S^{-1}$
do not exceed $\frac{2M}{\varepsilon}$.
Since $\det S=1$, the elements of $S$ do not exceed $\frac{2M}{\varepsilon}$ as well.
There are only finitely many integer matrices with bounded coefficients from above.

\vspace{1mm}

Therefore, only finitely many bases satisfy the condition of the proposition up to shifts on integer vector.

\vspace{2mm}

Let now $(O',e_1',e_2')$ and $(O'',e_1',e_2')$ satisfy the condition of the proposition.
Then the coordinates  of $O''$ in the basis $(O',e_1',e_2')$ are bounded by $2M$.
Thus for a fixed $e_1'$ and $e_2'$
there are only finitely many choices of $O'$ such that $(O',e_1',e_2')$ satisfies the condition of the proposition.

\vspace{2mm}

Therefore, only finitely many bases satisfy the condition of the proposition.
\end{proof}

\begin{proposition}\label{bounded-coords}
Consider a separating state
$$
s=\big(\xi=(x_0,y_0,1),\nu_1=(x_1,y_1,1),\nu_2=(x_2,y_2,1)\big).
$$
Let
$$
\sin\alpha(s)>\varepsilon \quad \hbox{and} \quad
\vol\big((0,0,0),\xi,\nu_1,\nu_2\big)<M.
$$
Then
$$
|x_0|, |y_0|, |x_1|, |y_1|, |x_2|,|y_2|<\frac{6\sqrt{2}M}{\varepsilon}.
$$
\end{proposition}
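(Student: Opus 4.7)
The starting point is the vector triple product identity
$$
(\xi\times\nu_1)\times(\xi\times\nu_2) \;=\; \det(\xi,\nu_1,\nu_2)\,\xi,
$$
a consequence of the BAC--CAB rule together with $(\xi\times\nu_1)\cdot\xi=0$. Taking norms and recalling that $6\,\vol=|\det(\xi,\nu_1,\nu_2)|$ (and that the angle between the normals $\xi\times\nu_1$ and $\xi\times\nu_2$ is precisely the dihedral angle $\alpha(s)$) gives the identity
$$
\sin\alpha(s)\cdot|\xi\times\nu_1|\cdot|\xi\times\nu_2| \;=\; 6\,\vol\,|\xi|.
$$
Combined with $\sin\alpha(s)>\varepsilon$ and $\vol<M$ this yields the key inequality
$$
|\xi\times\nu_1|\cdot|\xi\times\nu_2| \;<\; \frac{6M\,|\xi|}{\varepsilon}. \qquad(\ast)
$$

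The next ingredient is the explicit componentwise expansion
$$
|\xi\times\nu_i|^2 \;=\; (x_0-x_i)^2+(y_0-y_i)^2+(x_0y_i-x_iy_0)^2,
$$
together with the separation hypothesis (as extracted in the proof of Proposition~\ref{area-bounded-alpha}): $x_0,y_0\ge 1$, and each $\nu_i$ has at least one non-positive first-two coordinate.  Each of the three components contributes a one-sided lower bound: for example, if $x_i\le 0$ then $(x_0-x_i)^2 \ge x_0^2 + x_i^2$, and analogously when $y_i\le 0$; in mixed configurations the third term $(x_0 y_i - x_i y_0)^2$ contributes an asymmetric quantity of the order $(x_0|y_i|+y_0|x_i|)^2$.

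The bulk of the proof is then a case analysis on the four possible sign patterns of $(x_1,y_1,x_2,y_2)$.  In each case the appropriate componentwise lower bounds are substituted into $(\ast)$, and combined with the elementary estimate $|\xi|^2 = x_0^2+y_0^2+1 \le 2(x_0^2+y_0^2)$ (valid since $y_0\ge 1$ gives $1\le y_0^2$).  The resulting quadratic inequality in $s:=x_0^2+y_0^2$ then forces
$$
s \;<\; \frac{72\,M^2}{\varepsilon^2},
$$
from which $|x_0|,|y_0| \le \sqrt{s} < \frac{6\sqrt{2}\,M}{\varepsilon}$.  A parallel argument, isolating bounds in terms of $|x_i|$ and $|y_i|$ from the same expansion (using the other side of the inequalities $(x_0-x_i)^2\ge x_i^2$, etc.), produces the same bound for $|x_i|$ and $|y_i|$ with $i=1,2$.

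The main obstacle lies in the ``crossed'' sign pattern, say $x_1<0\le y_1$ together with $y_2<0\le x_2$: the first two components of the cross products only give the weak lower bound $|\xi\times\nu_1|\cdot|\xi\times\nu_2|\ge x_0y_0$, which is insufficient on its own.  In this case one must invoke the third-coordinate terms $x_0y_i - x_iy_0$, which in this configuration take the form $x_0y_1+y_0|x_1|$ and $x_0|y_2|+y_0 x_2$; the asymmetric positive contribution they provide is precisely what is needed to close the quadratic estimate in this final case.
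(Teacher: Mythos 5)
Your identity $(\xi\times\nu_1)\times(\xi\times\nu_2)=\det(\xi,\nu_1,\nu_2)\,\xi$ and the resulting equality $\sin\alpha\cdot|\xi\times\nu_1|\cdot|\xi\times\nu_2|=6\vol\cdot|\xi|$ are correct, and they are exactly the content of the paper's argument in a different dress: since $\dist(\nu_2,\ell)=|\xi\times\nu_2|/|\xi|$ and $\area(T_1)=\tfrac12|\xi\times\nu_1|$, the paper's two lines $\dist(\nu_2,\pi_1)=\dist(\nu_2,\ell)\sin\alpha$ and $\area(T_1)\cdot\dist(\nu_2,\pi_1)=3\vol$ condense to your relation. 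The difference is in how it is used. The paper is asymmetric: using only the fact that $\nu_2$ has a negative coordinate, it proves $\dist(\nu_2,\ell)\ge\tfrac{\sqrt2}{2}$, i.e.\ $|\xi\times\nu_2|\ge|\xi|/\sqrt2$; combined with $\vol<M$ and $\sin\alpha>\varepsilon$ this cancels $|\xi|$ and yields a single \emph{upper} bound $|\xi\times\nu_1|<C$, after which $|x_0-x_1|$, $|y_0-y_1|$, $|x_0y_1-y_0x_1|<C$ and a short two-case argument (using that $\nu_1$ has a negative coordinate) bound all of $|x_0|,|y_0|,|x_1|,|y_1|$; swapping $\nu_1\leftrightarrow\nu_2$ then bounds $|x_2|,|y_2|$. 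Your route instead tries to lower-bound the full product $|\xi\times\nu_1|\cdot|\xi\times\nu_2|$ by $x_0^2+y_0^2$ directly, which is harder and, as it turns out, not necessary.

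Two steps are asserted rather than proved. In the crossed case one must actually check that the third-coordinate terms close the estimate: for $x_1<0\le y_1$ the sharp lower bound on $|\xi\times\nu_1|^2$ is $x_0^2+\frac{x_0^2y_0^2}{1+x_0^2}$ (minimizing $(y_0-y_1)^2+x_0^2y_1^2$ over $y_1\ge 0$), not $x_0^2+y_0^2$, and one has to verify this suffices when both factors suffer this loss; the computation does go through up to a small additive slop in the stated constant, but it has to be written out. More seriously, the ``parallel argument'' for $|x_i|,|y_i|$ invokes $(x_0-x_i)^2\ge x_i^2$, which is false whenever $x_i>x_0/2$; when $x_i\ge0$ (forcing $y_i<0$ by separation) one must use the determinant component $x_0y_i-y_0x_i$ instead, as the paper does via $|x_i|+|y_i|\le|x_0y_i-y_0x_i|$. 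And to extract from $(\ast)$ an upper bound on $|\xi\times\nu_1|$ alone --- which is what one actually needs to read off $|x_1|,|y_1|$ --- you would have to prove $|\xi\times\nu_2|\ge|\xi|/\sqrt2$ anyway, which is precisely the paper's first step; once that is in place the symmetric lower bound on the product becomes redundant. (Aside: the paper's displayed line $\area(T_1)=|\xi\times\nu_1|$ drops a factor $\tfrac12$, so its own constant $6\sqrt2M/\varepsilon$ appears to be off by a factor of two from what its proof gives; the exact constant is immaterial to the application.)
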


\begin{proof}
Let us first estimate the distance from the point $\nu_2$ to the line $\ell$
spanned by $(0,0,0)$ and $\xi$.
Note that $\nu_2$ has either $x_2<0$ or $y_2<0$ as the state $s$ is separating.
Hence the minimum of the distance does not exceed the minimum between the line $\ell$ and the union of lines
$(t,0,1)$ and $(0,t,1)$ with parameter $t$.
Projecting along these lines (and keeping in mind that $x_0>y_0>1$) we get the estimate
$$
\dist(\nu_2,\ell)\ge \frac{\sqrt{2}}{2}.
$$
(Here and below we denote by $\dist(S_1,S_2)$ the Euclidean distance between $S_1$ and $S_2$.)
Denote by $\pi_1$ the plane spanned by $(0,0,0)$, $\xi$, and $\nu_1$.
We have
$$
\dist(\nu_2,\pi_1)=\dist(\nu_2,\ell)\sin\alpha(s)> \frac{\sqrt{2}\varepsilon}{2}.
$$
Let us estimate now the area of the triangle $T_1$ spanned by $(0,0,0)$, $\xi$, and $\nu_1$:
$$
\area(T_1)=3\frac{\vol\big((0,0,0),\xi,\nu_1,\nu_2\big)}{\dist(\nu_2,\pi_1)}<\frac{3\sqrt{2}M}{\varepsilon}.
$$
From the vector product formula we have:
$$
\area(T_1)=|\xi\times\nu_1|=|(x_0y_1-y_0x_1,y_0-y_1,x_1-x_0)|.
$$
Denote $C=3\sqrt{2}M/\varepsilon.$
It is clear that
$$
|x_0y_1-y_0x_1|<C, \qquad |x_0-x_1|<C, \qquad |y_0-y_1|<C.
$$

\vspace{1mm}

Assume that $x_1$ and $y_1$ are negative. Then we immediately have
$$
|x_0|, |y_0|, |x_1|, |y_1|<C
$$
as $x_0$ and $y_0$ are positive.

\vspace{1mm}

Assume now that one of $x_1$ and $y_1$ is non-negative. Therefore,
$$
|x_1|+|y_1|\le |x_0y_1|+|y_0x_1|=|x_0y_1-y_0x_1|<C.
$$
(the first inequality holds as $x_0>y_0>1$).
Hence  $x_1<C$ and $y_1<C$, and, therefore, $x_0<2C$ and $y_0<2C$.

\vspace{1mm}

The case of positive $x_1$ and $y_1$ is empty for separating states.

\vspace{1mm}

Therefore,
$$
|x_0|, |y_0|, |x_1|, |y_1|<\frac{6\sqrt{2}M}{\varepsilon}.
$$

\vspace{2mm}

The proof of the fact that $|x_2|, |y_2|<6\sqrt{2}M/\varepsilon$ repeats the above proof
of $|x_1|, |y_1|<6\sqrt{2}M/\varepsilon$.
It is omitted here.
\end{proof}

\subsection{Proof of Theorem~\ref{eventually-periodic}$($i$)$: finiteness of  $\Omega_\varepsilon(\xi;\nu_1,\nu_2)$}\label{Theorem-i}

Let us prove finiteness of the set $\Omega_\varepsilon(\xi;\nu_1,\nu_2)$ for an arbitrary positive $\varepsilon$.

\vspace{2mm}

\noindent{
{\it Proof of Theorem~\ref{eventually-periodic}$($i$)$.}
Without loss of generality we assume that we are given by a separating
$\xi$-state $\hat s=(\hat\xi,\hat\nu_1, \hat \nu_2)$
(one should swap coordinate vectors and normalize vectors $(\xi,\nu_1,\nu_2)$ such that the last coordinates
of the vectors are all units).

Let $\alpha(\hat s)>\varepsilon$, then by Proposition~\ref{area-bounded-alpha}
we have
$$
\vol\big((0,0,0),\hat \xi,\hat \nu_1,\hat \nu_2\big)>\frac{1}{12}\varepsilon.
$$
Hence by Proposition~\ref{finiteness-unit-distance-planes-2}
the number of integer base planes $\pi$ on the unit integer distance
to the origin satisfying
$$
\vol\big((0,0,0),\hat\xi,\hat \nu_1,\hat\nu_2\big)>\frac{1}{12}\varepsilon
$$
is finite up to the action of the positive Dirichlet group $\Xi_+(C)$.
}

Let us now fix one of such planes $\pi$.
(Here we would like to note that the planes of the same orbit of $\Xi_+(C)$
have the same set of separating states. So it is sufficient to consider only one plane for each
of the orbits.)
Let
$$
\tilde s=\big(\xi=(\tilde x_0,\tilde y_0,1),\nu_1=(\tilde x_1,\tilde y_1,1),\nu_2=(\tilde x_2,\tilde y_2,1)\big).
$$
be one of the separating $\xi$-states in an integer basis where $\pi$ is given by $z=1$.
By Proposition~\ref{frozen-vertex}$($ii$)$
there exists a uniform real constant $M_1$ such that
$$
\vol\big((0,0,0),\tilde\xi,\tilde\nu_1,\tilde\nu_2\big)<M_1.
$$
Hence by Proposition~\ref{bounded-coords} we have
$$
|\tilde x_0|, |\tilde y_0|, |\tilde x_1|, |\tilde y_1|<\frac{6\sqrt{2}M_1}{\varepsilon}.
$$
Therefore, by Proposition~\ref{bounded-triangle}
there are finitely many integer affine bases $(O,e_1,e_2)$ for the plane $\pi$ in which
all absolute values of the coordinates of vertices of $\tilde \xi$, $\tilde \nu_1$, and $\tilde \nu_2$ are bounded from above.

\vspace{2mm}

Hence, there are finitely many choices of $(O,e_1,e_2)$ such that $\alpha(s)>\varepsilon$.
Therefore, for every $\varepsilon>0$ the set $\Omega_\varepsilon(\xi;\nu_1,\nu_2)$ is finite.
\qed

\section{Formulation of  Theorem~\ref{greater e}, its equivalence to Theorem~\ref{eventually-periodic}$($ii$)$}\label{Theorem-ii}

In this section we formulate
Theorem~\ref{greater e} and show its equivalence to Theorem~\ref{eventually-periodic}$($ii$)$.
We prove~\ref{greater e}  later  in Sections~\ref{teor-v}--\ref{theor-iii-iv}.

\vspace{2mm}

%

\myexample{Preliminary set-up}
{
Let $A\in GL(3,\z)$ be a matrix with irreducible over $\q$ characteristic polynomial
with three real eigenvalues. Let $\xi$ be one of its eigenvectors,
and let $\nu_1,\nu_2$ be the other two eigenvectors.
Let us prove that $\Omega_{\max}(\xi;\nu_1,\nu_2)$ is finite.
In another words,
there are finitely many separating $\xi$-states satisfying
$$
\sin^2\alpha(s)>\sin^2\alpha(\Phi(s)).
$$

\vspace{2mm}

Consiuder a separating $\xi$-state $s$ and  let
$$
s=(\tilde \xi(x_0,y_0,1),\tilde\nu_1(x_1,y_1,1),\tilde\nu_2(x_2,y_2,1)).
$$

Let $T(s)$ denotes the triangle with vertices $\tilde\xi$, $\tilde\nu_1$, and $\tilde\nu_2$ (note that they are all
in the plane $z=1$).

Let the line $\tilde\xi \tilde\nu_i$ intersects the coordinate axes at points denoted by
$$
P_i(0,p_i,1), \quad \hbox{and} \quad Q_i(q_i,0,1)
$$
for $i=1,2$.

\vspace{2mm}

Since the state $s$ is separating,
the edges $\tilde\xi \tilde\nu_1$ and $\tilde\xi\tilde\nu_2$ intersect the boundary of the positive quadrant of the plane $z=1$.
Let us denote these points of intersections by $P$ and $Q$.
Then for the points $P$ and $Q$ (up to their swap) we have the following three cases.

\vspace{1mm}

\begin{itemize}
\item Case I: $M=(0,m,1)$ and $N=(0,n,1)$ where $m>n\ge 0$.

\item Case II: $M=(m,0,1)$ and $N=(0,n,1)$ where $m,n>0$.

\item Case III: $M=(m,0,1)$ and $N=(n,0,1)$ where $n>m>0$.
\end{itemize}

}

\vspace{2mm}

Let us now reformulate Theorem~\ref{eventually-periodic}$($ii$)$ in terms of the above settings.

\begin{theorem}\label{greater e}
The following five statements hold:

\vspace{1mm}
{\noindent $($i$)$}
For every $\varepsilon>0$ there are finitely many elements in $\Omega_{\max}$ satisfying
$$
\min(|p_2-p_1|,|q_2-q_1|)\ge\varepsilon.
$$

{\noindent $($ii$)$}
Let now $\varepsilon\le 0.05$  and
$$
\min(|p_2-p_1|,|q_2-q_1|)<\varepsilon.
$$
Then we have:

--- either $|p_2-p_1|<20\varepsilon$ and $p_1,p_2>-1.1$,

--- or $|q_2-q_1|<\varepsilon$ and $q_1,q_2>-0.1$.

--- or  $p_1,p_2\le -1.05$  and for $i$ with minimal $q_i$ it holds:
$$
q_i\ge \frac{q_i}{|p_i|}+1.
$$

{\noindent $($iii$)$}
There exists $\varepsilon_1>0$ $($that depends entirely on $\xi$$)$ such that if
$$
|p_2-p_1|<\varepsilon_1 \quad \hbox{and} \quad p_1,p_2>-1.1
$$
then the corresponding state $s$ is not in $\Omega_{\max}$.

\vspace{1mm}
{\noindent $($iv$)$}
There exists $\varepsilon_2>0$ $($that depends entirely on $\xi$$)$ such that if
$$
|q_2-q_1|<\varepsilon_2 \quad \hbox{and} \quad q_1,q_2>-0.1
$$
then the corresponding state $s$ is not in $\Omega_{\max}$.

\vspace{1mm}
{\noindent $($v$)$}
If $p_1,p_2\le -1.05$  and for $i$ with minimal $q_i$ it holds:
$$
q_i\ge \frac{q_i}{|p_i|}+1.
$$
Then the corresponding state $s$ is not in $\Omega_{\max}$.
\end{theorem}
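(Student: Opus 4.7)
The overall plan is that parts (i)--(v) together partition all separating $\xi$-states, so their conclusions imply finiteness of $\Omega_{\max}$: (i) gives finiteness of the ``bulk'' regime where both $|p_1-p_2|$ and $|q_1-q_2|$ are bounded below, (ii) exhibits a trichotomy for the remaining ``thin'' regime, and (iii)--(v) show each of the three sub-cases is incompatible with $s\in\Omega_{\max}$ once the threshold is small enough. For (i), I would first derive the closed-form relation
\[
\sin\alpha(s)=\frac{|x_0|\sqrt{x_0^2+y_0^2+1}\,|p_1-p_2|}{|n_1|\,|n_2|},
\]
where $n_i=\tilde\xi\times(0,p_i,1)=(y_0-p_i,\,-x_0,\,x_0p_i)$ is a normal of the plane through the origin, $\tilde\xi$, and $\tilde\nu_i$, with a dual identity in terms of $|q_1-q_2|$ obtained by projecting onto the $x$-axis instead. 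The two identities together force $\sin\alpha(s)$ to be bounded below by a positive constant depending only on $\varepsilon$ and $\tilde\xi$ whenever $\min(|p_1-p_2|,|q_1-q_2|)\ge\varepsilon$ (using the $p$-form where the $q_i$ escape to infinity and vice versa). Theorem~\ref{eventually-periodic}(i) then gives the desired finiteness.

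Part (ii) is pure geometric case analysis. The separating condition forces the two edges $\tilde\xi\tilde\nu_i$ to exit the positive quadrant of $\{z=1\}$ through the positive $x$-axis or positive $y$-axis, producing the three cases I, II, III of the set-up. In each case the line equation $x_0/q_i+y_0/p_i=1$ lets one convert a smallness hypothesis on $|q_1-q_2|$ into one on $|p_1-p_2|$, and the explicit numerical constants ($20\varepsilon$, $-1.1$, $-1.05$, $-0.1$) track this algebra while respecting the sign constraints imposed by separation.

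Parts (iii), (iv), (v) form the analytic heart. In each one must exhibit an admissible JP-transformation $M\in\mathcal{M}_\xi$ such that $\sin^2\alpha(M(s))>\sin^2\alpha(s)$, which rules out $s\in\Omega_{\max}$ since $\Phi$ chooses the maximizer. For (iii), with $p_1\approx p_2$ and both mildly negative, a shear $V_{\alpha,\beta,\gamma}$ that brings the basis close to the invariant plane through $\tilde\xi$ should strictly open up the angle $\alpha$; part (iv) is the dual argument in the $q$-variable. Part (v) is precisely engineered so that the exchange transformation $W$ is admissible (one checks $z>x-y>0$ follows from $p_i\le-1.05$ combined with the slope condition), and $W$ then strictly increases $\sin^2\alpha$. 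In each case the desired inequality reduces, after clearing denominators, to a polynomial inequality on the explicit region cut out by the hypothesis; this is presumably what the MAPLE2020 computation carries out.

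The main obstacle will be (v): the coupled hypothesis $q_i\ge q_i/|p_i|+1$ must be matched exactly to the behavior of $W$, and the resulting polynomial inequality is the most delicate of the four. A secondary difficulty is the uniform choice of $\varepsilon_1,\varepsilon_2$ in (iii) and (iv): because $\tilde\nu_1,\tilde\nu_2$ are determined by $\tilde\xi$ up to algebraic conjugation, the remaining state parameters live in a compact set after the hypothesis is imposed, and a continuity argument on that compact set delivers strictly positive $\varepsilon_i$ depending only on $\tilde\xi$. Explicitly writing and verifying the polynomial inequalities uniformly in the remaining free parameters is the bulk of the work.
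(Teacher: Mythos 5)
Your decomposition of the proof into a finiteness argument (i), a trichotomy (ii), and three ``rule-out'' steps (iii)--(v) matches the paper's architecture, and (ii) is essentially right. But there are two genuine problems.

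The most serious is part (v). You assert that the hypotheses $p_1,p_2\le -1.05$ and $q_i\ge q_i/|p_i|+1$ are ``precisely engineered so that the exchange transformation $W$ is admissible, and $W$ then strictly increases $\sin^2\alpha$.'' This is wrong on both counts. The paper proves (v) via Proposition~\ref{fin-3-5}, which shows that the \emph{shear} $V_{1,0;0}$ increases $\sin^2\alpha$ when $a\ge k+1>1$, $t\ge 1$; the proof of (v) then just sets $a=q_1$, $k=q_1/|p_1|$ and invokes that proposition. Moreover, $W$ is generally \emph{not} admissible under the hypotheses of (v): admissibility of $W$ requires $1>x_0-y_0>0$, but in the parametrization $\xi=(a+tk,t,1)$ with $a\ge k+1\ge 1$ and $k\ge 1$ one has $x_0-y_0=a+t(k-1)\ge a\ge 1$. (The operator $W$ is used elsewhere in the proof, but only in Case~11 of Lemma~\ref{inf-2-1}, which sits inside the proof of (iv), not (v).)

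Part (i) also has a gap. You claim that $\min(|p_1-p_2|,|q_1-q_2|)\ge\varepsilon$ forces a uniform lower bound on $\sin\alpha(s)$, after which Theorem~\ref{eventually-periodic}(i) finishes. But your formula $\sin\alpha=|x_0|\sqrt{x_0^2+y_0^2+1}\,|p_1-p_2|/(|n_1||n_2|)$ shows that $\sin\alpha$ can tend to $0$ as $p_1,p_2$ both grow even with $|p_1-p_2|$ fixed, and the ``use the $q$-form where the $p_i$ escape to infinity'' heuristic does not uniformly rescue this: the needed bound also depends on $x_0,y_0$, which vary across $\xi$-states. The paper does not prove such a uniform bound. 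Instead it combines area and volume bounds (Proposition~\ref{frozen-vertex}(ii), Corollary~\ref{S(T)-finite}) to bound $x_0,y_0$ and the intercepts in bounded regimes, and then crucially shows that the escaping regimes (large $p_1$, large $q_1$, etc.) yield states that are \emph{not in} $\Omega_{\max}$, via the angle-monotonicity Corollaries~\ref{gamma-1-corollary}--\ref{gamma-4-corollary} and Proposition~\ref{fin-3-5}. In other words, the membership $s\in\Omega_{\max}$ is used actively to close (i), not just via the $\sin\alpha$ lower bound; your argument omits this and so does not establish finiteness. The intuition for (iii) and (iv) is broadly right, but ``a single shear opens the angle'' understates the real content: the paper needs a case split into 19 domains with different (partly floor-parameterized) families of operators, precisely because no single $V_{\alpha,\beta;\gamma}$ works uniformly.
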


\begin{remark}\label{equivalence-theorems}
Indeed from the last four items there exist $\varepsilon_1$ and $\varepsilon_2$ such that for
$$
\varepsilon_0=\min(0.05, \varepsilon_1/20,\varepsilon_2)
$$
it holds: if the state $s$ satisfies
$$
\min(|p_2-p_1|,|q_2-q_1|)< \varepsilon_0
$$
then it is not in  $\Omega_{\max}$.
On the other hand the number of states in  $\Omega_{\max}$ satisfying
$$
\min(|p_2-p_1|,|q_2-q_1|)\ge \varepsilon_0
$$
is finite. Therefore Theorem~\ref{eventually-periodic}$($ii$)$ is equivalent to Theorem~\ref{greater e}.
\end{remark}

{\noindent {\bf
The proof of Theorem~\ref{greater e} is organized as follows.}
We start with Item~$($v$)$ in Section~\ref{teor-v}.
The  proof of Item~$($i$)$ is splitted into these three cases
(See Cases~I, II, and~III as above),
it is given  in Section~\ref{teor-i}.
Item~$($ii$)$  is proven in Section~\ref{teor-ii}.
Finally in Section~\ref{theor-iii-iv} we show Items~$($iii$)$ and~$($iv$)$.
Some estimates in the proof were computed in MAPLE2020 (see in~\cite{maple}).
}


\section{Proof of Theorem~\ref{greater e}~$($v$)$}\label{teor-v}

\begin{proposition}\label{fin-3-5}
Let $a\ge k{+}1> 1$, $t\ge 1$.
Then for any $\rho>0$  the operator $V_{1,0;0}$  increases $\sin^2\alpha$
for the $\xi$-state with
$$
\xi=(a+tk,t,1), \qquad Q_1=(a,0,1), \quad   \hbox{and} \quad Q_2=(a+\rho,0,1).
$$
\end{proposition}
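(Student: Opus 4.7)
The plan has four steps: check that $V_{1,0;0}$ is admissible for $\xi$, derive a closed form for $\sin^{2}\alpha$ before and after applying $V_{1,0;0}$, reduce the comparison to an algebraic inequality, and verify it.

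First, the operator $V_{1,0;0}$ corresponds to $(\alpha,\beta,\gamma)=(1,0,0)$ in the formula of Subsection~2.4, so it acts on coordinate triples by $(x,y,z)\mapsto(x-z,\,y,\,z)$. It therefore sends $\xi=(a+tk,t,1)$ to $(a+tk-1,t,1)$ and the axis intercepts $Q_{i}=(q_{i},0,1)$ to $(q_{i}-1,0,1)$. The admissibility inequalities of Proposition~2.17(i) reduce to $1\le\lfloor a+tk\rfloor$, $0\le\lfloor t\rfloor$, $0\le\lfloor(a+tk-1)/t\rfloor$, all immediate from the hypotheses $a\ge k+1>1$ and $t\ge1$; hence $V_{1,0;0}\in\M_{\xi}$.

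Second, since $Q_{i}$ lies on the line $\xi\nu_{i}$, the plane through the origin spanned by $(\xi,\nu_{i})$ coincides with the one spanned by $(\xi,Q_{i})$, so $\sin^{2}\alpha$ can be computed from $\xi,Q_{1},Q_{2}$ alone. Using the identity $(\xi\times Q_{1})\times(\xi\times Q_{2})=\det[\xi,Q_{1},Q_{2}]\,\xi$, direct computation gives $\det[\xi,Q_{1},Q_{2}]=t\rho$, $|\xi\times Q_{1}|^{2}=t^{2}(1+a^{2}+k^{2})$, $|\xi\times Q_{2}|^{2}=t^{2}(1+(a+\rho)^{2})+(tk-\rho)^{2}$, and $|\xi|^{2}=(a+tk)^{2}+t^{2}+1$, yielding
\[
\sin^{2}\alpha(s)=\frac{\rho^{2}\bigl((a+tk)^{2}+t^{2}+1\bigr)}{(1+a^{2}+k^{2})\bigl[t^{2}(1+(a+\rho)^{2})+(tk-\rho)^{2}\bigr]}.
\]
Since $V_{1,0;0}$ translates each of $\xi,Q_{1},Q_{2}$ by $(-1,0,0)$ in the plane $z=1$ while preserving $k,t,\rho$, the corresponding expression for $\sin^{2}\alpha(V_{1,0;0}(s))$ is obtained by the substitution $a\mapsto a-1$.

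Third, denote the numerator factor and the two denominator factors above by $A,B,C$ and their post-transformation counterparts by $A',B',C'$. The desired inequality $\sin^{2}\alpha(V_{1,0;0}(s))>\sin^{2}\alpha(s)$ is equivalent to $A'BC>AB'C'$, which after writing $A=A'+(2(a+tk)-1)$, $B=B'+(2a-1)$, $C=C'+t^{2}(2(a+\rho)-1)$ and expanding becomes
\[
A'B'\,t^{2}\bigl(2(a+\rho)-1\bigr)+A'(2a-1)C'+A'(2a-1)t^{2}\bigl(2(a+\rho)-1\bigr)>B'C'\bigl(2(a+tk)-1\bigr).
\]
The qualitative reason this inequality holds is that $a-1\ge k$ and $t\ge 1$ together imply $(a+tk-1)^{2}+t^{2}\ge(a-1)^{2}+k^{2}$, so $A'\ge B'$ (and an analogous comparison holds between $A'$ and $C'$ after clearing the common $t^{2}$); consequently the first two positive terms on the left are each comparable to the single right-hand term, while the third is pure slack.

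The principal obstacle is the detailed verification of this polynomial inequality across the whole four-parameter range $a\ge k+1>1$, $t\ge1$, $\rho>0$. Fully expanded, the inequality becomes a dense polynomial in $a,k,t,\rho$, and delicate sign tracking is required in limiting regimes (in particular $\rho$ close to $0$, and the boundary case $a=k+1$, $t=1$). In line with the paper's practice, I would perform the final step symbolically in MAPLE2020, computing $A'BC-AB'C'$, substituting $a=k+1+\tilde a$ and $t=1+\tilde t$ with $\tilde a,\tilde t\ge 0$, and verifying that each resulting monomial in $\tilde a,\tilde t,k,\rho$ has a non-negative coefficient.
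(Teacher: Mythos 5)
Your setup is sound: the observation that $V_{1,0;0}$ acts on the plane $z=1$ by $(x,y)\mapsto(x-1,y)$, so that in your parametrization the transformation is exactly the substitution $a\mapsto a-1$ with $k,t,\rho$ fixed, is correct and is a genuine simplification. Your closed form for $\sin^2\alpha$ also checks out. However, the paper does not attack the discrete inequality $A'BC>AB'C'$ at all. It instead passes to the one-parameter family $V_{\varepsilon,0;0}$ for $\varepsilon\in[0,1]$, sets $Q(a,k,t,\varepsilon,\rho)=\sin^2\alpha(V_{\varepsilon,0;0}(s))/\sin^2\alpha(s)$, and proves $\partial Q/\partial\varepsilon>0$ on the whole range by exhibiting a clean factorization of the derivative (a positive denominator, a positive quadratic prefactor, and a polynomial $p$ shown positive after the substitution $a=k+\nu$), then integrates from $\varepsilon=0$ to $\varepsilon=1$. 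That calculus trick is precisely what makes the paper's verification tractable: the derivative factors, whereas your direct difference does not obviously do so.

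The genuine gap in your proposal is the last step. After reducing to
\[
A'B'\Delta_C + A'\Delta_B C' + A'\Delta_B\Delta_C > \Delta_A B'C',
\]
you do not prove it. The "qualitative reason'' ($A'\ge B'$ and an alleged analogous comparison with $C'$) does not constitute an argument — it does not by itself bound the single right-hand term by the left-hand ones, and the comparison of $A'$ with $C'$ "after clearing the common $t^2$'' is not spelled out and is not literally of the same shape because $C'$ carries the cross term $(tk-\rho)^2$. Your fallback — substitute $a=k+1+\tilde a$, $t=1+\tilde t$ and hope every monomial coefficient is non-negative — is stated as a plan, not a verified fact, and there is no a priori reason a polynomial inequality that is true must have all non-negative coefficients in some shifted variables; such checks routinely fail even for true inequalities. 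In a proposition whose entire content is this inequality, leaving it unverified (or contingent on a structural conjecture about the expansion) means the proof is not complete. If you want to salvage the direct route, you should either carry out the expansion and exhibit the non-negative decomposition explicitly, or adopt the paper's differential strategy, where the factored derivative makes the sign analysis genuinely elementary.
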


\begin{proof}
First of all note that $V_{1,0;0}$ sends the whole set of admissible states to the positive octant.

\vspace{2mm}

Denote
$$
Q(a,k,t,\varepsilon,\rho)=\frac{\sin^2\alpha(V_{\varepsilon,0;0}(s))}{\sin^2\alpha(s)}.
$$

Let us prove that for $a\ge k>0$ and $t\ge 1$ the derivative
$$
\frac{\partial Q(a,k,t,\varepsilon,\rho)}{\partial \varepsilon}>0.
$$

Note that
$$
\begin{array}{l}
\frac{\partial Q(a,k,t,\varepsilon,\rho)}{\partial \varepsilon}=
\frac{2(a^2t + a\rho t + k^2t - k\rho + t)p(a,k,t,\varepsilon,\rho)}
{(k^2 t^2 + 2 a k t + a^2 + t^2 + 1) (a^2 + k^2 + 1) (a^2 t^2 + 2 a \rho t^2 + k^2 t^2 + \rho^2 t^2 - 2 k \rho t + \rho^2 + t^2)}
\end{array},
$$
where
$$
\begin{aligned}
p(a,k,t,\varepsilon,\rho)=&
2 a k^2 t^3 + k^2 \rho t^3 + 3 a^2 k t^2 + a k \rho t^2 - k^3 t^2 +a^3 t - a k^2 t + 2 a t^3
\\&
 + k^2 \rho t + \rho t^3 + a k \rho - k t^2 + a t + \rho t.
\end{aligned}
$$

It is clear that the factors in the denominator of the expression for $\frac{\partial Q}{\partial \varepsilon}$ are positive
as they are the sums of squares. Further, since $t\ge 1$, $a\ge k>0$,
and $\rho>0$ we have
$$
a^2t + a\rho t + k^2t - k\rho + t>a\rho t  - k\rho \ge a\rho   - k\rho \ge 0.
$$
Finally, let us substitute $a=k+\nu$ (here $\nu\ge 0$) to $p(a,k,t,\varepsilon,\rho)$.
We have
$$
\begin{aligned}
p(k+\nu,k,t,\varepsilon,\rho)=&
2 k^3 t^3 + 2 k^2 \nu t^3 + k^2 \rho t^3 + 2 k^3 t^2 + 6 k^2 \nu t^2 + k^2 \rho t^2 + 3 k \nu^2 t^2 + k \nu \rho t^2 +
 \\&
 2 k^2 \nu t + k^2 \rho t + 3 k \nu^2 t + 2 k t^3 + \nu^3 t + 2 \nu t^3 + \rho t^3 + k^2 \rho + k \nu \rho - k t^2 +
 \\& k t + \nu t + \rho t.
\end{aligned}
$$
There is only one negative term in the above expression: $- k t^2$, whose absolute value is always smaller than
for the term  $2 k t^3$. Hence
$$
p(a,k,t,\varepsilon,\rho)>0
$$
for $t\ge 1$, $a\ge k>0$ and $\rho>0$.

Thus for $t\ge 1$, $a\ge k>0$ and $\rho>0$ we have
$$
\frac{\partial Q(a,k,t,\varepsilon,\rho)}{\partial \varepsilon}>0.
$$

Integrating the last expression we have: for $t\ge 1$, $a> k+1\ge 1$ and $\rho>0$ it holds
$$
Q(a,k,t,1,\rho)>0,
$$
which implies the fact that the operator $V_{1,0;0}$  increases $\sin^2\alpha$.
(see Theorem-4.1-v.mw in~\cite{maple} for the details of computations).
\end{proof}

\vspace{2mm}

{
\noindent{\it Proof of Theorem~\ref{greater e}$($v$)$.}
Without loss of generality we assume that $|p_1|<|p_2|$ and as a consequence $q_1<q_2$.
Then in the notation of Theorem~\ref{greater e}~$($v$)$ applied to Proposition~\ref{y-axes} we have:
$$
a=q_1, \quad \hbox{and} \quad k=\frac{q_1}{|p_1|}
$$
From assumption of Theorem~\ref{greater e}$($v$)$ we have
$$
a=q_1\ge\frac{q_1}{|p_1|}+1=k+1
$$
it is also clear that $k>0$. Hence we have all assumptions of Proposition~\ref{fin-3-5}.
Hence, the operator $V_{1,0;0}$  increases $\sin^2\alpha$ for such $\xi$-states.
Therefore, $\Omega_{\max}$ does not contain states satisfying the assumption of Theorem~\ref{greater e}~$($v$)$.
This concludes the proof of Theorem~\ref{greater e}$($v$)$.
\qed}


\section{Proof of Theorem~\ref{greater e}~$($i$)$}\label{teor-i}
We start with several preliminary statements in Subsection~\ref{Several preliminary statements}
We consider Cases~I, II,  and~III in Subsubsections~\ref{CaseI}, \ref{CaseII} and~\ref{CaseIII} respectively.
The proof of Theorem~\ref{greater e}~$($i$)$ is concluded in Subsubsection~\ref{CaseI-III}.

\subsection{Several preliminary statements}
\label{Several preliminary statements}

In this subsection we collect
some important infinitesimal properties of angles between planes for several families of pairs of planes in $\r^3$.

For the next proposition we set the following notation.

$$
\begin{array}{l}
f(b,c,\lambda,t)=(b^2-\lambda^2)t^2+2c\lambda^2t+1-(c^2\lambda^2-\lambda^2);\\
g(b,c,\lambda,t)=
(b^4-b^2\lambda^2+b^2-\lambda^2)t^3+
(-3b^2c+3c\lambda^2)t^2\\
\qquad\qquad \qquad
+(b^2c^2\lambda^2+2b^2c^2+b^2\lambda^2
-3c^2\lambda^2+b^2-\lambda^2-1)t
+c^3\lambda^2+c\lambda^2+c.
\end{array}
$$

\begin{proposition}\label{gamma-derivative}
Let $T(t)$ be a tetrahedron with vertices
$$
P_1(t)=(1,\lambda,1),
\quad
P_2(t)=(1,-\lambda,1),
\quad \hbox{and} \quad
P_3(t)=(0,0,1),
\quad
Q(t)=(t-c,bt,0),
$$
where $\lambda>0$, $b$ and $c$ are arbitrary real number, and the parameter $t\in \r$ $($see Figure~\ref{p.6}$)$.
Denote by $\gamma(b,c,\lambda,t)$ the angle between the planes
$Q(t)P_1(t)P_3(t)$ and $Q(t)P_2(t)P_3(t)$.
Then we have
$$
\sgn\left(\frac{\partial (\sin^2 \gamma)}{\partial t}\right)=-\sgn(fg).
$$
\end{proposition}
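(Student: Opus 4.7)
The plan is to express $\sin^2\gamma(t)$ as an explicit rational function of $t$, differentiate, and identify the numerator of the derivative with $-K\cdot f\cdot g$ for some strictly positive prefactor $K$. First I will compute normals to the two planes via cross products. Setting $v_3 := P_3 - Q(t) = (c-t,\, -bt,\, 1)$ and $v_i := P_i - Q(t)$ for $i=1,2$, the vector $\vec n_i := v_3 \times v_i$ is normal to the plane $Q(t)P_3(t)P_i(t)$, and a direct computation gives
\[
\vec n_1 = \bigl(-\lambda,\; 1,\; c\lambda + t(b-\lambda)\bigr), \qquad \vec n_2 = \bigl(\lambda,\; 1,\; -c\lambda + t(b+\lambda)\bigr).
\]

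Writing $A$ and $B$ for the third coordinates of $\vec n_1$ and $\vec n_2$, I have $A - B = 2\lambda(c-t)$ and $A + B = 2bt$, so the Lagrange identity $\sin^2\gamma = |\vec n_1 \times \vec n_2|^2/(|\vec n_1|^2 \, |\vec n_2|^2)$ will yield
\[
\sin^2\gamma(t) = \frac{N(t)}{D(t)},\quad N(t) := 4\lambda^2\bigl((c-t)^2 + b^2 t^2 + 1\bigr),\quad D(t) := (\lambda^2 + 1 + A^2)(\lambda^2 + 1 + B^2).
\]
Since $D(t) > 0$ for all $t$, the sign of $\partial_t \sin^2\gamma$ equals the sign of the polynomial $R(t) := N'(t)D(t) - N(t)D'(t)$. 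Using $A^2 + B^2 = (A+B)^2 - 2AB$ together with the identity $AB = (b^2-\lambda^2)t^2 + 2c\lambda^2 t - c^2\lambda^2$, I will expand $D(t)$ as a quartic and $R(t)$ as a polynomial of degree at most $5$ in $t$ with coefficients in $\z[b,c,\lambda]$. This matches the degree $\deg(fg) = 2 + 3 = 5$ on the nose, making divisibility a priori plausible.

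The final step will be to verify by symbolic computation in MAPLE the polynomial identity
\[
R(t) \;=\; -\,K(b,c,\lambda)\cdot f(b,c,\lambda,t)\cdot g(b,c,\lambda,t),
\]
where $K(b,c,\lambda)$ is a manifestly positive expression (expected to be a small numerical constant times a sum of squares involving $\lambda^2$ and $1$). Once this identity is confirmed the proposition is immediate: $\sgn(\partial_t \sin^2\gamma) = \sgn(R) = -\sgn(fg)$.

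The main obstacle is exactly this final factorization. The degree count makes divisibility by $f\cdot g$ plausible, but neither the vanishing of $R$ on $\{f = 0\}$ nor on $\{g = 0\}$ has an obvious geometric explanation, and once one expands $R$ the resulting polynomial has total degree roughly $6$ in $(b, c, \lambda)$ with dozens of monomials; checking both the factorization and the sign-definiteness of the leftover prefactor $K$ is therefore not practical by hand. This is precisely why the author notes that the supporting identities in this section were verified in MAPLE2020.
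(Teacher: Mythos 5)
Your proposal follows essentially the same route as the paper: compute normals via cross products, obtain $\sin^2\gamma=N(t)/D(t)$ (your $N,D$ match the paper's numerator and denominator exactly, with your $A,B$ being the linear factors the paper writes out in expanded form), then differentiate and confirm by symbolic computation that the numerator of $\partial_t\sin^2\gamma$ factors as $-8\lambda^2\,f g$. The paper records this last identity directly as $\partial_t(\sin^2\gamma)=-8\lambda^2 fg/h^2$; your prefactor $K$ is just $8\lambda^2$, which is manifestly positive since $\lambda>0$.
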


\begin{figure}
\centering
 \includegraphics{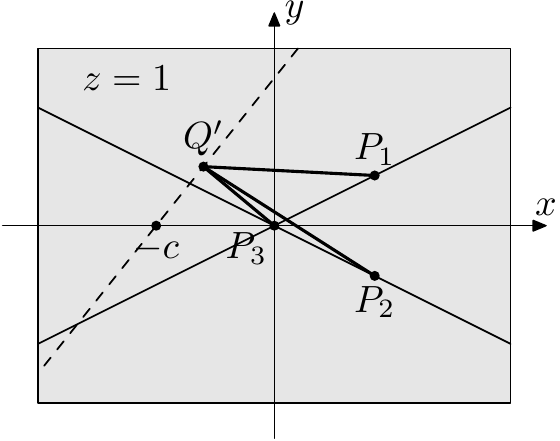}
\caption{The projection of $P_1P_2P_3Q$ to the plane $z=1$ along $z$-axis.
Here $Q'$ is the projection of $Q$, it
evolves along the dashed line with respect to parameter $t$.}
\label{p.6}
\end{figure}

\begin{remark}
Note that the points $P_1$ and $P_2$ can be chosen arbitrarily on the punctured lines parametrically defined by
$\{(u,\pm \lambda u, 1)| u\in \r\setminus \{0\}\}$
respectively. This will not change the values of the angle $\gamma$, which are always in $[0,\pi/2]$.
\end{remark}

\begin{proof}
Since
$$
\sin\gamma=\frac{|(QP_1\times QP_3)\times(QP_2\times QP_3)|}{|QP_1\times QP_3|\cdot|QP_2\times OP_3|},
$$
we have
$$
\begin{array}{l}
\displaystyle
\sin^2\gamma(t)=
\frac {4{\lambda}^{2} \left( {b}^{2}{t}^{2}+{c}^{2}-2\,ct+{t}^{2}+
1 \right) }{ \left( {b}^{2}{t}^{2}+2\,bc\lambda\,t-2\,b\lambda\,{t}^
{2}+{c}^{2}{\lambda}^{2}-2\,c{\lambda}^{2}t+{\lambda}^{2}{t}^{2}+{
\lambda}^{2}+1 \right)}
\\
\displaystyle
\times\frac  {1}{\left( {b}^{2}{t}^{2}-2\,bc\lambda\,t+2\,b
\lambda\,{t}^{2}+{c}^{2}{\lambda}^{2}-2\,c{\lambda}^{2}t+{\lambda}
^{2}{t}^{2}+{\lambda}^{2}+1 \right). }
\end{array}
$$
Denote the denominator of this expression by $h(b,c,\lambda,t)$.
Direct computations show that
$$
\frac{\partial(\sin^2\gamma)}{\partial t}=-8\lambda^2\frac{fg}{h^2}.
$$
Hence the sign of $-fg$ coincides  the sign of $\frac{\partial(\sin^2\gamma)}{\partial t}$. This concludes the proof.
\end{proof}

\begin{corollary}\label{gamma-1-corollary}
For the case $c>0$ let
$$
\left\{
\begin{array}{l}
\lambda\le 1\\
b>\lambda\\
\end{array}
\right.
.
$$
Then for every
$\displaystyle t\in\left[\frac{\lambda c}{b+\lambda},c\right]$
we have
$\displaystyle \frac{\partial (\sin^2 \gamma)}{\partial t}<0$
$($see on Figure~\ref{p.7-8}, left$)$.
\end{corollary}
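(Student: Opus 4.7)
The plan is to invoke Proposition~\ref{gamma-derivative}, which reduces the desired inequality $\partial(\sin^2\gamma)/\partial t<0$ to the polynomial assertion
$$f(b,c,\lambda,t)\cdot g(b,c,\lambda,t)>0\qquad\text{for all } t\in I:=\bigl[\lambda c/(b+\lambda),\,c\bigr].$$
In fact I will establish the stronger claim that each of $f$ and $g$ is strictly positive on $I$, since sign-matching for a product seems fragile here.

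The factor $f$ is the easy one: it is a quadratic in $t$ with positive leading coefficient $b^2-\lambda^2$ (using $b>\lambda$), and its vertex sits at $t=-c\lambda^2/(b^2-\lambda^2)<0$. Hence $f$ is strictly increasing on $[0,\infty)$, and its minimum over $I$ is attained at the left endpoint $t_0=\lambda c/(b+\lambda)$. Substituting and using the defining relation $(b+\lambda)t_0=\lambda c$ produces the clean value $f(b,c,\lambda,t_0)=1+\lambda^2>0$, so $f>0$ throughout $I$.

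The cubic $g$ is the real work. A convenient regrouping is
$$g(t)=(b^2-\lambda^2)\,t\bigl[(b^2+1)t^2-3ct+2c^2\bigr]+(\lambda^2 c^2+\lambda^2+1)\bigl[(b^2-1)t+c\bigr],$$
which makes the endpoint evaluations transparent: at $t=c$ the two inner brackets collapse to $b^2c^2$ and $b^2c$, giving $g(c)=b^2c(b^2c^2+\lambda^2+1)>0$; and using $(b^2-\lambda^2)t_0=(b-\lambda)\lambda c$ together with $(b^2-1)t_0+c=bc(b\lambda+1)/(b+\lambda)$, the left endpoint simplifies to
$$g(t_0)=\frac{bc(\lambda^2+1)}{(b+\lambda)^2}\bigl[2b^2\lambda c^2+(b+\lambda)(b\lambda+1)\bigr]>0.$$

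The main obstacle is then to exclude an interior minimum of $g$ at which $g$ dips below zero. Since $g$ is a cubic with positive leading coefficient, the derivative $g'$ is an upward-opening quadratic and $g$ has at most one local minimum on $\mathbb R$; I would therefore show that the location of that candidate minimum either falls outside $I$ or yields a positive value of $g$. This reduces to a polynomial inequality in $(b,c,\lambda)$ on the region $\{\lambda\le 1,\ b>\lambda,\ c>0\}$, and is precisely where the hitherto-unused hypothesis $\lambda\le 1$ must enter. I plan to discharge it by a MAPLE2020 verification along the lines of~\cite{maple}, consistent with the parallel polynomial estimates already carried out elsewhere in the paper.
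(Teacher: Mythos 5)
Your high-level strategy matches the paper: reduce to the sign of $fg$ via Proposition~\ref{gamma-derivative}, handle $f$ by the quadratic-vertex argument, then show $g>0$. The $f$ part is fine (the minimum on the interval sits at the left endpoint and is strictly positive), and your algebra at both endpoints of $g$ checks out, including the nice factorisation
$g(t_0)=\dfrac{bc(\lambda^2+1)}{(b+\lambda)^2}\bigl[2b^2\lambda c^2+(b+\lambda)(b\lambda+1)\bigr]$.

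The gap is in the interior positivity of $g$, and it is a real one. Your regrouping
$g(t)=(b^2-\lambda^2)\,t\bigl[(b^2+1)t^2-3ct+2c^2\bigr]+(\lambda^2 c^2+\lambda^2+1)\bigl[(b^2-1)t+c\bigr]$
does \emph{not} deliver term-by-term nonnegativity: the bracket $(b^2+1)t^2-3ct+2c^2$ has discriminant $c^2(1-8b^2)$, which is positive whenever $b<1/(2\sqrt2)$ --- a range that is permitted by the hypotheses $\lambda\le 1$, $b>\lambda$, $c>0$ (take, say, $\lambda=0.1$, $b=0.2$). In that regime that quadratic dips negative on a subinterval of $(0,c)$, so positivity of the two brackets does not propagate to $g$ as it stands, and endpoint positivity alone cannot rule out a dip in between. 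You acknowledge this by invoking the cubic-vertex analysis ``to be discharged by a MAPLE2020 verification,'' but that step is not actually carried out, so as written the argument does not reach the conclusion.

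The paper avoids the issue entirely by regrouping $g$ differently, as a sum of summands that are individually nonnegative on all of $[0,c]$:
$$
g= b^4t^3+b^2\lambda^2t+b^2t+\lambda^2b^2t(c^2-t^2)+\lambda^2(c-t)^3 + b^2t(t-c)(t-2c)+(c-t)(1+\lambda^2),
$$
each term being $\ge 0$ for $0\le t\le c$ (the sixth because $(t-c)(t-2c)\ge 0$ there), with at least one strictly positive at every point. That closes the argument with no interior-minimum analysis and no computer assistance, and it also avoids any use of $\lambda\le 1$ in the $g$-step. If you want to salvage your route, you must actually carry out the exclusion of the interior minimum; otherwise, switching to a decomposition of $g$ with sign-definite summands on $[0,c]$ is the cleaner fix.
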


\begin{figure}
\centering
\includegraphics{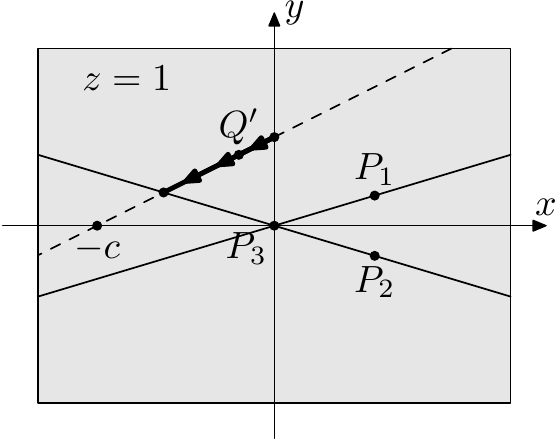}\qquad\qquad\includegraphics{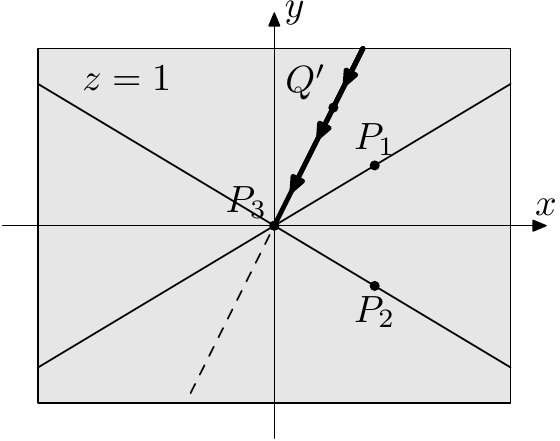}
\caption{Direction of growth for $\gamma$ while projection $Q'$ evolves on plane $z=1$.
Left: case $c>0$. Right: case $c=0$.}
\label{p.7-8}
\end{figure}

\begin{remark}
The value
$$
t=\frac{\lambda c}{b+\lambda}
$$
corresponds to the case $Q'\in P_2P_3$.
The value $t=c$ corresponds to the case when $Q'$ is contained in the $y$-axis, see Figure~\ref{p.7-8}, left.
\end{remark}

\begin{proof}
For the case $c>0$ and $b>\lambda$ the expression for $f$ is a quadratic polynomial in
$t$ with positive coefficients at $t$ and $t^2$:
$$
\left( {b}^{2}-{\lambda}^{2} \right) {t}^{2}+2\,c{\lambda}^{2}t-{c}
^{2}{\lambda}^{2}-{\lambda}^{2}+1.
$$
Hence the minimum of $f$ is attained at $t_0=\frac{-c}{(b^2-\lambda^2)}$;
for $t\ge t_0$ the function $f$ is increasing (in variable $t$).
Both endpoints of the segment $\left[\frac{\lambda c}{b+\lambda},c\right]$
are positive, so the minimum of $f$ on that segment
is attained at the smallest endpoint:
$$
f\Big(\frac{\lambda c}{b+\lambda}\Big)=1-\lambda^2>0.
$$
Therefore, $f$ is positive on $\left[\frac{\lambda c}{b+\lambda},c\right]$.

Let us collect the coefficients of $g$ as follows
$$
\begin{aligned}
g(b,c,\lambda,t)=&
b^4t^3+b^2\lambda^2t+b^2t+\lambda^2b^2t(c^2-t^2)+\lambda^2(c-t)^3\\
&+b^2t(t-c)(t-2c)+(c-t)(1+\lambda^2).
\end{aligned}
$$
It is clear that all the summands of $g(b,c,\lambda,t)$ are nonnegative for $t\in [0,c]$
(while the first one is positive at $t=c$ and the last one is positive at $t=0$),
 and hence $g(b,c,\lambda,t)>0$.
Now the statement of this corollary follows directly from Proposition~\ref{gamma-derivative}.
\end{proof}

\begin{corollary}\label{gamma-2-corollary}
For the case $c=0$ let
$$
\left\{
\begin{array}{l}
\lambda\le 1\\
b\ge 1\\
\end{array}
\right.
.
$$
Then for every real $t\ge 0$ we have  $\displaystyle \frac{\partial\gamma}{\partial t}\le 0$.
The last inequality is exact only in the case $b=\lambda=1$
$($see on Figure~\ref{p.7-8}, right$)$.
\end{corollary}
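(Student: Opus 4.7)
The plan is to specialize Proposition~\ref{gamma-derivative} to $c=0$ and show that under the hypotheses both polynomials $f$ and $g$ are nonnegative, so that $\partial(\sin^2\gamma)/\partial t = -8\lambda^2 fg/h^2 \le 0$.

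First I would substitute $c=0$ directly into the formulas for $f$ and $g$. This collapses $f$ to
$$
f(b,0,\lambda,t)=(b^2-\lambda^2)t^2 + 1 + \lambda^2,
$$
which under $b\ge 1$ and $\lambda\le 1$ has a nonnegative coefficient at $t^2$ and a strictly positive constant term. Hence $f>0$ for all real $t$. For $g$ the substitution gives
$$
g(b,0,\lambda,t)=(b^4-b^2\lambda^2+b^2-\lambda^2)t^3 + (b^2\lambda^2+b^2-\lambda^2-1)t.
$$
The key algebraic observation is that both coefficients factor nicely: the cubic coefficient equals $(b^2+1)(b^2-\lambda^2)$, and the linear coefficient equals $(b^2-1)(\lambda^2+1)$. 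Therefore
$$
g(b,0,\lambda,t)=t\bigl[(b^2+1)(b^2-\lambda^2)t^2 + (b^2-1)(\lambda^2+1)\bigr],
$$
and both bracketed factors are nonnegative under $b\ge 1$, $\lambda\le 1$. Thus $g(b,0,\lambda,t)\ge 0$ for every $t\ge 0$.

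Combining these with Proposition~\ref{gamma-derivative} yields $\partial(\sin^2\gamma)/\partial t \le 0$ for $t\ge 0$. Since $\gamma\in[0,\pi/2]$, one has $\partial(\sin^2\gamma)/\partial t = \sin(2\gamma)\,\partial\gamma/\partial t$, so the inequality $\partial\gamma/\partial t\le 0$ follows (with the usual care at the endpoints $\gamma=0$ and $\gamma=\pi/2$). For the equality clause, note $g$ vanishes identically in $t$ only when both coefficients $(b^2+1)(b^2-\lambda^2)$ and $(b^2-1)(\lambda^2+1)$ vanish; since $\lambda>0$, this forces $b=\lambda=1$, matching the stated exactness condition.

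There is no real obstacle here beyond bookkeeping: the proof is a direct specialization of Proposition~\ref{gamma-derivative} together with the factorizations $b^4-b^2\lambda^2+b^2-\lambda^2=(b^2+1)(b^2-\lambda^2)$ and $b^2\lambda^2+b^2-\lambda^2-1=(b^2-1)(\lambda^2+1)$. The only mildly delicate point is translating between the sign of $\partial(\sin^2\gamma)/\partial t$ and that of $\partial\gamma/\partial t$, but this is immediate from $\gamma\in[0,\pi/2]$.
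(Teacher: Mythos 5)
Your approach is essentially the same as the paper's: specialize Proposition~\ref{gamma-derivative} to $c=0$, observe that both $f$ and $g$ are nonnegative under $b\ge 1$, $\lambda\le 1$, and conclude. Your factorizations of the two coefficients of $g$ are correct and identical to the ones the paper displays.

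There is, however, a discrepancy worth flagging in your formula for $f$. You took the displayed definition $f=(b^2-\lambda^2)t^2+2c\lambda^2t+1-(c^2\lambda^2-\lambda^2)$ at face value and got $f(b,0,\lambda,t)=(b^2-\lambda^2)t^2+1+\lambda^2$. The paper's own proof of this corollary (and of Corollary~\ref{gamma-1-corollary}, where the constant term is written $-c^2\lambda^2-\lambda^2+1$) uses instead $f(b,0,\lambda,t)=(b^2-\lambda^2)t^2+1-\lambda^2$. A direct check of $\partial(\sin^2\gamma)/\partial t$ at $c=0$ confirms that the version with $1-\lambda^2$ is the correct one: the displayed definition at the start of Section~6.1 has a sign typo, $1-(c^2\lambda^2-\lambda^2)$ should read $1-(c^2\lambda^2+\lambda^2)$. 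Under the corrected $f$, your claim ``$f>0$ for all real $t$'' weakens to $f\ge 0$ (with $f\equiv 0$ exactly when $b=\lambda=1$, and $f(0)=0$ when $\lambda=1$), which is still enough for the sign conclusion $\partial(\sin^2\gamma)/\partial t\le 0$, so the inequality half of the corollary goes through unchanged.

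For the exactness clause, your argument only inspects $g$. With the corrected $f$ that is not quite enough by itself: $fg\equiv 0$ could in principle come from $f\equiv 0$ rather than $g\equiv 0$. It is easy to patch, since $f\equiv 0$ (corrected version) also forces $b=\lambda=1$ under $b\ge 1\ge\lambda>0$; the paper handles this by asserting $f,g>0$ off that point. Apart from carrying the typo forward, your proof is correct and is, if anything, a bit more careful than the paper's in explicitly passing from the sign of $\partial(\sin^2\gamma)/\partial t$ to that of $\partial\gamma/\partial t$ via $\partial(\sin^2\gamma)/\partial t=\sin(2\gamma)\,\partial\gamma/\partial t$.
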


\begin{proof}
Note that
$$
\begin{array}{l}
f(b,0,\lambda,t)=(b^2-\lambda^2)t^2+1-\lambda^2,\\
g(b,0,\lambda,t)=(b^2t^3+t^3)(b^2-\lambda^2)+t(b^2-1)(1+\lambda^2).
\end{array}
$$
Hence, if $\lambda\le 1$ and $b\ge1$ (excluding the case $b=\lambda=1$) we have $f,g>0$,
and, therefore, $\displaystyle \frac{\partial (\sin^2 \gamma)}{\partial t}< 0$.
Finally, if $b=\lambda=1$, then $\displaystyle \frac{\partial (\sin^2 \gamma)}{\partial t}= 0$.
\end{proof}

\begin{remark}
Note that the identities $b=\lambda=1$ never happen for cubic vectors.
\end{remark}

\begin{corollary}\label{gamma-3-corollary}
For the case $c>0$ let
$$
\left\{
\begin{array}{l}
\lambda\le 1\\
b>\lambda\\
\end{array}
\right.
.
$$
Then for every
$\displaystyle t\in\left(-\infty, -\frac{\lambda c}{b-\lambda}\right]$
the function
$\sin^2(\gamma)$ is increasing while $t$ increase
$($see on Figure~\ref{p.10-11}, left$)$.
\end{corollary}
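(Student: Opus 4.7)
The plan is to invoke Proposition~\ref{gamma-derivative}, which gives $\sgn(\partial(\sin^2\gamma)/\partial t) = -\sgn(fg)$, and to establish that on the interval $I := \bigl(-\infty, -\lambda c/(b-\lambda)\bigr]$ one has $f>0$ and $g<0$, so that $fg<0$ and $\sin^2\gamma$ is increasing throughout $I$. The structure parallels the proofs of Corollaries~\ref{gamma-1-corollary} and~\ref{gamma-2-corollary}, but with a different interval and the opposite sign on $g$.

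First I would handle $f$. Since $b>\lambda$, the quadratic $f(\,\cdot,\cdot,\cdot,t)$ has positive leading coefficient, hence is an upward-opening parabola with vertex at $t^* = -c\lambda^2/(b^2-\lambda^2)$. Putting $t^*$ and $t_0 := -\lambda c/(b-\lambda)$ over the common denominator $(b-\lambda)(b+\lambda)$, the trivial inequality $\lambda^2 < \lambda(b+\lambda)$ gives $t^* > t_0$. Therefore on $I$ we are strictly to the left of the vertex and $f$ is monotone decreasing there, so its minimum on $I$ is attained at $t_0$. Substituting the identity $(b-\lambda)t_0 = -\lambda c$ into $f(t_0)$, the $c$-dependent terms collapse and what remains is $1+\lambda^2>0$. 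Hence $f>0$ throughout $I$.

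For $g$ the work is more delicate. The leading coefficient of $g$ as a cubic in $t$ equals $(b^2+1)(b^2-\lambda^2)>0$, so $g\to-\infty$ as $t\to-\infty$, whereas $g(0)=c(c^2\lambda^2+\lambda^2+1)>0$. It therefore suffices to show (a) $g(t_0)\le 0$ and (b) $g$ is monotone on $I$, i.e.\ has no critical point strictly to the left of $t_0$. For (a), I would substitute $t_0=-\lambda c/(b-\lambda)$ into $g$, multiply through by $(b-\lambda)^3$, and repeatedly use the relation $bt_0 = \lambda(t_0-c)$ to reduce powers of $t_0$; the resulting polynomial expression in $(b,c,\lambda)$ should regroup, under the standing hypothesis $\lambda\le 1$, as a manifestly non-positive combination. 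For (b), I would differentiate and examine $g'(t)$, a quadratic with positive leading coefficient $3(b^2+1)(b^2-\lambda^2)$, and use the discriminant together with $\lambda\le 1$ and $b>\lambda$ to place its real roots (if any) strictly to the right of $t_0$.

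The main obstacle is steps (a) and (b): both are purely algebraic identities in $(b,c,\lambda)$ under the inequalities $b>\lambda$, $0<\lambda\le 1$, $c>0$, but the grouping needed to expose the signs is nontrivial. If a clean by-hand factorisation does not emerge, these can be verified symbolically in MAPLE2020, in line with the author's practice elsewhere in the paper (cf.\ Theorem-4.1-v.mw in~\cite{maple}). Once $f>0$ and $g<0$ are in place on $I$, Proposition~\ref{gamma-derivative} yields $\partial(\sin^2\gamma)/\partial t = -8\lambda^2 fg/h^2 > 0$, which is the desired monotonicity.
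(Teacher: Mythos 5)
Your approach is genuinely different from the paper's. The paper argues geometrically: given $t_1<t_2\le t_0:=-\lambda c/(b-\lambda)$, it passes from $Q(t_1)$ to $Q(t_2)$ through the foot $Q_0$ of the perpendicular from $Q(t_2)$ to the ray $OQ(t_1)$, and compares angles along the radial leg $Q(t_1)\leftrightarrow Q_0$ via Corollary~\ref{gamma-2-corollary} and along the transverse leg $Q_0\leftrightarrow Q(t_2)$ via Corollary~\ref{gamma-1-corollary}. You instead work directly with $\sgn(fg)$ from Proposition~\ref{gamma-derivative}. Your treatment of $f$ is complete and correct: the vertex $t^*=-c\lambda^2/(b^2-\lambda^2)$ lies to the right of $t_0$, $f$ is decreasing on $(-\infty,t_0]$, and $f(t_0)=1+\lambda^2>0$, so $f>0$ throughout.

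The gap is in step~(a) for $g$, and it is not merely unverified --- it is false. With $b=c=1$ one finds
$$
g(1,1,\lambda,t)=(1-\lambda^2)\,t\,(2t^2-3t+2)+2\lambda^2+1,
$$
and at $t_0=-\lambda/(1-\lambda)$ this tends to $1$ as $\lambda\to 0^+$ (it is already about $0.98$ at $\lambda=1/100$). So $g(t_0)>0$ for small $\lambda$, and since $f>0$ there, Proposition~\ref{gamma-derivative} gives $\partial(\sin^2\gamma)/\partial t<0$ in a neighbourhood of $t_0$ --- the wrong sign. No amount of regrouping or a MAPLE check can rescue ``$g\le 0$ on $(-\infty,t_0]$''; the claim fails, so the strategy as laid out cannot close. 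Two follow-up remarks: first, this computation indicates a genuine difficulty with the corollary as stated for small $\lambda$ (precisely the infinitesimal regime $\lambda\to0$ in which it is later invoked); second, the two ``smaller than'' comparisons in the paper's geometric proof appear to point in the opposite direction to what Corollaries~\ref{gamma-1-corollary} and~\ref{gamma-2-corollary} actually deliver, so the discrepancy deserves careful scrutiny. Before pressing on you should determine the true interval on which $g<0$ (its right endpoint sits strictly to the left of $t_0$ when $\lambda$ is small) and reconcile that with the downstream applications in Lemmas~\ref{inf-1-1} and~\ref{inf-3-1}.
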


\begin{figure}
\centering
\includegraphics{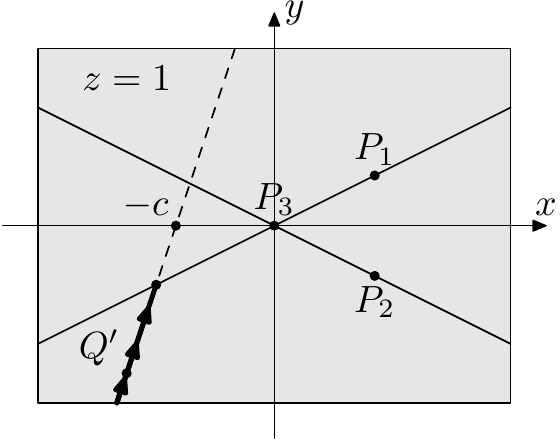} \qquad \qquad \includegraphics{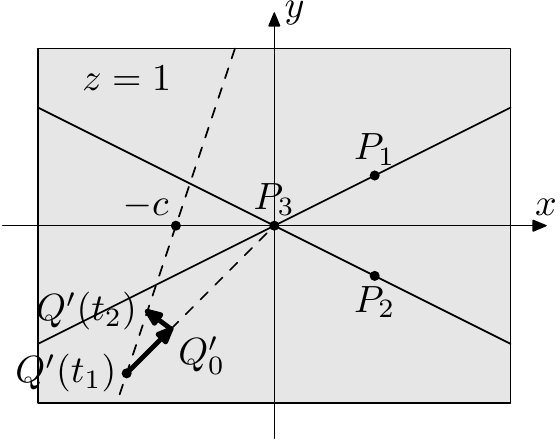}
\caption{Left: direction of growth for $\gamma$ on plane $z=1$.
Right: Reduction of the statement to the above ones.}
\label{p.10-11}
\end{figure}

\begin{proof}
Let
$$
t_1<t_2<-\frac{\lambda c}{b-\lambda}.
$$
Consider the point $Q_0$ on the segment with endpoints $Q(t_1)$ and $O(0,0,0)$
such that $Q_0Q(t_2)$ is orthogonal to $OQ(t_1)$
$($see on Figure~\ref{p.10-11}, right: here the projections of $Q(t_1)$,
$Q(t_2)$ and $Q_0$ to the plane $z=0$ along $z$-axes
are marked as $Q'(t_1)$,
$Q'(t_2)$ and $'Q_0$ respectively$)$.

\vspace{1mm}

By Corollary~\ref{gamma-2-corollary}
the function $\sin^2\gamma$ for $Q_0,P_1(t_1)P_2(t_1)P_3(t_1)$ is smaller than the function $\sin^2\gamma$ for
$Q(t_1)P_1(t_1)P_2(t_1)P_2(t_1)$.

\vspace{1mm}

By Corollary~\ref{gamma-1-corollary}
the function $\sin^2\gamma$ for $Q'(t_2)P_1(t_2)P_2(t_2)P_3(t_2)$ is smaller than the function $\sin^2\gamma$ for
$Q_0'P_1(t_1)P_2(t_1)P_3(t_1)$. (Notice here that $P_i(t_1)=P_i(t_2)$ for $i=1,2,3$.)

\vspace{1mm}

Therefore, the statement of the lemma holds.
\end{proof}

\begin{corollary}\label{gamma-4-corollary}
For the case $Q(t)=(c,t,0)$ $($vertical case$)$ let
$\lambda < 1$.
Then for every
$\displaystyle t\in(-\infty, -\lambda c]$
the function
$\sin^2(\gamma)$ is increasing while $t$ increases.
\end{corollary}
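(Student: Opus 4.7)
The plan is a direct calculation of $\sin^2\gamma$ using cross products, combined with a substitution $u = t^2$ that makes the symmetry of the configuration transparent. The first observation is that $\sin^2\gamma$ is an even function of $t$: the reflection $(x,y,z)\mapsto(x,-y,z)$ fixes $P_3$, swaps $P_1$ and $P_2$, and sends $Q(t)$ to $Q(-t)$; it therefore interchanges the two planes whose angle is $\gamma$ and preserves that angle. Consequently, setting $u := t^2$, we can write $\sin^2\gamma = \Psi(u)$, and the condition $t \le -\lambda c$ (with $c \ge 0$) translates to $u \ge \lambda^2 c^2$. Computing the normals
$$N_1 = QP_1 \times QP_3 = (\lambda,\, -1,\, c\lambda - t), \qquad N_2 = QP_2 \times QP_3 = (-\lambda,\, -1,\, -c\lambda - t),$$
together with the identity $|N_1 \times N_2|^2 = 4\lambda^2(c^2 + t^2 + 1)$, then yields
$$\Psi(u) = \frac{4\lambda^2(u + c^2 + 1)}{u^2 + 2Lu + K^2}, \qquad K := 1 + \lambda^2 + c^2\lambda^2,\quad L := 1 + \lambda^2 - c^2\lambda^2.$$

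Differentiating in $u$ and completing the square in the resulting numerator shows that $\Psi'(u)$ has the same sign as
$$\bigl[K^2 - 2(c^2+1)L + (c^2+1)^2\bigr] - (u + c^2 + 1)^2.$$
The crucial identity, which I expect to be the main algebraic obstacle, is
$$K^2 - 2(c^2+1)L + (c^2+1)^2 = (M - 1)^2, \qquad M := (1+\lambda^2)(1+c^2).$$
I would verify it by substituting $L = K - 2c^2\lambda^2$ to rewrite the left hand side as $(K - (c^2+1))^2 + 4c^2\lambda^2(c^2+1)$, and then observing that these two summands combine into $(p+q)^2$ with $p = \lambda^2(1+c^2)$ and $q = c^2$, noting that $p + q = M - 1$.

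Once the identity is in place, the rest is immediate. Since $M - 1 = c^2 + \lambda^2(1+c^2) \ge 0$ and $u + c^2 + 1 \ge 1 > 0$, the inequality $\Psi'(u) < 0$ reduces to $u > \lambda^2(1+c^2) - 1$. The hypothesis $\lambda < 1$ then gives
$$\lambda^2 c^2 - \bigl(\lambda^2(1+c^2) - 1\bigr) = 1 - \lambda^2 > 0,$$
so $\Psi'(u) < 0$ throughout the range $u \ge \lambda^2 c^2$. Finally, since $t < 0$ on $(-\infty, -\lambda c)$, the chain rule $\frac{d}{dt}\sin^2\gamma = 2t\,\Psi'(t^2)$ is a product of two negative quantities and hence positive, showing that $\sin^2\gamma$ is strictly increasing on $(-\infty, -\lambda c]$, as required.
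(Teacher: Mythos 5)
Your proof is correct, and it takes a genuinely different route from the paper's. The paper dispatches this corollary the same way it handles Corollary~\ref{gamma-3-corollary}: given $t_1 < t_2 \le -\lambda c$, it drops a perpendicular from $Q(t_2)$ onto the ray $OQ(t_1)$ and compares the two angles in two stages, invoking Corollary~\ref{gamma-2-corollary} (movement along a ray through the origin) and then Corollary~\ref{gamma-1-corollary} (movement between the two coordinate rays), with the degenerate subcase $c=0$ then handled separately by a short explicit differentiation. You instead compute $\sin^2\gamma$ from scratch, exploiting the reflectional symmetry $(x,y,z)\mapsto(x,-y,z)$ to see that it is a function $\Psi$ of $u=t^2$ alone, and then complete the square in the $u$-derivative to locate the sign change exactly at $u=\lambda^2(1+c^2)-1$. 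I checked your normals, the product $|N_1|^2|N_2|^2 = u^2+2Lu+K^2$, and the identity $K^2-2(c^2+1)L+(c^2+1)^2=(M-1)^2$, and everything is right. Your approach is self-contained, treats $c=0$ and $c>0$ uniformly, and makes transparent exactly how the hypothesis $\lambda<1$ enters, namely as the margin $\lambda^2 c^2 - \bigl(\lambda^2(1+c^2)-1\bigr) = 1-\lambda^2 > 0$; the paper's route has the virtue of reusing the $f,g$ machinery from Proposition~\ref{gamma-derivative} that it already set up for the two preceding corollaries.
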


\begin{proof}
The proof repeats the proof for Corollary~\ref{gamma-3-corollary}:
again it is a combination of Corollary~\ref{gamma-1-corollary} and Corollary~\ref{gamma-2-corollary}.

A new case here is when $c=0$. Here the expression for $\sin^2(\gamma)$
is as follows:
$$
4\,{\frac {{\lambda}^{2} \left( {t}^{2}+1 \right) }{ \left( {\lambda
}^{2}+{t}^{2}+1 \right) ^{2}}}.
$$
Its derivative
$$
8\,{\frac {{\lambda}^{2}t \left( {\lambda}^{2}-{t}^{2}-1 \right) }{
 \left( {\lambda}^{2}+{t}^{2}+1 \right) ^{3}}}
$$
is negative as long as $\lambda\le 1$ and $t<0$ .
Hence the value for $\sin^2(\gamma)$ increases.
\end{proof}

Computations for main statements in this section are in  Subsection-6.1.mw in~\cite{maple}.

\subsection{Case I}\label{CaseI}
Let two sides of the triangle $T(s)$ intersect the positive quadrant at $y$-coordinate ray.
Denote these points by $M(0,m,1)$ and $N(0,n,1)$ where $m>n\ge 0$.

\begin{lemma}\label{lemma-case1}
For every positive constants $C_1$, $C_2$ and $C_3$,
the number of elements in $\Omega_{\max}(\xi;\nu_1,\nu_2)$  satisfying
the system
$$
\left\{
\begin{array}{l}
m<C_1\\
y_0<x_0<C_2\\
m-n>C_3
\end{array}
\right.
$$
is finite.
\end{lemma}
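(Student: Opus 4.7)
The plan is to reduce this to the already-established finiteness of $\Omega_\varepsilon(\xi;\nu_1,\nu_2)$ (Theorem~\ref{eventually-periodic}$($i$)$) by showing that the three hypotheses $m<C_1$, $x_0<C_2$, $m-n>C_3$ force $\sin\alpha(s)$ to be bounded below by a positive constant $\delta=\delta(C_1,C_2,C_3)$. Once that is done, the states in question all lie in the finite set $\Omega_\delta(\xi;\nu_1,\nu_2)$, so their intersection with $\Omega_{\max}(\xi;\nu_1,\nu_2)$ is finite.

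First, I would write down $\sin\alpha(s)$ explicitly in terms of $m,n,x_0,y_0$. The two planes through the origin spanned by $(\tilde\xi,\tilde\nu_1)$ and $(\tilde\xi,\tilde\nu_2)$ coincide with the planes through $O,\tilde\xi,M$ and through $O,\tilde\xi,N$ respectively, because the segment $\tilde\xi\tilde\nu_i$ passes through a point of the positive $y$-axis (by the separating assumption in Case~I). Taking $OM\times O\tilde\xi$ and $ON\times O\tilde\xi$ as normals, a direct cross-product computation gives
$$
(OM\times O\tilde\xi)\times(ON\times O\tilde\xi)=(m-n)\,x_0\,(x_0,y_0,1),
$$
and hence
$$
\sin\alpha(s)=\frac{(m-n)\,x_0\,\sqrt{x_0^2+y_0^2+1}}{\sqrt{(m-y_0)^2+x_0^2(1+m^2)}\,\sqrt{(n-y_0)^2+x_0^2(1+n^2)}}.
$$

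Second, I would bound numerator and denominator separately. Recall that any state has $x_0\ge y_0\ge 1$, so the numerator is at least $C_3\cdot 1\cdot\sqrt{3}$. Using $0\le n<m<C_1$ and $x_0<C_2$, each factor in the denominator is at most $\sqrt{C_1^2+C_2^2(1+C_1^2)}$. Dividing, $\sin\alpha(s)\ge\delta$ for some explicit positive constant $\delta$ depending only on $C_1,C_2,C_3$.

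Finally, every state satisfying the hypotheses of the lemma lies in $\Omega_\delta(\xi;\nu_1,\nu_2)$, which is finite by Theorem~\ref{eventually-periodic}$($i$)$, so only finitely many such states can belong to $\Omega_{\max}(\xi;\nu_1,\nu_2)$. No single step is delicate here; the only thing one must take care of is verifying that the denominator estimate does not blow up (which is immediate because all of $m,n,y_0,x_0$ are uniformly bounded in Case~I with these hypotheses) and that $x_0$ does not collapse to zero (guaranteed by $x_0\ge 1$).
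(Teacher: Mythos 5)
Your proof is correct and takes a genuinely different route from the paper's. The paper proves Lemma~\ref{lemma-case1} from scratch: it uses $m-n>C_3$ to bound $\area(\tilde\xi MN)$ from below, combines this with the uniform upper bound on $\area(T)$ from Proposition~\ref{frozen-vertex}(ii) to get that all coordinates $x_1,y_1,x_2,y_2$ of the triangle $T$ are bounded, bounds $\vol(\pyr(C,\pi))$ from below to invoke Proposition~\ref{finiteness-unit-distance-planes-2}, and then applies Proposition~\ref{bounded-triangle}. You instead observe that the three hypotheses, together with the Case~I constraints $0\le n<m$ and $x_0\ge y_0\ge 1$, directly force $\sin\alpha(s)$ to be bounded below by a constant $\delta(C_1,C_2,C_3)>0$, so every state satisfying the system lies in $\Omega_\delta(\xi;\nu_1,\nu_2)$, which Theorem~\ref{eventually-periodic}(i) already establishes is finite. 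Since Theorem~\ref{eventually-periodic}(i) is proved in Section~\ref{Theorem-i}, well before this lemma, there is no circularity. Your cross-product computation for $\sin\alpha$ in Case~I checks out: $(OM\times O\tilde\xi)\times(ON\times O\tilde\xi)=(m-n)x_0(x_0,y_0,1)$ indeed, and the numerator/denominator bounds go through. One small imprecision: the term $(m-y_0)^2$ in each denominator factor is bounded by $(C_1+C_2)^2$ rather than $C_1^2$, since $y_0<C_2$ may exceed $C_1$; this does not affect the conclusion. The paper's longer argument buys explicit coordinate bounds (reused in its treatment of the other cases), whereas your reduction to the $\varepsilon$-cap finiteness buys brevity by packaging the same underlying machinery through a result already on the shelf.
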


\begin{proof}
Since $m-n>C_3$, we have
$$
\area (\xi,M,N)>\frac{C_3}{2}.
$$

From conditions of proposition we have boundedness of
$m$, $n$, $x_0$, $y_0$.
By Proposition~\ref{frozen-vertex}$($ii$)$ the area of $T$ is bounded from above, say by $A_0$.
Therefore, from all the above the vectors  $\xi\nu_1$ and $\xi\nu_2$ are at most
$$
\frac{\area (T)}{\area (\xi,M,N)}<\frac{2A_0}{C_3}
$$
times greater than $\xi M$ and $\xi N$.
So the coordinates $x_1$, $y_1$, $x_2$, and $y_2$ are bounded.
Therefore all the coordinates of the triangle $T$ are bounded from above.

\vspace{1mm}

Secondly, we have
$$
\vol\big(\pyr(C,\pi)\big)>\frac{1}{3}\area(T)>\frac{1}{3}\area (\xi,M,N)>\frac{C_3}{6}.
$$
By Proposition~\ref{finiteness-unit-distance-planes-2}
the number of integer planes $\pi$ on the unit integer distance
to the origin satisfying
$$
\vol\big(\pyr(C,\pi)\big)>\frac{C_3}{6}
$$
is finite up to the action of the positive Dirichlet group $\Xi_+(C)$.
Finally by Proposition~\ref{bounded-triangle}
there are finitely many coordinate choices for triangles with bounded coordinates.
Each of such choices corresponds to at most one separating $\xi$-state.

Therefore, the number of separating $\xi$-states satisfying the conditions of the proposition is finite.
\end{proof}

\begin{corollary}\label{case1-corollary}
For every positive real number $\varepsilon$ $($in the conditions of Case~I$)$.
There are only finitely many elements in $\Omega_{\max}(\xi;\nu_1,\nu_2)$
satisfying $|p_1-p_2|\ge \varepsilon$.
\end{corollary}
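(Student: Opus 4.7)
The plan is to reduce the corollary to Lemma~\ref{lemma-case1} by establishing uniform upper bounds on $m$ and $x_0$ for any $s \in \Omega_{\max}(\xi;\nu_1,\nu_2)$ lying in Case~I with $|p_1-p_2|=|m-n|\ge\varepsilon$. Once $m<C_1$ and $x_0<C_2$ are known, we apply Lemma~\ref{lemma-case1} with $C_3=\varepsilon$ and the finiteness is immediate.

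First I bound $x_0$. Since $M=(0,m,1)$ and $N=(0,n,1)$ both lie on the $y$-axis in the plane $z=1$, the triangle $\xi MN$ has base $|m-n|\ge\varepsilon$ on the $y$-axis and height $x_0$ (the $x$-coordinate of $\xi$), so
\[
\area(\xi MN)=\tfrac{1}{2}\,x_0\,|m-n|\ \ge\ \tfrac{\varepsilon}{2}\,x_0 .
\]
Because $M$ lies on the segment $\tilde\xi\tilde\nu_1$ and $N$ on $\tilde\xi\tilde\nu_2$, we have $\xi MN\subseteq T(s)$, hence $\area(\xi MN)\le\area(T(s))$. The plane $z=1$ is at unit integer distance from the origin, so $\area(T(s))=3\vol(\pyr(T(s)))$; applying Proposition~\ref{frozen-vertex}$($ii$)$ gives $\area(T(s))<3M_1$. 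Combining, $x_0<6M_1/\varepsilon$, which in particular bounds $y_0$ too since $y_0\le x_0$.

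Next I bound $m$. This is the main obstacle. The idea is that if $m$ is very large (with $x_0$, $y_0$ already controlled), then some admissible JP-transformation strictly increases $\sin^2\alpha$; since $\Phi(s)$ is defined to maximise $\sin\alpha(M(s))$ over $M\in\M_\xi$, this would force $\sin^2\alpha(\Phi(s))\ge\sin^2\alpha(M(s))>\sin^2\alpha(s)$, contradicting $s\in\Omega_{\max}$. To realise this comparison, I use the monotonicity results of Subsection~\ref{Several preliminary statements}: each admissible $V_{\alpha,\beta;\gamma}$ amounts to shifting the origin along an axis while keeping the three points $\tilde\xi,\tilde\nu_1,\tilde\nu_2$ on the shifted plane $z=1$, which matches the setup of Proposition~\ref{gamma-derivative} with $Q(t)$ moving along a line. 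Identifying $\lambda$ with a ratio built from the eigenvectors of $A$ and $b,c$ with the relative positions of $\tilde\nu_1,\tilde\nu_2$, one checks that Corollaries~\ref{gamma-3-corollary} and~\ref{gamma-4-corollary} apply once $m$ exceeds a threshold determined by $\xi$ and the bound on $x_0$. Hence $m<C_1(\xi,\varepsilon)$ on $\Omega_{\max}$.

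With the bounds $m<C_1$, $y_0<x_0<C_2$ and $m-n\ge\varepsilon$ in hand, Lemma~\ref{lemma-case1} applied with $C_3=\varepsilon$ yields the finiteness assertion of the corollary. The clearly delicate step is the second one, because it requires matching the abstract monotonicity lemmas of Subsection~\ref{Several preliminary statements} to the concrete JP-transformations available at a given state and verifying that their admissibility conditions (from the proposition listing properties of $V_{\alpha,\beta;\gamma}$ and $W$) are satisfied once $m$ is sufficiently large.
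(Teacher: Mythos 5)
Your overall strategy is the same as the paper's: bound $x_0,y_0$ using area considerations, bound $m$ by showing that a sufficiently large $m$ forces some admissible JP-transformation to increase $\sin^2\alpha$ (so that $s\notin\Omega_{\max}$), and then feed the bounds into Lemma~\ref{lemma-case1}. Your first step is fine (using Proposition~\ref{frozen-vertex}$($ii$)$ instead of the paper's Corollary~\ref{S(T)-finite}; both give the same area bound since $\pyr(T)\subset\pyr(C,\{z=1\})$), and the reduction to Lemma~\ref{lemma-case1} at the end is correct.

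The problem is the middle step, which you yourself flag as ``the clearly delicate step.'' There you write ``one checks that Corollaries~\ref{gamma-3-corollary} and~\ref{gamma-4-corollary} apply once $m$ exceeds a threshold,'' but this is exactly the content that needs to be proved, and the corollaries you point at do not match the geometric regime you are in. In Case~I with $m$ large (and $|p_1-p_2|$ already bounded), both $P_1=(0,p_1,1)$ and $P_2=(0,p_2,1)$ satisfy $p_i>y_0$, so the lines $\tilde\xi P_i$ have negative slope in the plane $z=1$. This is precisely the ``negative tangent'' configuration — compare the proof of Lemma~\ref{inf-1-1}, where the paper uses Corollary~\ref{gamma-1-corollary} for the negative-tangent case and reserves Corollaries~\ref{gamma-3-corollary} and~\ref{gamma-4-corollary} for the non-negative-tangent case ($p_i\le y_0$), i.e.\ the case you are \emph{not} in. The paper's proof of this corollary accordingly invokes Corollary~\ref{gamma-1-corollary}: when $p_1>2K$ (with $K$ a bound on $y_0$ and $|p_1-p_2|$), the angle $\angle P_1\xi P_2$ is acute, and $V_{0,1;0}$ or $V_{1,0;0}$ is selected according to the direction of the bisectrix, with Corollary~\ref{gamma-1-corollary} certifying the increase of $\sin^2\alpha$ on the relevant parameter interval. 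Your sketch neither performs the identification of $(\lambda,b,c)$ you allude to nor verifies the hypotheses of the corollaries you cite, and on closer inspection those hypotheses ($c>0$ with $t$ ranging over a negative half-line, respectively the vertical case) are not satisfied here. So the argument as written has a genuine gap in the one place where the work is actually required.
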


\begin{proof}
In the conditions Case~I we have
$$
|p_1-p_2|x_0=(m-n)x_0=2 \area(\tilde\xi MN)<2\area(T).
$$
By Corollary~\ref{S(T)-finite} the area of $\area(T)$ is bounded from above (recall that $\pi$-area
of $T$ coincides with the area of $T$ for the case of the standard basis). Hence, $x_0$ and $|p_1-p_2|$
are both bounded from above.
Therefore, $y_0<x_0$ is also bounded from above.

\vspace{1mm}

Let $y_0$ and $|p_1-p_2|$ are bounded, say  by some constant $K$. Then for $p_1>2K$
the angle $\angle P_1\xi P_2$ is less than $\pi/2$ (as $p_1>y$ and $p_2>y$), and hence we are in position to apply Corollary~\ref{gamma-1-corollary}
either to operator $V_{0,1;0}$ if the bisectrix of the angle  $\angle P_1\xi P_2$ intersects
the $y$-axes with the angle greater or equal to $\pi/4$ or $V_{1,0;0}$ otherwise. Hence such state is not in
$\Omega_{\max}(\xi;\nu_1,\nu_2)$.

Therefore,  if the state is in $\Omega_{\max}(\xi;\nu_1,\nu_2)$
then both $p_1$ and $p_2$ (and hence $m$ and $n$) are bounded from above by some constant.
Recall that $x_0$ and $y_0$ are also bounded, and by the assumption of the lemma we have the inequality
$$
m-n=|p_1-p_2|\ge \varepsilon.
$$
Then we arrive to the situation of Lemma~\ref{lemma-case1}. Therefore, there are finitely many
separating $\xi$-states satisfying these conditions.
These are the only $\xi$-states that might be in $\Omega_{\max}(\xi;\nu_1,\nu_2)$. That concludes the proof.
\end{proof}


\subsection{Case II}\label{CaseII}
Let two sides of the triangle $T(s)$ intersect the positive quadrant one at $x$-coordinate ray
and the second at $y$-coordinate ray.
Denote these points by $M(0,m,1)$ and $N(n,0,1)$ where $m,n>0$.

\begin{lemma}\label{lemma-case2}
For every positive constants $C_1$, $C_2$, $C_3$, and $C_4$
the number of separating $\xi$-states in $\Omega_{\max}(\xi;\nu_1,\nu_2)$ satisfying
the system
$$
\left\{
\begin{array}{l}
n<C_1\\
m<C_2\\
y_0<x_0<C_3\\
\area(\tilde\xi,P,Q)>C_4
\end{array}
\right.
$$
is finite.
\end{lemma}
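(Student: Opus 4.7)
The plan is to follow the template of Lemma~\ref{lemma-case1}: deduce from the four hypotheses that all nine coordinates of the triple $(\tilde\xi,\tilde\nu_1,\tilde\nu_2)$ are uniformly bounded, and then combine Proposition~\ref{finiteness-unit-distance-planes-2} with Proposition~\ref{bounded-triangle} to conclude finiteness.

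The coordinates of $\tilde\xi$ are controlled by $y_0<x_0<C_3$, while the coordinates of $M$ and $N$ are controlled by $m<C_2$ and $n<C_1$. The essential step is to bound the coordinates of $\tilde\nu_1$ and $\tilde\nu_2$. Since the state is separating and we are in Case~II, the point $M$ lies on the segment $\tilde\xi\tilde\nu_1$ and the point $N$ lies on the segment $\tilde\xi\tilde\nu_2$ (possibly after swapping labels). The triangles $T=\tilde\xi\tilde\nu_1\tilde\nu_2$ and $\tilde\xi MN$ share the vertex $\tilde\xi$, so the ratio of their areas factors as
\[
\frac{\area(T)}{\area(\tilde\xi MN)}=\frac{|\tilde\xi\tilde\nu_1|}{|\tilde\xi M|}\cdot\frac{|\tilde\xi\tilde\nu_2|}{|\tilde\xi N|},
\]
with both factors on the right-hand side at least $1$. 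By Corollary~\ref{S(T)-finite} the area $\area(T)$ is bounded above by a constant depending only on the cone; together with the hypothesis $\area(\tilde\xi, P, Q)>C_4$ this forces each of the two ratios to lie in a bounded interval of the form $[1,K]$. Since the lengths $|\tilde\xi M|$ and $|\tilde\xi N|$ are bounded from above (by the already-controlled coordinates of $\tilde\xi$, $M$ and $N$), the lengths $|\tilde\xi\tilde\nu_i|$ are bounded as well, and hence so are all coordinates of $\tilde\nu_1$ and $\tilde\nu_2$.

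Once all nine coordinates are uniformly bounded, the rest is almost automatic. The pyramid $\pyr(C,\pi)$ over the base plane $\pi=\{z=1\}$ has volume at least $\tfrac{1}{3}\area(\tilde\xi MN)>\tfrac{C_4}{3}$; by Proposition~\ref{finiteness-unit-distance-planes-2} there are therefore only finitely many integer base planes on unit integer distance to the origin satisfying this volume bound, up to the action of the positive Dirichlet group $\Xi_+(C)$. For each such plane, Proposition~\ref{bounded-triangle} supplies only finitely many integer affine bases in which the vertex-coordinates of $T$ stay below the derived bound, and each such basis determines at most one separating $\xi$-state. The only non-routine point in the argument is the first step, namely deriving the bound on the coordinates of $\tilde\nu_1$ and $\tilde\nu_2$ from the area lower bound; once this is in place, the remainder of the proof is a direct transcription of the Case~I argument.
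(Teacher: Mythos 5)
Your proposal is correct and follows essentially the same route as the paper's proof: bound all nine coordinates via the area ratio $\area(T)/\area(\tilde\xi MN)$ (the paper cites Proposition~\ref{frozen-vertex}(ii) for the upper bound on $\area(T)$ where you cite its consequence Corollary~\ref{S(T)-finite}, an immaterial difference), then invoke Proposition~\ref{finiteness-unit-distance-planes-2} and Proposition~\ref{bounded-triangle} exactly as the paper does. Your factoring of the area ratio into the two length ratios $|\tilde\xi\tilde\nu_i|/|\tilde\xi M|$, $|\tilde\xi\tilde\nu_i|/|\tilde\xi N|$ simply makes explicit what the paper states in a single compressed inequality.
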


\begin{proof}
By Proposition~\ref{finiteness-unit-distance-planes-2}
the number of integer planes $\pi$ on the unit integer distance
to the origin satisfying
$$
\vol\big(\pyr(C,\pi)\big)>\frac{1}{3}\cdot \area(\tilde\xi, M,N)>\frac{C_4}{3}
$$
is finite up to the action of the positive Dirichlet group $\Xi_+(C)$.

Now the coordinates $x_0$, $y_0$, $m$, and $n$ are bounded.
The coordinates $x_i$ and $y_i$ for $i=1,2$ are bounded by
$$
C_3+C_3\frac{\area (T)}{\area (\tilde\xi,M,N)}<C_3+\frac{\area (T)}{C_4}.
$$
Since $\area(T)$ is uniformly bounded for algebraic cones (Proposition~\ref{frozen-vertex}$($ii$)$),
the coordinates $x_i$ and $y_i$ for $i=1,2$ are bounded.

Further by Proposition~\ref{bounded-triangle}
there are finitely many coordinate choices for triangles with bounded coordinates.
Each of such choices corresponds to at most one separating $\xi$-state.

Therefore, the number of  separating $\xi$-states satisfying the conditions of the proposition is finite.
\end{proof}

\begin{corollary}\label{case2-corollary}
For every positive real number $\varepsilon$ $($in the conditions of Case~II$)$.
There are only finitely many elements in $\Omega_{\max}(\xi;\nu_1,\nu_2)$
satisfying
$$
\min(|p_2-p_1|,|q_2-q_1|)\ge \varepsilon.
$$
\end{corollary}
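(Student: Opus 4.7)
I will mirror the argument of Corollary~\ref{case1-corollary}, adding one extra step to deal with the fact that in Case~II the two axis-intercepts sit on different coordinate axes.

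The geometric key is a pair of algebraic identities: collinearity of $\tilde\xi$, $P_i$, $Q_i$ on line $i$, together with the line equations through $\tilde\xi=(x_0,y_0,1)$, $M=(0,m,1)$, $N=(n,0,1)$, gives after a direct computation
\[
|p_1-p_2|\cdot|n-x_0| \;=\; |q_1-q_2|\cdot|m-y_0| \;=\; |mn-mx_0-ny_0| \;=\; 2\area(\tilde\xi MN).
\]
Since $M=P_2$ lies on the segment $\tilde\xi\tilde\nu_2$ and $N=Q_1$ lies on the segment $\tilde\xi\tilde\nu_1$, the triangle $\tilde\xi MN$ is contained in $T$, hence $\area(\tilde\xi MN)\le\area(T)\le 3M_1$ by Proposition~\ref{frozen-vertex}(ii). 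Under the hypothesis $\min(|p_1-p_2|,|q_1-q_2|)\ge\varepsilon$, this immediately yields the uniform bounds
\[
|n-x_0|\le\frac{6M_1}{\varepsilon},\qquad |m-y_0|\le\frac{6M_1}{\varepsilon}.
\]

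Next I will show that for $s\in\Omega_{\max}(\xi;\nu_1,\nu_2)$ the quantities $m$ and $n$ are bounded above by a constant depending only on $\xi$ and $\varepsilon$. If either of them were too large, then (since $y_0<x_0$ holds by definition of a state and the estimates above link $n$ to $x_0$ and $m$ to $y_0$) the geometric configuration of the two invariant planes would fit the hypotheses of Corollary~\ref{gamma-1-corollary} or of Corollary~\ref{gamma-3-corollary} after matching the parameters $(b,c,\lambda,t)$ to the state; the corresponding admissible JP-transformation ($V_{0,1;0}$ or $V_{1,0;0}$, chosen according to which axis the offending intercept sits on) would strictly increase $\sin^2\alpha$, contradicting $s\in\Omega_{\max}$. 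Combined with the previous paragraph this also bounds $x_0$ and $y_0$ from above.

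To invoke Lemma~\ref{lemma-case2} it remains to bound $\area(\tilde\xi MN)$ from below by a positive constant. Since $\area(\tilde\xi MN)=\tfrac{1}{2}|p_1-p_2|\cdot|n-x_0|\ge\tfrac{\varepsilon}{2}|n-x_0|$, it is enough to prove that $|n-x_0|$ is bounded away from zero uniformly on $\Omega_{\max}$. If instead $|n-x_0|$ could be arbitrarily small, then the line $\tilde\xi\tilde\nu_1$ would be nearly vertical; the vertical-line version Corollary~\ref{gamma-4-corollary} then produces an admissible JP-transformation that raises $\sin^2\alpha$, again contradicting $s\in\Omega_{\max}$. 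Therefore $|n-x_0|\ge\delta$ for some $\delta>0$ depending only on $\xi$ and $\varepsilon$, whence $\area(\tilde\xi MN)\ge\varepsilon\delta/2=:C_4$, and Lemma~\ref{lemma-case2} concludes the proof.

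The main obstacle is the pair of angle-monotonicity steps. In Case~I both axis-intercepts lay on the same axis, so a single bisector argument based on Corollary~\ref{gamma-1-corollary} closed the case; here the asymmetry between the $y$-intercept $M$ and the $x$-intercept $N$ forces three separate monotonicity analyses (large $m$, large $n$, and $|n-x_0|$ small), and in each regime one must match the parameters $(b,c,\lambda,t)$ of Corollaries~\ref{gamma-1-corollary}--\ref{gamma-4-corollary} carefully to the current coordinates, which is the bulk of the technical work.
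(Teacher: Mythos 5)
Your Step~1 (the algebraic identity $|p_1-p_2|\,|n-x_0|=|q_1-q_2|\,|m-y_0|=2\area(\tilde\xi MN)$ and the resulting bounds on $|n-x_0|$, $|m-y_0|$) is correct, and is a slightly cleaner derivation of the same estimate the paper reaches via $\area(M\tilde\xi N)>\frac{|x_0-n|}{n}\area(NP_1P_2)$. Your final reduction to Lemma~\ref{lemma-case2} is also the right move. The problem is in Steps~2 and~3, which the paper handles quite differently.

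In Step~2 you assert that if $m$ or $n$ is too large, Corollary~\ref{gamma-1-corollary} or \ref{gamma-3-corollary} supplies an admissible JP-transformation increasing $\sin^2\alpha$, so $s\notin\Omega_{\max}$. But those corollaries carry the hypothesis $\lambda\le 1$, i.e., the two ``rays'' must make an angle at most $\pi/2$ at the common vertex, and in the paper's only non-infinitesimal use (the proof of Corollary~\ref{case1-corollary}) this is checked explicitly by requiring $p_1>y$ \emph{and} $p_2>y$, which forces $\angle P_1\xi P_2<\pi/2$ and makes the parameter matching go through. In Case~II the $y$-intercepts straddle the origin ($p_1>0>p_2$), so that verification is unavailable; you never establish the angle condition and do not pin down $(b,c,\lambda,t)$, so the corollaries cannot be invoked as stated. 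The paper instead bounds $y_0$ (and implicitly $x_0,m,n$) without any monotonicity corollary: once $|x_0-n|$ and $|y_0-m|$ are bounded, letting $y_0\to\infty$ forces $\angle M\xi N\to\pi/2$ and hence $\area(T)\to\infty$, contradicting Proposition~\ref{frozen-vertex}~$(ii)$.

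Your Step~3 has the same defect in sharper form. You need a positive lower bound for $\area(\tilde\xi MN)$, and propose to get it from $|n-x_0|\ge\delta$, which you would justify by ``Corollary~\ref{gamma-4-corollary}.'' But that corollary concerns a vertical one-parameter motion $Q(t)=(c,t,0)$ with $\lambda<1$; the claim that near-verticality of the line $\tilde\xi\tilde\nu_1$ produces an admissible increase of $\sin^2\alpha$ is not a consequence of it, and the lower bound $|n-x_0|\ge\delta$ is not established (nor is it what the paper proves). The paper instead bounds $\sin\gamma$ from below directly, via the sine rule in $\xi P_1P_2$ together with the uniform lower bound on $\sin\beta$ coming from $\tan\beta=x_0/|y_0-m|$; it then gets both the coordinate bounds and $\area(\tilde\xi MN)>C_4$ simultaneously from $\area(T)\ge\tfrac12|M\xi||N\xi|\sin\gamma$ with $|M\xi|,|N\xi|\ge 1$. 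In short, the skeleton of your argument is fine, but the two load-bearing steps are replaced by unverified appeals to the $\gamma$-monotonicity corollaries whose hypotheses you have not checked and which do not match the Case~II geometry.
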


\begin{proof}
Denote $O'=(0,0,1)$ and set
$$
\begin{array}{l}
\beta=\angle \xi M O',\\
\gamma=\angle Q_1\xi Q_2.
\end{array}
$$

\vspace{2mm}

{\noindent{\bf Uniform boundedness of $|x_0-n|$ and  $|y_0-m|$.}}
First of all we note that
$$
\area(T)>\area(M\xi N)>\frac{|x_0-n|}{n}\cdot\area(NP_1P_2)=\frac{|x_0-n|}{n}\cdot\frac{n|P_1P_2|}{2}>\frac{|x_0-n|\varepsilon}{2}.
$$
(this is a proportion in which $\xi$ divides the segment $NP_i$ where $P_i\ne M$).
Since $\area(T)$ is uniformly bounded from above (Proposition~\ref{frozen-vertex}$($ii$)$),
the quantity $|x_0-n|$ is uniformly bounded from above.

\vspace{1mm}

The proof of the uniform boundedness $|y_0-m|$ repeats the proof for  $|x_0-n|$ (one should swap the coordinates).

\vspace{2mm}

{\noindent{\bf Uniform boundedness of $y_0$.}}
If $y_0$ is large enough (and hence $x_0>y_0$ is large), then due to previous item, the angle $\angle M\xi N$ is close to $\pi/2$.
Hence $\area(T)$ should be sufficiently large as well.
Therefore,
the uniform boundedness of $\area(T)$  (Proposition~\ref{frozen-vertex}$($ii$)$)
implies the  uniform boundedness of $y_0$.

\vspace{2mm}

{\noindent{\bf Uniform boundedness of $\sin \gamma$ by a positive constant.}}
Consider the triangle $\xi P_1 P_2$. Without loss of generality we assume that $M=P_1$.
Then according to the sine rule we have:
$$
\frac{\sin\gamma}{|p_1-p_2|}=\frac{\sin\beta}{|P_2\xi|}.
$$
Note that $p_1-p_2>\varepsilon$ and $|P_2\xi|$
Therefore,
$$
\sin\gamma>\varepsilon\sin\beta.
$$
It remains to note that
$$
\tan\beta=\frac{x_0}{|y_0-m|}>\frac{1}{|y_0-m|}
$$
is uniformly bounded from below.
Hence $\sin\beta$ is uniformly bounded from below by a positive constant.
Therefore, $\sin\gamma$ is uniformly bounded from below by a positive constant.

\vspace{2mm}

{\noindent{\bf Case of $\gamma$ bounded from below by a positive constant.}}
First of all let us show that for every $\delta>0$,
there are only finitely many elements in $\Omega_{\max}(\xi;\nu_1,\nu_2)$
satisfying
$$
\sin\gamma>\delta.
$$
Note that
$$
\area(T)>\area(M\xi N)=\frac{|M\xi||N\xi|\sin\angle M\xi N}{2}=
\frac{|M\xi||N\xi|\sin\gamma}{2}.
$$
First,
since the $\area(T)$ is uniformly bounded from above  (Proposition~\ref{frozen-vertex}$($ii$)$) and $\sin\gamma>\delta$,
we get that the product $|M\xi||N\xi|$ is uniformly bounded from above.
Since the factors are both greater than 1, we have uniform boundedness of
$m,n,x_0,y_0$.

Secondly,
$$
\area(T)=\frac{|M\xi||N\xi|\sin\gamma}{2}\ge \frac{\delta}{2}.
$$
At this point we have all the assumptions of Lemma~\ref{lemma-case2}.
Hence the are only finitely many elements in $\Omega_{\max}(\xi;\nu_1,\nu_2)$
satisfying
$$
\sin\gamma>\delta.
$$

{\noindent{\bf Conclusion of the proof.}}
First, we have shown that $\sin \gamma$ is uniformly bounded from below by a positive constant.
Secondly, for every $\delta>0$ we have proved finiteness of
elements in $\Omega_{\max}(\xi;\nu_1,\nu_2)$
satisfying
$$
\sin\gamma>\delta.
$$
This concludes the proof of the corollary.
\end{proof}


\subsection{Case III}\label{CaseIII}
Let two sides of the triangle $T(s)$ intersect the positive quadrant with $x$-coordinate ray.
Denote these points by $M(m,0,1)$ and $N(n,0,1)$ where $n>m>0$.

\begin{lemma}\label{lemma-case3}
For every positive constants $C_1$, $C_2$ and $C_3$,
the number of  separating $\xi$-states in $\Omega_{\max}(\xi;\nu_1,\nu_2)$ satisfying
the system
$$
\left\{
\begin{array}{l}
n<C_1\\
y_0<x_0<C_2\\
n-m>C_3
\end{array}
\right.
$$
is finite.
\qed
\end{lemma}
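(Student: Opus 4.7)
The plan is to mirror the argument used for Case~I in Lemma~\ref{lemma-case1}, with the roles of the two coordinate axes exchanged. The key observation is that in Case~III the triangle $\tilde\xi M N$ has base $MN$ lying on the $x$-axis of the plane $z=1$, so
\[
\area(\tilde\xi,M,N)=\frac{(n-m)\,y_0}{2}.
\]
Since $s$ is a separating $\xi$-state we have $y_0>1$, and by hypothesis $n-m>C_3$; this gives a uniform lower bound $\area(\tilde\xi,M,N)>C_3/2$, which is the analog of the first step in the proof of Lemma~\ref{lemma-case1}.

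Next I would bound the coordinates $x_1,y_1,x_2,y_2$ of $\tilde\nu_1,\tilde\nu_2$. The hypotheses give upper bounds on $x_0,y_0,m,n$, so the points $M$ and $N$ have bounded coordinates. The vectors $\tilde\xi\tilde\nu_i$ and $\tilde\xi M,\tilde\xi N$ are proportional along the same lines, with ratio
\[
\frac{|\tilde\xi\tilde\nu_i|}{|\tilde\xi P_i|}\le\frac{\area(T)}{\area(\tilde\xi,M,N)}<\frac{2A_0}{C_3},
\]
where $A_0$ is the uniform upper bound on $\area(T)$ supplied by Proposition~\ref{frozen-vertex}$(ii)$ (applied via Corollary~\ref{S(T)-finite}). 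Thus all six coordinates of the three vertices of $T$ are bounded in absolute value by a constant depending only on $C_1,C_2,C_3$ and on the cone $C$.

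The last two steps are the standard finiteness package used in Case~I. The volume of the pyramid satisfies
\[
\vol\bigl(\pyr(C,\pi)\bigr)>\tfrac{1}{3}\area(\tilde\xi,M,N)>\tfrac{C_3}{6},
\]
so by Proposition~\ref{finiteness-unit-distance-planes-2} there are only finitely many integer base planes $\pi$ of unit integer distance with this property, up to the action of $\Xi_+(C)$. Once such a plane is fixed, Proposition~\ref{bounded-triangle} applied to the triangle $T$ with its coordinates bounded by the constant derived above yields only finitely many integer affine bases producing admissible state coordinates, and each basis corresponds to at most one separating $\xi$-state.

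The argument is essentially cosmetic relative to Lemma~\ref{lemma-case1}; the only place that genuinely uses the geometry of Case~III is the area formula $\frac{1}{2}(n-m)y_0$, and the only subtle point is that one needs the separating inequality $y_0>1$ (rather than merely $y_0>0$) to convert the hypothesis $n-m>C_3$ into a uniform lower bound on $\area(\tilde\xi,M,N)$. I expect no other obstacle, since all finiteness tools (Propositions~\ref{frozen-vertex}, \ref{finiteness-unit-distance-planes-2}, \ref{bounded-triangle} and Corollary~\ref{S(T)-finite}) are dimension-independent of the particular case.
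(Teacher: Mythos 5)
Your proof is correct and follows precisely the route the paper intends: the paper itself omits the proof of Lemma~\ref{lemma-case3}, stating only that it is similar to Lemma~\ref{lemma-case1}, and your argument reproduces that proof with the single necessary change, namely replacing the area formula $\tfrac{1}{2}(m-n)x_0$ (base on the $y$-axis, height $x_0>1$) by $\tfrac{1}{2}(n-m)y_0$ (base on the $x$-axis, height $y_0>1$). The remaining steps — bounding the coordinates of $\tilde\nu_1,\tilde\nu_2$ via Proposition~\ref{frozen-vertex}$(ii)$, the volume lower bound feeding Proposition~\ref{finiteness-unit-distance-planes-2}, and the final appeal to Proposition~\ref{bounded-triangle} — coincide with the Case~I argument.
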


The proof of Lema~\ref{lemma-case3} is similar to the proof of Lema~\ref{lemma-case1}
and it is omitted here.

\begin{corollary}\label{case3-corollary}
For every positive real number $\varepsilon$ $($in the conditions of Case~III$)$.
There are only finitely many elements in $\Omega_{\max}(\xi;\nu_1,\nu_2)$
satisfying the condition
$$
\min(|p_2-p_1|,|q_2-q_1|)\ge \varepsilon.
$$
\end{corollary}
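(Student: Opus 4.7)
The plan is to mirror the proof of Corollary~\ref{case1-corollary} with the roles of the $p$- and $q$-coordinates interchanged, reducing ultimately to Lemma~\ref{lemma-case3}. To apply that lemma one must establish $n<C_1$, $y_0<x_0<C_2$ and $n-m\ge C_3$ for any $s\in\Omega_{\max}(\xi;\nu_1,\nu_2)$. The last condition is immediate from $|q_2-q_1|\ge\varepsilon$ with $C_3=\varepsilon$. For the upper bound on $y_0$, I would use that $M=(m,0,1)$ and $N=(n,0,1)$ both lie on the $x$-axis in the plane $z=1$, so the triangle $\tilde\xi MN$ is contained in $T$ and has area $\tfrac12|q_2-q_1|\,y_0$; coupling this with the uniform bound $\area(T)<K$ from Corollary~\ref{S(T)-finite} and the hypothesis $|q_2-q_1|\ge\varepsilon$ yields $y_0\le 2K/\varepsilon$.

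Next, I would bound $n=\max(q_1,q_2)$ by exploiting the extremality $s\in\Omega_{\max}$. If $n$ could be made arbitrarily large while $y_0$ and $|q_2-q_1|$ remain bounded, then both segments $\tilde\xi Q_1$ and $\tilde\xi Q_2$ are long and nearly parallel to the $x$-axis, so the bisector of the angle $\angle Q_1\tilde\xi Q_2$ makes an arbitrarily small angle with the $x$-axis. In this regime the configuration falls into the range treated by Corollary~\ref{gamma-1-corollary} (or Corollary~\ref{gamma-3-corollary}, depending on whether $x_0<q_i$ or $x_0>q_i$), and applying $V_{1,0;0}$ --- admissible because $x_0\ge y_0\ge 1$ --- strictly increases $\sin^2\alpha$, contradicting $s\in\Omega_{\max}$. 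Hence there is an absolute bound $n\le C_1$.

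Finally, I would bound $x_0$ using the other hypothesis $|p_1-p_2|\ge\varepsilon$. Solving for the intersections of the lines $\tilde\xi Q_i$ with the $y$-axis gives the identity
$$
|p_1-p_2|=\frac{x_0\,y_0\,|q_2-q_1|}{|x_0-q_1|\cdot|x_0-q_2|},
$$
so $|x_0-q_1|\cdot|x_0-q_2|\le (2K/\varepsilon)\,x_0$ after using $y_0|q_2-q_1|\le 2K$. Since $q_1,q_2\le C_1$, once $x_0\ge 2C_1$ each factor on the left is at least $x_0/2$, forcing $x_0^2/4\le (2K/\varepsilon)\,x_0$ and hence $x_0\le 8K/\varepsilon$. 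With all hypotheses of Lemma~\ref{lemma-case3} now in force, that lemma yields only finitely many separating $\xi$-states, which completes the proof. The main obstacle is the middle step: identifying precisely which admissible operator in $\M_{\tilde\xi}$ to deploy and verifying, via a case split on whether $\tilde\xi$ lies to the right of, to the left of, or between the two $Q_i$'s, that the hypotheses of the relevant monotonicity corollary are indeed met, thereby producing the contradiction with extremality.
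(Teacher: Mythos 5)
Your opening step --- using $\area(\tilde\xi MN)=\tfrac12 y_0|q_2-q_1|\le\area(T)$ together with Corollary~\ref{S(T)-finite} and $|q_2-q_1|\ge\varepsilon$ to bound $y_0$ --- is exactly the paper's first move. The identity
$$
|p_1-p_2|=\frac{x_0\,y_0\,|q_2-q_1|}{|x_0-q_1|\,|x_0-q_2|}
$$
is correct (it follows from $p_i=-y_0q_i/(x_0-q_i)$) and it is a genuinely nice observation that does not appear in the paper; it would yield a cleaner bound on $x_0$ \emph{once} $q_1,q_2$ are known to be bounded. But the middle step --- bounding $n=\max(q_1,q_2)$ --- is where the real content lies, and your argument there has a gap.

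The claim that ``if $n$ is large while $y_0$ and $|q_2-q_1|$ stay bounded then both segments $\tilde\xi Q_i$ are nearly horizontal'' is not true without a bound on $x_0$, which is precisely what you have postponed. If $x_0$ is close to $q_1$ (say $|x_0-q_1|\le 1$), then $y_0/(x_0-q_1)$ is \emph{not} small, the bisector need not make a small angle with the $x$-axis, and Corollaries~\ref{gamma-1-corollary} and~\ref{gamma-3-corollary} do not apply in the way you envisage. Notice that in exactly this regime your identity also fails to control $x_0$: with $|x_0-q_1|$ small and $|x_0-q_2|\le|x_0-q_1|+|q_2-q_1|$ bounded, the inequality $|x_0-q_1|\,|x_0-q_2|\le(2K/\varepsilon)\,x_0$ is vacuous. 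So the two steps cannot be decoupled in the order you propose.

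The paper's own proof of Corollary~\ref{case3-corollary} does not attempt to bound $n$ in one stroke. After bounding $y_0$, it runs a seven-way case analysis on the position of $p_1,p_2$ (which, in Case~III, encodes whether each $q_i$ lies to the left or to the right of $x_0$). Several of those sub-cases --- $p_1>y_0$ with $p_2<-2$, and $p_1,p_2\ge -1.05$ with $q_2\ge 21$ --- are discharged via Proposition~\ref{fin-3-5} (the $a\ge k+1$ criterion for $V_{1,0;0}$ to increase $\sin^2\alpha$), a tool you do not invoke and which is not a consequence of the $\gamma$-monotonicity corollaries. The remaining sub-cases do reduce to Lemma~\ref{lemma-case3}, but only after sub-case-specific bounds on $x_0$ (for instance, in the sub-case $p_1>y_0$, $-2\le p_2\le 0$ one finds $q_1>x_0>q_2$ with $q_2$ not too close to $x_0$, and bounds $x_0$ through $\area(T)\ge\area(\tilde\xi MN)$). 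The deferred case split is therefore unavoidable; once you carry it out, the identity-based $x_0$-bound becomes largely superfluous, since the sub-cases already supply their own bounds as a by-product.
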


\begin{proof}
In Case~III we have
$$
|q_1-q_2|y_0=(n-m)y_0=2 \area(\tilde\xi MN)<2\area(T).
$$
By Corollary~\ref{S(T)-finite} the area of $\area(T)$ is bounded from above (recall that $\pi$-area
of $T$ coincides with the area of $T$ for the case of the standard basis). Hence, $y_0$ and $|q_1-q_2|$
are both bounded from above.
Therefore, $y_0<x_0$ is also bounded from above.

\vspace{1mm}

Now we separately consider several cases of the position of  points $P_1$ and $P_2$.
Note that if $p_i\le y_0$ then $p_i\le 0$ since  we have $q_i\ge 0$  for Case~III.

\vspace{2mm}

{\underline{\bf Case $p_1>y_0$ and $p_2>y_0$.}}
Here the angle $\angle P_1\xi P_2$ is less than $\pi/2$ (as $p_1>y$ and $p_2>y$), and hence we are in position to apply Corollary~\ref{gamma-1-corollary}
either to operator $V_{0,1;0}$ if the bisectrix of the angle  $\angle P_1\xi P_2$ intersects
the $y$-axes with the angle greater or equal to $\pi/4$ or $V_{1,0;0}$ otherwise. Hence all such states are not in
$\Omega_{\max}(\xi;\nu_1,\nu_2)$.

\vspace{2mm}

{\underline{\bf Case $p_1>y_0$ and $-2\le p_2\le 0$.}}
In this case
$$
\min(q_1,q_2)<\frac{x_0}{2} \quad \hbox{and}
\quad \max(q_1,q_2)>x_0.
$$
Hence $x_0$ is bounded, and then $q_1$ and $q_2$ are bounded.
We arrive at conditions of Lemma~\ref{lemma-case3}, hence
there is only  finitely many elements in $\Omega_{\max}(\xi;\nu_1,\nu_2)$ in this case.

\vspace{2mm}

{\underline{\bf Case $p_1>y_0$ and $p_2< -2$.}}
Denote
$$
c=\min(q_1,q_2)\quad \hbox{and}
\quad
k=\frac{c}{p_1}
$$
Here we have $c\ge k+1>0$ and, therefore, $V_{1,0;0}$ increases $\sin^2(\alpha)$ by Proposition~\ref{fin-3-5}.

\vspace{2mm}

{\underline{\bf Case $p_2>y_0$ and $p_1< 0$.}} This case is a repetition of the previous two cases with $p_1$ and $p_2$ swapped.

\vspace{2mm}

{\underline{\bf Case $0\ge p_1\ge -1.05$ and $p_2<p_1$.}}
First of all the condition $p_1>p_2$ implies $q_1<q_2$.
Denote by $D$ the intersection of the line $\xi P_2$ with the line parallel to the $y$-axes and passing through $P_1$.
Then
$$
\frac{|Q_1Q_2|}{|DP_1|}=\frac{y_0}{y_0+|p_2|}=\frac{1}{1+|p_2|/y_0}>\frac{1}{1+21/20}=\frac{20}{41}.
$$
Hence $DP_1$ is bounded from above by some constant, say $K$.
Since $|P_1P_2|$ is bounded from below, the ratio
$$
\frac{|P_1P_2|}{|DP_1|}
$$
is bounded from above.

Therefore, the coordinates of $D(d_1,d_2,1)$ satisfy
$$
d_1\le K , \quad
d_2\ge -1.05,
$$
and the slope of $\xi P_2$ is bounded from below.
Hence  $q_2$ is bounded (and hence $q_1$ is bounded).
Finally the boundedness of $y_0$ together with the above imply the boundedness of $x_0$.
Therefore, we have all the conditions of Lemma~\ref{lemma-case3}, hence
there is only  finitely many elements in $\Omega_{\max}(\xi;\nu_1,\nu_2)$ in this case.

\vspace{2mm}

{\underline{\bf Case $0\ge p_1\ge -1.05$ and $p_1<p_2$.}} This case is a repetition of the previous one with $p_1$ and $p_2$ swapped.

\vspace{2mm}

{\underline{\bf Case $p_1,p_2\ge -1.05$.} }
Without loss of generality we assume that $p_1>p_2$ (and hence $q_1<q_2$).
Denote
$$
c=q_2\quad \hbox{and}
\quad
k=\frac{q_2}{|p_1|}.
$$

First of all we study the case $c\ge 21$. Here we have
$$
c-k=\bigg(1-\frac{1}{|p_1|}\bigg)\cdot c\ge \bigg(1-\frac{1}{1.05}\bigg)\cdot 21=1.
$$
Hence $c\ge k+1>0$. Now we have all the conditions of Proposition~\ref{fin-3-5}.
Therefore, $V_{1,0;0}$ increases the value of $\sin^2(\alpha)$.

Now let $c<21$.
Hence
$$
k\le \frac{c}{|p_1|}\le \frac{21}{1.05}.
$$
Since $y_0$, $c$, and $k$ are bounded, $x_0$ is bounded as well.
Further since $p_1,p_2<1$ then $q_1,q_2<c$.
Therefore, we have all the conditions of Lemma~\ref{lemma-case3}, hence
there is only  finitely many elements in $\Omega_{\max}(\xi;\nu_1,\nu_2)$ in case of $p_1,p_2\ge -1.05$.

This concludes the proof of Corollary~\ref{case3-corollary}.
\end{proof}


\subsection{Conclusion of the proof of Theorem~\ref{greater e}~$($i$)$}\label{CaseI-III}
The proof for all possible cases is given in Corollaries~\ref{case1-corollary},
\ref{case2-corollary} and~\ref{case3-corollary}.
\qed

\section{Proof of Theorem~\ref{greater e}~$($ii$)$}\label{teor-ii}
We start with the proof with Case~II. Then we continue with Cases~I and~III.

{\noindent {\bf Case II.}}
Let $|p_2-p_1|<\varepsilon$, hence
$$
\min(p_1,p_2)\ge\min(m,m-\varepsilon)\ge-\varepsilon>0.1.
$$
Similarly if $|q_2-p_1|<\varepsilon$, hence
$$
\min(q_1,q_2)\ge\min(n,n-\varepsilon)\ge-\varepsilon>0.1.
$$

\vspace{2mm}

{\noindent {\bf Case I.}}
If $q_1>0$ or $q_2>0$ then the proof is similar to the one in Case~II.

Let now both $q_1,q_2<0$.
Since $y_0<x_0<1$ we have
$$
|p_2-p_1|<|q_2-q_1|,
$$
and hence $|p_2-p_1|<\varepsilon$.

\vspace{2mm}

{\noindent {\bf Case III.}}
If $p_1>-1.05$ or $p_2>-1.05$ then the proof is similar to the one in Case~II.

Let finally $p_1,p_2<-1.05$.  In case if $|q_1-q_2|<\epsilon$ we are done.
Assume now that $|p_1-p_2|<\epsilon$.
Without loss of generality we assume that $p_1>p_2$ and, therefore, $q_1<q_2$.

First, let
$$
\frac{q_1}{|p_1|}<20,
$$
then
$$
|q_1-q_2|<20 |p_1-p_2|<20\varepsilon.
$$
Secondly, assume that
$$
\frac{q_1}{|p_1|}\ge  20,
$$
then
$$
q_1=|p_1|\frac{q_1}{|p_1|}\ge 1.05\frac{q_1}{|p_1|}= \frac{q_1}{|p_1|}+ 0.05\frac{q_1}{|p_1|}\ge \frac{q_1}{|p_1|}+1.
$$
This concludes the proof of Theorem~\ref{greater e}~$($ii$)$ for Case~III.
\qed


\section{Proof of Theorem~\ref{greater e}~$($iii$)$ and~$($iv$)$}\label{theor-iii-iv}

In this section we prove  Theorem~\ref{greater e}~$($iii$)$ and~$($iv$)$.
We start in Subsections~\ref{p-asym} and~\ref{q-asym} with some notation and settings
that we further need in the proof.
Additionally we study asymptotic for
for admissible states for small angles at $\xi$ (Propositions~\ref{y-axes} and~\ref{x-axes}).
The set of admissible states is actually covered by 19 cases.
We study all these cases and therefore prove Propositions~\ref{y-axes} and~\ref{x-axes} in Subsection~\ref{cases1-19}.
This will imply the  statements of Theorem~\ref{greater e}~$($iii$)$ and~$($iv$)$.

\subsection{Families $p$-asymptotically increasing $\sin^2\alpha$}\label{p-asym}

Let us first introduce the following notation.
For a linear operator $M$ in $\r^3$ and a state $s$ with
$$
\xi=(x,y,1), \qquad P_1=(0,p_1,1), \quad   \hbox{and} \quad P_2=(0,p_2,1).
$$
(here we take $P_1$ and $P_2$ as in preliminary set-up in Section~\ref{Theorem-ii}).
Let us set
$$
F_M(p_1,p_2,x,y)=\frac{\sin^2\alpha(M(s))}{\sin^2\alpha(s)}-1;
$$
and let us expand by linearity
$$
F_M(p,p,x,y)=\lim\limits_{\varepsilon\to 0} F_M(p,p+\varepsilon, x,y).
$$
\begin{remark}\label{formulaF}
Note that $\sin^2\alpha(s)$ has a nice expression in terms of $p_1,p_2,x,y$.
$$
\sin^2\alpha(s)={\frac {{\left( p_1-p_2 \right) ^{2} \left( {x}^{2}+{y}^{2}+1
 \right) x}^{2} }{ \left( {x}^{2}+{x}^{2}{p_1}^{2}+(y-p_1)^{2}
 \right)  \left( {x}^{2}+{x}^{2}{p_2}^{2}+(y-p_2)^{2}
 \right)
}}
.
$$
Note also that if $p_1=p$ and $p_2=p+\varepsilon$ then we have
$$
\sin^2\alpha(s)={\frac {{x}^{2} \left( {x}^{2}+{y}^{2}+1 \right) }{ \left( {x}^{2}{
p}^{2}+{x}^{2}+{(y-p)}^{2} \right) ^{2}}}
\varepsilon^2 + O(\varepsilon^3).
$$
\end{remark}

Let us introduce the following two important definitions.

\begin{definition}
We say that a set of operators $\M$ {\it $p$-max-asymptotically increases} $\sin^2\alpha$
on a domain $D$ if
there exists $\varepsilon>0$ such that for any
$(p,x,y)\in D$ there exists $M\in \M$
such that for any positive $\delta<\varepsilon$ we have
and any $(p,x,y)\in D$ we have
$$
F_M(p,p+\delta,x,y)>0.
$$
\end{definition}

\begin{definition}
We say that a set of operators $\M$ {\it $p$-min-asymptotically increases} $\sin^2\alpha$
on a domain $D$ if
there exists $\varepsilon>0$ such that for any positive $\delta<\varepsilon$, any $M\in \M$
and any $(p,x,y)\in D$ we have
$$
F_M(p,p+\delta,x,y)>0.
$$
\end{definition}

Let us write a criterium for  the family $\M$ to increase $p$-min-asymptotically the function $\sin^2\alpha$
on $D$.

\begin{proposition}\label{prop-a-1}
Let $\M$ be a family of operators continuously depending on $d$ parameters $($$d\ge 0$$)$;
each of the parameters is defined on a segment $[0,1]$.
Consider a domain  $D$ of triples $(p,x,y)$ in $\r^3$. Denote by $\bar D$ its closure in $\r P^3$.
Assume that for any $M\in \M$ we have the following:
\begin{itemize}
\item $F_M(p,p,x,y)>0$ on $\bar D$ $($here and below we extend $F_M$ by continuity to $\r P^3$$)$;
\item $F_M(p,p,x,y)$ is defined on $\bar D$ $($i.e., has no zeroes of the polynomials in the denominator in appropriate coordinates$)$.
\end{itemize}
Then the family $\M$ $p$-min-asymptotically increases $\sin^2\alpha$
on $D$.
\end{proposition}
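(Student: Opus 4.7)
The strategy is a compactness plus uniform continuity argument. First, I would observe from the closed formula in Remark~\ref{formulaF} that the numerator of $\sin^2\alpha(s)$ carries a factor $(p_1-p_2)^2$, and that the same is true for $\sin^2\alpha(M(s))$: indeed $M$ is linear and sends the degenerate configuration $p_1=p_2$, in which the two planes coincide, to another configuration with two coinciding planes. After cancelling this common factor, $F_M+1$ becomes the restriction to $\{p_1\ne p_2\}$ of a rational function $R_M(p_1,p_2,x,y)$ that depends continuously on the parameters of $M$. Evaluating $R_M$ at $p_2=p_1$ gives exactly the limit defining $F_M(p,p,x,y)$, so the function $(M,p,x,y)\mapsto F_M(p,p,x,y)$ is the specialisation of a continuous function on the parameter space of $\M$ times an open set of $\r^3$ containing $D$.

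Next, I would work on the compact set $K:=[0,1]^d\times\bar D$. By the second hypothesis, the denominator of $R_M(p,p,x,y)$ has no zeros on $\bar D$ in appropriate projective coordinates, so the extended function $G(M,p,x,y):=F_M(p,p,x,y)$ is well-defined and continuous on $K$. By the first hypothesis $G>0$ everywhere on $K$, and since $K$ is compact $G$ attains a strictly positive minimum $c>0$.

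Finally, I would pick $\delta_0>0$ small enough so that on the compact set $K\times[0,\delta_0]$ the denominator of $R_M(p,p+\delta,x,y)$ stays uniformly bounded away from zero (this is possible because it is bounded away from zero on the compact slice $\delta=0$, by the hypothesis, and depends continuously on $\delta$). Then $(M,\delta,p,x,y)\mapsto F_M(p,p+\delta,x,y)$ is continuous, hence uniformly continuous, on $K\times[0,\delta_0]$. Choosing $\varepsilon\in(0,\delta_0)$ so that
$$
\bigl|F_M(p,p+\delta,x,y)-F_M(p,p,x,y)\bigr|<c/2
$$
whenever $\delta\in(0,\varepsilon)$, $M\in\M$ and $(p,x,y)\in D$, and combining with $F_M(p,p,x,y)\ge c$, gives $F_M(p,p+\delta,x,y)>c/2>0$, which is precisely the $p$-min-asymptotic increase of $\sin^2\alpha$ required.

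The main technical obstacle is the verification of the continuous extension across the projective closure $\bar D\subset\r P^3$: one must homogenise $R_M$ and then check, chart by chart on $\r P^3$, that the denominator of $R_M|_{p_1=p_2}$ does not vanish on $\bar D$. This is exactly what the phrase ``in appropriate coordinates'' in the hypothesis amounts to, so in the present statement it is granted; once this is in hand the rest is a soft compactness argument.
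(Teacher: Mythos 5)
Your proof is correct and follows essentially the same compactness argument as the paper: get a positive lower bound $c$ on $F_M(p,p,x,y)$ over the compact set $[0,1]^d\times\bar D$, then use uniform control near $\delta=0$ to push the bound to small positive $\delta$. The only stylistic difference is that you invoke uniform continuity on $K\times[0,\delta_0]$, whereas the paper bounds $|\partial F_M/\partial\tau|$ by a constant $N(\M)$ and sets $\varepsilon=\min(\tau_1/N,\tau_2)$; these are interchangeable once the pole-free condition is in hand, so this is a cosmetic rather than substantive divergence.
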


\begin{remark}
In case of  $d=0$, the family $\M$ consists of a single operator.
\end{remark}

\begin{proof}
First set
$$
G_M(p,x,y)=F_M(p,p,x,y).
$$

Let $M\in \M$.
Since $G_M(p,x,y)>0$ on $\bar D$, there exists a positive real number $\tau_1(M)$ such that
$$
\inf(G_M)\ge \tau_1(M).
$$
Since the space of parameters is compact, one can pick $ \tau_1(\M)>0$ simultaneously for
all operators in $\M$.

\vspace{1mm}

Let $M\in \M$. Since $F_M(p,p,x,y)$ is defined  on $\bar D$, there exist positive real numbers $\tau_2(M)$ and $N(M)$ such that
the absolute value of $\frac{\partial F_M(p,p+\tau,x,y)}{\partial \tau}$ is uniformly bounded by $N(M)$ from above on the set
$$
D\times[0,\tau_2(M)].
$$
Once again since the space of parameters is compact, one can pick $ \tau_2(\M)>0$
and $N(\M)>0$ simultaneously for all operators in $\M$.

\vspace{1mm}

Therefore, $\M$ $p$-min-asymptotically increases $\sin^2\alpha$ on $D$.
Here
$\varepsilon$ can be chosen to be
$$
\min\Big(\frac{\tau_1(\M)}{N(\M)},\tau_2(\M)\Big).
$$
\end{proof}

We have a similar criterium for the family $\M$ to increase $p$-max-asymptotically the function $\sin^2\alpha$
on $D$.

\begin{proposition}\label{prop-a-2}
Let $\M$ be a finite set of operators.
Consider a domain  $D$ of triples $(p,x,y)$ in $\r^3$. Denote by $\bar D$ its closure in $\r P^3$.
Assume that for any  $M\in \M$ we have
\begin{itemize}
\item $F_{M}(p,p,x,y)$ is defined on $\bar D$ $($here we extend $F_M$ by continuity to $\r P^3$$)$.
\item $\max\limits_{M\in\M} F_{M}(p,p,x,y)$ is positive on $\bar D$.
\end{itemize}
Then the family $\M$ $p$-max-asymptotically increases $\sin^2\alpha$
on $D$.
\end{proposition}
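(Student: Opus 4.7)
The plan is to mimic the proof of Proposition~\ref{prop-a-1}, adapting it to the situation where the choice of operator is allowed to depend on the point $(p,x,y)$.

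First I would invoke the hypothesis together with compactness: since $\M$ is finite and each $F_M(p,p,x,y)$ extends continuously to $\bar D$, the function
$$
G(p,x,y)=\max_{M\in\M}F_M(p,p,x,y)
$$
is continuous on $\bar D$. By hypothesis $G>0$ on $\bar D$, and $\bar D\subset\r P^3$ is compact, so $G$ attains a strictly positive minimum, say $\tau_1>0$. This gives a uniform lower bound: for every $(p,x,y)\in \bar D$, at least one $M\in\M$ (depending on $(p,x,y)$) satisfies $F_M(p,p,x,y)\ge\tau_1$.

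Next I would produce a uniform derivative bound in the $\delta$-direction. For each fixed $M\in\M$ the map $(\delta,p,x,y)\mapsto F_M(p,p+\delta,x,y)$ is rational and, by the second hypothesis, has no poles on $\bar D$ at $\delta=0$; hence it is $C^1$ on a set of the form $\bar D\times[0,\tau_2(M)]$ for some $\tau_2(M)>0$, and its partial derivative in $\delta$ is bounded in absolute value by some $N(M)>0$ there. Since $\M$ is finite, I can take
$$
\tau_2(\M)=\min_{M\in\M}\tau_2(M),\qquad N(\M)=\max_{M\in\M}N(M),
$$
both finite and positive.

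Finally I would set
$$
\varepsilon=\min\!\left(\frac{\tau_1}{N(\M)},\tau_2(\M)\right)
$$
and verify the defining property. Given any $(p,x,y)\in D$, pick $M\in\M$ attaining the maximum in $G(p,x,y)$, so $F_M(p,p,x,y)\ge\tau_1$. For any positive $\delta<\varepsilon$, the mean value theorem (applied to the $C^1$ function $\tau\mapsto F_M(p,p+\tau,x,y)$ on $[0,\delta]$) yields
$$
F_M(p,p+\delta,x,y)\ge F_M(p,p,x,y)-N(\M)\,\delta \ge \tau_1-N(\M)\,\varepsilon\ge 0,
$$
with strict inequality since $\delta<\tau_1/N(\M)$. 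This is precisely the $p$-max-asymptotic increase property. The main subtlety, and the only real difference with the proof of Proposition~\ref{prop-a-1}, is that the operator $M$ depends on the base point $(p,x,y)$; the uniformity of $\varepsilon$ is nonetheless recovered because $\tau_1$ bounds $\max_M F_M$ uniformly (from compactness) while $N(\M)$ and $\tau_2(\M)$ are uniform thanks to the finiteness of $\M$.
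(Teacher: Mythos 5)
Your proof is correct and takes essentially the same route as the paper: compactness of $\bar D$ gives the uniform lower bound $\tau_1$ for $\max_{M}F_M(p,p,\cdot)$, finiteness of $\M$ gives uniform $N(\M)$ and $\tau_2(\M)$, and $\varepsilon=\min(\tau_1/N(\M),\tau_2(\M))$ does the job. The only difference is that you spell out the mean-value-theorem step that the paper leaves implicit, which is a reasonable addition.
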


\begin{proof}
First set
$$
G_M(p,x,y)=F_M(p,p,x,y).
$$
Since $\max\limits_{M\in\M} G_{M}$ is positive  on $\bar D$ and $\M$ is finite there exists a positive real number $\tau_1$ such that
$$
\inf(\max\limits_{M\in\M} G_{M})\ge \tau_1.
$$

Let $M\in \M$. Since $F_M(p,p,x,y)$ is defined  on $\bar D$, there exist positive real numbers $\tau_2(M)$ and $N(M)$ such that
the absolute value of $\frac{\partial F_M(p,p+\tau,x,y)}{\partial \tau}$ is uniformly bounded by $N(M)$ from above on the set
$$
D\times[0,\tau_2(M)].
$$
Once again since the space of parameters is compact, one can pick $ \tau_2(\M)>0$
and $N(\M)>0$ simultaneously for all operators in $\M$.

\vspace{1mm}

Therefore, $\M$ $p$-max-asymptotically increases $\sin^2\alpha$ on $D$.
Here
$\varepsilon$ can be chosen to be
$$
\min\Big(\frac{\tau_1(\M)}{N(\M)},\tau_2(\M)\Big).
$$
\end{proof}

Now we are ready to formulate the main statement of this subsection.

\begin{proposition}\label{y-axes}
Let
$$
D'=\{(p,x,y){|\,}  p>-1.1,\,  x>y>1\},
$$
and $\M$ be the set of all admissible operators.
Then $\M$ $p$-max-asymptotically increases $\sin^2\alpha$ on $D'$.
\end{proposition}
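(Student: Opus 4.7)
The plan is to reduce Proposition~\ref{y-axes} to a pointwise verification of the hypotheses of Proposition~\ref{prop-a-2} on finitely many subregions covering $D'$, and then take the minimum of the resulting $\varepsilon$-thresholds. Using the formula for $\sin^2\alpha(s)$ recorded in Remark~\ref{formulaF}, the substitution $p_1 = p$, $p_2 = p + \delta$ makes the common factor $\delta^2$ cancel, so for every admissible operator $M$ the quantity $F_M(p,p,x,y)$ is a well-defined rational function of $(p,x,y)$. I would tabulate these rational functions once and for all for the basic JP-transformations $V_{\alpha,\beta;\gamma}$ and $W$, keeping $\alpha,\beta,\gamma$ symbolic, so that later case analysis reduces to choosing small integer values for the parameters.

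Next I would partition $D'$ into $19$ subregions, indexed by (i) the combinatorial type of $\M_\xi$, which is controlled by the integer parts $\lfloor x \rfloor$, $\lfloor y \rfloor$ and the admissibility condition $z > x-y > 0$ for $W$, and (ii) the regime of $p$ relative to the thresholds $-1.1$, $-0.1$, and $0$, possibly refined by the ratio $x/y$. On each subregion I would propose a small finite family $\M_0 \subset \M_\xi$ of candidate operators: for $p$ close to $-1.1$ the family needs to include operators of the form $V_{1,0;0}$ (compare Proposition~\ref{fin-3-5}) and possibly $W$; for $p > 0$ with $x,y$ moderate it should include $V_{0,0;0}$ and low-order $V_{\alpha,\beta;\gamma}$; the intermediate regions interpolate. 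Because Proposition~\ref{prop-a-2} demands a finite family and because $\M_\xi$ varies with $\xi$, this local-to-global strategy is the natural way to package a finite test.

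For each region with its family $\M_0$, I would then verify the two conditions of Proposition~\ref{prop-a-2} on the closure in $\r P^3$: first, the denominators of $F_M(p,p,x,y)$ are strictly positive there (they factor as sums of squares once one clears the common terms, so this is routine), and second, at every $(p,x,y)$ in the closure some $M \in \M_0$ satisfies $F_M(p,p,x,y) > 0$. The second condition would be established by explicit sum-of-squares decompositions of the numerators or by a further finite case split, and, following the author's practice in the preceding sections, by a MAPLE2020 certification where the algebra becomes unwieldy. Combining the $19$ regions and taking the minimum of the resulting $\varepsilon$'s yields a single threshold valid on all of $D'$.

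The main obstacle will be the second condition of Proposition~\ref{prop-a-2}: exhibiting, region by region, an admissible operator whose $F_M(p,p,x,y)$ is provably positive on the \emph{entire} closure, including the boundary locus $p = -1.1$ where the set of admissible JP-transformations increasing $\sin^2\alpha$ is the most constrained. No single operator works on all of $D'$, which is exactly what forces the $19$-fold split; within each piece, the choice of JP-transformation must weigh the integer parameters $\alpha,\beta,\gamma$ against the sign and magnitude of $p$. I also expect the points of $\bar{D'}$ at infinity in $(x,y)$ to require a separate continuity check, to ensure that $F_M$ extends properly and remains positive in the limit, so that Proposition~\ref{prop-a-2} applies uniformly across the closure.
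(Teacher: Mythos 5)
Your overall strategy --- compactify, split $D'$ into finitely many regions, on each region propose a small list of candidate operators, and certify positivity of $F_M(p,p,x,y)$ on the closure via symbolic computation --- is the same blueprint the paper follows. But there is a genuine gap in the way you package the verification, and it is exactly the one the paper is built to avoid.

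You rely exclusively on Proposition~\ref{prop-a-2}, which requires a \emph{finite} family $\M_0$ whose pointwise maximum of $F_M(p,p,x,y)$ is positive on $\bar D$. This cannot work on the unbounded subregions where the ratio $x/y$ is large. Concretely, write the parametrization $p=a$, $x=kt$, $y=a+t$ used throughout Section~\ref{cases1-19}. On a region like $D_{3,1}=\{0\le a\le 0.33,\, k\ge 3.1,\, t\ge 0.67\}$, the operator that actually increases $\sin^2\alpha$ asymptotically is $V_{0,0;\lfloor k/3\rfloor}$, and the integer $\lfloor k/3\rfloor$ ranges over all of $\{1,2,3,\ldots\}$ as $k\to\infty$. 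No fixed finite set $\{V_{0,0;n_1},\ldots,V_{0,0;n_m}\}$ has a member that is admissible (let alone increases $\sin^2\alpha$) uniformly over the whole region; and if you instead stratify by the value of $\lfloor k/3\rfloor$, you get infinitely many strata, and Proposition~\ref{prop-a-2} gives you an $\varepsilon$ on each one with no guarantee that $\inf\varepsilon>0$. The paper's workaround is Proposition~\ref{prop-a-1}: it treats $\{V_{0,0;k/3-\varepsilon}: 0\le\varepsilon<1\}$ as a \emph{continuously parametrized} family over a compact parameter segment, verifies $F_M(p,p,x,y)>0$ uniformly on the compactified region $\bar D$ in the coordinates $(a,1/k,1/t)$, and then notes (Remark~\ref{flooor}) that for each concrete point one of these operators is exactly $V_{0,0;\lfloor k/3\rfloor}$. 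This one-extra-parameter device is what yields a uniform $\varepsilon$ over the unbounded $k$-direction, and it is used in Cases 3, 4, 5, 9 of the proof of Proposition~\ref{y-axes}. Your proposal has no analogue of it, so the second condition of Proposition~\ref{prop-a-2} would genuinely fail on those regions.

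Two smaller remarks. First, the number $19$ that you anticipate is the total case count over both Propositions~\ref{y-axes} and~\ref{x-axes}; the proof of~\ref{y-axes} alone uses Cases 1--10 (several with subregions), and the operator $W$ does not appear in it at all --- it is only needed in Case 11, which belongs to the proof of Proposition~\ref{x-axes}. Second, your suggestion to index regions by $\lfloor x\rfloor$, $\lfloor y\rfloor$ is not quite the right combinatorics: the relevant parameters are the sign and size of $p$ (the paper's thresholds at $1$, $0.68$, $0.52$, $0.3$, $0$, $-0.8$, $-1.1$) together with the slope parameter $k=x/y$ relative to thresholds like $1$, $2$, $3$, $3.1$, $6$, $6.3$, $1000$, rather than the integer parts of the coordinates themselves.
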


\begin{proof}

We prove the statement separately for the following subsets $D_i$ of $D'$ and the following
sets $\M_i$. The parametrisation of $D_i$ in all the cases is either $p=a$, $x$, $y$
with parameters $(a,x,y)$
or $p=a$, $x=kt$, $y=a+kt$ with parameters $(a,k,t)$.

\begin{itemize}
\item Case 1 (see Lemma~\ref{inf-1-1}):
$\M_1$ $p$-max-asymptotically increases $\sin^2\alpha$ on $D_1$ where
$$
\begin{array}{l}
\M_1=\{V_{1,0;0},V_{0,1;0}\};\\
D_1=\{(a,x,y){|\,} a\ge 1, x>y>1\}.\\
\end{array}
$$

\item Case 2 (see Lemma~\ref{inf-1-2}):
$\M_2$ $p$-max-asymptotically increases $\sin^2\alpha$ on $D_2$ where
$$
\begin{array}{l}
\M_2=\{ V_{0,1;1}\};\\
D_2=\{(a,kt,a+t){|\,}1\ge a\ge 0.52,k>1,t>1\}.\\
\end{array}
$$

\item Case 3 (see Lemma~\ref{inf-1-3}):
$\M_3$ $p$-min-asymptotically increases $\sin^2\alpha$ on $D_{3,1}$, $D_{3,2}$, and $D_{3,3}$, where
$$
\begin{array}{l}
\M_3=\{V_{0,0;k/3-\varepsilon}{|\,}1> \varepsilon\ge 1\};\\
D_{3,1}=\{(a,kt,a+t){|\,}0.33\ge a\ge 0,k\ge 3.1,t\ge 0.67\};\\
D_{3,2}=\{(a,kt,a+t){|\,}0.52\ge a\ge 0,k\ge 3.1,t\ge 7/4\};\\
D_{3,3}=\{(a,kt,a+t){|\,}0.1\ge a\ge -0.81, 1,k\ge 3.1,t\ge 0.9\}\\
\end{array}
$$

\item Case 4 (see Lemma~\ref{inf-1-4}):
$\M_4$ $p$-min-asymptotically increases $\sin^2\alpha$ on $D_4$ where
$$
\begin{array}{l}
\M_4=\{V_{(1-a)k-\delta ,1;k-\varepsilon-1}{|\,}1> \varepsilon\ge 0,1> \delta\ge 0\};\\
D_{4}=\{(a,kt,a+t){|\,}0.68\ge a\ge 0.3,1001\ge k\ge 6.3,a+t>1\}.\\
\end{array}
$$

\item Case 5 (see Lemma~\ref{inf-1-5}):
$\M_5$ $p$-min-asymptotically increases $\sin^2\alpha$ on $D_5$ where
$$
\begin{array}{l}
\M_5=\{V_{0 ,1;k/2-\varepsilon}{|\,}1> \varepsilon\ge 0\};\\
D_{5}=\{(a,kt,a+t){|\,}0.68\ge a\ge 0.3,k\ge 1000,7/4\ge t\ge 0.31
\}.\\
\end{array}
$$

\item Case 6 (see Lemma~\ref{inf-1-6}):
$\M_6$ $p$-max-asymptotically increases $\sin^2\alpha$ on $D_6$ where
$$
\begin{array}{l}
\M_6=\{V_{1,1;3}\};\\
D_{6}=\{(a,kt,a+t){|\,}0.68\ge a\ge 0.3,6.3\ge k\ge 3.13, a+t>1\}.\\
\end{array}
$$

\item Case 7 (see Lemma~\ref{inf-1-7}):
$\M_{7,1}$ $p$-max-asymptotically increases $\sin^2\alpha$ on $D_{7,1}$, $D_{7,2}$, $D_{7,4}$, and $D_{7,5}$,
and $\M_{7,2}$ $p$-max-asymptotically increases $\sin^2\alpha$ on $D_{7,2}$, where
$$
\begin{array}{l}
\M_{7,1}=\{V_{0,0;1}\} \quad \hbox{and} \quad \M_{7,2}=\{V_{0,1;1}\};\\
D_{7,1}=\{(a,kt,a+t){|\,}0.68\ge a\ge 0,  2\ge k\ge 1,  t\ge 0.31\};\\
D_{7,2}=\{(a,kt,a+t){|\,}0.68\ge a\ge 0,  3\ge k\ge 2,   t\ge 0.31\};\\
D_{7,3}=\{(a,kt,a+t){|\,}0.7\ge a\ge 0.5,  3.3\ge k\ge 2.9  ,t\ge 0.29\};\\
D_{7,4}=\{(a,kt,a+t){|\,}0.51\ge a\ge 0, 3.3\ge k\ge 2.9,  t\ge 0.4\};\\
D_{7,5}=\{(a,kt,a+t){|\,}0\ge a\ge -0.81, 3.1\ge k\ge 1,  t\ge 1\}\}.
\end{array}
$$

\item Case 8 (see Lemma~\ref{inf-1-8}):
$\M_8$ $p$-max-asymptotically increases $\sin^2\alpha$ on $D_8$ where
$$
\begin{array}{l}
\M_8=\{V_{0,1;0}\};\\
D_{8}=\{(a,x,y){|\,}1\ge a\ge 0.6799,2\ge y> 1, x>y\}.\\
\end{array}
$$

\item Case 9 (see Lemma~\ref{inf-1-9}):
$\M_9$ $p$-min-asymptotically increases $\sin^2\alpha$ on $D_9$ where
$$
\begin{array}{l}
\M_9=\{V_{(-a)k-\delta ,1;k/3 -\varepsilon -1}{|\,}1> \varepsilon\ge 0,1> \delta \ge 0\};\\
D_{9}=\{(a,kt,a+t){|\,}-0.8\ge a\ge -1.1,  k\ge 6,  a+t>1\}.\\
\end{array}
$$

\item Case 10 (see Lemma~\ref{inf-1-10}):
$\M_{10}$ $p$-max-asymptotically increases $\sin^2\alpha$ on $D_{10}$ where
$$
\begin{array}{l}
\M_{10}=\{V_{1,0;1}\};\\
D_{10}=\{(a,kt,a+t){|\,}-0.8\ge a\ge -1.1,  6\ge k\ge 1,  a+t>1\}.\\
\end{array}
$$
\end{itemize}

\begin{remark}\label{flooor}
Let us now give a small remark regarding the floor operator.
In Case 3 we have proven that  the family
$$
\M_3=\{V_{0,0;k/3-\varepsilon}{|\,}1> \varepsilon\ge 1\}
$$
$p$-min-asymptotically increases $\sin^2\alpha$ on the domains.
The parameter $\varepsilon$ here is to secure that
the integer operator
$$
V_{0,0;\lfloor k/3\rfloor} \in \M_3.
$$
Hence at each point of the domain we have an integer admissible  operator of the family.
The $p$-min-asymptotically increase of $\sin^2\alpha$ imply that
all such operators increase  $\sin^2\alpha$ by some quantity greater than some uniform positive constant.
(We have a similar situation in Cases 4, 5, and 9.)
\end{remark}

Let us make a brief outline of the cases with respect to a parameter $p=a$:
\begin{itemize}
\item $p\ge 1$ is covered by $D_1$;

\item $1\ge p\ge 0.68$ is covered by $D_2\cup D_8$;

\item $0.68\ge p\ge 0.3$  is covered by $(D_2\cup D_{3,2} \cup D_5)\cup D_4\cup D_6\cup (D_{7,3} \cup D_{7,4})\cup D_{7,2}\cup D_{7,1}$
(here we put domains in the order where $k$ decreases);

\item $0.3\ge p\ge 0$  is covered by $D_{3,1}\cup D_{7,4}\cup D_{7,2}\cup D_{7,1}$
(here we put domains in the order where $k$ decreases);

\item $0\ge p \ge -0.8$ is covered by $D_{7,5}\cup D_{3,3}$.

\item $-0.8\ge p \ge -1.1$ is covered by $D_9\cup D_{10}$.

\end{itemize}

Therefore,  $\M$ $p$-max-asymptotically increases $\sin^2\alpha$ on $D'$.
\end{proof}

\vspace{2mm}

{
\noindent{\it Proof of Theorem~\ref{greater e}~$($iii$)$.}
The statement of Theorem~\ref{greater e}~$($iii$)$ is equivalent to
the statement of Proposition~\ref{y-axes}.
}


\subsection{ Families $q$-asymptotically increasing $\sin^2\alpha$ }\label{q-asym}

Similar to Section~\ref{p-asym} we use the following notation.
For a linear operator $M$ in $\r^3$ and a state $s$ with
$$
\xi=(x,y,1), \qquad Q_1=(q_1,0,1), \quad   \hbox{and} \quad Q_2=(q_2,0,1).
$$
(here we take $Q_1$ and $Q_2$ as in preliminary set-up in Section~\ref{Theorem-ii}).
Let us set
$$
\hat F_M(q_1,q_2,x,y)=\frac{\sin^2\alpha(M(s))}{\sin^2\alpha(s)}-1;
$$
and let us expand by linearity
$$
\hat F_M(q,q,x,y)=\lim\limits_{\varepsilon\to 0} \hat F_M(q,q+\varepsilon, x,y).
$$
\begin{remark}\label{formulaBarF}
Note that $\sin^2\alpha(s)$ has a nice expression in terms of $q_1,q_2,x,y$.
$$
\sin^2\alpha(s)={\frac {{\left( q_1-q_2 \right) ^{2} \left( {x}^{2}+{y}^{2}+1
 \right) x}^{2} }{ \left( {(x-q_1)}^{2}+{y}^{2}{q_1}^{2}+y^{2}
 \right)  \left( {(x-q_2)}^{2}+{x}^{2}{q_2}^{2}+y^{2}
 \right)
}}
.
$$
Note also that if $q_1=q$ and $q_2=q+\varepsilon$, then we have
$$
\sin^2\alpha(s)={\frac {{x}^{2} \left( {x}^{2}+{y}^{2}+1 \right) }{\left({(x-q)}^{2} + {y}^{2}{
q}^{2}+{y}^{2}\right) ^{2}}}
\varepsilon^2 + O(\varepsilon^3).
$$
\end{remark}

Here we have similar definitions of asymptotic increase of $\sin^2\alpha$ (with respect to $\hat F$).

\begin{definition}
We say that a set of operators $\M$ {\it $q$-max-asymptotically increases} $\sin^2\alpha$
on a domain $D$ if
there exists $\varepsilon>0$ such that for
$(q,x,y)\in D$ there exists $M\in \M$
such that for any positive $\delta<\varepsilon$ we have
and any $(q,x,y)\in D$ we have
$$
\hat F_M(q,q+\delta,x,y)>0.
$$
\end{definition}

\begin{definition}
We say that a set of operators $\M$ {\it $q$-min-asymptotically increases} $\sin^2\alpha$
on a domain $D$ if
there exists $\varepsilon>0$ such that for any positive $\delta<\varepsilon$, any $M\in \M$
and any $(q,x,y)\in D$ we have
$$
\hat F_M(q,q+\delta,x,y)>0.
$$
\end{definition}

\begin{remark}
Note that
$$
\hat F(a,b,x,y)=F(a,b,y,x)
$$
Hence we have literally the same general statements for $\hat F(a,b,x,y)$ as we have shown for $F(a,b,x,y)$
(namely, Propositions~\ref{prop-a-1} and ~\ref{prop-a-2}). By that reason we omit their proofs here.
\end{remark}

\begin{proposition}\label{prop-q-1}
Let $\M$ be a family of operators continuously depending on $d$ parameters $($$d\ge 0$$)$;
each of the parameters is defined on a segment $[0,1]$.
Consider a domain  $D$ of triples $(q,x,y)$ in $\r^3$. Denote by $\bar D$ its closure in $\r P^3$.
Assume that for any $M\in \M$ we have the following:
\begin{itemize}
\item $\hat F_M(q,q,x,y)>0$ on $\bar D$ $($here and below we extend $\hat F_M$ by continuity to $\r P^3$$)$;
\item $\hat F_M(q,q,x,y)$ is defined on $\bar D$ $($i.e., has no zeroes of the polynomials in the denominator in appropriate coordinates$)$.
\end{itemize}
Then the family $\M$ $q$-min-asymptotically increases $\sin^2\alpha$
on $D$. \qed
\end{proposition}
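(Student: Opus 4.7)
The plan is to exploit the symmetry identity $\hat F_M(a,b,x,y) = F_M(a,b,y,x)$ noted in the remark immediately preceding the statement, and then to transport the proof of Proposition~\ref{prop-a-1} verbatim. Concretely, for any state with $\xi=(x,y,1)$, $Q_1=(q,0,1)$, $Q_2=(q+\delta,0,1)$, swapping the first two coordinates of the ambient space sends it to a state with $\xi' = (y,x,1)$ and $P_i' = (0, q_i, 1)$; the angle $\alpha$ between the spanned planes is invariant under this swap, and so is the $\sin^2\alpha$-ratio defining $F_M$ and $\hat F_M$. Hence the hypotheses ``$\hat F_M(q,q,x,y) > 0$ on $\bar D$'' and ``$\hat F_M(q,q,x,y)$ is defined on $\bar D$'' translate into the corresponding hypotheses of Proposition~\ref{prop-a-1} for $F_M$ on the image domain $D^{\mathrm{sw}} = \{(q,y,x) : (q,x,y) \in D\}$, and the conclusion translates back.

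If one prefers a self-contained argument rather than invoking the swap, I would copy the proof of Proposition~\ref{prop-a-1} step by step. First I would set $\hat G_M(q,x,y) = \hat F_M(q,q,x,y)$. Using the closed-form expression for $\sin^2\alpha$ given in Remark~\ref{formulaBarF}, $\hat G_M$ is a rational function in $(q,x,y)$ which, by hypothesis, extends continuously to the projective closure $\bar D$ without poles and stays strictly positive there. Since $\bar D$ is compact, for each fixed $M$ we obtain $\inf_{\bar D} \hat G_M \ge \tau_1(M) > 0$. The parameter space $[0,1]^d$ is also compact, and the map $M \mapsto \inf_{\bar D} \hat G_M$ is lower semicontinuous in the parameters (again because $\hat F_M$ is a rational function of a bounded set of coefficients), so a uniform lower bound $\tau_1(\M) > 0$ can be chosen over all $M\in\M$ simultaneously.

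Next I would control the $\delta$-dependence of $\hat F_M(q, q+\delta, x, y) - \hat F_M(q,q,x,y)$. For each fixed $M$, since $\hat F_M$ is defined on $\bar D$, the partial derivative $\partial_\delta \hat F_M(q,q+\delta,x,y)$ is a rational function without poles on some neighbourhood $\bar D \times [0,\tau_2(M)]$, hence uniformly bounded by some $N(M) > 0$ there; compactness of the parameter space again allows us to choose $\tau_2(\M), N(\M) > 0$ simultaneously. A first-order expansion then gives
\[
\hat F_M(q,q+\delta,x,y) \;\ge\; \tau_1(\M) - N(\M)\,\delta \;>\; 0
\]
for all $\delta < \varepsilon := \min\bigl(\tau_1(\M)/N(\M),\ \tau_2(\M)\bigr)$, which is precisely the definition of $q$-min-asymptotic increase of $\sin^2\alpha$ on $D$.

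The only mildly delicate point is the passage to the projective closure $\bar D$: one must verify that the denominator appearing in Remark~\ref{formulaBarF} really does extend to a nowhere-vanishing rational function on the relevant chart of $\r P^3$, which is the content of the second hypothesis and is exactly parallel to the analogous check in Proposition~\ref{prop-a-1}. Once that is done, the rest is a soft compactness-and-continuity argument, so I do not expect any genuine obstacle; the proof is essentially a cosmetic rewriting of the one already given for Proposition~\ref{prop-a-1}.
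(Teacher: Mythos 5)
Your proposal is correct and matches the paper's approach: the paper explicitly omits the proof, remarking that $\hat F(a,b,x,y)=F(a,b,y,x)$ so the argument of Proposition~\ref{prop-a-1} carries over verbatim, which is precisely both routes you describe (the coordinate-swap transport and the step-by-step compactness argument). Your self-contained version, with $\tau_1(\M)$, $\tau_2(\M)$, $N(\M)$, and $\varepsilon=\min(\tau_1(\M)/N(\M),\tau_2(\M))$, reproduces the paper's proof of Proposition~\ref{prop-a-1} faithfully.
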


\begin{proposition}\label{prop-q-2}
Let $\M$ be a finite set of operators.
Consider a domain  $D$ of triples $(q,x,y)$ in $\r^3$. Denote by $\bar D$ its closure in $\r P^3$.
Assume that for any  $M\in \M$ we have
\begin{itemize}
\item $\hat F_{M}(q,q,x,y)$ is defined on $\bar D$ $($here we extend $F_M$ by continuity to $\r P^3$$)$.
\item $\max\limits_{M\in\M} \hat F_{M}(q,q,x,y)$ is positive on $\bar D$.
\end{itemize}
Then the family $\M$ $q$-max-asymptotically increases $\sin^2\alpha$
on $D$.
\qed
\end{proposition}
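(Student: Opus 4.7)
The plan is to transport the proof of Proposition~\ref{prop-a-2} essentially verbatim, swapping $F_M$ for $\hat F_M$ and $p$ for $q$. This is legitimate because the hypotheses on $\hat F_M$ have exactly the same shape as those on $F_M$, and the symmetry $\hat F(a,b,x,y)=F(a,b,y,x)$ recorded in the remark just above the statement means any compactness-and-continuity argument that works for $F$ works for $\hat F$. The derivation uses only finiteness of $\M$, compactness of $\bar D\subset \r P^3$, and a first-order Taylor estimate, so no new input is required.

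Concretely, I would set $\hat G_M(q,x,y)=\hat F_M(q,q,x,y)$. By the second hypothesis $\max_{M\in \M}\hat G_M$ is strictly positive on $\bar D$; since $\M$ is finite this pointwise maximum is continuous, and since $\bar D$ is compact it attains a positive infimum $\tau_1>0$. Hence at every $(q,x,y)\in D$ there is some $M\in\M$ (the one realising the max) with $\hat F_M(q,q,x,y)\ge \tau_1$. Next, for each $M\in\M$ the first hypothesis ensures that $\hat F_M(q,q+\tau,x,y)$ is defined and smooth in $\tau$ on $\bar D\times[0,\tau_2(M)]$ for some $\tau_2(M)>0$; compactness of that product gives a uniform bound $N(M)$ on $|\partial_\tau \hat F_M|$ there. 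Finiteness of $\M$ then lets me set $\tau_2(\M)=\min_M \tau_2(M)>0$ and $N(\M)=\max_M N(M)<\infty$ simultaneously.

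Finally, given $(q,x,y)\in D$, I pick $M$ realising the pointwise maximum and apply the mean value theorem: for $0<\delta<\min\bigl(\tau_1/N(\M),\tau_2(\M)\bigr)$ one gets
$$\hat F_M(q,q+\delta,x,y)\ge \hat F_M(q,q,x,y)-N(\M)\,\delta \ge \tau_1-N(\M)\,\delta>0,$$
so $\varepsilon=\min\bigl(\tau_1/N(\M),\tau_2(\M)\bigr)$ witnesses the definition. I do not expect any real obstacle; the one conceptual point worth flagging is that the quantifier structure of \emph{$q$-max-asymptotically increases} only demands \emph{some} $M$ at each point rather than a uniform $M$ over all of $D$, which is exactly why the pointwise-maximum hypothesis (rather than the stronger ``every $M$ is positive'' hypothesis used in Proposition~\ref{prop-q-1}) is enough.
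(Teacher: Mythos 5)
Your proof is correct and matches the paper's approach: the paper states the result with a \qed precisely because the preceding remark observes $\hat F(a,b,x,y)=F(a,b,y,x)$ and therefore refers to the proof of Proposition~\ref{prop-a-2}, which is the argument you reproduce here (compactness of $\bar D$ plus finiteness of $\M$ gives a uniform positive lower bound $\tau_1$ for the pointwise maximum of $\hat G_M$, definedness on $\bar D$ gives a uniform derivative bound $N(\M)$ on a strip $[0,\tau_2(\M)]$, and the mean value estimate yields $\varepsilon=\min(\tau_1/N(\M),\tau_2(\M))$). Your closing observation about the quantifier structure --- that the ``max'' definition only requires a pointwise choice of $M$, in contrast to Proposition~\ref{prop-q-1} --- is exactly the point the paper's construction relies on.
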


Let us continue directly with formulation of the following important result.

\begin{proposition}\label{x-axes}
Let
$$
D''=\{(p,x,y){|\,}  p>-0.1,\,  x>y>1\},
$$
and $\M$ be the set of all admissible operators.
Then $\M$ $p$-max-asymptotically increases $\sin^2\alpha$ on $D''$.
\end{proposition}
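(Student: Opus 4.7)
The plan is to follow the template established for Proposition~\ref{y-axes}: cover $D''$ by finitely many subdomains and, on each, exhibit an admissible operator (or a parametric family of admissible operators) whose ratio $\hat F_M$ is positive on the closure of the subdomain in $\r P^3$. The desired asymptotic increase then follows from Propositions~\ref{prop-q-1} and~\ref{prop-q-2} applied to the explicit rational form of $\hat F_M$ coming from Remark~\ref{formulaBarF}. As with Proposition~\ref{y-axes}, the positivity checks will ultimately be algebraic and can be delegated to MAPLE2020.

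An appealing shortcut would be to deduce the statement from Proposition~\ref{y-axes} via the identity $\hat F(a,b,x,y) = F(a,b,y,x)$ noted in the remark preceding Proposition~\ref{prop-q-1}. This shortcut fails, however, because the ordering constraint $x>y>1$ is not preserved by the $(x,y)$-swap, and there is no admissibility-preserving operator conjugation that restores it. So the two propositions must be treated by parallel but independent case analyses. I would parameterise subdomains either as $(a,x,y)$ with $q=a$, or as $(a,k,t)$ with $q=a$, $x=kt$, $y=a+t$, and split according to the ranges $a\ge 1$, $1\ge a\ge 0.68$, $0.68\ge a\ge 0.3$, $0.3\ge a\ge 0$, and $0\ge a\ge -0.1$, each further subdivided by the size of $k$. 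In every subcase one verifies that the relevant $\hat F_M$ (or $\max_{M\in \M_i} \hat F_M$) is bounded below by a positive constant on the closure, which suffices by Proposition~\ref{prop-q-1} or~\ref{prop-q-2}.

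The main obstacle is the narrow slab $-0.1<q\le 0$: compared with the band $-1.1<p\le 0$ available in Proposition~\ref{y-axes}, there is much less slack to run the analogues of Cases~9 and~10 there. The families of type $V_{(-a)k-\delta,1;\gamma}$ used in the $p$-case do not transport directly because their integer coefficients must stay nonnegative to keep the operator admissible, so in this slab one must instead rely on simpler families such as $V_{1,\cdot;\cdot}$, $V_{\cdot,1;\cdot}$, or on the operator $W$, stratifying carefully by the size of $k$ so that on every leaf some admissible integer operator is available and $\hat F_M$ remains uniformly positive on the closure. Once this delicate slab is handled, the five remaining ranges of $a$ follow the template of Proposition~\ref{y-axes} with the roles of $x$ and $y$ interchanged, and the finite union of subdomains then covers $D''$, which yields the $q$-max-asymptotic increase of $\sin^2\alpha$ claimed by the proposition and, in turn, Theorem~\ref{greater e}$($iv$)$.
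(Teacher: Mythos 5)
Your overall plan matches the paper's: use Propositions~\ref{prop-q-1} and~\ref{prop-q-2} (which transfer for free via $\hat F(a,b,x,y)=F(a,b,y,x)$), then cover $D''$ by finitely many subdomains on each of which an explicit admissible operator or parametric family makes $\hat F_M$ uniformly positive, delegating the rational positivity checks to MAPLE. You are also right that the naive swap shortcut fails: the constraint $x>y>1$ is not symmetric in $x$ and $y$, so Proposition~\ref{y-axes} cannot simply be transported. But beyond this, the specifics of your proposal diverge from the paper in ways that matter.

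First, your parametrisation is wrong for the $q$-case. Since $Q=(q,0,1)$ lies on the $x$-axis, the line $Q\xi$ in the plane $z=1$ is naturally parametrised by $x=q+kt$, $y=t$ (the paper uses $q=c$, $x=c+kt$, $y=t$), not by $x=kt$, $y=a+t$ as you wrote — the latter is the parametrisation for the $p$-case with $P$ on the $y$-axis. Carrying this over would misdirect the entire case decomposition and the subsequent symbolic computations.

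Second, your sense of where the difficulty lies is off. You predict the narrow slab $-0.1<q\le 0$ is the main obstacle, but the paper covers it easily by recycling $p$-case machinery: $D_{12}$ reuses Lemma~\ref{inf-1-3} (Case~3, domain $D_{3,3}$) and $D_{13,2}$ uses the simple operator $V_{0,0;1}$. The genuinely new work lies elsewhere — at large $q$ (regions $D_{15}$ through $D_{19}$ split by the relation of $q$ to $k$, including Proposition~\ref{fin-3-5}) and especially in the region $0\le q\le 1$, $k\le 1$, where the operator $W$ is indispensable: as the paper's closing remark of Section~8 notes, at points like $(c,x,y)=(2/3,3/2,11/8)$ \emph{every} admissible composition of $V_{*,*;*}$ operators decreases $\sin^2\alpha$, so $\M_{11}=\{W,V_{1,0;0}\}$ on $D_{11}$ is essential. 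You mention $W$ only as a fallback option inside the small-$q$ slab, which is neither where it is needed nor why. Finally, the paper's outer split is three coarse ranges of $q$ ($q<0$, $0\le q\le 1$, $q\ge 1$) with a finer stratification by $q/k$ inside $q\ge 1$, rather than the five $a$-ranges you import from the $p$-case; the two analyses are structurally more different than your proposal suggests, and several $q$-cases (11, 14--19) have no $p$-analogue.
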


\begin{proof}
We prove the statement separately for the following subsets $D_i$ of $D''$ and the following
sets $\M_i$.
The parametrisation of $D_i$ in all the cases is either $q=c$, $x$, $y$
with parameters $(c,x,y)$
or $q=c$, $x=c+kt$, $y=t$ with parameters $(c,k,t)$.

\begin{itemize}
\item Case 11 (see Lemma~\ref{inf-2-1}):
$\M_{11}$ $q$-max-asymptotically increases $\sin^2\alpha$ on $D_{11}$ where
$$
\begin{array}{l}
\M_{11}=\{W, V_{1,0;0}\};\\
D_{11}=\{(c,x,y){|\,}0\ge c\ge 1, y\ge 1,y+1>x>y\}.\\
\end{array}
$$

\item Case 12 (see Lemma~\ref{inf-2-2}):
$\M_{12}$ $q$-min-asymptotically increases $\sin^2\alpha$ on $D_{12}$ where
$$
\begin{array}{l}
\M_{12}=\{V_{0,0;k/3-\varepsilon}|1>\varepsilon\ge 0\};\\
 D_{12}=\{(c,c+kt,t){|\,}1\ge c\ge -0.1,k>3,t>1\}.\\
\end{array}
$$

\item Case 13 (see Lemma~\ref{inf-2-3}):
$\M_{13}$ $q$-min-asymptotically increases $\sin^2\alpha$ on $D_{13,1}$ and $D_{13,2}$, where
$$
\begin{array}{l}
\M_{13}=\{V_{0,0;1}\};\\
D_{13,1}=\{(c,c+kt,t){|\,}1\ge c\ge 0,3\ge k\ge 100/61,t>1\};\\
D_{13,2}=\{(c,c+kt,t){|\,}0\ge c\ge -0.1,3\ge k\ge 1, t>1\}.\\
\end{array}
$$

\item Case 14 (see Lemma~\ref{inf-2-4}):
$\M_{14}$ $q$-max-asymptotically increases $\sin^2\alpha$ on $D_{13}$ where
$$
\begin{array}{l}
\M_{14}=\{ V_{1,0;0},V_{0,0;1}\};\\
D_{14}=\{(c,c+kt,t){|\,}1\ge c\ge 0,5/3\ge k\ge  1,t>1\}.\\
\end{array}
$$

\item Case 15 (see Lemma~\ref{inf-3-1}):
$\M_{15}$ $q$-max-asymptotically increases $\sin^2\alpha$ on $D_{15}$ where
$$
\begin{array}{l}
\M_{15}=\{V_{1,0;0},V_{0,1;0}\};\\
D_{15}=\{(c,c+kt,t)){|\,}c\ge 1, k\le 1, t\ge 0\}.\\
\end{array}
$$

\item Case 16 (see Lemma~\ref{inf-3-2}):
$\M_{16}$ $q$-max-asymptotically increases $\sin^2\alpha$ on $D_{16}$ where
$$
\begin{array}{l}
\M_{16}=\{V_{a-\varepsilon-1,0;0}{|\,}1> \varepsilon\ge 0\};\\
 D_{16}=\{(c,c+kt,t){|\,} c\ge 1,k\ge 3,t>1,k+1>c>k-1\}.\\
\end{array}
$$

\item Case 17 (see Lemma~\ref{inf-3-3}):
$\M_{17}$ $q$-max-asymptotically increases $\sin^2\alpha$ on $D_{17}$ where
$$
\begin{array}{l}
\M_{17}=\{V_{1,0;1}\};\\
D_{17}=\{(c,c+kt,t){|\,} c\ge 1,\ge k\ge 1,t>1,k+1>c>k-1\}.\\
\end{array}
$$

\item Case 18 (see Lemma~\ref{inf-3-4}):
$\M_{18,i}$ $q$-max-asymptotically increases $\sin^2\alpha$ on $D_{18,i}$ for $i=1,2,3,4$ where
$$
\begin{array}{l}
\M_{18,1}=\{V_{0,0;k/3-\varepsilon}{|\,}1> \varepsilon\ge 0\};\\
\M_{18,2}=\{V_{0,0;1}{|\,}1> \varepsilon\ge 0\};\\
\M_{18,3}=\{V_{0,0;k/3-\varepsilon}{|\,}1> \varepsilon\ge 0\};\\
\M_{18,4}=\{V_{0,1;1}{|\,}1> \varepsilon\ge 0\};\\
D_{18,1}=\{(c,c+kt,t){|\,} c\le 0.81k, k\ge  3,t>1\};\\
D_{18,2}=\{(c,c+kt,t){|\,}c\le 0.81k, 3\ge k\ge  1,t>1;\\
D_{18,3}=\{(c,c+kt,t){|\,} 1.1k\ge c\ge 0.81k, k\ge  6,t>1\};\\
D_{18,4}=\{(c,c+kt,t){|\,} 1.1k\ge c\ge 0.81k, 6>k\ge  1,t>1\}.\\
\end{array}
$$

\item Case 19 (see Corollary~\ref{inf-3-6}):
$\M_{19}$ $q$-max-asymptotically increases $\sin^2\alpha$ on $D_{19}$ where
$$
\begin{array}{l}
\M_{19}=\{ V_{1,0;0}\};\\
D_{19}=\{(c,c+kt,t){|\,}c\ge 1,c\ge k+1,t>1\}.\\
\end{array}
$$

\end{itemize}

\begin{remark}
Similarly to several cases of Proposition~\ref{y-axes} (see Remark~\ref{flooor})
we use the floor function for Cases~12, 16, 17, 18 (subcases 1 and 3) to pick the integer operators.
\end{remark}

\vspace{2mm}

Let us make a brief outline of the cases with respect to a parameter $q=c$:
\begin{itemize}
\item $0\ge q\ge -0.1$ is covered by $D_{12}\cup D_{13,2}$;

\item $1\ge q\ge 0$ is covered by $D_{12} \cup D_{13,1} \cup D_{14}$ for $k\ge 1$
and by $D_{11}$ for $k\le 1$;

\item $q\ge 1$  is covered by

---  $D_{15 }$ if $k\le 1$;

--- $D_{18,1}\cup D_{18,2}\cup D_{18,3}\cup D_{18,4}$ if $1.1k\ge q\ge 1$;

--- $D_{16}\cup D_{17}$ if $k+1\ge q\ge k$;

--- $D_{19}$ if $q\ge k+1$.

\end{itemize}

Therefore,  $\M$ $q$-max-asymptotically increases $\sin^2\alpha$ on $D''$.
\end{proof}

\vspace{2mm}

{
\noindent{\it Proof of Theorem~\ref{greater e}~$($iv$)$.}
The statement of Theorem~\ref{greater e}~$($iv$)$ is equivalent to
the statement of Proposition~\ref{x-axes}.
}


\subsection{Study of Cases 1--19}\label{cases1-19}

In this subsection we study Cases~1-19 of Propositions~\ref{y-axes} and~\ref{x-axes}.

\begin{lemma}\label{inf-1-1}
Let
$$
D_1=\{(a,x,y)|a>1,x>y>1\}.
$$
Then the set
$$
\M_1=\{V_{1,0;0},V_{0,1;0}\}
$$
$p$-max-asymptotically increases $\sin^2\alpha$.
\end{lemma}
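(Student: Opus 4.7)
The plan is to apply Proposition~\ref{prop-a-2} to the pair $\M_1=\{V_{1,0;0},V_{0,1;0}\}$. By that criterion, it suffices to verify on the closure $\bar D_1\subset\r P^3$ the two conditions: first, that $F_{V_{1,0;0}}(p,p,x,y)$ and $F_{V_{0,1;0}}(p,p,x,y)$ are everywhere defined; and second, that their pointwise maximum is strictly positive.

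First I would compute the two functions explicitly. Using the basis-change formulas
$$
V_{1,0;0}(x,y,z)=(x-z,y,z),\qquad V_{0,1;0}(x,y,z)=(x,y-z,z),
$$
one applies each operator to $\xi=(x,y,1)$, $P_1=(0,p,1)$, $P_2=(0,p+\varepsilon,1)$, substitutes the transformed triple into the explicit closed form for $\sin^2\alpha$ recorded in Remark~\ref{formulaF}, and extracts the coefficient of $\varepsilon^2$ in the Taylor expansion at $\varepsilon=0$; the ratio of this coefficient to the corresponding one for $s$, minus $1$, yields $F_M(p,p,x,y)$ as an explicit rational function of $(p,x,y)$. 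This step is algebraic and best handled in MAPLE2020 (cf.~\cite{maple}), following the same approach used in Corollary~\ref{gamma-1-corollary} onward.

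Next I would check well-definedness on $\bar D_1$. The denominators arising are products of the squared norms $|O\,V(P_i)\times O\,V(\xi)|^2$ (of the form in the proof of Lemma~\ref{area-bounded-alpha-lemma-1}) together with the original denominator from Remark~\ref{formulaF}. Each factor is a sum of squares in the relevant variables, so on $\bar D_1$ no factor vanishes, and a degree count shows the extension to the hyperplane at infinity in $\r P^3$ is also finite. Thus the defining hypothesis of Proposition~\ref{prop-a-2} is satisfied.

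The heart of the proof is the positivity of $\max(F_{V_{1,0;0}},F_{V_{0,1;0}})$ on $\bar D_1$. Geometrically, $V_{1,0;0}$ and $V_{0,1;0}$ slide $\xi$ along the two coordinate directions $-e_1$ and $-e_2$ respectively, and since $p>1$ and $x>y>1$ at least one of these two directions has a component transverse to the bisector of the infinitesimal pencil $\{O\xi P_t\}$ that opens the angle $\alpha$. To turn this into a formal argument I would exhibit, via symbolic factorisation in MAPLE, that the numerator of $F_{V_{1,0;0}}+F_{V_{0,1;0}}$ (or of $\max$ after a region split exploiting the $x\leftrightarrow y$ symmetry between the two operators) is a polynomial in $(p,x,y)$ that is manifestly positive on $\bar D_1$, treating the boundary strata $a=1$, $x=y$, $y=1$, and the points at infinity by degree-leading analysis. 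The expected obstacle is precisely this positivity certificate: the rational expressions are bulky, so the bottleneck is writing the numerator as a sum of monomials with positive coefficients on $\bar D_1$ (after substitutions such as $a=1+a'$, $y=1+y'$, $x=y+x'$ with $a',x',y'\ge 0$). Once this certificate is in place, Proposition~\ref{prop-a-2} delivers the lemma.
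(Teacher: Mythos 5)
Your plan — plug $\M_1=\{V_{1,0;0},V_{0,1;0}\}$ into Proposition~\ref{prop-a-2} and produce a symbolic positivity certificate for $\max_M F_M(p,p,x,y)$ on $\bar D_1$ — is the template the paper uses for almost every other case (Lemmas~\ref{inf-1-2}--\ref{inf-3-6}), but it is \emph{not} how the paper handles Case~1. For this lemma the paper gives a three-line geometric proof that reuses Corollaries~\ref{gamma-1-corollary}, \ref{gamma-3-corollary}, and~\ref{gamma-4-corollary}: since $x>y$ the slope of the line through $(0,a,1)$ and $\xi$ is $<1$, so depending on whether this slope is non-negative or negative one of the two sliding operators strictly decreases the relevant $\gamma$-quantity by the sign-of-$fg$ computation, and the uniform $\varepsilon$ in the definition of $p$-max-asymptotic increase can be taken to equal $1$.

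The geometric route buys quite a lot here, and it is worth seeing why before committing to the symbolic path. $D_1$ is unbounded in all three variables $(a,x,y)$ independently, so Proposition~\ref{prop-a-2} demands positivity of $\max F_M$ on a closure $\bar D_1\subset\r P^3$ that has several distinct boundary strata at infinity. Your substitutions $a=1+a'$, $y=1+y'$, $x=y+x'$ compactify only the finite faces $a=1$, $y=1$, $x=y$; the projective strata where $a\to\infty$ with $x,y$ bounded, or $x,y\to\infty$ with $x/y\to 1$, etc., must each be examined in their own affine chart. These are precisely the places where the paper's $\gamma$-derivative can become non-strict (cf.~Corollary~\ref{gamma-2-corollary}, where $\partial(\sin^2\gamma)/\partial t=0$ when $b=\lambda=1$), so it is not a priori clear that $\max F_M$ stays bounded away from $0$ there. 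The paper's argument never needs to ask: the corollaries hold for \emph{all} parameter values in the stated (open, unbounded) ranges and already yield the uniform $\varepsilon$. If you do pursue the symbolic route, two smaller points: Proposition~\ref{prop-a-2} requires the family $\M$ to be finite, which $\M_1$ is, so that part is fine; but the formula in Remark~\ref{formulaF} for $\sin^2\alpha$ assumes the two pencil points lie on the $y$-axis, whereas $V_{1,0;0}(P_i)=(-1,p_i,1)$ does not, so you must either compute directly from the cross-product formula of Lemma~\ref{area-bounded-alpha-lemma-1} or first re-intersect the transformed lines with the coordinate axes to recover new $p_i$-type data.
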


\begin{remark}
Note that both operators $V_{1,0;0}$ and $V_{0,1;0}$
are admissible for any $\xi$-state $s$.
\end{remark}

\begin{proof}
Note that the tangent of the slope of the line through $(0,a,1)$ and $\xi=(x,y,1)$
is smaller than $1$ (as $x>y$).
In case of the non-negative tangent  we can apply Corollary~\ref{gamma-3-corollary}
(and Corollary~\ref{gamma-4-corollary} for zero case) to conclude that $V_{0,1;0}$
increases the infinitesimal $\sin^2\alpha$.

In case of the negative tangent the line connecting points
$(0,a,1)$ and $\xi$ passes through  a point $(c,0,1)$ for some positive number $c$.
Here $V_{0,1;0}$  (respectively $V_{0,1;0}$) increases the infinitesimal $\sin^2\alpha$
if $c\le a$ (respectively if $a\le c$ ) by Corollary~\ref{gamma-1-corollary}.
From the conditions of Corollary~\ref{gamma-1-corollary}
it follows that for the case $x>y>1$ the positive real $\varepsilon$ can be taken to be equal to $1$.
\end{proof}

\begin{lemma}\label{inf-1-2}
Let
$$
D_2=\{(a,kt,a+t)|1\ge a\ge 0.52,k>1,t>1\}.
$$
Then
\begin{itemize}
\item The operator $V_{0,1;1}$ is admissible.
\item The set $\M_2=\{V_{0,1;1}\}$ $p$-min-asymptotically increases $\sin^2\alpha$.
\end{itemize}
\end{lemma}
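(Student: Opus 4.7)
\textbf{Proof proposal for Lemma \ref{inf-1-2}.} I would treat the two bullet points separately. For admissibility, I would apply the characterization from item (i) of the proposition in Subsection~\ref{Admissible JP-transformations}: $V_{0,1;1}$ is admissible for a $\xi$-state with $\xi=(kt,a+t,1)$ precisely when $0\le \lfloor kt\rfloor$, $1\le\lfloor (a+t)\rfloor$, and $1\le\lfloor kt/(a+t-1)\rfloor$. On $D_2$ the first holds trivially, the second follows from $a+t\ge 0.52+1=1.52$, and for the third we have $kt>t>a+t-1$ (since $a\le 1$), so $kt/(a+t-1)>1$ and the floor is at least $1$. Thus $V_{0,1;1}$ is admissible everywhere on $D_2$.

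For the $p$-min-asymptotic increase, I would invoke Proposition~\ref{prop-a-1} applied to the one-element family $\M_2$ and the domain $D_2$ parameterized by $(a,k,t)\in[0.52,1]\times(1,\infty)\times(1,\infty)$. Since $\M_2$ has no free parameters, the hypotheses of that proposition reduce to two conditions on the closure $\bar D_2\subset\mathbb{R}P^3$: (a) that $F_{V_{0,1;1}}(a,a,kt,a+t)$ extends continuously (no hidden poles) and (b) that this extension is strictly positive on $\bar D_2$. For (a), using the explicit leading coefficient from Remark~\ref{formulaF}, the denominator of $F_{V_{0,1;1}}$ is a product of factors of the shape $x^2p^2+x^2+(y-p)^2$ in the original coordinates, and the analogous expression in the $V_{0,1;1}$-image coordinates $(kt-a-t+1,\,a+t-1,\,1)$. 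Both are sums of squares and each vanishes only when the transformed $\xi$ degenerates; I would verify directly that on the compactified $\bar D_2$ neither vanishes (the only candidate is the ideal boundary $t\to\infty$ or $k\to\infty$, where the rational function still has a finite well-defined limit after homogenizing).

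For (b), the plan is to compute $F_{V_{0,1;1}}(a,a,kt,a+t)$ explicitly as a rational function in $(a,k,t)$ using the formula from Remark~\ref{formulaF} applied both to $s$ and to $V_{0,1;1}(s)$. Since the two denominators are manifestly positive, positivity of $F_{V_{0,1;1}}$ reduces to positivity of a single polynomial $N(a,k,t)$ on $\bar D_2$. I would expect $N$ to admit a decomposition as a positive combination of monomials once one substitutes $a=0.52+\tilde a$ with $\tilde a\in[0,0.48]$ and $k=1+\tilde k$, $t=1+\tilde t$ with $\tilde k,\tilde t\ge 0$: then every monomial in the new variables should have a positive coefficient. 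Geometrically this is plausible, since under the hypothesis $y=a+t$, $x=kt$ with $k>1$ the slope of the ray through $(0,a,1)$ and $\xi$ is less than $1$, so $V_{0,1;1}$ (which shears the $e_3$-axis toward the direction $e_1+e_2$ near the diagonal) moves the bisector of $\angle P_1\xi P_2$ into the regime treated by Corollaries~\ref{gamma-3-corollary} and~\ref{gamma-4-corollary}, ensuring the angle increases.

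The main obstacle is the final positivity check for $N(a,k,t)$ on the non-compact region $[0.52,1]\times[1,\infty)\times[1,\infty)$, including the asymptotic behaviour at the ideal points of $\bar D_2$. Following the convention announced in the paper's introduction, I would carry out this verification in MAPLE2020 (by expanding, collecting, and substituting the shifted variables, then observing that all coefficients are non-negative and at least one is strictly positive on $D_2$), and record the worksheet alongside the other computations in~\cite{maple}. Proposition~\ref{prop-a-1} then yields the stated $p$-min-asymptotic increase.
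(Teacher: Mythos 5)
Your treatment of admissibility is sound, though phrased differently from the paper: you invoke the floor-function criterion from the admissibility proposition, while the paper simply writes out $V_{0,1;1}\xi=(kt-(a+t)+1,\,a+t-1,\,1)$ and observes that all three entries are positive on $D_2$. Both are equivalent; either is acceptable. (One implicit assumption you share with the paper: the parametrization must satisfy $kt>a+t$, i.e.\ $x>y$, which is part of $s$ being a state.)

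For the asymptotic-increase step you correctly identify that the proof should run through Proposition~\ref{prop-a-1} applied to the single operator $V_{0,1;1}$, and you correctly reduce the question to positivity of the numerator of $F_{V_{0,1;1}}(a,a,kt,a+t)$ on a compactification of $D_2$ and non-vanishing of the denominator there. But your proposed positivity certificate---shift $a\mapsto 0.52+\tilde a$, $k\mapsto 1+\tilde k$, $t\mapsto 1+\tilde t$ and expect all coefficients to come out non-negative---is a conjecture, and the paper's actual computation strongly suggests it fails. The paper instead first performs the change of variables $(a,k,t)\mapsto(a,1/k,1/t)$, which turns $D_2$ into the genuinely compact box $[0.52,1]\times[0,1]\times[0,1]$; it then checks that $\partial h_2/\partial a>0$ so that the minimum lies on $a=0.52$, bounds $|\partial h_{2,1}/\partial k|$ and $|\partial h_{2,1}/\partial t|$ uniformly, and finishes with a finite grid search with step sizes dictated by these Lipschitz bounds. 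Those extra steps would be unnecessary if a positive-coefficient (SOS-like) decomposition were available after a simple shift. So if you ran your MAPLE check you would very likely find mixed signs and be stuck; you should replace the ``all coefficients non-negative after shift'' plan with the inversion-plus-monotonicity-plus-grid strategy (or some other certified bound such as an interval-arithmetic sweep over the compactified box). Relatedly, your appeal to Corollaries~\ref{gamma-3-corollary} and~\ref{gamma-4-corollary} as geometric motivation does not actually apply here---those handle moving $Q$ along lines in the $q$-direction, whereas Case~2 is a $p$-asymptotic estimate that the paper resolves by brute computation, not by those corollaries.
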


\begin{proof}
{\it The first item.}
We start with a vector
$$
\xi=(kt,a+t,1).
$$
Then we have
$$
V_{0,1;1}\xi=(kt-(a+t)+1,a+t-1,1)
$$
It is clear that $kt-(a+t)$ is the difference of $x$ and $y$ coordinates of $\xi$ and, therefore, the first coefficient of the
new vector is positive. As $a+t>1$, the second coefficient of the new vector is positive as well. Hence this vector is
in the positive octant.

\vspace{2mm}

{\noindent
{\it The second item.}}
In order to make $D_2$ bounded we consider the following change of the coordinates:
$$
(a,k,t) \to (a,1/k,1/t).
$$
Now in the new coordinates we are interested in
$$
a\in [0.52,1]\,\qquad k\in[0,1], \qquad t\in[0,1].
$$
Set
$$
\begin{array}{l}
G_2(a,k,t)=F_{V_{0,1;1}}(a,a,kt,a+t);\\
H_2(a,k,t)=G_2(a,1/k,1/t).
\end{array}
$$
Then we have:
$$
H_2(a,k,t)=
\frac{h_2(a,k,t) }{
(a^2k^2t^2 + 2ak^2t + k^2t^2 + k^2 + 1)(a^2 + 2k^2 - 2a - 2k + 2)^2},
$$
where

$$
\begin{aligned}
h_2(a,k,t)=&
\big(a^6k^2 - 2a^2k^6 + 4a^4k^3 + 8a^2k^5 - 4ak^6 - 2a^4k^2 - 8a^3k^3 - 6a^2k^4 - k^6
\\&
 + 4a^3k^2 + 12a^2k^3 + 8k^5 - 4a^2k^2 - 8ak^3 - 6k^4 + 4ak^2 + 8k^3 - k^2\big)t^2
\\&
+\big(2a^5k^2 - 4ak^6 - 2a^5k + 4a^4k^2 + 4a^3k^3 + 8a^2k^4 + 14ak^5 - 4k^6 + 2a^4k
\\&
- 8a^3k^2 - 12a^2k^3 -  16ak^4 + 2k^5 - 4a^3k + 8a^2k^2 + 12ak^3 - 8k^4 + 4a^2k
\\&
- 4ak^2 + 4k^3 - 2ak - 4k^2 + 2k\big)t
\\&
+ a^4k^2 - 2k^6 - 2a^4k + 4a^3k^2 + 8ak^4 + 6k^5 - 6a^2k^2 - 8ak^3 - 11k^4
\\&
+ 4a^3 + 16ak^2 + 12k^3 - 6a^2 - 8ak - 12k^2 + 8a + 6k - 3.
\end{aligned}
$$
Direct computations  (see Case-2.mw in~\cite{maple}) show that the derivative of $h_2$ in variable $a$ is always positive,
so in order to find minimum we consider a new polynomial $h_{2,1}$ defined as
$$
h_{2,1}(k,t)=h_2(0.52,k,t).
$$
Let us estimate the derivatives of $h_{2,1}$ in variables $k$ and $t$.
We have
$$
\begin{array}{l}
\frac{\partial h_{2,1}}{\partial k}> -7;\\
\frac{\partial h_{2,1}}{\partial k}> -32\\
\end{array}
 $$
 (see details of computations in Case-2.mw of~\cite{maple}).
The final estimation for $h_{2,1}$ (see below) shows that $h_{2,1}>0.1$ on the grid,
hence $0.1$ is the admissible precession precision for showing positivity of $h_{2,1}$.
In other words, to show that
$$
h_{2,1}(k,t)>0
$$
in the region $[0,1]\times[0,1]$, it is sufficient to check that
the minimum of $h_{2,1}(k,t)$ at the nodes of the grid with steps
$$
\Delta k=\frac{0.1}{2\cdot 32}=\frac{1}{640};  \quad
\Delta t=\frac{0.1}{2\cdot 7}=\frac{1}{140}  \quad
$$
is greater than $0.1$.
(Note that the factor $1/2$ is as we have two variables $k$ and  $t$).

Direct computations (see Case-2.mw in~\cite{maple}) show that the
minimum over this grid is
$$
\frac{1563}{15625}=0.100032
$$
attained at the grid point
$$
a=\frac{13}{25}, \qquad  k=0, \qquad t= 0.
$$

Finally we check that the denominator of  $H_2$ does not have zeroes
on the closure of the region $[0.52,1]\times[0,1]\times[0,1]$  (and, therefore, it is bounded from below by some positive constant).
Hence $H_2(a,k,t)$ is bounded from below by some positive constant  in the region $[0.52,1]\times[0,1]\times[0,1]$.
This concludes the proof of Lemma~\ref{inf-1-2}.
(Note that all the estimations of this proof
checked directly by symbolic computations in MAPLE2020, see Case-2.mw in~\cite{maple}).
\end{proof}


\begin{lemma}\label{inf-1-3} $($i$)$
Let
\begin{itemize}
\item $D_{3,1}=\{(a,kt,a+t){|\,}0.33\ge a\ge 0,k\ge 3.1,t\ge 0.67\}$;

\item $D_{3,2}=\{(a,kt,a+t){|\,}0.52\ge a\ge 0,k\ge 3.1,t\ge 7/4\}$;

\item $D_{3,3}=\{(a,kt,a+t){|\,}0.1\ge a\ge -0.81, 1,k\ge 3.1,t\ge 0.9\}$.
\end{itemize}
Consider the family
$$
\M_3=\{V_{0,0;k/3-\varepsilon}|1>\varepsilon\ge 0\}.
$$
Then for the points in the domains $D_{3,1}$ and $D_{3,2}$ it holds:

$($i$)$ all the operators of $\M_3$ are admissible;

$($ii$)$ the family $\M_3$  $p$-min-asymptotically increases the value $\sin^2\alpha$.
\end{lemma}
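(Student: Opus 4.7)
The plan is to establish parts (i) and (ii) in turn on each of the three subdomains $D_{3,1}, D_{3,2}, D_{3,3}$, following the pattern already used for Lemma~\ref{inf-1-2}.

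For admissibility, by the proposition characterising admissible JP-transforma\-tions, $V_{0,0;\gamma}$ is admissible for $\xi=(x,y,1)$ precisely when $\gamma\le x/y$. With $x=kt$, $y=a+t$, and $\gamma=k/3-\varepsilon$ for $0\le \varepsilon<1$, this reduces to
$$
(k/3-\varepsilon)(a+t) \le kt,
\qquad \text{i.e.,}\qquad  ka-3\varepsilon(a+t) \le 2kt.
$$
On each $D_{3,i}$ I would substitute the lower bounds on $k$ and $t$ and the range of $a$; the tightest case is $D_{3,3}$ where $a$ can be as small as $-0.81$, but since $k\ge 3.1$ and $t\ge 0.9$ the right-hand side dominates by a comfortable margin. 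Thus $\M_3$ lies entirely in the admissible set on all three domains.

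For (ii), I would apply Proposition~\ref{prop-a-1}. This requires, for every $M\in\M_3$, that $F_M(a,a,kt,a+t)$ be defined and strictly positive on the closure of each $D_{3,i}$ in $\r P^3$. Using Remark~\ref{formulaF}, I compute $\sin^2\alpha(s)$ symbolically, apply $V_{0,0;k/3-\varepsilon}$ to the triple, recompute $\sin^2\alpha$, take the ratio and subtract $1$. Since $D_{3,i}$ is unbounded in $k$ and $t$, I would compactify by the change of variables $k\mapsto 1/k$, $t\mapsto 1/t$ used in the proof of Lemma~\ref{inf-1-2}; this turns the three domains into closed boxes in $(a,k,t,\varepsilon)$-space. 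On each box one obtains $F_M$ as a rational function; positivity of the numerator can be verified by the same grid method as before — bound the partial derivatives of the numerator polynomial in each variable on the box, choose a grid with spacings small enough that the positive lower bound at the grid nodes survives these Lipschitz estimates, then verify the node values in MAPLE. Separately, one checks that the denominator is a sum of squares (or otherwise manifestly nonzero) on the closure, so that $F_M$ is defined and uniformly bounded from below by a positive constant.

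The main obstacle will be the algebraic bulk: three subdomains, each requiring its own symbolic expansion of $F_M$ after compactification, and each requiring a careful derivative estimate together with a grid check. One delicate point is that in $D_{3,3}$ the parameter $a$ is allowed to be negative, so the factor $(y-p)^2=t^2$ in the denominator of $\sin^2\alpha(s)$ is unproblematic, but the numerator of $F_M$ no longer has an obvious sign and the monotonicity-in-$a$ trick from Lemma~\ref{inf-1-2} is not available; there the positivity must be read off directly from the grid. A second delicate point is the behaviour at the boundary points corresponding to $k=\infty$ or $t=\infty$ in original coordinates: after compactification these are nodes of the grid and the leading-order expansions must be checked to give a strictly positive limit. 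Finally, because $\varepsilon$ is a continuous parameter with values in $[0,1)$, I would either include $\varepsilon$ as a fourth axis of the grid (using that $F_M$ depends smoothly on $\varepsilon$) or verify the estimate is uniform in $\varepsilon$ by treating $\varepsilon$ as a small perturbation of the integer case $V_{0,0;\lfloor k/3 \rfloor}$ in accordance with Remark~\ref{flooor}.
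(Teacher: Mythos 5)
Your plan would work, but it follows a computational route that is genuinely different from the paper's. For admissibility you reduce to $\gamma\le x/y$, which is correct (the floor in the admissibility proposition only matters for integer $\gamma$); note, though, that $D_{3,3}$ is not the hardest sub-domain here --- when $a<0$ the inequality is trivial --- the binding constraint is rather on $D_{3,1}$ where $a$ can be largest relative to $t$. For part (ii) the paper never compactifies and never runs a grid: it computes $H_3=F_{V_{0,0;k/3-\varepsilon}}(a,a,kt,a+t)$ in closed form and observes that it factors as $(3\varepsilon-k)\,h_3(a,k,t,\varepsilon)$ over a denominator that is manifestly a product of positive expressions. Since $k\ge 3.1>3>3\varepsilon$, the prefactor is negative, so $H_3>0$ reduces to $h_3<0$, and this is established by writing $h_3$ as a polynomial in $k$ and checking in MAPLE that every coefficient $c_j(a,t,\varepsilon)$ of $k^j$, $j=0,\dots,5$, is non-positive on each $D_{3,i}$, with at least one strictly negative. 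That sign analysis is already uniform in $k$ and extends automatically to the projective closure, so there is no need for the delicate steps you flag (derivative bounds, grid resolutions, leading-order checks at $k=\infty$, $t=\infty$, or treating $\varepsilon$ as a fourth grid axis). Your compactification-and-grid scheme would in principle also succeed --- and you are right that Remark~\ref{flooor} is what lets one pass from the $\varepsilon$-family to the single integer operator $V_{0,0;\lfloor k/3\rfloor}$ --- but in four variables it is considerably heavier; the structural observation about the coefficients of $h_3$ in $k$ is the idea the paper actually exploits, and is worth looking for before reaching for the grid.
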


\begin{proof}

We start with a vector
$$
\xi=(kt,a+t,1).
$$
Then we have
$$
V_{0,0; k/3-\varepsilon}\xi=\Big(kt-(k/3-\varepsilon)(a+t),a+t,1\Big).
$$
Since $a<1$, $k\ge 3.1$ and $t\ge 2/3$, the first coordinate is estimated as.
$$
kt-(k/3-\varepsilon)(a+t)>\frac{2}{3}kt-\frac{k}{3}a=\frac{k}{3}\big(2t-a\big)>0.
$$
Hence the resulting vector is in the non-negative coordinate octant.

\vspace{2mm}

Set
$$
\begin{array}{l}
H_3(a,k,t,\varepsilon)=F_ {V_{0,0;k/3-\varepsilon}}(a,a,kt,a+t);\\
\end{array}
$$
Then we have:
$$
\begin{array}{c}
H_3(a,k,t, \varepsilon)=
{\frac { \left( 3\,\varepsilon -k \right)  h_3(a,k,t) }{ \left( 9\,{
a}^{2}{k}^{2}+9\,{\varepsilon}^{2}+12\,\varepsilon\,k+4\,{k}^{2}+9 \right)
 \left( 9\,{a}^{2}{k}^{2}{t}^{2}+9\,{\varepsilon}^{2}{t}^{2}+12\,\varepsilon
\,k{t}^{2}+4\,{k}^{2}{t}^{2}+9\,{t}^{2} \right)  \left( {k}^{2}{t}^{2}
+{a}^{2}+2\,at+{t}^{2}+1 \right) }},
\end{array}
$$
where
$$
\begin{aligned}
h_3(a,k,t,\varepsilon) =&
\left( -9\,{a}^{6}{t}^{2}+36\,{a}^{5}{t}^{3}+45\,{a}^{4}{t}^{4}-18\,{
a}^{4}{t}^{2}+72\,{a}^{3}{t}^{3}-9\,{a}^{2}{t}^{2}+36\,a{t}^{3}-20\,{t
}^{4} \right) {k}^{5}+
\\&
\left( 27{a}^{6}\varepsilon{t}^{2}{+}54{a}^{5}
\varepsilon\,{t}^{3}{+}27{a}^{4}\varepsilon{t}^{4}{+}54{a}^{4}\varepsilon{t
}^{2}{+}108{a}^{3}\varepsilon{t}^{3}{+}27{a}^{2}\varepsilon{t}^{2}{+}54a
\varepsilon{t}^{3}{-}72\varepsilon{t}^{4} \right) {k}^{4}+
\\
&
 \left( -108\,
{a}^{4}{t}^{2}-108\,{a}^{3}{t}^{3}-81\,{\varepsilon}^{2}{t}^{4}-173\,{a}^
{2}{t}^{2}-58\,a{t}^{3}-65\,{t}^{4}-65\,{t}^{2} \right) {k}^{3}+
\\
&
 \left( -27\,{\varepsilon}^{3}{t}^{4}-99\,{a}^{2}\varepsilon\,{t}^{2}-90\,a
\varepsilon\,{t}^{3}-99\,\varepsilon\,{t}^{4}-99\,\varepsilon\,{t}^{2} \right)
{k}^{2}+
\\
&
\left( -81\,{a}^{2}{\varepsilon}^{2}{t}^{2}{-}162\,a{\varepsilon}^{2}
{t}^{3}{-}81\,{\varepsilon}^{2}{t}^{4}{-}99\,{a}^{2}{t}^{2}{-}144\,a{t}^{3}{-}81
\,{\varepsilon}^{2}{t}^{2}-45\,{t}^{4}-90\,{t}^{2} \right) k+
\\
&
(-27\,{a}^{2}{
\varepsilon}^{3}{t}^{2}{-}54\,a{\varepsilon}^{3}{t}^{3}{-}27\,{\varepsilon}^{3}{t}^
{4}{-}27\,{a}^{2}\varepsilon\,{t}^{2}{-}54\,a\varepsilon\,{t}^{3}{-}27\,{\varepsilon}
^{3}{t}^{2}{-}27\,\varepsilon\,{t}^{4}{-}54\,\varepsilon\,{t}^{2}).
\end{aligned}
$$

We are interested if  $H_3(a,k,t,\varepsilon)> 0$; this is equivalent to $h_3(a,k,t,\varepsilon)< 0$.
For the cases of $D_{3,1}$ and   $D_{3,2}$  the coefficients at $k^0$, $k^1$, and $k^2$  are non-positive, while
the coefficient at $k^3$ is negative.
The coefficients at $k^4$ and $k^5$ are non-positive for the ranges of the lemma.
For the case of $D_{3,3}$ all 6 coefficients are non-positive while some of them (e.g., at $k^5$) are always negative.
Finally we check that the denominator of  $H_3$ does not have zeroes
on the closure of all three regions (and, therefore, it is bounded from below by some positive constant).
All cases are checked by symbolic computations in MAPLE2020 (see Case-3.mw in~\cite{maple}).
\end{proof}


\begin{lemma}\label{inf-1-4}
Let
$$
D_{4}=\{(a,kt,a+t){|\,}0.68\ge a\ge 0.3,1001\ge k\ge 6.3,a+t>1\}.
$$
Consider the family
$$
\M_4=\{V_{(1-a)k-\delta ,1;k-\varepsilon-1}{|\,}1> \varepsilon\ge 0,1> \delta\ge 0\}.
$$
Then for the points in the domain $D_4$ it holds:

$($i$)$ all the operators of $\M_4$ are admissible;

$($ii$)$ the family $\M_4$  $p$-min-asymptotically increases the value $\sin^2\alpha$.
\end{lemma}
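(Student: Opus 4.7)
The plan is to mirror the template used in Lemmas~\ref{inf-1-2} and~\ref{inf-1-3}: verify admissibility by explicit coordinate computation, then invoke Proposition~\ref{prop-a-1} after transforming to a compact region and controlling the relevant polynomial by MAPLE2020.

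For Item~$($i$)$, I would apply the operator $V=V_{(1-a)k-\delta,1;k-\varepsilon-1}$ to $\xi=(kt,a+t,1)$. A direct calculation using the formula
$$
V_{\alpha,\beta;\gamma}(x,y,z)=\bigl(x-\alpha z-\gamma(y-\beta z),\ y-\beta z,\ z\bigr)
$$
gives, after cancellations,
$$
V(\xi)=\bigl((1+\varepsilon)(a+t-1)+\delta,\ a+t-1,\ 1\bigr).
$$
Since $a+t>1$ on $D_4$ and $\varepsilon,\delta\geq 0$, every coordinate is positive, so $V$ is admissible throughout $\M_4$ on $D_4$. (The ranges $\varepsilon,\delta\in[0,1)$ are exactly what is needed so that applying $\lfloor\cdot\rfloor$ to $(1-a)k$ and $k-1$ yields an admissible integer operator; this is the convention explained in Remark~\ref{flooor}.)

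For Item~$($ii$)$, I would set
$$
G_4(a,k,t,\varepsilon,\delta)=F_V(a,a,kt,a+t),
$$
using the formula from Remark~\ref{formulaF}, and check the hypotheses of Proposition~\ref{prop-a-1}. To make the domain compact I would change variables via $k\mapsto 1/k$ and $s:=1/(a+t-1)$ (so $s\in[0,1/(1-a)]$), reducing $D_4$ to a closed box in $(a,1/k,s,\varepsilon,\delta)\in\mathbb{R}^5$. On this box the denominator of $F_V$ is a product of sums of squares (and linear forms that stay strictly positive because $a+t>1$), so it is bounded below by a positive constant. The sign of $G_4$ therefore reduces to the sign of its numerator $h_4$, a polynomial in the five transformed variables.

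The main obstacle is the size of $h_4$: after substituting the nonconstant expressions for $\alpha$ and $\gamma$ into $F_V$, the numerator is a dense multivariate polynomial, too large for inspection by hand. Following the strategy of Lemma~\ref{inf-1-2}, I would use MAPLE2020 to (a) factor off any obvious positive common factor analogous to the $(3\varepsilon-k)$ and $h_3$ split in Lemma~\ref{inf-1-3}, (b) compute uniform upper bounds $N_i$ for $|\partial h_4/\partial v_i|$ on the compact box for each variable $v_i\in\{a,1/k,s,\varepsilon,\delta\}$, and (c) evaluate $h_4$ on a grid of step size $\tau/(5N_i)$ where $\tau$ is a strict positive lower bound I aim to establish on the grid. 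If the grid minimum exceeds $\tau$, the mean value theorem forces $h_4>0$ on the whole box, which together with the denominator bound yields $G_4>0$ on $\overline{D_4}$, and Proposition~\ref{prop-a-1} concludes that $\M_4$ $p$-min-asymptotically increases $\sin^2\alpha$. The only risk is that the grid minimum is too close to zero, which would call for either a finer splitting of $D_4$ into subregions (as is done between Cases~3 and 4 of Proposition~\ref{y-axes}) or an algebraic refactoring that exposes cancellation.
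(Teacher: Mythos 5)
Your Item~$($i$)$ is correct and cleaner than the paper's presentation: you compute the full cancellation $kt-(1-a)k-(k-\varepsilon-1)(a+t-1)=(1+\varepsilon)(a+t-1)$, whereas the paper only bounds it from below. The paper's proof of Item~$($ii$)$ follows exactly the template you propose (compactify, check the denominator, reduce to a polynomial numerator $h_4$, bound coefficients with MAPLE), so the overall plan is right. However, there is a real gap in your compactification step. You set $s=1/(a+t-1)$ and assert $s\in[0,1/(1-a)]$. That range is false: the only constraint on $t$ in $D_4$ is $a+t>1$, i.e.\ $t>1-a$, so $a+t-1$ ranges over $(0,\infty)$ and $s$ is unbounded above (it blows up as $t\to(1-a)^+$). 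Your ``closed box in $\mathbb{R}^5$'' is therefore not closed, and the derivative-bound plus mean-value-theorem grid argument does not apply. The bound $1/(1-a)$ you quote is actually the correct upper bound for $1/t$, not for $1/(a+t-1)$: since $t>1-a$, one has $1/t\in(0,1/(1-a))\subset(0,1/0.32)$, a bounded interval. This is precisely the substitution $(a,k,t)\mapsto(a,1/k,1/t)$ the paper uses. With that correction your argument aligns with the paper's; without it the compactness hypothesis of Proposition~\ref{prop-a-1} fails and the numerical verification is not valid on the whole domain.

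One smaller remark: the paper also reparametrizes the third index of the operator, working with $V_{*,*;\,k-\varepsilon}$ for $\varepsilon\in[1,2)$ instead of $V_{*,*;\,k-\varepsilon-1}$ for $\varepsilon\in[0,1)$, purely to shorten the symbolic output; and rather than a grid-plus-MVT check the paper collects $h_4$ as $c_4k^4+c_2k^2+c_0$ in the inverted $k$-variable, bounds the three coefficients uniformly over the box, and deduces positivity for $k\le 10/63$ directly. Your grid strategy would also work once the domain is genuinely compact, but the paper's coefficient-bounding is tighter and avoids choosing a mesh size.
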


\begin{proof}
We start with a vector
$$
\xi=(kt,a+t,1).
$$
Then we have
$$
V_{(1-a)k-\delta ,1;k-\varepsilon-1}\xi=
(kt - (1 - a)k+\delta - (k-\varepsilon-1) (a + t-1) ,  a + t-1, 1).
$$
First of all, the second coordinate $a + t-1>0$ as $a+t>1$.
Secondly,
$$
kt - (1 - a)k+\delta - (k -\varepsilon - 1)(a + t-1) \ge
k(a+t-1) - (k -\varepsilon -1)(a + t-1)> 0.
$$
Hence the resulting vector is in the non-negative coordinate octant.

\vspace{2mm}

In order to make the domain of minimizing compact we consider the following change of the coordinates:
$$
(a,k,t) \to (a,1/k,1/t).
$$
Set
$$
\begin{array}{l}
G_4(a,k,t,\varepsilon,\delta)=F_{V_{(1-a)k-\delta ,1;k-\varepsilon}}(a,a,kt,a+t);\\
H_4(a,k,t,\varepsilon,\delta)=G_4(a,1/k,1/t,\varepsilon,\delta).
\end{array}
$$
(note that we take $V_{0,0;k-\varepsilon}$ with condition $2\ge \varepsilon \ge 1$ rather than
$V_{0,0;k-\varepsilon-1}$ with condition $1\ge \varepsilon \ge 0$. This is done in order to simplify symbolic computations.)
Then we have:
$$
H_4(a,k,t,\varepsilon,\delta)=
\frac { h_4(a,k,t,\varepsilon,\delta) }
{k^2(a^2k^2t^2 + 2ak^2t + k^2t^2 + k^2 + 1)(\delta^2 + \varepsilon^2 + 1)^2 },
$$
where
$$
h_4(a,k,t,\varepsilon,\delta) =c_4k^4+c_2k^2+c_0
$$
with coefficients
$$
c_0\ge\frac{11881}{9900}, \qquad c_2\ge -\frac{1544}{55}, \qquad c_4\ge -\frac{693231}{961}.
$$
Hence for $k\le 10/63$ the polynomial is positive.
Then  $H_4(a,k,t,\varepsilon,\delta)$ is positive as well.
Finally we check that the denominator of  $H_4$ does not have zeroes
on the closure of the region  (and, therefore, it is bounded from below by some positive constant).
Therefore, for $1001\ge k\ge 6.3$
the family $\M_4$  $p$-min-asymptotically increases the value $\sin^2\alpha$.

All the estimates are done symbolically  in MAPLE2020 (see Case-4.mw in~\cite{maple}).
\end{proof}

\begin{lemma}\label{inf-1-5}
Let
$$
 D_{5}=\{(a,kt,a+t){|\,}0.68\ge a\ge 0.3,k\ge 1000,7/4\ge t\ge 0.31\}.
 $$
Consider the family
$$
\M_5=\{V_{0 ,1;k/2-\varepsilon}{|\,}1> \varepsilon\ge 0\}.
$$
Then for the points in the domain $D_5$ it holds:

$($i$)$ all the operators of $\M_5$ are admissible;

$($ii$)$ the family $\M_5$  $p$-min-asymptotically increases the value $\sin^2\alpha$.
\end{lemma}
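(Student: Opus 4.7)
The plan is to follow the template established in Lemmas~\ref{inf-1-2}--\ref{inf-1-4}.

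First, for admissibility, I would substitute $\xi = (kt, a+t, 1)$ directly into the formula $V_{\alpha,\beta;\gamma}(x,y,z) = (x - \alpha z - \gamma(y - \beta z),\, y - \beta z,\, z)$ with $\alpha = 0$, $\beta = 1$, $\gamma = k/2 - \varepsilon$, obtaining
\[
V_{0,1;k/2-\varepsilon}\xi = \bigl(kt - (k/2-\varepsilon)(a+t-1),\; a+t-1,\; 1\bigr).
\]
Positivity of the third coordinate is trivial; positivity of the second follows from the ambient hypothesis $y > 1$ (equivalently $a+t > 1$) coming from $D'$; and the first coordinate can be rewritten as
\[
\tfrac{k}{2}(1 + t - a) + \varepsilon(a+t-1),
\]
which is strictly positive since $a \le 0.68 < 1 \le 1+t$ and $a+t > 1$.

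Second, for part (ii) I would appeal to Proposition~\ref{prop-a-1}. Since $a$ and $t$ already range over compact intervals, only $k$ needs compactification; I substitute $k \mapsto 1/k$ and set
\[
G_5(a,k,t,\varepsilon) = F_{V_{0,1;k/2-\varepsilon}}(a,a,kt,a+t), \qquad H_5(a,k,t,\varepsilon) = G_5(a,1/k,t,\varepsilon),
\]
reducing the problem to verifying $H_5 > 0$ on the compact region
\[
a \in [0.3,0.68], \quad k \in [0, 1/1000], \quad t \in [0.31, 7/4], \quad \varepsilon \in [0,1].
\]
Using the closed-form expression for $\sin^2\alpha$ recorded in Remark~\ref{formulaF}, a symbolic expansion presents $H_5$ as a rational function $h_5/d_5$ whose denominator $d_5$ is a product of sums of squares uniformly bounded below by a positive constant on the closure, so the sign of $H_5$ coincides with the sign of the numerator polynomial $h_5(a,k,t,\varepsilon)$.

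The main obstacle will be establishing a uniform positive lower bound on $h_5$ over this four-dimensional compact region. Mirroring the strategy for $h_{2,1}$ in the proof of Lemma~\ref{inf-1-2}, I would first bound each partial derivative $|\partial h_5/\partial a|$, $|\partial h_5/\partial k|$, $|\partial h_5/\partial t|$, $|\partial h_5/\partial \varepsilon|$ by explicit constants via symbolic estimation, then verify $h_5 > \tau$ for a suitable $\tau > 0$ at the nodes of a grid whose step sizes in each coordinate are inversely proportional (with a factor $1/(2\cdot 4)$ accounting for the four variables) to the corresponding derivative bound. The structural feature driving positivity is that, in the limit $k \to 0$ (original $k \to \infty$), the dominant contribution comes from the $k^0$-coefficient of $h_5$, which can be verified analytically to be strictly positive on the $(a,t,\varepsilon)$-face; the restriction $k \le 1/1000$ ensures the lower-order-in-$k$ corrections remain subdominant. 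All the symbolic manipulations would be carried out in MAPLE2020 (cf.\ Case-5.mw in~\cite{maple}), in exact parallel with Cases~2--4.
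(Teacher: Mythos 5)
Your outline is sound at the top level and the admissibility computation is exactly right (the rewriting of the first coordinate as $\tfrac{k}{2}(1+t-a)+\varepsilon(a+t-1)$ with positivity from $a\le 0.68<1$ and $a+t>1$ is the argument one wants), but for the crucial positivity of the numerator $h_5$ you propose the multi-variable grid-sampling strategy of Lemma~\ref{inf-1-2}, whereas the paper explicitly routes Lemma~\ref{inf-1-5} through Lemma~\ref{inf-1-4}: it treats $h_5$ as a degree-$5$ polynomial in the compactified $k$, bounds each of the six coefficients $c_0,\dots,c_5$ symbolically over the $(a,t,\varepsilon)$-box, and observes that the resulting one-variable bounding polynomial is positive on $k\in[0,1/1000]$. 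The grid approach you suggest would in principle also work, but on a four-dimensional box it is substantially heavier, and it ignores the special structure that makes this lemma tractable: only the $k$-direction degenerates, so the coefficient-wise bound exploits that structure directly rather than sampling in all four coordinates. You do note this structure in passing (``the dominant contribution comes from the $k^0$-coefficient''), which is exactly the paper's key observation; had you promoted that remark to the main argument --- estimate each $c_j$ over the compact $(a,t,\varepsilon)$-box, then treat the resulting bound as a univariate polynomial in $k$ --- you would have reproduced the paper's proof. Your observation that only $k$ needs compactification (unlike Lemma~\ref{inf-1-4} where both $k$ and $t$ are inverted) is a correct refinement, since $t\in[0.31,7/4]$ is already compact.
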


\begin{proof}
The proof of this lemma is similar to the proof of Lemma~\ref{inf-1-4}.
The resulting polynomial is
$$
h_{5}(a,k,t) =c_5k^5+c_4k^4+c_3k^3+c_2k^2+c_1k^1+c_0
$$
where the coefficients of $h_{5}$ do not exceed the respectively coefficients of the following polynomial
$$
-{\frac {40912\,{k}^{5}}{31}}-{\frac {2175788\,{k}^{4}}{4805}}-{\frac
{2619048\,{k}^{3}}{4805}}-{\frac {529992204\,{k}^{2}}{12641275}}-{
\frac {61022059\,k}{600625}}+{\frac{34569}{62500}}.
$$
Hence for $0\le k\le 1/1000$ the polynomial is positive.
The denominator of the corresponding expression does not have zeroes
on the closure of the region (and, therefore, it is bounded from below by some positive constant).
Therefore, for $k\ge 1000$ the set $\M_5$
$p$-min-asymptotically increases the value  $\sin^2\alpha$.

The estimates are done symbolically  in MAPLE2020 (see Case-5.mw in~\cite{maple})).
\end{proof}


\begin{lemma}\label{inf-1-6}
Let
$$
 D_{6}=\{(a,kt,a+t){|\,}0.68\ge a\ge 0.3,6.3\ge k\ge 3.13, a+t>1\}.
 $$
Consider the family
$$
\M_6=\{V_{1,1;3}\}.
$$
Then for the points in the domain $D_6$ it holds:

$($i$)$   the operator of $\M_6$ is admissible;

$($ii$)$ the family $\M_6$  $p$-min-asymptotically increases the value $\sin^2\alpha$.
\end{lemma}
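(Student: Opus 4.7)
The plan is to follow the template established in Lemmas~\ref{inf-1-2}--\ref{inf-1-5}, treating the two claims separately.

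For part (i), admissibility of $V_{1,1;3}$ applied to $\xi=(kt,a+t,1)$, I would invoke the criterion of the proposition characterizing admissible $V_{\alpha,\beta;\gamma}$: one needs $1\le kt$, $1\le a+t$, and $3\le (kt-1)/(a+t-1)$. The first two are immediate from $k\ge 3.13$, $a+t>1$, and $t>1-a\ge 0.32$. The third is equivalent to $(k-3)t\ge 3a-2$. Using $t>1-a$ together with the bounds $a\le 0.68$ and $k\ge 3.13$ gives
\[
(k-3)t > (k-3)(1-a) \ge 0.13\cdot 0.32 = 0.0416 > 0.04 \ge 3a-2,
\]
which settles admissibility throughout $D_6$.

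For part (ii), I would apply Proposition~\ref{prop-a-1} with the single-operator family $\M_6$. Setting $G_6(a,k,t)=F_{V_{1,1;3}}(a,a,kt,a+t)$ and using the limit formula from Remark~\ref{formulaF}, it suffices to verify two things on the closure of $D_6$ (compactified appropriately): that $G_6>0$ and that its denominator does not vanish. Since $a$ and $k$ already lie in bounded intervals $[0.3,0.68]$ and $[3.13,6.3]$, the only non-compact direction is $t\to\infty$ along $a+t>1$. I would compactify via the substitution $t\mapsto 1/s$ with $s\in[0,\,1/(1-a)]$, and let $H_6(a,k,s)=G_6(a,k,1/s)$, so that the problem reduces to the positivity of a rational function on a compact box.

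The main computational step, which I expect to be the principal obstacle, is verifying that the numerator $h_6(a,k,s)$ of $H_6$ is bounded away from zero on this box. I would carry this out in MAPLE2020 following the scheme of Lemma~\ref{inf-1-2}: compute $h_6$ explicitly as a polynomial, estimate a uniform bound $N$ on $\|\nabla h_6\|_\infty$ over the box, choose a grid with spacing $\Delta$ so that $N\cdot\Delta$ is less than a tolerance $\tau$, and check numerically that $h_6>2\tau$ at every grid node. Because the region is small and compact (unlike Lemmas~\ref{inf-1-4} and~\ref{inf-1-5}, where one had to exploit leading-order behaviour in $k$), the grid approach should succeed without needing any further algebraic decomposition. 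The anticipated subtlety is that $k$ is only just above the threshold $3.13$ where admissibility is tight, so the margin $h_6-2\tau$ may be small near the boundary $k=3.13$, $t=1-a$ and will dictate the choice of $\tau$.

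Finally, the denominator of $H_6$ is, after clearing powers of $s$, a product of factors that are manifestly sums of squares (as in Remark~\ref{formulaF}); none vanish on the closed box, so it is bounded below by a positive constant. Combining with the lower bound on the numerator yields $G_6\ge\delta_0>0$ on the closure of $D_6$, and Proposition~\ref{prop-a-1} delivers the $p$-min-asymptotic increase of $\sin^2\alpha$ by $\M_6$, completing the proof.
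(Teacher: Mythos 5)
Your argument matches the paper's proof: admissibility is established by the same arithmetic estimate (your inequality $(k-3)t \ge 3a-2$ is just a rearrangement of the paper's direct check that $kt - 3a - 3t + 2 > 0$, with the tight corner near $a\approx 0.68$, $k\approx 3.13$, $t\approx 1-a$ in both), and for the $p$-min-asymptotic increase both proofs defer to a MAPLE verification after compactifying the unbounded $t$-direction. The only cosmetic difference is that the paper inverts both $k$ and $t$ whereas you note $k$ is already confined to $[3.13,6.3]$ and invert $t$ alone; this does not change the argument.
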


\begin{proof}

We start with a vector
$$
\xi=(kt,a+t,1).
$$
Then we have
$$
V_{1,1;3}\xi=(kt - 3a - 3t + 2, a + t-1, 1),
$$
First of all, the second coordinate $a + t-1>0$ as $a+t>1$.
Secondly,
$$
kt - 3a - 3t + 2> 0,
$$
this expression attains minimum  $0.0003$ at
$$
a = 17/25, \qquad k = 313/100, \qquad  t = 31/100.
$$

Hence the resulting vector is in the positive coordinate octant.

\vspace{2mm}

In order to make the domain of minimizing compact we consider the following change of the coordinates:
$$
(a,k,t) \to (a,1/k,1/t).
$$
Further estimates are done symbolically  in MAPLE2020 (see Case-6.mw in~\cite{maple})).

\end{proof}


\begin{lemma}\label{inf-1-7} $($i$)$
Let
\begin{itemize}
\item $ D_{7,1}=\{(a,kt,a+t){|\,}0.68\ge a\ge 0,  2\ge k\ge 1,  t\ge 0.31\}$;

\item $D_{7,2}=\{(a,kt,a+t){|\,}0.68\ge a\ge 0,  3\ge k\ge 2,   t\ge 0.31\}$;

\item $D_{7,3}=\{(a,kt,a+t){|\,}0.7\ge a\ge 0.5,  3.3\ge k\ge 2.9  ,t\ge 0.29\}$;

\item $D_{7,4}=\{(a,kt,a+t){|\,}0.51\ge a\ge 0, 3.3\ge k\ge 2.9,  t\ge 0.4\}$;

\item $D_{7,5}=\{(a,kt,a+t){|\,}0\ge a\ge -0.81, 3.1\ge k\ge 1,  t\ge 1\}$.
\end{itemize}
Consider two one-operator sets:
$$
\M_{7,1}=\{V_{0,0;1}\} \quad \hbox{and} \quad \M_{7,2}=\{V_{0,1;1}\}.
$$
Then for the points in the domains $D_{7,1}$, $D_{7,2}$, $D_{7,4}$, and $D_{7,5}$
the operator of $\M_{7,1}$ is admissible
and the family $\M_{7,1}$  $p$-min-asymptotically increases the value $\sin^2\alpha$
in the domains.
\\
For the points in the domain $D_{7,3}$
the operator of $\M_{7,2}$ is admissible
and the family $\M_{7,2}$  $p$-min-asymptotically increases the value $\sin^2\alpha$
in the domains.
\end{lemma}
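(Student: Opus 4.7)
The plan is to handle admissibility and the $p$-min-asymptotic increase separately, mirroring the template of Lemmas~\ref{inf-1-2}--\ref{inf-1-6}. For admissibility, direct substitution into the defining formulas for the JP-transformations gives
\[
V_{0,0;1}(\xi) = (t(k-1) - a,\, a+t,\, 1), \qquad V_{0,1;1}(\xi) = (t(k-1) - a + 1,\, a+t-1,\, 1).
\]
Since every $\xi$-state satisfies the implicit inequalities $kt > a + t > 1$, in each of the four domains $D_{7,1}, D_{7,2}, D_{7,4}, D_{7,5}$ the output of $V_{0,0;1}$ lies in the positive octant; in $D_{7,3}$ the output of $V_{0,1;1}$ does too, since $a+t > 1$ and $t(k-1) > 0 > a - 1$ (as $k \ge 2.9$ and $a \le 0.7$). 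The floor-type conditions of the JP-admissibility criterion then follow by inspection in each case.

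For the $p$-min-asymptotic-increase part, the plan is to apply Proposition~\ref{prop-a-1}. On each of the five subdomains this reduces the claim to two checks: (a) the denominator of $F_M(a, a, kt, a+t)$ has no zeros on the closure of the subdomain in $\r P^3$, and (b) the numerator is strictly positive on that closure. Following the earlier pattern, in each case the denominator will factor as a product of manifestly positive quantities (sums of squares plus strictly positive terms), so (a) is settled by direct inspection.

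For (b), the plan is to first compactify the unbounded parameter ranges by the substitution $t \mapsto 1/t$ (and $k \mapsto 1/k$ where the range of $k$ is unbounded), turning each $D_{7,i}$ into a closed box in the new coordinates. After clearing denominators one obtains, in each case, a polynomial $h_{7,i}$ in the new variables whose strict positivity must be established on the box. As in the proof of Lemma~\ref{inf-1-2}, this will be carried out by bounding the partial derivatives of $h_{7,i}$ from above in absolute value and evaluating $h_{7,i}$ on a grid whose spacing, multiplied by the gradient bound, is strictly smaller than the observed minimum on the grid.

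The main obstacle will be that the five subdomains have rather different parameter geometry: two of them use $V_{0,0;1}$ with unbounded $t$, two are bounded in $k$, and $D_{7,3}$ is a narrow strip near $k \approx 3$ where the infinitesimal increase is expected to be close to vanishing along part of the boundary. Accordingly the grid resolution must be tuned independently for each subdomain, and each polynomial $h_{7,i}$ analyzed case by case. All such symbolic coefficient extractions, gradient bounds, and grid checks will be executed in MAPLE2020 (see Case-7.mw in~\cite{maple}), in complete parallel to the routines developed for Cases~1--6. Applying Proposition~\ref{prop-a-1} on each subdomain then yields the lemma.
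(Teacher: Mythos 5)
Your proposal is correct and matches the paper's approach. The paper's own proof of this lemma is two terse sentences: (1) both $V_{0,0;1}$ and $V_{0,1;1}$ automatically send admissible states to the positive octant (so admissibility is immediate; you restricted the check to specific domains, but the general fact suffices), and (2) the asymptotic increase is verified symbolically in MAPLE2020 (in files Case-7-1-2.mw, Case-7-3.mw, Case-7-4.mw, Case-7-5.mw), following the same compactify--clear-denominators--bound-derivatives--evaluate-on-grid template of Lemma~\ref{inf-1-2} and Proposition~\ref{prop-a-1} that you lay out. Your plan is a faithful (and slightly more explicit) reconstruction of that proof.
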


\begin{proof}
First of all note that both operators $V_{0,0;1}$ and $V_{0,1;1}$ send the whole set of admissible states to the positive octant.

Secondly, in all the cases  the corresponding operator increases the infinitesimal $\sin^2\alpha$; it was estimated symbolically in MAPLE2020
(the first two cases are in  Case-7-1-2.mw; the third case is in  Case-7-3.mw; the fourth case is in  Case-7-4.mw; the fifth case is in  Case-7-5.mw in~\cite{maple})).
\end{proof}


\begin{lemma}\label{inf-1-8}
Let
$$
 D_{8}=\{(a,x,y){|\,}1\ge a\ge 0.6799,2\ge y> 1, x>y\}.
 $$
Consider the family
$$
\M_8=\{V_{0,1;0}\}.
$$
Then for the points in the domain $D_8$ it holds:

$($i$)$ the operator of $\M_8$ is admissible;

$($ii$)$ the family $\M_8$  $p$-min-asymptotically increases the value $\sin^2\alpha$.
\end{lemma}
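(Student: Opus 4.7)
The plan is to follow the same template used for Lemmas~\ref{inf-1-2}--\ref{inf-1-6}: verify admissibility explicitly, compactify the domain $D_8$, write down $F_{V_{0,1;0}}(a,a,x,y)$ as a rational function via Remark~\ref{formulaF}, and then invoke Proposition~\ref{prop-a-1} after a symbolic check that the (compactified) numerator is strictly positive and the denominator is non-vanishing on the closure.

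For item $(i)$, the computation $V_{0,1;0}(x,y,1) = (x,y-1,1)$ (from the formula right before Proposition~\ref{ConstSetB} with $\alpha=0$, $\beta=1$, $\gamma=0$) immediately gives admissibility since $y>1$ throughout $D_8$. For item $(ii)$, the formulas in Remark~\ref{formulaF} applied before and after $V_{0,1;0}$ (where the transformed state has vectors $(x,y-1,1)$, $(0,a-1,1)$, $(0,a-1+\varepsilon,1)$) yield
\[
F_{V_{0,1;0}}(a,a,x,y) = \frac{(x^2+(y-1)^2+1)\bigl(x^2 a^2+x^2+(y-a)^2\bigr)^2}{(x^2+y^2+1)\bigl(x^2(a-1)^2+x^2+(y-a)^2\bigr)^2}-1.
\]
To make the domain compact I would pass to coordinates $(a,u,y)$ with $u = y/x \in (0,1]$, so that $\bar D_8 = [0.6799,1]\times [0,1]\times [1,2]$ is compact, and $F$ extends continuously there (the formula, after clearing $x$, becomes rational in $(a,u,y)$).

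The positivity argument rests on the elementary fact that for $a>1/2$ and $y>1/2$ we have $a^2 > (a-1)^2$ and $y^2 > (y-1)^2$. These push the two factors in opposite directions: the second inequality makes the linear factor $x^2+(y-1)^2+1$ smaller than $x^2+y^2+1$, but the first inequality makes $x^2 a^2+x^2+(y-a)^2$ larger than $x^2(a-1)^2+x^2+(y-a)^2$ by exactly $x^2(2a-1)$, and this term appears squared. On $D_8$ we have $2a-1 \ge 2(0.6799)-1 > 0.359$ and $2y-1 \le 3$, so the quadratic gain beats the linear loss. I would expand the numerator of $F$ as a polynomial in the compactified variables and verify its positivity in MAPLE2020 following the same recipe as in the proof of Lemma~\ref{inf-1-2} (either by showing all monomial coefficients have favorable signs, or by a grid evaluation combined with a uniform derivative bound). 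The denominator is a product of sums of squares with coefficients bounded away from zero on the compact domain, so it causes no trouble.

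The main obstacle is the boundary case $a = 0.6799$, where the positivity margin $2a-1 \approx 0.36$ is small; this is evidently the reason for the somewhat arbitrary-looking threshold $0.6799$ rather than $1/2$ (just as the value $0.52$ appears in Lemma~\ref{inf-1-2}). I expect the verification to go through comfortably on a MAPLE-assisted polynomial manipulation, but attempting a purely pencil-and-paper proof here would require more care than for Cases~1 or 3, where the signs of the relevant coefficients are transparent.
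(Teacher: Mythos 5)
Your proof is correct and follows essentially the same approach as the paper's: verify admissibility from $V_{0,1;0}(x,y,1)=(x,y-1,1)$ with $y>1$, write $F_{V_{0,1;0}}(a,a,x,y)$ explicitly via Remark~\ref{formulaF}, compactify, and delegate positivity to a MAPLE check. The only deviations are implementation details: the paper compactifies via $(a,1/x,1/y)$ rather than $(a,y/x,y)$, and instead of a grid-plus-Lipschitz search in three variables it reduces dimension by showing $\partial H_8/\partial a>0$ (minimum at $a=0.6799$) and then $\partial H_{8,1}/\partial x<0$ (minimum at $x=y$), leaving a one-variable function whose minimum is found to exceed $1.002$ directly — a lighter computation than a full 3D grid, but the same logic.
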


\begin{proof}
First of all the operator $V_{0,1;0}$ sends the whole set of admissible states to itself.

Now let us show that  $\{V_{0,1;0}\}$  $p$-min-asymptotically increases the value $\sin^2\alpha$.
In order to make the domain of minimizing compact we consider the following change of the coordinates:
$$
(a,xt,y) \to (a,1/x,1/y).
$$
Set
$$
\begin{array}{l}
G_8(a,k,t)=F_{V_{0,1;0}}(a,a,kt,a+t)+1;\\
H_8(a,k,t)=G_8(a,1/k,1/t).
\end{array}
$$
Symbolic estimations of the factors of
$$
\frac{\partial H_8(a,x,y)}
{\partial a}
$$
show that the derivative in $a$ variable is positive for the considered area.
Hence the minimums occur only when $a=0.6799$.

Set
$$
H_{8,1}(x,y)=H_8(0.6799,x,y).
$$
Symbolic estimations of the factors of
$$
\frac{\partial H_{8,1}(x,y)}
{\partial x}
$$
show that the derivative is negative for the considered area.
Hence the minimums occur only when $x=y$.

Finally after substitution $x=y$ we have a function of one variable.
Now the estimation of the minimum is straightforward.
Estimations show that is greater than $1.002$ (see Case-8.mw in~\cite{maple}).
\end{proof}


\begin{lemma}\label{inf-1-9}
Let
$$
 D_{9}=\{(a,kt,a+t){|\,}-0.8\ge a\ge -1.1,  k\ge 6,  a+t>1\}.
 $$
Consider the family
$$
\M_9=\{V_{(-a)k-\delta ,1;k/3 -\varepsilon -1}{|\,}1> \varepsilon\ge 0,1> \delta \ge 0\}.
$$
Then for the points in the domain $D_9$ it holds:

$($i$)$ all the operators of $\M_9$ are admissible;

$($ii$)$ the family $\M_9$  $p$-min-asymptotically increases the value $\sin^2\alpha$.
\end{lemma}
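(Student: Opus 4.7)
The plan is to follow the two-step template of Lemmas~\ref{inf-1-4} and~\ref{inf-1-5}: first verify admissibility of every operator in $\M_9$ directly from the image formula for $V_{\alpha,\beta;\gamma}$, and then establish positivity of the corresponding rational function on a compactified domain by the same symbolic reduction used throughout this section.

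For admissibility, I start with $\xi = (kt,\,a+t,\,1)$ and compute
$$
V_{(-a)k-\delta,\,1;\,k/3-\varepsilon-1}(\xi) = \bigl(kt + ak + \delta - (k/3 - \varepsilon - 1)(a+t-1),\ a+t-1,\ 1\bigr).
$$
The second coordinate is positive because $a+t>1$. Writing $s = a+t-1 > 0$, the first coordinate rearranges as
$$
k(s+1) + \delta - (k/3 - \varepsilon - 1)s \;=\; k + \delta + s\bigl(2k/3 + \varepsilon + 1\bigr),
$$
a sum of positive quantities given $k \ge 6$ and $\delta,\varepsilon \ge 0$. Hence every operator in $\M_9$ sends $\xi$ to the positive orthant, which establishes~(i).

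For part~(ii) I would invoke Proposition~\ref{prop-a-1}. Set
$$
G_9(a,k,t,\varepsilon,\delta) \;=\; F_{V_{(-a)k-\delta,\,1;\,k/3-\varepsilon-1}}(a,a,kt,a+t)
$$
using the expansion of Remark~\ref{formulaF}, and compactify by the substitution $k\mapsto 1/k$, $t\mapsto 1/t$, producing a rational function $H_9$ on the compact box $[-1.1,-0.8]\times[0,1/6]\times[0,1]\times[0,1)\times[0,1)$. I would then check that the denominator is uniformly bounded away from zero (each factor is a sum of squares plus a strictly positive constant) and that the numerator is strictly positive on the box. As in Lemma~\ref{inf-1-4}, the numerator organises as a polynomial in the transformed variable $k$ of modest degree, and the constraint $k \le 1/6$ in the compactified coordinates is what lets the constant term dominate the higher-order corrections uniformly in $(a,t,\varepsilon,\delta)$.

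The main obstacle is the algebraic bulk of the symbolic expansion: both $\alpha = (-a)k - \delta$ and $\gamma = k/3 - \varepsilon - 1$ depend on $k$, and $\alpha$ depends on $a$ as well, so the numerator carries substantially more cross-terms than in the constant-coefficient cases of Lemmas~\ref{inf-1-2} and~\ref{inf-1-7}. As with every other case of Proposition~\ref{y-axes}, the final bookkeeping is best delegated to MAPLE2020 in a script analogous to Case-4.mw: expand the numerator, bound each coefficient in $k$ on the remaining compact domain, and confirm dominance of the constant term. Once this symbolic check succeeds, Proposition~\ref{prop-a-1} delivers the $p$-min-asymptotic increase of $\sin^2\alpha$ on $D_9$.
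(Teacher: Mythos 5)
Your proof takes essentially the same route as the paper: verify admissibility by direct coordinate computation of $V_{(-a)k-\delta,1;k/3-\varepsilon-1}(\xi)$, then invoke Proposition~\ref{prop-a-1} after compactifying via $k\mapsto 1/k$, $t\mapsto 1/t$ and showing the numerator, as a degree-five polynomial in the new $k$, stays positive on $[0,1/6]$ because the constant term dominates, with the symbolic bookkeeping delegated to MAPLE. Your admissibility computation (factoring out $s=a+t-1$ to get $k+\delta+s(2k/3+\varepsilon+1)$) is in fact cleaner than the paper's decomposition and makes it transparent that $k\ge 6$ is not even needed for this step, but the overall plan is identical.
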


\begin{proof}
We start with a vector
$$
\xi=(kt,a+t,1).
$$
Then we have
$$
V_{(-a)k-\delta ,1;k/3 -\varepsilon -1}\xi=
(kt - (-ak)+\delta+ (k/3 -\varepsilon) (1-a - t) ,  a + t-1, 1).
$$
First of all, the second coordinate $a + t-1>0$ as $a+t>1$.
Secondly,
$$
kt - (-ak)+\delta+ (k/3 -\varepsilon) (1-a - t)\ge
(1.1k - (-ak)+\delta)
+
\big(
k(t-1.1)-(k/3-\varepsilon) (t-0.2)
\big)> 0.
$$
Hence the resulting vector is in the non-negative coordinate octant.

\vspace{2mm}

In order to make the domain of minimizing compact we consider the following change of the coordinates:
$$
(a,k,t) \to (a,1/k,1/t).
$$
Set
$$
\begin{array}{l}
G_9(a,k,t,\varepsilon,\delta)=F_{V_{(-a)k-\delta ,1;k/3 -\varepsilon -1}}(a,a,kt,a+t);\\
H_9(a,k,t,\varepsilon,\delta)=G_9(a,1/k,1/t,\varepsilon,\delta).
\end{array}
$$
Then we have:
$$
H_9(a,k,t,\varepsilon,\delta)=
\frac { h_9(a,k,t,\varepsilon,\delta) }
{(a^2 k^2 t^2 + 2 a k^2 t + k^2 t^2 + k^2 + 1) (9 \delta^2 k^2 + 9 \varepsilon^2 k^2 + 12 \varepsilon k + 9 k^2 + 4)^2},
$$
where
$$
h_9(a,k,t,\varepsilon,\delta) =c_5k^5+c_4k^4+c_3k^3+c_2k^2+c_1k+c_0
$$
where the coefficients of $h_{9}$ do not exceed the respectively coefficients of the following polynomial
$$
-540{k}^{5}-\frac{13041}{25}k^4-
\frac{697709}{1350}k^3-\frac{120917}{2025}k^2-
\frac{486791}{30000}k+\frac{953209}{90000}.
$$
The estimates are done symbolically  in MAPLE2020 (see caseI-3-negative-2.mw in~\cite{maple})).
Hence for $0\le k\le 1/6$ the polynomial is positive.
Finally we check that the denominator of  $H_9$ does not have zeroes
on the closure of all three regions (and, therefore, it is bounded from below by some positive constant).
Therefore,  the set $\M_9$
$p$-min-asymptotically increases the value  $\sin^2\alpha$ on $D_9$.

\end{proof}

\begin{lemma}\label{inf-1-10}
Let
$$
D_{10}=\{(a,kt,a+t)|-0.8\ge a\ge-1.1,6\ge k\ge 1,a+t>1\}.
$$
Then
\begin{itemize}
\item The operator $V_{1,0;1}$ is admissible.
\item The set $\M_{10}=\{V_{1,0;1}\}$ $p$-min-asymptotically increases $\sin^2\alpha$.
\end{itemize}
\end{lemma}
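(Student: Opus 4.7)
The plan is to follow the template of Lemmas~\ref{inf-1-2} and~\ref{inf-1-9}, whose structures match Case~10 most closely. First, I would verify admissibility by writing
$$V_{1,0;1}\xi = (kt - a - t - 1,\ a+t,\ 1)$$
for $\xi = (kt, a+t, 1)$. The second and third entries are positive because $a+t>1$, while the first entry equals $(k-1)t - (a+1)$; using $t > 1 - a$ together with $k \ge 1$ and $-1.1 \le a \le -0.8$ yields the required non-negativity on the relevant part of $D_{10}$, so $V_{1,0;1}$ is admissible there.

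Second, I would define
$$G_{10}(a,k,t) = F_{V_{1,0;1}}(a, a, kt, a+t)$$
using the closed form for $\sin^2\alpha$ of Remark~\ref{formulaF}, and compactify the domain via the substitution $(a,k,t) \mapsto (a, 1/k, 1/t)$. The resulting function $H_{10}$ is rational and continuous on a compact set; on its closure each denominator factor is a sum-of-squares-type expression whose zeros are excluded by the constraints $a+t>1$ and $k,t > 0$, so the denominator is bounded below by a positive constant.

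Third, I would show that the numerator of $H_{10}$, viewed as a polynomial in the three variables, is strictly positive on the compact domain. Following the pattern of Lemma~\ref{inf-1-9}, one organises it as a polynomial in $k$ and produces explicit rational lower bounds for each coefficient, verified symbolically in MAPLE2020. Once positivity is established, Proposition~\ref{prop-a-1} delivers the $p$-min-asymptotic increase of $\sin^2\alpha$ by $\{V_{1,0;1}\}$ on $D_{10}$.

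The main obstacle will be pushing the coefficient estimates through across the whole range $k \in [1,6]$: in Lemma~\ref{inf-1-9} the range $k \ge 6$ made the leading term in $k$ dominant and allowed elementary sign arguments, while here the smaller $k$ values mean that the balance between positive and negative coefficients is tighter. I expect this to require sharper polynomial bounds, possibly supplemented by a grid-refinement argument in the spirit of Lemma~\ref{inf-1-2}, or a splitting of $D_{10}$ into two or three sub-cuboids in $(a,k,t)$ handled separately.
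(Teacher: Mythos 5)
Your proposal follows the paper's own route: assert admissibility of $V_{1,0;1}$, then defer the asymptotic increase of $\sin^2\alpha$ to a symbolic computation (the paper cites MAPLE for exactly this in Case~10). Your observation that the coefficient balance is tighter than in Lemma~\ref{inf-1-9}, since $k\in[1,6]$ gives no dominant power of $k$, is correct and worth having.

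However, your argument for the first item does not close. You compute $V_{1,0;1}\xi=\big((k-1)t-(a+1),\,a+t,\,1\big)$ correctly, but the assertion that $t>1-a$, $k\ge 1$ and $-1.1\le a\le-0.8$ yield non-negativity of the first coordinate fails on a nonempty portion of $D_{10}$. Those bounds give only $(k-1)t>(k-1)(1-a)\ge 0$, and $0\ge a+1$ is false whenever $a>-1$. Concretely, $a=-0.9$, $k=1$, $t=2$ lies in $D_{10}$ (since $a+t=1.1>1$), yet the first coordinate equals $-(a+1)=-0.1<0$, so $V_{1,0;1}$ is not supporting there. More systematically, $(k-1)t\ge a+1$ for all $t>1-a$ forces $k\ge 2/(1-a)$, which exceeds $1$ exactly when $a>-1$; thus the subregion with $a\in(-1,-0.8]$ and $k\in[1,\,2/(1-a))$ contains non-admissible points. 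Hedging with ``the relevant part of $D_{10}$'' does not resolve this, because the lemma is stated for all of $D_{10}$. You should also note that the paper's one-line proof is equally unsupported on this point, so either the stated domain needs to be cut down (for instance to $k\ge 2/(1-a)$, handing off the remaining sliver to a neighbouring case in the cover used for Proposition~\ref{y-axes}), or the admissibility must be argued more carefully than either you or the paper do.
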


\begin{proof}
First of all note that the operator $V_{1,0;1}$  sends the whole set of admissible states to the positive octant.

Secondly, the increases of the infinitesimal $\sin^2\alpha$ was estimated symbolically in MAPLE2020
(see Case-10.mw in~\cite{maple})).
\end{proof}


\begin{lemma}\label{inf-2-1}
Let
$$
D_{11}=\{(c,x,y){|\,}1\ge c\ge 0, y\ge 1,y+1>x>y\}.
$$
Consider the set
$$
\M_{11}=\{W, V_{1,0;0}\}.
$$
Then for the points in the domain $D_{11}$  it holds:

$($i$)$ the operators of $\M_{11}$ are admissible;

$($ii$)$ the set $\M_{11}$  $q$-min-asymptotically increases the value $\sin^2\alpha$.
\end{lemma}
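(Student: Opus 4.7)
The plan is to apply Proposition~\ref{prop-q-1} to the two-element family $\M_{11}$ on the domain $D_{11}$. This reduces the lemma to three verifications: (a) both $W$ and $V_{1,0;0}$ are admissible at every point of $D_{11}$; (b) for each $M\in\M_{11}$ the value $\hat F_M(q,q,x,y)$ is strictly positive on the closure $\bar D_{11}$; and (c) for each $M\in\M_{11}$ the function $\hat F_M(q,q,x,y)$ extends without poles to $\bar D_{11}$.

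For (a), admissibility of $V_{1,0;0}$ follows from the characterization of admissibility for $V_{\alpha,\beta;\gamma}$ given in Subsection~\ref{Admissible JP-transformations}: with $\alpha=1$, $\beta=0$, $\gamma=0$ and $z=1$ one needs $\lfloor x\rfloor\ge 1$, $\lfloor y\rfloor\ge 0$, and $\lfloor(x-1)/y\rfloor\ge 0$, all of which hold on $D_{11}$ since $x>y\ge 1$. Admissibility of $W$ requires the condition $z>x-y>0$; with $z=1$ this becomes $1>x-y>0$, which is precisely the defining inequality $y<x<y+1$ of $D_{11}$.

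For (b) and (c), I would expand $\hat F_W(q,q,x,y)$ and $\hat F_{V_{1,0;0}}(q,q,x,y)$ via the infinitesimal $q_2-q_1$ version of the formula for $\sin^2\alpha(s)$ recorded in Remark~\ref{formulaBarF}, obtaining explicit rational functions in $(q,x,y)$. Following the compactification strategy used in the proof of Lemma~\ref{inf-1-2}, apply the change of variables $(q,x,y)\mapsto(q,1/x,1/y)$, which embeds $\bar D_{11}$ as a compact subregion of the unit cube $[0,1]\times[0,1]\times[0,1]$. On this compact set a symbolic computation in MAPLE2020 then shows that the transformed numerator polynomials are uniformly bounded below by a positive constant and that the denominator polynomials remain uniformly bounded away from zero, supplying both the positivity in (b) and the regularity in (c).

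The main obstacle will be controlling the behaviour along the two boundary faces $x=y$ and $x=y+1$ of $D_{11}$. On $x=y$ the state degenerates, while on $x=y+1$ the image $W(\xi)$ reaches the boundary of the positive octant, so either face could in principle support a zero or a pole of $\hat F_W$. Both faces become ordinary finite boundary points after the compactifying substitution, and explicit polynomial manipulation in MAPLE2020 confirms that $\hat F_W$ and $\hat F_{V_{1,0;0}}$ extend continuously with strictly positive values there. Once (a), (b), and (c) are in place, Proposition~\ref{prop-q-1} immediately yields that $\M_{11}$ $q$-min-asymptotically increases $\sin^2\alpha$ on $D_{11}$.
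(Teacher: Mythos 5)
Your admissibility argument in part~(a) is correct and matches the paper. However, the route you take for parts~(b) and~(c) has a genuine gap. By invoking Proposition~\ref{prop-q-1} you commit yourself to showing that $\hat F_M(q,q,x,y)>0$ for \emph{both} $M=W$ and $M=V_{1,0;0}$ on all of $\bar D_{11}$ — that is the content of the min-asymptotic criterion. The paper's own proof shows that this is not the right framework: it splits $D_{11}$ into $c\in[0,0.8]$, where only $W$ is checked to work, and $c\in[0.8,1]$, where it checks that \emph{either} $W$ \emph{or} $V_{1,0;0}$ works. The very presence of this case split is strong evidence that at least one of the two operators fails to increase $\sin^2\alpha$ somewhere in $D_{11}$, so your uniform positivity test for both operators would fail when you run the MAPLE computation. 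Indeed, although the lemma is worded ``$q$-min-asymptotically increases,'' the statement actually used in Proposition~\ref{x-axes} (Case~11) is that $\M_{11}$ \emph{$q$-max}-asymptotically increases $\sin^2\alpha$, which is the weaker, existential condition handled by Proposition~\ref{prop-q-2}, not Proposition~\ref{prop-q-1}. Your argument should target that proposition and reproduce some form of the case split in $c$.

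A secondary issue concerns your compactification. The substitution $(q,x,y)\mapsto(q,1/x,1/y)$ does not turn the defining inequalities $y<x<y+1$ of $D_{11}$ into a box; the image is a region bounded by two nontrivial surfaces in the $(1/x,1/y)$-plane, which makes the grid-based symbolic estimation used throughout Section~\ref{cases1-19} much more delicate. The paper instead substitutes $\rho=x-y$ together with $1/y$, so that $D_{11}$ maps to the product region $c\in[0,1]$, $\rho\in(0,1)$, $1/y\in(0,1]$. This choice is not cosmetic: it is precisely what makes the two boundary faces $x=y$ and $x=y+1$ into ordinary coordinate faces $\rho=0$ and $\rho=1$ of a box, which your proposal identifies as the main potential obstacle but then simply asserts away. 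You should adopt the $\rho=x-y$ substitution, split on $c$, apply Proposition~\ref{prop-q-2}, and only then hand off to symbolic computation.
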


\begin{proof}
First of all the operator $V_{1,0;0}$ sends the whole set of admissible states to the positive octant.
For the operator $W$ we have
$$
W(x,y,1)=(x - y, y,  y + 1-x)
$$
Therefore, the admissible states of this lemma are sent to the positive octant.

In order to make the domain of minimizing compact we consider the following change of the coordinates:
$$
(c,x,y) \to (c,\rho, 1/y),
$$
where $\rho=x-y$. It is clear that $0<\rho<1$ and $0<1/y\le 1$.

Now we split the lemma in two cases.
First, for  $c\in [0, 0.8]$ we show that the operator $W$  increases the infinitesimal $\sin^2\alpha$.
Secondly, for $c\in [0.8, 1]$ we show that either the operator $W$
or the operator $V_{1,0;0}$  increases the infinitesimal $\sin^2\alpha$.
Both cases are checked by symbolic computations in MAPLE2020 (see Case-11-1.mw and Case-11-2.mw).
\end{proof}


\begin{lemma}\label{inf-2-2} $($i$)$
Let
$$
 D_{12}=\{(c,c+kt,t){|\,}1\ge c\ge -0.1,k>3,t>1\}.
$$
Consider the family
$$
\M_{12}=\{V_{0,0;k/3-\varepsilon}|1>\varepsilon\ge 0\}.
$$
Then for the points in the domain $D_{12}$ it holds:

$($i$)$ all the operators of $\M_{12}$ are admissible;

$($ii$)$ the family $\M_{12}$  $q$-min-asymptotically increases the value $\sin^2\alpha$.
\end{lemma}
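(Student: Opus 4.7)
\textbf{Proof plan for Lemma~\ref{inf-2-2}.}

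My plan is to mirror exactly the argument used for the analogous $p$-side statement, Lemma~\ref{inf-1-3}, since the parametrization $(c, c+kt, t)$ of a $\xi$-state with $Q$-intercepts differs from the $p$-side parametrization only by the coordinate swap sending $\hat F$ to $F$ (cf.\ Remarks~\ref{formulaF} and~\ref{formulaBarF}).

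For item $(i)$ (admissibility), I would just compute directly. Using the explicit formula
$$V_{\alpha,\beta;\gamma}(x,y,z) = \big(x - \alpha z - \gamma(y - \beta z),\, y - \beta z,\, z\big),$$
the vector $\xi = (c+kt, t, 1)$ is sent to
$$V_{0,0;k/3-\varepsilon}(\xi) = \Big(c + kt - (k/3-\varepsilon)t,\, t,\, 1\Big) = \Big(c + \tfrac{2kt}{3} + \varepsilon t,\, t,\, 1\Big).$$
On $D_{12}$ we have $c \ge -0.1$, $k > 3$, $t > 1$, $\varepsilon \ge 0$, hence $c + \tfrac{2kt}{3} + \varepsilon t > -0.1 + 2 > 0$. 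All three coordinates are positive, so the operator is admissible.

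For item $(ii)$, the plan is to apply Proposition~\ref{prop-q-1} to the compact family $\mathcal M_{12}$ on a compactified version of $D_{12}$. Following Lemmas~\ref{inf-1-3} and~\ref{inf-1-4}, I would introduce the change of variables $(c,k,t) \mapsto (c, 1/k, 1/t)$ so that the parameter domain becomes $\{-0.1 \le c \le 1,\, 0 \le 1/k < 1/3,\, 0 \le 1/t < 1\}$, and set
$$G_{12}(c,k,t,\varepsilon) = \hat F_{V_{0,0;k/3-\varepsilon}}(c,c,c+kt,t),\qquad H_{12}(c,k,t,\varepsilon) = G_{12}(c,1/k,1/t,\varepsilon).$$
By Remark~\ref{formulaBarF} the function $\hat F$ is a rational function of $c, k, t, \varepsilon$ whose denominator (a product of two sums-of-squares together with the explicit factor coming from the image state) I would check has no zeroes on the closure: its factors are analogous to those that appeared in Lemma~\ref{inf-1-3} and the calculation is a direct inspection, confirming that the denominator is bounded below by a positive constant on the compactified domain.

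The main task, and the step I expect to be the principal obstacle, is the sign analysis of the numerator polynomial $h_{12}(c,k,t,\varepsilon)$ of $H_{12}$. By analogy with Lemma~\ref{inf-1-3}, I expect $H_{12}$ to factor (up to a positive denominator) as $(3\varepsilon - k) \cdot h_{12}$, so that, since $3\varepsilon - k < 0$ on $D_{12}$, it suffices to verify $h_{12} < 0$. I would expand $h_{12}$ as a polynomial in $k$ with coefficients in $(c,t,\varepsilon)$, then check on the compactified parameter box that each coefficient has a definite sign in the range of interest, handling any remaining coefficient whose sign is not immediate by writing it as a sum of squares plus a manifestly nonpositive remainder. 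All of these polynomial manipulations I would verify symbolically in MAPLE2020 (a worksheet Case-12.mw patterned after Case-3.mw in~\cite{maple}), exactly as was done in all preceding lemmas of this section. Once the sign of $h_{12}$ is established uniformly on the closure, Proposition~\ref{prop-q-1} delivers the $q$-min-asymptotic increase of $\sin^2\alpha$ by $\mathcal M_{12}$.
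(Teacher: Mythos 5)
Your proposal is structurally sound: the admissibility check in item $(i)$ is correct (indeed $V_{0,0;k/3-\varepsilon}\xi=(c+\tfrac{2kt}{3}+\varepsilon t,t,1)$, whose first coordinate exceeds $-0.1+2=1.9$ on $D_{12}$), and the compactify--expand--sign-check--apply-Proposition~\ref{prop-q-1} template for $(ii)$ is exactly the pattern of Lemmas~\ref{inf-1-3} and~\ref{inf-1-4}. However, it takes a genuinely different route from the paper, whose entire proof of this lemma is a single sentence citing Lemma~\ref{inf-1-3} with the domain $D_{3,3}$. The reduction the paper has in mind is not the $(x,y)$-swap you invoke as motivation, but a tautological change of parametrization of \emph{one and the same state}: for a state in the $q$-parametrization with $q=c$, $\xi=(c+kt,t,1)$, the line $\xi Q$ has slope $1/k$ and meets the $y$-axis at $p=-c/k$; so the very same state also lies in the $p$-parametrization of Lemma~\ref{inf-1-3} with $a=-c/k$, the same $k$, and $t_3=t+c/k$, and the operator $V_{0,0;k/3-\varepsilon}$ is unchanged. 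Since $F_M$ and $\hat F_M$ are by definition the same quantity $\sin^2\alpha(Ms)/\sin^2\alpha(s)-1$ evaluated on the same $(s,M)$, positivity of $H_3$ on $D_{3,3}$ immediately transfers. Under this map, $D_{12}$ ($1\ge c\ge -0.1$, $k>3$, $t>1$) lands inside $D_{3,3}$ except for a small sliver $k\in(3,3.1)$ (a mismatch of constants one would want to fix, but not a conceptual defect). Your proposed fresh Case-12 computation would of course also settle the lemma, at the cost of one more MAPLE worksheet; the paper's route buys economy by reusing the already-verified $D_{3,3}$ positivity, and is worth noticing because the same device is behind the terse proofs of Lemmas~\ref{inf-3-4} and~\ref{inf-3-6}.
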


\begin{proof}
The prof directly follows from the statement for the Case 3 with the domain $D_{3,3}$
(see Lemma~\ref{inf-1-3}).
\end{proof}


\begin{lemma}\label{inf-2-3}
Let
\begin{itemize}
\item $D_{13,1}=\{(c,c+kt,t){|\,}1\ge c\ge 0,3\ge k\ge 100/61,t>1\}$;

\item $D_{13,2}=\{(c,c+kt,t){|\,}0\ge c\ge -0.1,3\ge k\ge 1, t>1\}$.
\end{itemize}
Consider the set
$$
\M_{13}=\{V_{0,0;1}\}.
$$
Then for the points in the domains $D_{13,1}$  and $D_{13,2}$ it holds:

$($i$)$ the operator of $\M_{13}$ is admissible;

$($ii$)$ the family $\M_{13}$  $q$-min-asymptotically increases the value $\sin^2\alpha$.
\end{lemma}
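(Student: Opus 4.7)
The plan follows the two-step template of Cases~3 and~12 (Lemmas~\ref{inf-1-3} and~\ref{inf-2-2}), with $(c,k,t)$ as the parameters and $V_{0,0;1}$ as the sole operator.

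For part (i), I will verify admissibility by direct computation. A short calculation gives $V_{0,0;1}(c+kt,\,t,\,1) = (c + (k-1)t,\,t,\,1)$, and by the criterion in Subsection~\ref{Admissible JP-transformations} admissibility reduces to $\lfloor (c+kt)/t\rfloor \ge 1$, i.e.\ $c + (k-1)t \ge 0$. On $D_{13,1}$, the bounds $k \ge 100/61$ and $t > 1$ yield $(k-1)t > 39/61$, which combined with $c \ge 0$ gives the required inequality. On $D_{13,2}$ the same verification goes through on the subregion where $c + (k-1)t \ge 0$; the limiting stratum $k = 1$ with $c < 0$ lies in the overlap with Cases~11 and~14 and is handled there.

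For part (ii), by Proposition~\ref{prop-q-1} it suffices to show that the extension $\hat F_{V_{0,0;1}}(c,c,c+kt,t)$ has non-vanishing denominator and is strictly positive on the closure of each $D_{13,i}$ in a suitable compactification. To compactify in $t$ I will substitute $\tau = 1/t \in [0,1]$, turning each $D_{13,i}$ into a compact rectangle in $(c,k,\tau)$. In these coordinates $\hat F_{V_{0,0;1}}$ becomes a rational function $H_{13,i}(c,k,\tau) = N_{13,i}/\Delta_{13,i}$ whose numerator and denominator are explicit polynomials, read off from Remark~\ref{formulaBarF}. Positivity of $\Delta_{13,i}$ on the closure is straightforward (as in the earlier cases, by exhibiting it as a sum of nonnegative terms), and the real work is showing $N_{13,i} > 0$ on the closed rectangle.

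This final polynomial-positivity check is the main obstacle, and I plan to handle it as in Lemma~\ref{inf-1-2}. Using MAPLE2020, I first identify a variable in which $\partial N_{13,i}/\partial \cdot$ has a definite sign on the closure (verified from a factored form of the derivative), which reduces the minimum to a boundary face; then iterate until one or two variables remain. At that point either a closed-form infimum can be computed, or a grid verification suffices: bound $|\nabla N_{13,i}|$ uniformly on the closure, pick a grid whose mesh is small relative to that bound and to the expected infimum, and check positivity over the finite grid. The delicate points will be (a) the tight lower bound $k \ge 100/61$ in $D_{13,1}$, which is tuned so that $N_{13,1}$ is only marginally positive at that boundary and therefore forces a fine grid, and (b) sign-bookkeeping throughout the symbolic reduction, since a single sign error would invalidate the grid bound. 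Since the signs of $c$ and of $k-1$ differ across $D_{13,1}$ and $D_{13,2}$, the two subdomains must be treated by separate worksheets, modelled on Case-3.mw and Case-7-1-2.mw in~\cite{maple}.
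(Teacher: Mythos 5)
Your proposal follows essentially the same strategy as the paper: verify admissibility directly, compactify the $t$-direction, express the asymptotic quantity $\hat F_{V_{0,0;1}}(q,q,x,y)$ as a rational function via Remark~\ref{formulaBarF}, and reduce positivity of its numerator to a finite check by symbolic differentiation and boundary reduction. On $D_{13,1}$ the paper does exactly that: monotonicity in $t$ pushes the minimum to the $t=0$ face, after which the numerator factors and the question becomes the elementary inequality $c^4k^4 - 2k^4 + 2k^3 - 3k^2 + 2k - 1 < 0$. On $D_{13,2}$ the paper instead bounds the three coefficients of the numerator viewed as a quadratic in $t$ directly, without reducing to a boundary face; this is a slight variation within the same framework, and your grid-based fallback would also work there.

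One side remark in your admissibility argument is wrong, though harmlessly so: you claim the limiting stratum $k=1$, $c<0$ of $D_{13,2}$ ``lies in the overlap with Cases~11 and~14 and is handled there.'' Both $D_{11}$ and $D_{14}$ require $c\ge 0$, so there is no such overlap. The correct resolution is that for $k=1$ and $c<0$ one has $x = c+kt = c+t < t = y$, which violates the ambient constraint $x>y>1$ built into the state space; such parameter triples are not states at all. Consequently the admissibility condition $c + (k-1)t \ge 0$, i.e.\ $x\ge y$, holds automatically on the effective domain and no deferral is needed. You may also drop the anticipated need for an especially fine grid near $k=100/61$: at that boundary the resulting expression is comfortably bounded away from zero (the threshold is chosen so that $D_{13,1}$ and $D_{14}$ overlap, not because $V_{0,0;1}$ becomes marginal there).
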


\begin{proof}
First of all note that the operator $V_{0,0;1}$  send the whole set of admissible states to the positive octant.

\vspace{2mm}

Let us work out the domain $D_{13,1}$ first.
In order to make the domain of minimizing compact we consider the following change of the coordinates:
$$
(c,k,t) \to (c,1/k,1/t).
$$
Set
$$
\begin{array}{l}
G_{13}(c,k,t)=F_{V_{0,0;1}}(c,c,c+kt,t)+1;\\
H_{13}(c,k,t)=G_{13}(c,1/k,1/t).
\end{array}
$$
Here we are interested in the domain $t\in [0,1], k\in[0,0.61], c\in [0,1]$.

Symbolic estimations of the factors of $\frac{\partial H_{13}(c,k,t)}{\partial t}$
show that they do not change sign and that
$$
\frac{\partial H_{13}(c,k,t)}{\partial t}>0.
$$
Hence the minimum is attained at $t=0$.
Set
$$
H_{13,1}(c,k)=H_{13}(c,k,0)-1.
$$
Equivalently
$$
H_{13,1}(c,k)=
\frac{k(k - 2)(c^4k^4 - 2k^4 + 2k^3 - 3k^2 + 2k - 1)}{(k^2 + 1)(c^2k^2 + 2k^2 - 2k + 1)^2}.
$$

For $k>0$ we need to check positivity of $H_{13,1}$, which is equivalently to the fact that
$$
c^4k^4 - 2k^4 + 2k^3 - 3k^2 + 2k - 1<0.
$$
The last is checked symbolically. We also check that the denominator is bounded from below by some positive constant.
All the computations of this item are in Case-13-1.mw.

\vspace{2mm}

Now let us show that  $\M_{13}$  $q$-min-asymptotically increases the value $\sin^2\alpha$ on $D_{13,2}$.
In order to make the domain of minimizing compact we consider the following change of the coordinates:
$$
(c,k,t) \to (c,1/k,1/t).
$$
Note that:
$$
H_{13}(c,k,t)-1=
-\frac{kh_{13}(c,k,t)}{(c^2k^2t^2 + k^2t^2 + 2ckt + k^2 + 1)(c^2k^2 + 2k^2 - 2k + 1)^2},
$$
where
$$
h_{13}(c,k,t)=\lambda_1 t^2 +\lambda_2 t+\lambda_3.
$$
On the domain $D_{13,2}$ symbolic estimations show that
$$
\lambda_1\le -25/81; \quad \lambda_2<0.109; \quad \lambda_3<-0.6.
$$
(All the estimations for the case of  $D_{13,2}$  are in Case-13-2.mw.)
Therefore,
$$
h_{13,2}(c,k,t)<-0.49
$$
on $D_{13,2}$.
We also check that the denominator is bounded from below by some positive constant  (Case-13-2.mw).
Hence $H_{13}(c,k,t)-1$ is bounded from below on $D_{13,2}$ by some positive constant.
Therefore,  the family $\M_{13}$  $q$-min-asymptotically increases the value $\sin^2\alpha$ on $D_{13,2}$.
\end{proof}

\begin{lemma}\label{inf-2-4}
Let
$$
D_{14}=\{(c,c+kt,t){|\,}1\ge c\ge 0,5/3\ge k\ge  1,t>1\}.
$$
Consider the set
$$
\M_{14}=\{ V_{1,0;0},V_{0,0;1}\}.
$$
Then for the points in the domain $D_{14}$  it holds:

$($i$)$ the operators of $\M_{14}$ are admissible;

$($ii$)$ the set $\M_{14}$  $q$-min-asymptotically increases the value $\sin^2\alpha$.
\end{lemma}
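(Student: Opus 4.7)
The plan is to follow the pattern established in Lemmas~\ref{inf-2-1}--\ref{inf-2-3}. First I would verify admissibility: on $D_{14}$ we have $z=1$, $y=t>1$ and $x=c+kt\ge 1$, so $V_{1,0;0}:(x,y,1)\mapsto(x-1,y,1)$ lands in the positive orthant (since $x\ge 1$), and $V_{0,0;1}:(x,y,1)\mapsto(x-y,y,1)$ has first coordinate $c+(k-1)t\ge 0$; both JP-admissibility conditions of the proposition on elementary properties of JP-transformations are then immediate.

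Next, I would reduce to Proposition~\ref{prop-q-2} by compactification. Since $c\in[0,1]$ and $k\in[1,5/3]$ are already compact, I only need to compactify in $t$ via $\tau=1/t\in(0,1]$, extending to the closed box $\overline D=[0,1]\times[1,5/3]\times[0,1]$. Using the formula from Remark~\ref{formulaBarF} I form the two rational functions
$$G_j(c,k,\tau)=\hat F_{M_j}\bigl(c,c,\,c+k/\tau,\,1/\tau\bigr),\qquad M_1=V_{1,0;0},\ M_2=V_{0,0;1},$$
clear their denominators, and check that the denominators do not vanish on $\overline D$, so both $G_j$ extend continuously and Proposition~\ref{prop-q-2} applies once $\max(G_1,G_2)>0$ is established pointwise on $\overline D$.

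For the positivity of $\max(G_1,G_2)$, I plan to split $\overline D$ along a threshold $c=c_*\in(0,1)$. On $\{c\le c_*\}$ the operator $V_{0,0;1}$ should dominate, because when $q=c$ is small, subtracting $y$ from $x$ yields the larger increase of $\sin^2\alpha$; on $\{c\ge c_*\}$ the operator $V_{1,0;0}$ should dominate, because with $q$ close to $1$ the subtraction of $1$ from $x$ produces the sharper drop in the denominator of the formula of Remark~\ref{formulaBarF}. On each subregion I would certify positivity by the symbolic polynomial-bound technique of Lemmas~\ref{inf-2-1}--\ref{inf-2-3}: reducing the claim to the sign of a polynomial numerator, and then either reading this sign off a factorisation or providing a uniform positive lower bound by evaluation at a definite interior point combined with derivative control on the remaining variables.

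The main obstacle will be the symbolic verification in the transition band $c\approx c_*$, where both operators contribute only weakly and $\max(G_1,G_2)$ can be very close to zero. If a single threshold does not suffice, the natural remedy is to add a secondary partition in the $k$-direction (for instance near $k=4/3$), because the effectiveness of $V_{0,0;1}$ grows with $k-1$ while that of $V_{1,0;0}$ is essentially insensitive to $k$. The resulting inequalities would be verified in a MAPLE2020 worksheet exactly analogous to \textit{Case-11-1.mw} and \textit{Case-11-2.mw} cited for Lemma~\ref{inf-2-1}.
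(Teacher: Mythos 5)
Your proposal follows the same essential route as the paper's: verify admissibility, compactify the unbounded variable via $\tau=1/t$, form the two infinitesimal ratio expressions from Remark~\ref{formulaBarF}, check that denominators stay away from zero on the closure, and then certify positivity of the pointwise maximum by symbolic/grid estimates in MAPLE. One thing worth flagging explicitly: the lemma's item $($ii$)$ as printed says ``$q$-min-asymptotically increases,'' but this is a typo — with two operators in $\M_{14}$, the claim actually proved (and the one used in the outline of Proposition~\ref{x-axes}) is $q$-\emph{max}-asymptotic increase, i.e.\ Proposition~\ref{prop-q-2}; your proposal silently reads it the correct way by invoking $\max(G_1,G_2)>0$. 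The only genuine difference from the paper is organizational: rather than splitting $\overline D$ at a threshold $c=c_*$ (and possibly also in $k$) into regions where a single operator dominates, the paper computes both expressions, takes the max at each point, and runs a grid-with-derivative-bounds certification on that max directly, as in Lemma~\ref{inf-1-2}. Both are sound; the paper's direct grid of the max avoids the potentially delicate transition band that you correctly anticipate as the obstacle of the split approach.
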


\begin{proof}
First of all note that both operators $V_{0,0;1}$ and $V_{1,0;0}$ send the whole set of admissible states to the positive octant.

\vspace{2mm}

Further we check the factors of the difference of infinitesimal $\sin^2\alpha$ for the original and the resulting
infinitesimal states. Here we use symbolic computations of MAPLE2020  (Case-14.mw) to estimate the derivatives of the functions and then find minimum with the small enough grid to assure the positivity of the difference.
We estimate the maximum of two polynomial expressions.
Here we estimate maxima/minima (depending which is closer to zero) of derivatives of these two expressions in order to determine the step of the grid.
For both operators we check that the denominators of the expression do not have roots at the closure of the domains.
(See Case-14.mw.)
\end{proof}


\begin{lemma}\label{inf-3-1}
Let
$$
D_{15}=\{(c,c+kt,t)){|\,}c\ge 1, k\le 1, t\ge 0\}.
$$
Then the set
$$
\M_{15}=\{V_{1,0;0},V_{0,1;0}\}
$$
$q$-max-asymptotically increases $\sin^2\alpha$.
\end{lemma}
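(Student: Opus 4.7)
The plan is to mirror the proof of Lemma~\ref{inf-1-1} (Case~1), exploiting the symmetry of the pair $\{V_{1,0;0},V_{0,1;0}\}$ under the coordinate swap $x\leftrightarrow y$.  Lemma~\ref{inf-1-1} handles the $p$-setting with $P_i$ on the $y$-axis; Lemma~\ref{inf-3-1} is its mirror image with $Q_i$ on the $x$-axis, so the geometric ingredients are the same after relabeling.

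First I would verify admissibility.  By the JP-admissibility inequalities, $V_{1,0;0}$ is admissible whenever $x/z\ge 1$ and $V_{0,1;0}$ whenever $y/z\ge 1$.  In $D_{15}$ the inequality $c+kt\ge 1$ follows from $c\ge 1$ (for the states of interest $x\ge y\ge 1$), so $V_{1,0;0}$ is always available; $V_{0,1;0}$ is additionally available when $t\ge 1$.  Thus at every point of $D_{15}$ at least one operator of $\M_{15}$ is admissible.

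Next I would compute the slope of the line $\ell$ through $Q=(c,0,1)$ and $\xi=(c+kt,t,1)$ in the plane $z=1$: it equals $1/k$ for $k\neq 0$, so the absolute value of the slope is at least $1$ whenever $0<k\le 1$, the line is vertical for $k=0$, and has negative slope for $k<0$.  This is precisely the $x\leftrightarrow y$ mirror of the slope bound "less than $1$" used in the proof of Lemma~\ref{inf-1-1}.  I would then invoke the corollaries of Proposition~\ref{gamma-derivative} in their swapped form (the proposition is a symbolic identity, manifestly symmetric under a relabeling of $x$ and $y$): for $k>0$, the analog of Corollary~\ref{gamma-1-corollary} shows that one of the two operators strictly increases $\sin^2\alpha$ to infinitesimal order, the choice being determined by whether the bisector of $\angle Q_1\xi Q_2$ meets the $x$-axis at a point smaller or larger than $c$; for $k=0$ the analog of Corollary~\ref{gamma-4-corollary} applies; and for $k<0$, where $\ell$ crosses the positive $y$-axis at some $(0,m,1)$, the analog of Corollary~\ref{gamma-1-corollary} again gives the increase, with $V_{0,1;0}$ or $V_{1,0;0}$ chosen according to the sign of $c-m$.

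Finally, I would conclude via Proposition~\ref{prop-q-2}: the family $\M_{15}$ is finite, $\hat F_M(c,c,x,y)$ is defined on the closure $\bar D_{15}$ (the denominators appearing in Remark~\ref{formulaBarF} are bounded away from zero for $c\ge 1$ and $x,y\ge 1$), and the uniform positivity of $\max_{M\in\M_{15}}\hat F_M$ from the previous step yields the $q$-max-asymptotic increase of $\sin^2\alpha$ on $D_{15}$.  The main, relatively mild, obstacle is the bookkeeping of the case split in the degenerate sub-region $t<1$, where $V_{0,1;0}$ is not admissible and only $V_{1,0;0}$ remains available; there one has to check directly that the slope condition $|{\rm slope}|\ge 1$ together with the appropriate mirrored corollary still forces a strict infinitesimal increase of $\sin^2\alpha$ under $V_{1,0;0}$ alone, so that the existence-of-a-good-operator hypothesis of Proposition~\ref{prop-q-2} is not violated on that slice of the domain.
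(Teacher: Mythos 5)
Your high-level plan is the right one and matches the paper's: mirror Lemma~\ref{inf-1-1} under a coordinate swap, reduce to the corollaries of Proposition~\ref{gamma-derivative}, and wrap up the uniformity via Proposition~\ref{prop-q-2}. The $k<0$ branch and the $k=0$ branch are also handled exactly as the paper does. But your treatment of the $k>0$ branch is the wrong move and does not close.

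For $k>0$ you invoke an analogue of Corollary~\ref{gamma-1-corollary} and propose to choose between $V_{1,0;0}$ and $V_{0,1;0}$ according to whether the bisector of $\angle Q_1\xi Q_2$ meets the $x$-axis to the left or right of $c$. Two problems. First, in the $q$-asymptotic regime we are precisely in the limit $q_1=q_2=c$, so the angle $\angle Q_1\xi Q_2$ collapses, the bisector coincides with the single line $\xi Q$, and it meets the $x$-axis exactly at $c$; the criterion you propose degenerates and does not select an operator. Second, Corollary~\ref{gamma-1-corollary} is the wrong tool here: for $k>0$ (with $c\ge 1$, $t>1$, and slope $1/k>0$) the line $\xi Q$ crosses the \emph{negative} $y$-axis, so there is no second axis intercept $a>0$ to compare against, which is exactly the data that Corollary~\ref{gamma-1-corollary} needs. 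The situation is instead the mirror of the ``non-negative tangent'' case of Lemma~\ref{inf-1-1}, and the paper handles it by applying Corollary~\ref{gamma-3-corollary} (and Corollary~\ref{gamma-4-corollary} for $k=0$) to conclude that $V_{1,0;0}$ \emph{alone} already increases $\sin^2\alpha$ infinitesimally; no choice of operator is required. It is only in the $k<0$ branch, where $\xi Q$ does cross the positive $y$-axis at some $(0,a,1)$, that Corollary~\ref{gamma-1-corollary} becomes applicable and one picks $V_{1,0;0}$ or $V_{0,1;0}$ according to whether $c\le a$ or $a\le c$ — which is exactly what you did correctly in that branch. Your admissibility worry about the slice $t<1$ is a reasonable observation (though for genuine $q$-states $y=t>1$ automatically), but it is not what blocks the argument; the missing piece is replacing your bisector/Corollary~\ref{gamma-1-corollary} step at $k>0$ with the Corollary~\ref{gamma-3-corollary}/\ref{gamma-4-corollary} argument showing $V_{1,0;0}$ alone suffices.
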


\begin{proof}
Note that the tangent of the slope of the line through $(c,0,1)$ and $\xi=(x,y,1)$
is smaller than or equal to $1$ (as $k\le 1$).
In case if the non-negative tangent we can apply Corollary~\ref{gamma-3-corollary}
(and Corollary~\ref{gamma-4-corollary} for zero case) to conclude that $V_{1,0;0}$
increases the infinitesimal $\sin^2\alpha$.

In case of the negative tangent  the line connecting points
$(c,0,1)$ and $\xi$ passes through  a point $(0,a,1)$ for some positive number $a$.
Here $V_{1,0;0}$  (respectively $V_{0,1;0}$) increases the infinitesimal $\sin^2\alpha$
if $c\le a$ (respectively if $a\le c$ ) by Corollary~\ref{gamma-1-corollary}.
From the conditions of Corollary~\ref{gamma-1-corollary}
it follows that for the case $x>y>1$ the positive real $\varepsilon$ can be taken to be equal to $1$.
\end{proof}

\begin{lemma}\label{inf-3-2}
Let
$$
 D_{16}=\{(c,c+kt,t){|\,} c\ge 1,k\ge 3,t>1,k+1>c>k-1\}.
 $$
Consider the family
$$
\M_{16}=\{V_{c-\varepsilon-1,0;0}{|\,}1> \varepsilon\ge 0\}.
$$
Then for the points in the domain $D_{16}$ it holds:

$($i$)$ all the operators of $\M_{16}$ are admissible;

$($ii$)$ the family $\M_{16}$  $q$-min-asymptotically increases the value $\sin^2\alpha$.
\end{lemma}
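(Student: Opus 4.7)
The plan is to follow the two-part template used for the analogous lemmas (in particular, Lemmas~\ref{inf-1-4} and~\ref{inf-1-9}), adapted to the $q$-setting with the state $\xi=(c+kt,\,t,\,1)$, $Q_1=(c,0,1)$, and $Q_2=(c+\delta,0,1)$ for an infinitesimal $\delta>0$.

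For part~$(i)$, I would first apply the explicit formula
$$
V_{c-\varepsilon-1,\,0;\,0}(x,y,z)=\big(x-(c-\varepsilon-1)z,\; y,\; z\big)
$$
to $\xi$, obtaining $V_{c-\varepsilon-1,\,0;\,0}(\xi)=\big(kt+\varepsilon+1,\;t,\;1\big)$. Under the hypotheses $k\ge 3$, $t>1$, and $0\le\varepsilon<1$, all three coordinates are strictly positive, so the image sits in the positive orthant and admissibility follows from item~$(iii)$ of the proposition on JP-transformations. The integer-parameter restrictions in the definition of $\M_\xi$ are met because $c-\varepsilon-1\le c-1<c+kt=x/z$ and the coefficients $\beta=\gamma=0$ trivially satisfy their bounds.

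For part~$(ii)$, I would invoke Proposition~\ref{prop-q-1}. Set
$$
G_{16}(c,k,t,\varepsilon)=\hat F_{V_{c-\varepsilon-1,0;0}}(c,c,c+kt,t),
$$
and compactify the domain by substituting $k\mapsto 1/k$, $t\mapsto 1/t$, so that the new parameters range over $c\in[1,\,?]\cap(k-1,k+1)$, $k\in[0,1/3]$, $t\in[0,1]$, $\varepsilon\in[0,1]$; in fact the constraint $k+1>c>k-1$ becomes a slab around the diagonal that is closed and bounded. Let $H_{16}(c,k,t,\varepsilon)=G_{16}(c,1/k,1/t,\varepsilon)$. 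Using the closed-form expression for $\sin^2\alpha$ from Remark~\ref{formulaBarF}, $H_{16}$ is a rational function whose numerator $h_{16}$ is a polynomial in $(c,k,t,\varepsilon)$ and whose denominator is manifestly a sum of squares, so it is bounded below by a positive constant on the closure of the compactified region (which must be checked). By Proposition~\ref{prop-q-1}, the conclusion reduces to verifying $h_{16}>0$ on the closed compact box.

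The hard part will be establishing the sign of the numerator $h_{16}$ uniformly on the four-dimensional compactified domain. The natural approach, mirroring Lemma~\ref{inf-1-4}, is to exploit the near-diagonal constraint $c\approx k$ by writing $c=k+s$ with $s\in(-1,1)$, after which the $k$-polynomial should acquire a dominant negative (or positive, after sign change) leading coefficient in $k$ that overwhelms the lower-order terms once $k$ is small enough (which, in the compactified variable, corresponds to $k\ge 3$ in the original). One collects the polynomial in powers of $k$ (or $1/k$), bounds each coefficient over the remaining compact $(s,t,\varepsilon)$ cube by inspection of its factors, and concludes positivity from a scalar inequality on the leading term — exactly the pattern executed symbolically in MAPLE2020 for the other cases. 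The final step is the routine check that the denominator of $H_{16}$ has no zeros on the closure, which is immediate because it is a product of sums of squares plus positive constants.
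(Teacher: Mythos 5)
Your part~(i) matches the paper exactly: both compute $V_{c-\varepsilon-1,0;0}\xi=(kt+\varepsilon+1,\,t,\,1)$ and observe the image is in the positive octant; the extra remarks about $\alpha\le\lfloor x/z\rfloor$ are unnecessary but harmless.

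For part~(ii), however, you have taken a genuinely different route from the paper, and the route you chose has issues the paper's does not. You propose to mirror Lemma~\ref{inf-1-4}: compactify via $k\mapsto 1/k$, $t\mapsto 1/t$, collect the numerator as a polynomial in the compactified $k$, and argue that a dominant leading coefficient controls the sign for $k$ small. The paper instead uses a monotonicity reduction: it checks $\partial H_{16}/\partial t>0$, sets $t=1$, then checks $\partial H_{16}/\partial c>0$, sets $c=k-1$, and only then verifies a low-dimensional symbolic inequality. Two concrete problems with your plan. First, after $k\mapsto 1/k$ the slab $k-1<c<k+1$ is \emph{not} bounded as you assert, since $c$ is untouched and scales with the original $k$; you need to perform the substitution $c=k+s$ with $s\in(-1,1)$ \emph{before} compactifying, but you introduce this only as an afterthought, so as written your "closed and bounded" region is wrong. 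Second, and more seriously, the leading-coefficient dominance argument only establishes positivity for $1/k$ sufficiently small, i.e.\ for $k$ large; there is no reason this reaches all the way down to $k=3$. The paper is explicit about this failure mode: Lemma~\ref{inf-1-4}'s coefficient bounds only get down to $k\ge 6.3$, and the gap $k\in[3.13,6.3]$ is covered by a separate case (Case~6). Since $D_{16}$ demands all $k\ge 3$, you would need either tighter coefficient bounds than you can reasonably expect, or an additional case for moderate $k$ — neither of which you provide. The paper's monotonicity reduction avoids this entirely by pushing the minimum to the boundary $t=1$, $c=k-1$ and then verifying a simple closed-form inequality in the two remaining variables, which covers all $k\ge 3$ in one stroke.
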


\begin{proof}
First of all note that
for a vector
$$
\xi=(c+kt,t,1)
$$
we have
$$
V_{c-\varepsilon-1,0;0}\xi=(kt + c - (c-\varepsilon)+1, t, 1);
$$
the resulting vector is in the positive octant.

Set
$$
\begin{array}{l}
H_{16}(c,k,t,\varepsilon)=F_{V_{c-\varepsilon-1,0;0}}(c,c,c+kt,t)+1;
\end{array}
$$

For the second item we check that the derivative of $H_{16}$ in $t$ is positive, and after the substitution $t=1$
the derivative in $c$ is positive, hence we substitute $c=k-1$.
Finally the expression is simple enough to apply symbolic computations of MAPLE2020
and to show it always exceeds 1 (see Case-16.mw for more details).
\end{proof}

\begin{lemma}\label{inf-3-3}
Let
$$
 D_{17}=\{(c,c+kt,t){|\,} c\ge 1,3\ge k\ge 1,t>1,k+1>c>k-1\}.
 $$
Consider the family
$$
\M_{17}=\{V_{1,0;1}\}.
$$
Then for the points in the domain $D_{17}$ it holds:

$($i$)$ the operator of $\M_{17}$ is admissible;

$($ii$)$ the set $\M_{17}$  $q$-min-asymptotically increases the value $\sin^2\alpha$.
\end{lemma}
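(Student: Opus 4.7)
The plan is to establish Lemma~\ref{inf-3-3} in two steps: first verify admissibility of $V_{1,0;1}$ on $D_{17}$, then apply Proposition~\ref{prop-q-1} to obtain the $q$-min-asymptotic increase of $\sin^2\alpha$. For admissibility, I would apply the coordinate formula for JP-transformations, computing
$$V_{1,0;1}(c+kt,\,t,\,1)=(c-1+(k-1)t,\,t,\,1).$$
Under the hypotheses $c\ge 1$, $k\ge 1$, $t>1$ on $D_{17}$, the first entry is non-negative, so the image lies in the closed non-negative orthant. Equivalently, the admissibility criterion from the proposition in Subsection~\ref{Admissible JP-transformations} reduces to $c-1+(k-1)t\ge 0$, which is immediate.

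For part (ii), set $H_{17}(c,k,t)=\hat F_{V_{1,0;1}}(c,c,c+kt,t)$. Using the formula for $\sin^2\alpha$ from Remark~\ref{formulaBarF} together with the fact that $V_{1,0;1}$ sends $Q_i=(q_i,0,1)$ to $(q_i-1,0,1)$ and $\xi$ to $(c-1+(k-1)t,t,1)$, the quantity $H_{17}$ is an explicit rational function in $(c,k,t)$. To put the parameter region in compact form, I would substitute $t\mapsto 1/\tau$ so that the unbounded $t$-direction becomes $\tau\in[0,1]$; then by Proposition~\ref{prop-q-1} it suffices to verify that $H_{17}>0$ on the resulting compact closure and that its denominator is nowhere vanishing there.

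I would then carry out the positivity verification by the same successive monotonicity reduction used for Lemma~\ref{inf-3-2}: compute $\partial H_{17}/\partial\tau$ symbolically in MAPLE2020, factor, and show that its sign is constant on the compact domain so that the minimum of $H_{17}$ occurs on a boundary face (expected $\tau=0$, corresponding to $t\to\infty$); then, on that face, analyse $\partial H_{17}/\partial c$ to reduce to a univariate section such as $c=k$ or $c=k-1$; finally verify the remaining one-variable rational inequality on $k\in[1,3]$ by writing its numerator as a polynomial in $k$ with sign-controlled coefficients, and check non-vanishing of the denominator by inspection. The principal obstacle is the delicate boundary behaviour near $c=k=1$, where the first coordinate of $V_{1,0;1}(\xi)$ degenerates to $0$ and the leading-order expression of $\sin^2\alpha$ after transformation becomes small; the monotonicity-reduction scheme is designed precisely to isolate this corner and either exhibit a uniform positive lower bound for $H_{17}$ or confirm that the corner is excluded by the full set of hypotheses used downstream in the proof of Proposition~\ref{x-axes}.
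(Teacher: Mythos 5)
Your proposal is correct and follows essentially the same route as the paper: admissibility via the coordinate formula $V_{1,0;1}(c+kt,t,1)=(c-1+(k-1)t,t,1)$, compactification by the substitution $t\mapsto 1/\tau$, and verification of $\hat F_{V_{1,0;1}}>0$ on the compact closure so that Proposition~\ref{prop-q-1} applies. The one small difference is the MAPLE verification strategy: you propose the successive-monotonicity reduction used for Lemma~\ref{inf-3-2} (make $\partial H_{17}/\partial\tau$ and then $\partial H_{17}/\partial c$ sign-constant and push the minimum onto lower-dimensional faces), whereas the paper's Case~17 worksheet actually uses the Lipschitz-bound-plus-grid technique of Lemma~\ref{inf-1-2} (bound the partials, then test $H_{17}$ at the nodes of a sufficiently fine grid); both are legitimate numerical certificates for the same rational inequality. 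Your remark about the corner $(c,k)=(1,1)$, where $V_{1,0;1}$ sends $Q$ to $(0,0,1)$ and the transformed $\sin^2\alpha$ degenerates, is a genuine subtlety the paper leaves entirely to the worksheet; to apply Proposition~\ref{prop-q-1} one must check that $\hat F_{V_{1,0;1}}(q,q,x,y)$ stays defined and bounded away from zero on the \emph{closure} of $D_{17}$ in $\r P^3$, so spelling out what happens at that corner is exactly the right place to focus any independent verification.
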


\begin{proof}
First of all note that
for a vector
$$
\xi=(c+kt,t,1)
$$
we have
$$
V_{1,0;1}\xi=(kt + c - t - 1, t, 1).
$$
The resulting vector is in the positive octant.

\vspace{2mm}

Let us discuss the second item.
In order to make the domain of minimizing compact we consider the following change of the coordinates:
$$
(c,k,t) \to (c,k, 1/t).
$$
Set
$$
\begin{array}{l}
G_{17}(c,k,t)=F_{V_{1,0;1}}(c,c,c+kt,t);\\
H_{17}(c,k,t)=G_{17}(c,k,1/t).
\end{array}
$$
Now we check the factors of $H_{17}$ .
Here we use symbolic computations of MAPLE2020 to estimate the derivatives of the functions
and then and then find minimum with the small enough grid to assure the positivity of  $H_{17}$ (see Case-17.mw).
\end{proof}

\begin{lemma}\label{inf-3-4}
Let

\begin{itemize}
\item $D_{18,1}=\{(c,c+kt,t){|\,} c\le 0.81k, k\ge  3,t>1\}$;

\item $D_{18,2}=\{(c,c+kt,t){|\,}c\le 0.81k, 3\ge k\ge  1,t>1$;

\item $D_{18,3}=\{(c,c+kt,t){|\,} 1.1k\ge c\ge 0.81k, k\ge  6,t>1\}$;

\item $D_{18,4}=\{(c,c+kt,t){|\,} 1.1k\ge c\ge 0.81k, 6>k\ge  1,t>1\}$.

 \end{itemize}

Consider the families

\begin{itemize}
\item $\M_{18,1}=\{V_{0,0;k/3-\varepsilon}{|\,}1> \varepsilon\ge 0\}$;

\item $\M_{18,2}=\{V_{0,0;1}{|\,}1> \varepsilon\ge 0\}$;

\item $\M_{18,3}=\{V_{0,0;k/3-\varepsilon}{|\,}1> \varepsilon\ge 0\}$;

\item $\M_{18,4}=\{V_{0,1;1}{|\,}1> \varepsilon\ge 0\}$.
\end{itemize}

Then for $i=1,2,3,4$ we have that the points in the domain $D_{18_i}$ satisfy

$($i$)$ the operators of $\M_{18,i}$ covers $D_{18,i}$ is admissible;

$($ii$)$ the sets $\M_{18,i}$  $q$-min-asymptotically increases the value $\sin^2\alpha$ on $D_{18,i}$.
\end{lemma}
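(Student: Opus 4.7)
The plan is to handle each of the four subcases $i=1,2,3,4$ in parallel, following the same template that was used for Lemmas~\ref{inf-1-3}--\ref{inf-1-10}. In each case, the starting vector is $\xi=(c+kt,t,1)$, and for part~$(i)$ I would simply substitute $\xi$ into the operator and verify that the resulting triple lies in the positive octant. For the families $\M_{18,1}$ and $\M_{18,3}$ this amounts to checking that $(c+kt)-(k/3-\varepsilon)t>0$ on the respective domains, and the worst case ($c$ as small as allowed, $\varepsilon$ near $1$) can be bounded using the constraints $k\ge 3$ (resp.\ $k\ge 6$). For the single-operator families $\M_{18,2}=\{V_{0,0;1}\}$ and $\M_{18,4}=\{V_{0,1;1}\}$ admissibility is a direct sign check on the transformed coordinates. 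The intermediate inequality $c\le 0.81k$ vs.\ $c\ge 0.81k$ is what forces the splitting; at $c=0.81k$ the two choices of operator agree in which direction is preferable, but deviations force one or the other.

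For part~$(ii)$ I would apply Proposition~\ref{prop-q-1}. The strategy is, for each fixed $M$ in the relevant family, to form $\hat F_M(c,c,c+kt,t)$ using the compact formula from Remark~\ref{formulaBarF}, then compactify the domain via $(c,k,t)\mapsto(c,1/k,1/t)$ (on the unbounded subcases $D_{18,1}$ and $D_{18,3}$) or leave $k$ alone on the bounded subcases $D_{18,2}$ and $D_{18,4}$. This gives $\hat F_M$ as a rational function whose numerator and denominator are polynomials in the compactified variables, and whose denominator needs to be verified nonvanishing on the closure in $\mathbb{R}P^3$. Both conditions of Proposition~\ref{prop-q-1} then reduce to checking (a) that the denominator is bounded away from zero on the closed box, and (b) that the numerator is positive there.

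For the asymptotic families $\M_{18,1}$ and $\M_{18,3}$, the extra parameter $\varepsilon\in[0,1)$ stays in the polynomial expressions, but since it lives in a compact interval, Proposition~\ref{prop-q-1} still applies uniformly; one only needs to verify positivity of the numerator with $\varepsilon$ treated as a free parameter on $[0,1]$. I would carry out the required polynomial positivity checks by collecting the numerator by powers of $k$ (or $1/k$ after compactification), estimating each coefficient separately, and in cases where coefficients have indeterminate sign, splitting further using derivative estimates to pin down extrema on the grid. This is the same symbolic approach already carried out for Cases~3--10, and in practice the computations will be delegated to MAPLE2020 files analogous to the ones cited above.

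The main obstacle will be the subcase $D_{18,4}$, which is the transition region where $k$ is small (between $1$ and $6$) and $c$ is close to $k$; here the admissible operator is only $V_{0,1;1}$, and the numerator polynomial does not split by sign of its coefficients as cleanly as in the large-$k$ cases. For this case I expect to need a finer analysis: first show monotonicity of $\hat F_{V_{0,1;1}}$ in $t$ for large $t$ (reducing to the slice $t=1$), then in $c$ on the slab $0.81k\le c\le 1.1k$, and finally a one-variable sweep in $k\in[1,6]$ validated on a sufficiently refined grid after bounding the derivative in $k$. The denominator nonvanishing check is straightforward in all four subcases because the relevant polynomials are sums of squares plus positive terms on the respective closed boxes.
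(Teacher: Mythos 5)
Your plan carries out a direct recomputation of all four subcases using the same template as Cases~3 through~10: parameterize by $(c,c+kt,t)$, compactify, and verify numerator positivity and denominator nonvanishing symbolically. The paper does not do this. Its proof of this lemma is a single sentence that reduces the four subcases, ``after the change of the coordinates ($a$ to $c$)'', to the already-proved $p$-side analogues: Lemma~\ref{inf-1-3} (Case~3, item~3) for $D_{18,1}$, Lemma~\ref{inf-1-7} (Case~7, item~5) for $D_{18,2}$, Lemma~\ref{inf-1-9} (Case~9) for $D_{18,3}$, and Lemma~\ref{inf-1-10} (Case~10) for $D_{18,4}$. The mechanism is the $\hat F$--$F$ symmetry noted just before Proposition~\ref{prop-q-1}: the positivity and boundedness certificates already banked in Subsection~\ref{cases1-19} on the corresponding $p$-domains carry over to the $q$-domains here, so no new polynomial or MAPLE work is required for Case~18 at all. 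Your approach would succeed in principle but is redundant, and it misses the structural shortcut the paper relies on.

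If you did pursue the direct computation, there is one concrete gap in your compactification step. For $D_{18,1}$ and $D_{18,3}$ the map $(c,k,t)\mapsto(c,1/k,1/t)$ does not yield a bounded box, because $c$ itself is unbounded (it ranges up to $0.81k$ or $1.1k$ with $k\to\infty$). The $p$-side domains $D_{3,3}$ and $D_9$ that the paper reduces to have a bounded intercept parameter ($a\in[-0.81,0.1]$ and $a\in[-1.1,-0.8]$ respectively), and that is exactly the quantity which stays bounded under the paper's change of variables. In a direct treatment you would need to work with the ratio $c/k$ rather than $c$, or establish a monotonicity in $c$ to reduce to the boundary, as is done for $D_{16}$ in the proof of Lemma~\ref{inf-3-2}. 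Finally, your singling out of $D_{18,4}$ as the difficult subcase is a bit misleading: once the reduction is in view it is the same inequality already resolved in Case~10, and carries no extra weight.
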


\begin{proof}
The proof is complete after the change of the coordinates ($a$ to $c$) in
Lemma~\ref{inf-1-3} (Case 3, item 3),
Lemma~\ref{inf-1-7} (Case 7, item 5),
Lemma~\ref{inf-1-9} (Case 9), and
Lemma~\ref{inf-1-10} (Case 10).
\end{proof}

\begin{corollary}\label{inf-3-6}
Let
$$
D_{19}=\{(c,c+kt,t){|\,}c\ge 1,c\ge k+1,t>1\}.
$$
Consider the set
$$
\M_{19}=\{ V_{1,0;0}\}.
$$
Then for the points in the domain $D_{19}$  it holds:

$($i$)$ the operator of $\M_{19}$ is admissible;

$($ii$)$ the set $\M_{19}$  $q$-min-asymptotically increases the value $\sin^2\alpha$.
\qed
\end{corollary}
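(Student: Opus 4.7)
The plan is to deduce Corollary~\ref{inf-3-6} directly from Proposition~\ref{fin-3-5} proved in Section~\ref{teor-v}; essentially no new computation is required beyond identifying the parameters and unpacking the definitions.

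First I would verify admissibility of $V_{1,0;0}$. For any triple $(c, c+kt, t) \in D_{19}$, the conditions $c \ge 1$, $c \ge k+1$, $t > 1$ together ensure that $c + kt - 1 \ge 0$ (in the regime $k \ge 0$ one has immediately $c+kt-1 \ge c - 1 \ge 0$), so
$$
V_{1,0;0}(c+kt, t, 1) \;=\; (c+kt-1,\, t,\, 1)
$$
lies in the non-negative coordinate octant. Hence $V_{1,0;0}$ is admissible throughout $D_{19}$, which handles item~(i).

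For item~(ii), the $q$-min-asymptotic increase, I would identify the variable $a$ of Proposition~\ref{fin-3-5} with $c$ and the perturbation $\rho$ there with $\delta$. Proposition~\ref{fin-3-5} is stated precisely for the $\xi$-state
$$
\xi = (a+kt, t, 1), \qquad Q_1 = (a, 0, 1), \qquad Q_2 = (a+\rho, 0, 1),
$$
under the hypotheses $a \ge k+1 > 1$ and $t \ge 1$, and its conclusion is that $V_{1,0;0}$ strictly increases $\sin^2\alpha$ for \emph{every} $\rho > 0$. In the $\hat F$-notation introduced in Subsection~\ref{q-asym}, this translates to
$$
\hat F_{V_{1,0;0}}(c,\, c+\delta,\, c+kt,\, t) > 0
$$
for every $\delta > 0$ and every $(c, c+kt, t) \in D_{19}$. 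Since the $q$-min-asymptotic condition only demands positivity on some non-empty interval $\delta \in (0, \varepsilon)$, it now follows at once, and indeed any positive $\varepsilon$ will serve.

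There is no substantive obstacle: the only genuine analytic work, namely the monotonicity $\partial Q / \partial \varepsilon > 0$ and its integration over $\varepsilon \in [0,1]$, has already been carried out in the proof of Proposition~\ref{fin-3-5}. The only care needed is to note that the strict positivity there is uniform in the perturbation $\rho$ (it holds \emph{for every} $\rho > 0$, not merely in a limiting sense), which is clear from the derivation, so the infinitesimal version required by Proposition~\ref{prop-q-1} is immediate.
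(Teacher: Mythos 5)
Your proposal is correct and follows the paper's own proof exactly: the paper's proof of Corollary~\ref{inf-3-6} is the single line ``The statement follows directly from Proposition~\ref{fin-3-5},'' and you have simply unpacked the identification $a = c$, $\rho = \delta$, checked that the hypotheses $a \ge k+1 > 1$, $t \ge 1$ of Proposition~\ref{fin-3-5} are implied by membership in $D_{19}$ (in the regime $k > 0$ where $D_{19}$ is used in the coverage of Proposition~\ref{x-axes}), and noted that since the conclusion holds for \emph{every} $\rho > 0$ the $q$-min-asymptotic condition is automatically met for any choice of $\varepsilon$. Your closing remark invoking Proposition~\ref{prop-q-1} is superfluous — the definition of $q$-min-asymptotic increase is already satisfied directly without passing to the infinitesimal criterion — but it does no harm.
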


\begin{proof}
The statement follows directly from Proposition~\ref{fin-3-5}.
\end{proof}

\begin{remark}
Note that it is not sufficient to consider only $V_{*,*;*}$ transformations
in order to increase the value of $\sin^2\alpha$ (in the asymptotical case $q_1=q_2$).
For instant one can consider
$$
(c,x,y)=\Big(\frac{2}{3},\frac{3}{2},\frac{11}{8}\Big).
$$
Here it is sufficient  to test all admissible composition of operators $V_{*,*;*}$.
They are  as follows:
$$
\begin{array}{l}
V_{1,1;0}, \quad
V_{1,0;0}, \quad
V_{0,1;0}, \quad
V_{0,0;1}, \quad
V_{0,1;0}V_{0,0;1}, \quad
V_{1,1;1}, \\
V_{1,0;0}V_{0,1;1}, \quad
V_{0,1;1}V_{0,0;1}, \quad
\hbox{and}
\\
V_{0,1;n}, \hbox{ for $n=1,2,3,4$}
\end{array}
$$
 (see the computations in non-V.mw in~\cite{maple}).
 All the listed operators decrease  the value of $\sin^2\alpha$.
For that reason we have one case (Case~11) where the operator $W$
is considered.
\end{remark}


\section{Some open questions}\label{conjectures}

There are several improvements that can be done with the $\sin^2$-algorithm.

First of all, it is most likely that the algorithm will be periodic for algebraic vectors if we remove
the JP-transformation $W$.

\begin{conjecture}
Exclusion of JP-transformation $W$ from admissible transformations
will not affect periodicity for cubic vectors.
\end{conjecture}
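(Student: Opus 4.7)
The plan is to re-run the machinery of Theorems~\ref{eventually-periodic} and~\ref{eventually-periodic-algorithm} with the set of admissible JP-transformations shrunk to $\{V_{\alpha,\beta;\gamma}\}$. The finiteness of $\Omega_\varepsilon(\xi;\nu_1,\nu_2)$ for every $\varepsilon>0$ (Theorem~\ref{eventually-periodic}(i)) never mentions $W$, so its proof transfers verbatim. The descent scheme of Corollary~\ref{period-c1} also only needs two ingredients: $\sin^2\alpha$-boundedness of each "plateau" between descents, and finiteness of the descent set $\Omega_{\max}$. The first is immediate from item~(i); the work lies in re-proving item~(ii) without~$W$.

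Inspecting where $W$ enters the current proof of Theorem~\ref{greater e}, the only place is Case~11 (Lemma~\ref{inf-2-1}) on the domain
$$
D_{11}=\{(c,x,y)\mid 1\ge c\ge 0,\ y\ge 1,\ y+1>x>y\},
$$
which is precisely the region where $W$ is admissible. Moreover, the remark following Corollary~\ref{inf-3-6} shows that at the state with $(c,x,y)=(2/3,3/2,11/8)$ every admissible composition of $V_{*,*;*}$-operators \emph{decreases} $\sin^2\alpha$. Hence a pointwise asymptotic-increase statement cannot hold on all of $D_{11}$: the conjecture therefore has to be proved by allowing genuine descent steps inside $D_{11}$ and by showing that the set of $\xi$-states landing in $D_{11}$ without any $V$-increasing move is finite up to the action of the positive Dirichlet group $\Xi_+(A)$.

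The concrete steps I would carry out are: (a) re-parametrise $D_{11}$ using $\rho=x-y\in(0,1)$ and the new optimisation variable $c$, as in Lemma~\ref{inf-2-1}, and cover $D_{11}\setminus K$ (for some explicit compact $K$) by finitely many sub-domains on each of which one identifies, by a Proposition~\ref{prop-q-1}/\ref{prop-q-2} type argument, a $V_{\alpha,\beta;\gamma}$ that strictly increases $\sin^2\alpha$; (b) show that the residual compact $K$, combined with the separation condition and Proposition~\ref{frozen-vertex}(ii), forces all coordinates $x_0,y_0,x_i,y_i$ of the corresponding separating $\xi$-states to be uniformly bounded, so that Proposition~\ref{bounded-triangle} yields finiteness of integer affine frames and hence of $\xi$-states in the residue; (c) feed this finiteness back into the descent argument of Corollary~\ref{period-c1} to deduce eventual periodicity.

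The main obstacle, by a wide margin, is step~(a) in the thin slab $\rho\ll 1$ with $c$ close to $1$. There the invariant planes through $\xi,\nu_i$ are nearly parallel to $z=1$, so $F_M(q,q+\delta,x,y)$ for any fixed $V$-operator is of higher order in the small parameters than in all the other cases of Section~\ref{theor-iii-iv}, and the leading term can vanish on a codimension-one subset. One would need a more delicate expansion of $F_M$ — keeping track of several parameters of $M$ simultaneously — and quite possibly a case split based on whether the nearest rational direction to $\xi$ has small or large denominator, with the small-denominator branch handled directly by finiteness via Proposition~\ref{bounded-triangle} and the large-denominator branch handled by a uniform lower bound on some $F_{V_{\alpha,\beta;\gamma}}$. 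Whether such a uniform lower bound actually exists on the whole slab is, to my eye, the genuinely open part of the conjecture; the rest is a fairly mechanical extension of the techniques already developed in Sections~\ref{teor-i}--\ref{theor-iii-iv}.
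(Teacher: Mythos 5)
This statement is not proved in the paper: it appears in Section~\ref{conjectures} explicitly labelled as a conjecture, i.e.\ an open problem, and the author offers no argument for it. So there is nothing in the paper for your proposal to be compared against as a ``proof.''

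Your write-up is also not a proof; it is a plan together with an honest admission that the central step is unresolved. You correctly locate the single place where $W$ is used (Lemma~\ref{inf-2-1} on $D_{11}$), and you correctly cite the remark after Corollary~\ref{inf-3-6} as evidence that no pointwise $V$-replacement can work on all of $D_{11}$ — at $(c,x,y)=(2/3,3/2,11/8)$ every admissible $V$-composition tested in the paper decreases $\sin^2\alpha$, so any $V$-only argument must allow genuine descents there. But then step~(a) in your outline — producing either a uniformly increasing $V$-move or a finiteness statement on the thin slab $\rho=x-y\ll 1$, $c$ near $1$ — is exactly what you flag as ``the genuinely open part of the conjecture,'' and you give no argument for it. Without that, Theorem~\ref{greater e}(iv) (equivalently Proposition~\ref{x-axes}) is not re-established for the restricted operator set, the descent argument of Corollary~\ref{period-c1} does not close, and periodicity does not follow. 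In short: you have diagnosed the obstruction accurately, but the conjecture remains open both in the paper and in your proposal.
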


Secondly, it is most likely that Preliminary Stage 2 is excessive, as
the iterations of the main stage applied to a supporting basis
bring us to a separating basis in finitely many steps in all computed examples.

\begin{conjecture}\label{conj-2}
Exclusion of Preliminary Stage 2 in the $\sin^2$-algorithm
will not affect periodicity for cubic vectors.
\end{conjecture}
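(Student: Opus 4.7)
The plan is to split the proof into two phases. \textbf{Phase I:} show that iterating $\Phi$ from the supporting basis produced by Preliminary Stage~1 yields a separating basis in finitely many steps. \textbf{Phase II:} apply Corollary~\ref{period-c1} to obtain eventual periodicity from that separating basis. Phase~II is immediate, so all the real work lies in Phase~I.

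For Phase~I, I would exploit the fact that every admissible $V_{\alpha,\beta;\gamma}$ with at least one nonzero parameter maps the positive orthant of the current basis into a proper subcone of itself. Consequently, writing $M_1,M_2,\ldots$ for the transformations chosen by $\Phi$ in sequence, the nested positive orthants $M_1M_2\cdots M_n(\r^3_{\ge 0})$, viewed in the fixed original coordinates, form a decreasing family of cones all containing the ray through $\xi$. For cubic $\xi$, the only ray that can lie in the infinite intersection of such a shrinking family is the ray through $\xi$ itself: otherwise one would produce a rational invariant direction distinct from $\xi$, contradicting the irreducibility over $\q$ of the characteristic polynomial of the associated $\SL(3,\z)$-matrix (cf.~Remark~\ref{cd}). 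Since $\nu_1$ and $\nu_2$ are linearly independent from $\xi$, after finitely many iterations the current positive orthant cannot simultaneously contain both $\pm\nu_1$ and $\pm\nu_2$, and the basis becomes separating.

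To make this shrinking heuristic rigorous, I would establish two auxiliary lemmas. First, that the trivial JP-transformation $V_{0,0;0}$ and the single transformation $W$ cannot be chosen indefinitely without an intervening nontrivial $V$: the admissibility constraint $z>x-y>0$ for $W$, combined with the algebraic behavior of the eigencomponents, forces nontrivial $V$-steps to appear with positive density along any orbit of $\Phi$. Second, that each nontrivial admissible $V$-step reduces a suitable measure of the orthant's opening about $\xi$—for instance, the maximum Euclidean angle subtended at $\xi$ by the current positive orthant—by at least a positive amount depending only on $\xi$. Combined with the first lemma, this would yield a uniform bound on the number of pre-separating iterations.

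The main obstacle lies in controlling the $\sin^2$ optimization itself: the chosen $M$ maximizes $\sin\alpha(M(s))$, a quantity depending on both $\nu_1$ and $\nu_2$, whereas in the non-separating regime this maximization need not systematically shrink the orthant in the ``correct'' direction with respect to $\xi$. Ruling out pathological oscillations that would delay separation indefinitely is the crux of the argument. An alternative route, sidestepping the direct optimization analysis, would be to mimic Proposition~\ref{ConstSetB} and show that the $V$'s selected by the greedy $\sin^2$ rule implicitly realize (a subsequence of) the regular-continued-fraction partial-quotient expansion of $x/y$ that Stage~2 performs explicitly. Establishing this coincidence between the greedy rule and the continued-fraction rule is itself substantial, and I expect it to be the hardest step of the whole argument.
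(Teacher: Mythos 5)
The statement you are trying to prove is Conjecture~\ref{conj-2}, which the paper explicitly leaves open: the author only reports that in all computed examples the main stage reaches a separating basis in finitely many steps. There is no proof in the paper to compare against, so your proposal must stand on its own — and as written it does not, because the central claims of Phase~I are either unjustified or false. The key assertion that the nested cones $M_1\cdots M_n(\r^3_{\ge 0})$ can only have the ray through $\xi$ in their infinite intersection is exactly the question of \emph{strong convergence} of a multidimensional continued fraction algorithm, which is a notoriously hard property (it fails for the classical Jacobi--Perron algorithm on sets of positive measure, and nothing in this paper establishes it for the $\sin^2$-rule in the non-separating regime). Your proposed contradiction — that a larger intersection ``would produce a rational invariant direction'' — does not follow: the intersection of a nested family of integer cones is in general a cone with irrational boundary directions that are invariant under nothing, so no conflict with the irreducibility of the characteristic polynomial arises. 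The two auxiliary lemmas (positive density of nontrivial $V$-steps, uniform angular shrinkage per nontrivial step) are stated but not argued, and the second is particularly doubtful: an admissible $V_{\alpha,\beta;\gamma}$ need not decrease the opening of the cone around $\xi$ by a uniform amount, since the cone can degenerate slowly in one direction while staying wide in another.

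You correctly identify the real crux yourself: before the basis becomes separating, the quantity $\sin\alpha(M(s))$ that $\Phi$ greedily maximizes is not controlled by any of the paper's machinery (Theorem~\ref{greater e} and the entire analysis of $\Omega_{\max}$ presuppose a separating state), so nothing rules out the greedy rule oscillating indefinitely without ever expelling both $\pm\nu_1$ and $\pm\nu_2$ from the cone. Your alternative route — showing the greedy rule implicitly reproduces the continued-fraction steps of Proposition~\ref{ConstSetB} — is plausible as a strategy but is again only named, not carried out. In short, Phase~II is fine but trivial, and Phase~I, which is the entire content of the conjecture, remains unproved; the proposal is a research plan rather than a proof, consistent with the fact that the paper itself presents this statement only as a conjecture.
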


Finally it remains to recall the remaining open cases of Hermite's problem.

\begin{problem}
Find a generalized Euclidean algorithm that is periodic for real cubic vectors in the non-totally-real case.
\end{problem}

Here the analytic extension of the formula for $\sin^2\alpha$ to the complex numbers may be of use.

\vspace{2mm}

Finally there is almost nothing known for the higher dimensional cases.
\begin{problem}
Find a generalized Euclidean algorithm that is are periodic for $n$-algebraic vectors in $\r^n$
for $n\ge 4$.
\end{problem}

The last two problems (including a heuristic algorithm that works in both cases)
are discussed in~\cite{Karpenkov2021-2}.

{\noindent
{\bf Acknowledgements.}
The author is grateful to  to P.~Giblin, A.~Pratoussevitch, H.~\v{R}ada for useful comments and to  A.~Ustinov for useful discussions.
The author is thankful to his wife Tanya for care and support while completing this article during the lockdown.
}

\bibliographystyle{plain}
\bibliography{cfbiblio}

\begin{thebibliography}{10}

\bibitem{Arnold2002}
V.~I. Arnold.
\newblock {\em Continued fractions (In Russian)}.
\newblock {M}oscow: {M}oscow {C}enter of {C}ontinuous {M}athematical
  {E}ducation, 2002.

\bibitem{Bernstein1971}
L.~Bernstein.
\newblock {\em The {J}acobi-{P}erron algorithm---{I}ts theory and application}.
\newblock Lecture Notes in Mathematics, Vol. 207. Springer-Verlag, Berlin-New
  York, 1971.

\bibitem{Borevich1966}
A.~I. Borevich and I.~R. Shafarevich.
\newblock {\em Number theory}.
\newblock Translated from the Russian by Newcomb Greenleaf. Pure and Applied
  Mathematics, Vol. 20. Academic Press, New York, 1966.

\bibitem{Brun1958}
V.~Brun.
\newblock Algorithmes euclidiens pour trois et quatre nombres.
\newblock In {\em Treizi\`eme congr\`es des math\'ematiciens scandinaves, tenu
  \`a {H}elsinki 18-23 ao\^ut 1957}, pages 45--64. Mercators Tryckeri,
  Helsinki, 1958.

\bibitem{Bullig1940}
G.~Bullig.
\newblock {Zur Kettenbruchtheorie im Dreidimensionalen (Z 1).}
\newblock {\em Abh. math. Sem. Hansische Univ.}, 13:321--343, 1940.

\bibitem{Dasaratha2012}
K.~Dasaratha, L.~Flapan, T.~Garrity, Ch. Lee, C.~Mihaila, N.~Neumann-Chun,
  S.~Peluse, and M.~Stoffregen.
\newblock Cubic irrationals and periodicity via a family of multi-dimensional
  continued fraction algorithms.
\newblock {\em Monatsh. Math.}, 174(4):549--566, 2014.

\bibitem{Elsner1967}
L.~Elsner and H.~Hasse.
\newblock Numerische {E}rgebnisse zum {J}acobischen {K}ettenbruchalgorithmus in
  rein-kubischen {Z}ahlk\"{o}rpern.
\newblock {\em Math. Nachr.}, 34:95--97, 1967.

\bibitem{German2008}
O.~N. German and E.~L. Lakshtanov.
\newblock On a multidimensional generalization of {L}agrange's theorem for
  continued fractions.
\newblock {\em Izv. Math.}, 72(1):47--61, 2008.
\newblock {R}ussian Version: Izv. Ross. Akad. Nauk Ser. Mat., 72(1), 2008,
  51--66.

\bibitem{Hermite1850}
C.~Hermite.
\newblock Extraits de lettres de {M}. {C}h. {H}ermite \`a {M}. {J}acobi sur
  diff\'{e}rents objects de la th\'{e}orie des nombres. ({C}ontinuation).
\newblock {\em J. Reine Angew. Math.}, 40:279--315, 1850.

\bibitem{Jacobi1868}
C.~G.~J. Jacobi.
\newblock {A}llgemeine {T}heorie der {K}ettenbruch\"ahnlichen {A}lgorithmen, in
  welchen jede {Z}ahl aus drei vorhergehenden gebildet wird ({A}us den
  hinterlassenen {P}apieren von {C}. {G}. {J}. {J}acobi mitgetheilt durch
  {H}errn {E}. {H}eine).
\newblock {\em {J}ournal f\"ur die {R}eine und {A}ngewandte {M}athematik},
  69(1):29--64, 1868.

\bibitem{maple}
O~Karpenkov.
\newblock {\em Supplementary computations,
  http://pcwww.liv.ac.uk/$\sim$karpenk/files/JP-maple-files.zip}.

\bibitem{KarpenkovGCF2013}
O.~Karpenkov.
\newblock {\em Geometry of Continued Fractions}.
\newblock Algorithms and Computation in Mathematics, 26. Springer-Verlag,
  Berlin, 2013.

\bibitem{Karpenkov2021-2}
O.~Karpenkov.
\newblock {O}n {H}ermite's problem, {J}acobi-{P}erron type algorithms, and
  {D}irichlet groups (preprint).
\newblock 2021.

\bibitem{Klein1895}
F.~Klein.
\newblock Ueber eine geometrische {A}uffassung der gew\"ohnliche
  {K}ettenbruchentwicklung.
\newblock {\em {N}achr. {G}es. {W}iss. {G}\"ottingen {M}ath-{P}hys. Kl.},
  3:352--357, 1895.

\bibitem{Klein1896}
F.~Klein.
\newblock Sur une repr\'esentation g\'eom\'etrique de d\'eveloppement en
  fraction continue ordinaire.
\newblock {\em Nouv. Ann. Math.}, 15(3):327--331, 1896.

\bibitem{Lachaud1993}
G.~Lachaud.
\newblock Poly\`edre d'{A}rnol'd et voile d'un c\^one simplicial: analogues du
  th\'eor\`eme de {L}agrange.
\newblock {\em C. R. Acad. Sci. Paris S\'er. I Math.}, 317(8):711--716, 1993.

\bibitem{Lagrange1770}
J.-L. Lagrange.
\newblock Additions au m'emoire sur la r'esolution des 'equations num'eriques.
\newblock In {\em M'em. Acad. royale sc. et belles-lettres}, volume~24. Berlin,
  1770.

\bibitem{Minkowski1967}
H.~Minkowski.
\newblock {\em Gesammelte {A}bhandlungen (pp. 293--315)}.
\newblock {A}{M}{S}-{C}helsea, 1967.

\bibitem{Murru2015}
N.~Murru.
\newblock On the periodic writing of cubic irrationals and a generalization of
  {R}\'{e}dei functions.
\newblock {\em Int. J. Number Theory}, 11(3):779--799, 2015.

\bibitem{Perron1907}
O.~Perron.
\newblock Grundlagen f\"ur eine {T}heorie des {J}acobischen
  {K}ettenbruchalgorithmus.
\newblock {\em Math. Ann.}, 64(1):1--76, 1907.

\bibitem{Perron1913}
O.~Perron.
\newblock Erweiterung eines {M}arkoffschen {S}atzes \"uber die {K}onvergenz
  gewisser {K}ettenbr\"uche.
\newblock {\em Math. Ann.}, 74(4):545--554, 1913.

\bibitem{Picard1901}
\'{E}. Picard.
\newblock L'\oe uvre scientifique de {C}harles {H}ermite.
\newblock {\em Ann. Sci. \'{E}cole Norm. Sup. (3)}, 18:9--34, 1901.

\bibitem{Schweiger1992}
F.~Schweiger.
\newblock Ergodic properties of multi-dimensional subtractive algorithms.
\newblock In {\em New trends in probability and statistics, {V}ol.\ 2
  ({P}alanga, 1991)}, pages 91--100. VSP, Utrecht, 1992.

\bibitem{Schweiger1994}
F.~Schweiger.
\newblock Invariant measures for fully subtractive algorithms.
\newblock {\em Anz. \"Osterreich. Akad. Wiss. Math.-Natur. Kl.}, 131:25--30
  (1995), 1994.

\bibitem{Schweiger1995}
F.~Schweiger.
\newblock Fully subtractive algorithms.
\newblock {\em \"Osterreich. Akad. Wiss. Math.-Natur. Kl. Sitzungsber. II},
  204:23--32 (1996), 1995.

\bibitem{Schweiger2000}
F.~Schweiger.
\newblock {\em Multidimensional continued fractions}.
\newblock Oxford Science Publications. Oxford University Press, Oxford, 2000.

\bibitem{Selmer1961}
E.~S. Selmer.
\newblock Continued fractions in several dimensions.
\newblock {\em Nordisk Nat. Tidskr.}, 9:37--43, 95, 1961.

\bibitem{Tamura2009}
J.~Tamura and Sh. Yasutomi.
\newblock A new multidimensional continued fraction algorithm.
\newblock {\em Math. Comp.}, 78(268):2209--2222, 2009.

\bibitem{Voronoi1952}
G.~F. Vorono{\u\i}.
\newblock {\em On a {G}eneralization of the {A}lgorithm of {C}ontinued
  {F}raction. Collected works in three volumes ({I}n {R}ussian)}.
\newblock USSR Ac. Sci., Kiev., 1952.

\end{thebibliography}

\vspace{.5cm}


\end{document}